\documentclass[10pt,reqno,a4paper,oneside]{article}

\usepackage{breakurl}             
\usepackage{underscore}           

\usepackage[english]{babel}

\usepackage[a4paper]{geometry}

\usepackage{amssymb}
\usepackage{amsthm}
\usepackage{amsmath}
\usepackage{textcmds}
\usepackage{listings}
\usepackage{comment}
\usepackage{tikz-cd} 
\usetikzlibrary{decorations.markings}
\usetikzlibrary{calc}
\usepackage[ruled,vlined]{algorithm2e}
\usepackage{xcolor}
\usepackage{geometry}
\usepackage[percent]{overpic}
\usepackage{wrapfig}
\usepackage{array}
\usepackage{graphicx}
\usepackage{tabularx}

\definecolor{customblue}{HTML}{166dde}
\definecolor{customgrey}{HTML}{d3d3d3}
\definecolor{customred}{HTML}{e32636}

\newcommand{\boldblue}[1]{\textcolor{customblue}{\textbf{#1}}}
\newcommand{\boldred}[1]{\textcolor{customred}{\textbf{#1}}}

\usepackage[colorlinks=true, allcolors = customblue]{hyperref}


\usepackage{titling}
\usepackage[skip=10pt]{parskip}
\usepackage{subfig}
\usepackage{caption}
\usepackage{thmtools}
\usepackage{thm-restate}
\usepackage{tikz}
\usepackage{setspace}
\usepackage[framemethod=tikz]{mdframed}
\usepackage[export]{adjustbox}
\usepackage{makecell}

\newcounter{computationalnote}
\newmdenv[
  linewidth=0.5pt,
  linecolor=customred,
  backgroundcolor=white,
  skipabove=10pt,
  skipbelow=10pt,
  innerleftmargin=8pt,
  innerrightmargin=8pt,
  innertopmargin=6pt,
  innerbottommargin=6pt,
  frametitlefont=\bfseries\color{customred},
  frametitle={Computational Note~\thecomputationalnote},
  settings={\refstepcounter{computationalnote}} 
]{computationalnote}
\newcommand{\compnote}[1]{%
  {\hypersetup{linkcolor=customred}\hyperref[#1]{Computational Note~\ref*{#1}}}%
}

\newcounter{relatedwork}
\newmdenv[
  linewidth=0.5pt,
  linecolor=customblue,
  backgroundcolor=white,
  skipabove=10pt,
  skipbelow=10pt,
  innerleftmargin=8pt,
  innerrightmargin=8pt,
  innertopmargin=6pt,
  innerbottommargin=6pt,
  frametitlefont=\bfseries\color{customblue},
  frametitle={Related work},
  settings={\refstepcounter{relatedwork}} 
]{relatedwork}

\newmdenv[
  linewidth=0.5pt,
  linecolor=customblue,        
  backgroundcolor=white,      
  skipabove=10pt,
  skipbelow=10pt,
  innerleftmargin=8pt,
  innerrightmargin=8pt,
  innertopmargin=6pt,
  innerbottommargin=6pt,
  frametitlefont=\bfseries\color{customblue},
  frametitle={Main contributions} 
]{contributions}

\newmdenv[
  linewidth=0.5pt,
  linecolor=customred,        
  backgroundcolor=white,      
  skipabove=10pt,
  skipbelow=10pt,
  innerleftmargin=8pt,
  innerrightmargin=8pt,
  innertopmargin=6pt,
  innerbottommargin=6pt,
  frametitlefont=\bfseries\color{customred},
  frametitle={Code availability} 
]{code}

\usepackage{titlesec}

\makeatletter
\let\@internalcite\cite
\def\cite{\def\citeauthoryear##1##2{##1, ##2}\@internalcite}
\def\shortcite{\def\citeauthoryear##1{##2}\@internalcite}
\def\@biblabel#1{\def\citeauthoryear##1##2{##1, ##2}[#1]\hfill}
\makeatother

\newcommand{\R}{\mathbb{R}}
\newcommand{\C}{\mathbb{C}}
\newcommand{\Z}{\mathbb{Z}}

\newcommand{\N}{\mathbb{N}}

\newcommand{\E}{\mathbb{E}}

\newcommand{\M}{\mathcal{M}}
\newcommand{\A}{\mathcal{A}}
\DeclareMathOperator{\sign}{sign}
\DeclareMathOperator{\diag}{diag}
\newcommand\inp[2]{\langle #1, #2 \rangle}

\newcommand{\inv}{^{-1}}

\newcommand{\ord}{\mathcal{O}}

\newcommand{\thupper}{^{\text{th}}}

\newcommand{\p}[1]{\phi_{#1}}

\newtheorem{theorem}{Theorem}[section]
\newtheorem{corollary}[theorem]{Corollary}
\newtheorem{lemma}[theorem]{Lemma}
\newtheorem{prop}[theorem]{Proposition}

\theoremstyle{definition}
\newtheorem{definition}[theorem]{Definition}

\theoremstyle{definition}

\theoremstyle{definition}
\newtheorem{example}[theorem]{Example}
 
\theoremstyle{remark}

\title{Computing Diffusion Geometry}
\author{
Iolo Jones 
\\ University of Oxford
\and
David Lanners \\ Durham University}
\date{February 2026}

\begin{document}

\maketitle

\begin{abstract}
\noindent
Calculus and geometry are ubiquitous in the theoretical modelling of scientific phenomena, but have historically been very challenging to apply directly to real data as statistics.
Diffusion geometry is a new theory that reformulates classical calculus and geometry in terms of a diffusion process, allowing these theories to generalise beyond manifolds and be computed from data.
This work introduces a new computational framework for diffusion geometry that substantially broadens its practical scope and improves its precision, robustness to noise, and computational complexity.
We present a range of new computational methods, including all the standard objects from vector calculus and Riemannian geometry, and apply them to solve spatial PDEs and vector field flows, find geodesic (intrinsic) distances, curvature, and several new topological tools like de Rham cohomology, circular coordinates, and Morse theory.
These methods are data-driven, scalable, and can exploit highly optimised numerical tools for linear algebra.
\end{abstract}

\begin{figure}[h!]
  \centering
    \begin{overpic}[width=\linewidth,grid=false,
      clip=true, trim=0 400 0 0]{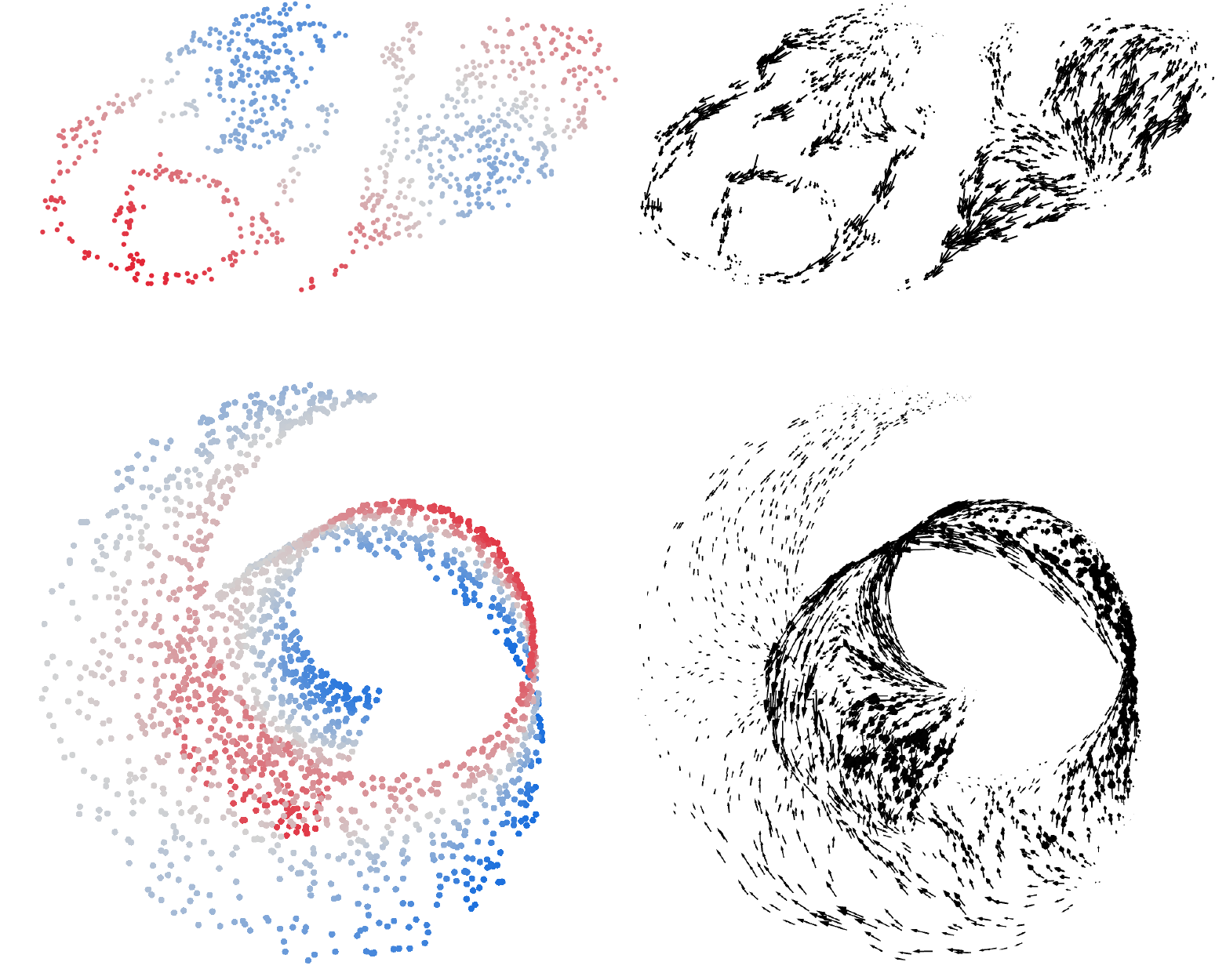}
\put(25.0,0){\makebox(0,0)[c]{$f$}}
\put(75.0,0){\makebox(0,0)[c]{$\nabla f$}}
  \end{overpic}
  \caption{A function $f$ represents a signal on the point cloud (blue denotes negative values, and red positive).
  We use diffusion geometry to compute its \boldblue{gradient vector field $\nabla f$} with respect to the data geometry.
  }
\end{figure}

\vspace{-1.2em}

\begin{figure}[h!]
  \centering
  \begin{overpic}[width=\linewidth,grid=false]{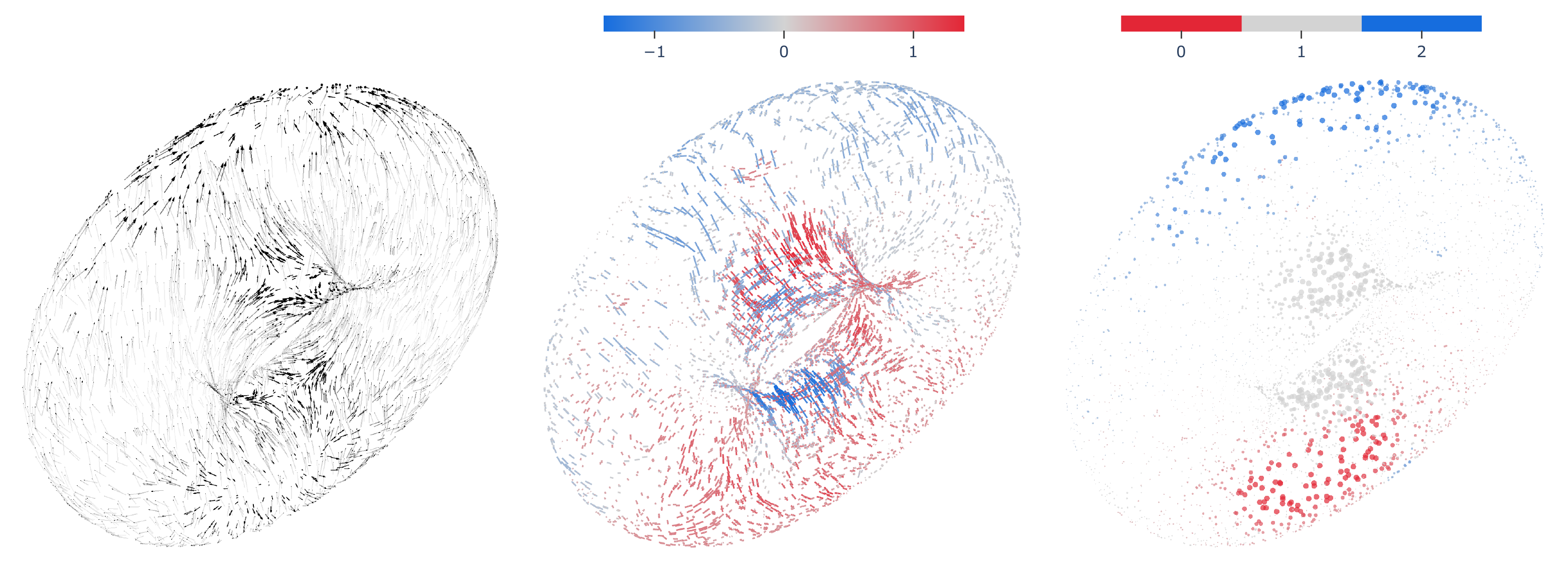}
\put(16.7,-1.5){\makebox(0,0)[c]{$\nabla h$ near critical points}}
\put(50.0,-1.5){\makebox(0,0)[c]{eigenvectors of $H(h)$}}
\put(83.3,-1.5){\makebox(0,0)[c]{indices of critical points}}
  \end{overpic}
  \vspace{0.em}
  \caption{
    The height function $h$ measures elevation on the torus.
    It has a maximum at the top, a minimum at the bottom, and two saddle points in between.
    The gradient $\nabla h$ reveals the flow into and out of these critical points.
    We compute the \boldblue{Hessian $H(h)$}, which measures the expansion and contraction of $h$ in a $2 \times 2$ matrix at each point.
    We plot its eigenvectors, which indicate the directions of expansion, coloured by their eigenvalues, which measure the rate of expansion (so negative implies contraction).
    These tell us the \boldblue{indices of the critical points}: there is expansion in both directions at a minimum (degree 0), contraction in both directions at a maximum (degree 2), and both expansion and contraction at a saddle (degree 1).
  }
\end{figure}

\newpage
\begingroup
  \begin{spacing}{0.9}
    \tableofcontents
  \end{spacing}
\endgroup
\newpage

\section{Introduction}

\subsection{The problem of data-driven calculus}

\boldblue{Calculus} is the mathematical language for describing change.
It is a foundational method in many areas of science, where objects like functions and vector fields describe all sorts of phenomena, and calculus methods like derivatives and differential equations are used to study them.
For example, a wave can be modelled by a function which measures its height at each point, and its movement is described by the wave equation.

Calculus can measure change through time, but also change through space.
When this space is curved, the derivatives will become deformed and affect the behaviour of the calculus.
For example, an electrical wave travelling across the surface of the brain will speed up and slow down as it moves in and out of bumps along the surface.
In this way, the calculus of curved spaces implicitly encodes something about their shape, and the ways in which this geometry can be inferred from calculus are studied in \boldblue{Riemannian geometry}.

Despite the great significance of calculus as a theoretical tool for modelling dynamics and measuring geometry, it has historically been very difficult to apply directly to data.
What happens when we only have a finite sample of points from a space, and want to estimate its calculus?
We might have an MRI scan of a brain and want to study how its shape will affect the propagation of electrical waves across the surface.

There are two major obstacles.
First, calculus involves taking derivatives, which are the infinitesimal rates of change measured by \q{zooming in} infinitely closely to a point.
Unlike in continuous space, finite data are all isolated, and the gaps between them prevent any infinitesimal measurements.
Second, the classical theory of Riemannian geometry only applies when the space is a \boldblue{manifold}.
Manifolds are highly specialised spaces that satisfy very specific constraints.
Most real data do not meet these constraints (as we discuss below), and so, even if we could compute it, the classical Riemannian geometry would rarely apply in practice.


\boldblue{Diffusion geometry} \cite{jones2024diffusion,jones2024manifold} is a new theory that simultaneously overcomes both of these obstacles by reformulating calculus and geometry in terms of the \boldblue{heat diffusion} on the underlying space.
Heat diffusion (as described by the classical heat equation) may appear to be an extremely specific dynamical process and unrelated to the other calculus objects, but, remarkably, it contains enough information to fully reconstruct all the calculus and geometry of the space.

This has two important consequences.
First, the heat diffusion can be expressed using the \boldblue{heat kernel}, which measures how heat spreads from one point to another over time.
We can evaluate the heat kernel on finite data to get a discrete approximation of the diffusion process, which can then be used to compute calculus objects using their reformulation in diffusion geometry.
The heat kernel is easily computable and highly robust to noise, and all our methods will inherit these qualities.
Second, the heat diffusion can be defined on very general spaces, not just manifolds.
This means that diffusion geometry and the methods we derive from it will apply to all real-world data geometries, without requiring special assumptions.

\subsection{Why the \qq{manifold hypothesis} has the wrong name}

A space must satisfy several strict criteria to be a manifold.
It must have a dimension $d$, meaning that, at every point, the space locally \q{looks like} the standard $d$-dimensional space $\R^d$.
As such, it cannot have locally variable dimension (like a line, which is 1d, joined to a sphere, which is 2d) or any tree-like \q{branching points} where different parts of the space split off.
While these criteria are met in special cases, most real-world data have branches and variable dimensions, so are not manifolds (see Figure \ref{fig:intro_fig_gradients}).
On the other hand, a fundamental observation in data science is that real-world data tend to have \boldblue{low-dimensional} structure.
For example, the space of natural images is intrinsically low-dimensional because, given some initial image, there are only certain \q{directions} in which you can change that image and obtain another natural image (you could safely change the colour of the background, but changing pixel values randomly just looks like noise).

Confusingly, the reasonable and broadly applicable assumption that the data are low-dimensional has been mislabelled the \qq{manifold hypothesis}, and the low-dimensional data are called \qq{data manifolds} \cite{cayton2005algorithms}.
In fact, the two properties are unrelated.
For example, three lines that meet at a single point (like a letter \q{Y}) form a 1-dimensional space that is not a manifold, and a 999,999-dimensional sphere in 1,000,000-dimensional space is a manifold, but is not low-dimensional.

The goal of diffusion geometry is to redefine Riemannian geometry, which only exists on actual manifolds, so that the same methods can be applied to the things that people mean when they say \qq{data manifold}.

\subsection{Paper outline}

In \cite{jones2024diffusion}, we computed diffusion geometry objects by expanding on the \textit{Spectral Exterior Calculus} framework \cite{berry2020spectral}, in which Riemannian geometry objects are computed from the eigenvalues and eigenfunctions of the Laplacian.
However, there are several practical drawbacks to this approach that we discuss in Section \ref{sec: background}.
This paper introduces a new computational framework based on a simple and general recipe directly involving the \textit{carré du champ} operator \cite{bakry2014analysis} from the theory of diffusion geometry.
This leads to a much simplified mapping from theory to computation, as well as improved performance and scalability.
It provides a general, robust approach to data-driven Riemannian geometry, which can be exploited for statistics and machine learning.

In Section \ref{sec: overview}, we briefly illustrate the main ideas of diffusion geometry and demonstrate our computational approach by computing gradient vector fields on point clouds.
The central objects of calculus and geometry are functions, vector fields, differential forms, and tensors: in Section \ref{sec: functions_vector_fields_forms_tensors}, we describe a framework in which they can be represented and manipulated.
In Section \ref{sec: frame_theory_weak_formulations}, we provide a theoretical justification for the approach, showing that it is well-posed and numerically stable.
The interesting information in calculus and geometry is captured by differential operators that map between these spaces, such as the exterior derivative and Lie bracket, and these are described in Section \ref{sec: differential_operators}.

We can use this framework to compute the calculus, geometry, and topology of point clouds with diffusion geometry.
To demonstrate the potential of the Riemannian geometry toolkit in statistics and machine learning, we illustrate a range of possible applications and methods.
In Section \ref{sec: differential_equations}, we describe methods for solving partial differential equations, including the heat and wave equations, and vector field flows.
In Section \ref{sec: geometric data analysis}, we present new geometric methods like finding the geodesic (intrinsic) distances and curvature.
In Section \ref{sec: TDA}, we introduce methods for topological data analysis based on differential topology, including de Rham cohomology, circular coordinates and Morse theory.
These applications all deliver promising results, and we leave their thorough development and evaluation for future work.

In Section \ref{sec: computational_complexity}, we analyse the computational complexity and scaling laws of our framework on a topological data analysis task, where it outperforms comparable methods like persistent homology by several orders of magnitude.
We discuss the historical background and related work in Section \ref{sec: background}.

\begin{contributions}
\vspace{-0.3em}
The theory of diffusion geometry reformulates Riemannian geometry in terms of a Markov process, leading to a simple and more general theory.
In this paper, we show that the same principle applies to computation,
where diffusion geometry directly translates the theories of vector calculus, Riemannian geometry, geometric analysis, and differential topology into simple, general statistics.
\begin{enumerate}
  \item We introduce a new computational framework for representing vector fields, differential forms, and tensors in diffusion geometry by using embedding or immersion coordinates as generators, leading to improved statistical and computational performance.
  These can be computed from any Markov process, and we offer a simple data-driven solution based on the heat kernel.
  \item We represent all the major differential operators from calculus and geometry by solving weak formulations, and study their stability.
  \item We use these tools to develop a meshless system for differential equations and geometric analysis on arbitrary data geometries.
  \item We solve the geodesic distance equation to recover the intrinsic metric of point cloud data.
  \item We compute the sectional curvature via the Levi-Civita connection, which is the first time a classical curvature tensor has been estimated from non-manifold data.
  \item We introduce novel methods for topological data analysis by applying techniques from differential topology, such as de Rham cohomology via harmonic forms, circular coordinates, and Morse theory.
  These can be computed in several orders of magnitude less time and space than Vietoris-Rips persistent homology, and exhibit significantly better robustness to noise and outliers.
\end{enumerate}
\end{contributions}

\vspace{-1em}
\begin{code}
\vspace{-0.3em}
We release a Python software package at \url{github.com/Iolo-Jones/DiffusionGeometry}.
\end{code}

\newpage
\section{Overview and main concepts}
\label{sec: overview}

To illustrate the main ideas in diffusion geometry, we show how to compute gradient vector fields on a point cloud.
This will demonstrate a simple and very general method that we apply throughout this paper.

\subsection{Vector calculus on $\R^n$ and manifolds}

A fundamental object in vector calculus is a \boldblue{vector field}, which is an assignment of a \q{direction} to each point in space.
We can represent this direction at a point $p \in \R^d$ by the vector $(f_1(p),...,f_d(p)) \in \R^d$, which is expressed by the notation
\begin{equation}
\label{eq: intro vector field eq}
X = \sum_{i=1}^d f_i \nabla x_i.
\end{equation}
The term $\nabla x_i$ refers to the vector field $e_i = (0,...,1,...,0)$, which represents a constant flow with speed 1 in the direction of the $i\thupper$ axis.
In the left column of Figure \ref{fig: vector calculus R2 and manifold}, we 
demonstrate this construction in $\R^2$.

In geometry, we traditionally work with \boldblue{manifolds}, which we can think of as subsets $\M \subseteq \R^d$ that meet certain criteria.
A manifold must have a dimension $d' \leq d$, which means that, at every point $p \in \M$, the space $\M$ locally \q{looks like} $\R^{d'}$.
This is encoded in the \boldblue{tangent space} $T_p \M$, which is a $d'$-dimensional vector space that contains all the infinitesimal directions through which a curve in $\M$ can pass through $p$.
We can define vector fields on $\M$ by projecting them from $\R^d$ onto $T_p \M$ at each point $p$, a process which is called the \boldblue{pullback}.
We can use the same notation as Equation (\ref{eq: intro vector field eq}), where $f_i$ now means a function $\M \to \R$, and $\nabla x_i$ means the projection of $e_i$ to $T_p\M$.
In the right column of Figure \ref{fig: vector calculus R2 and manifold}, we demonstrate the construction of vector fields on a manifold $\M \subset \R^2$ using the pullbacks of functions and vector fields from $\R^2$.

\begin{figure}[]
  \centering
  \begin{overpic}[width=\linewidth,grid=false]{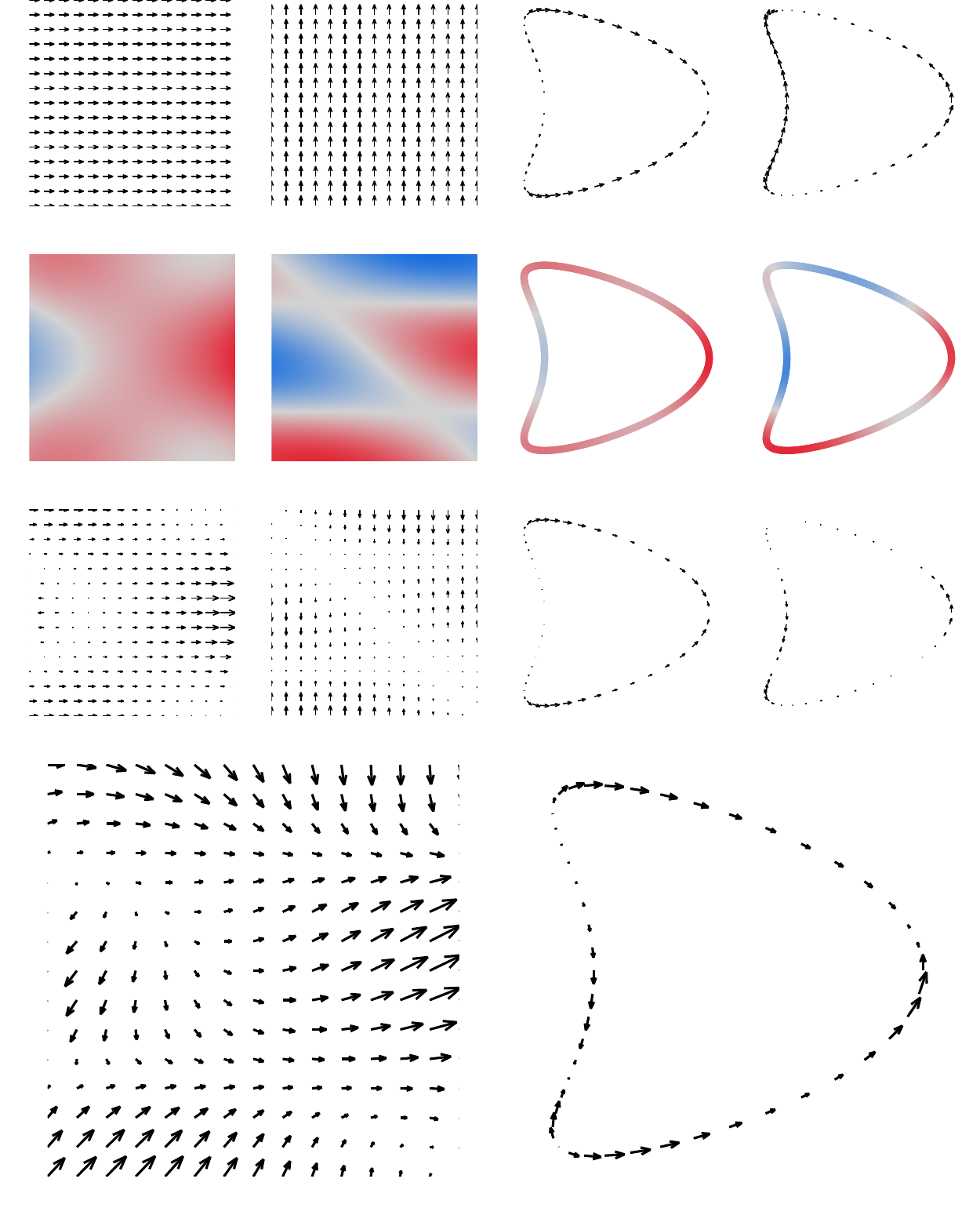}
\put(10.6,81.6){\makebox(0,0)[c]{$\nabla x$}}
\put(30.2,81.6){\makebox(0,0)[c]{$\nabla y$}}
\put(49.8,81.6){\makebox(0,0)[c]{$\nabla x$}}
\put(69.4,81.6){\makebox(0,0)[c]{$\nabla y$}}
\put(10.6,60.8){\makebox(0,0)[c]{$f$}}
\put(30.2,60.8){\makebox(0,0)[c]{$h$}}
\put(49.8,60.8){\makebox(0,0)[c]{$f$}}
\put(69.4,60.8){\makebox(0,0)[c]{$h$}}
\put(10.6,40.0){\makebox(0,0)[c]{$f \nabla x$}}
\put(30.2,40.0){\makebox(0,0)[c]{$h \nabla y$}}
\put(49.8,40.0){\makebox(0,0)[c]{$f \nabla x$}}
\put(69.4,40.0){\makebox(0,0)[c]{$h \nabla y$}}
\put(20.4,2){\makebox(0,0)[c]{$X = f \nabla x + h \nabla y$}}
\put(59.6,2){\makebox(0,0)[c]{$X = f \nabla x + h \nabla y$}}
  \end{overpic}
  \vspace{1em}
  \caption{\textbf{Vector fields on $\R^2$ and a manifold $\M$.}
  In both cases, we construct a vector field $X$ by multiplying $\nabla x$ and $\nabla y$ by coefficient functions $f$ and $h$.
  The functions and vector fields on the right are the pullbacks of those on the left from $\R^2$ to the manifold $\M$ (i.e. we restrict functions to $\M$ and project vectors to the tangent spaces $T_p\M$).}
\label{fig: vector calculus R2 and manifold}
\end{figure}

\subsection{Diffusion geometry on $\R^n$ and manifolds}

The two aims of diffusion geometry are to generalise the above constructions to more general spaces than manifolds, and to do so in a way that can be computed from data.
These goals share a common solution, which is to reformulate the classical theory in terms of a \boldblue{diffusion process}.

In our definition of vector fields on manifolds, we used the pointwise projection of an ambient vector field onto the tangent space $T_p \M$, and this projection is defined using the pointwise inner product.
More generally, the pointwise inner product $X \cdot Y$ of two vector fields (which defines a function) is a fundamental construction in calculus.
This idea is abstracted by the theory of {Riemannian geometry}, which is formulated entirely in terms of the pointwise inner product, called the \boldblue{Riemannian metric} and denoted $g(X,Y)$.
Diffusion geometry generalises this further by noting that, if $X = \sum_{i=1}^d f_i \nabla x_i$ and $Y = \sum_{j=1}^d h_j \nabla x_j$, then
\[
g(X,Y) = \sum_{i,j=1}^d f_i h_j g(\nabla x_i, \nabla x_j),
\]
where the terms $g(\nabla x_i, \nabla x_j)$ are called the \boldblue{carré du champ}~\cite{bakry2014analysis} of $x_i$ and $x_j$, denoted $\Gamma(x_i,x_j)$.
The carré du champ can be expressed in terms of a diffusion process on the manifold $\M$, but, crucially, can also be defined using more general diffusions on more general spaces.
By generalising the diffusion, we can generalise the carré du champ, and so generalise the geometry.

The specific diffusion process that captures the geometry of Euclidean space $\R^d$ and manifolds $\M$ is the \boldblue{heat flow}.
On $\R^d$, it is defined by the \boldblue{heat kernel}
\[
p_t(x, y)
= \frac{1}{(4\pi t)^{d/2}} \exp\!\left(-\frac{\|x - y\|^2}{4t}\right)
\]
for $t > 0$ and $x,y \in \R^d$, which we can interpret as the probability of a Brownian motion transitioning from $x$ to $y$ after $t$ time.
For our purposes, the important geometric information is summarised by the following proposition, which relates the heat kernel to the carré du champ.

\begin{restatable}{reprop}{covarianceeuclideanformula}
\label{prop: covarianceeuclideanformula}
If $f, h : \R^d \to \R$ are differentiable functions then
\begin{equation}
\label{eq: covarianceeuclideanformula}
\begin{split}
\Gamma(f,h)(p)
& = \lim_{t\to 0} \frac{1}{2t} \int p_t(p,y) (f(y) - f(p)) (h(y) - h(p)) \, dy
\\
& = \lim_{t\to 0} \frac{1}{2t} \, \textnormal{Cov}\!\left[f(X), h(X)\,:\, X \sim \mathcal{N}(p,2t\textbf{I})\right].
\end{split}
\end{equation}
\end{restatable}

We state this proposition for $\R^d$ (see Appendix Section \ref{sec:markov-appendix} for a proof), but it holds on manifolds and for general Markov processes \cite{bakry2014analysis}.
By writing the carré du champ as the \boldblue{infinitesimal covariance} of the diffusion process, we obtain a straightforward formula to compute it from data.

\subsection{Computing the carré du champ}

The covariance formula (\ref{eq: covarianceeuclideanformula}) relates the carré du champ $\Gamma$ to the heat kernel.
We can compute a data-driven carré du champ by first computing the kernel $p_t$ on the data and then applying the same formula.
Given $n$ data points in $\R^d$, we evaluate $p_t(x_i,x_j)$ for all $i,j = 1,...,n$, to obtain an $n \times n$ matrix.
We row-normalise to obtain a matrix $\textbf{P}$ whose rows sum to 1, so $\textbf{P}$ defines a discrete diffusion process or \boldblue{Markov chain}.
A function $f$ on the data can be represented by a vector $\textbf{f} \in \R^n$, i.e. $\textbf{f}_i = f(x_i)$, and we can apply (\ref{eq: covarianceeuclideanformula}) to compute the discrete carré du champ $\boldsymbol\Gamma(f,h) \in \R^d$ as
\begin{equation}
\label{eq: cdc covariance computation intro}
\boldsymbol\Gamma_i(f,h) = \frac{1}{2t} \sum_{j=1}^n \textbf{P}_{ij} (\textbf{f}_j - \textbf{f}_i) (\textbf{h}_j - \textbf{h}_i).
\end{equation}

\subsection{Gradient vector fields}

As a simple application of our carré du champ formula, we can compute the \boldblue{gradient} $\nabla f$ of a function $f$, which is a vector field measuring the speed and direction in which $f$ is increasing the fastest.
The gradient is inherently spatial, because it measures the variation of $f$ with respect to the underlying geometry.

We can visualise any vector field $X$ by taking the derivative of each coordinate function $x_i$ in the direction of $X$, and plotting the \q{arrow} given by these derivatives $(X(x_1),..., X(x_d)) \in \R^d$ at each point.
This derivative is given by the Riemannian metric (i.e. pointwise inner product), because $X(h) = g(X, \nabla h)$ for any function $h$.
When the vector field $X$ is $\nabla f$, this is a straightforward computation with the carré du champ
\[
\begin{pmatrix}
\nabla f(x_1) \\
... \\
\nabla f(x_d) \\
\end{pmatrix}
=
\begin{pmatrix}
g(\nabla f, \nabla x_1) \\
... \\
g(\nabla f, \nabla x_d) \\
\end{pmatrix}
=
\begin{pmatrix}
\Gamma(f,x_1) \\
... \\
\Gamma(f,x_d) \\
\end{pmatrix}
\]
which we can compute using (\ref{eq: cdc covariance computation intro}).
So, at the $i\thupper$ point in the data, we get a vector
\[
\big(\boldsymbol\Gamma_i(f,x_1), ... , \boldsymbol\Gamma_i(f,x_d) \big) \in \R^d.
\]
In Figure \ref{fig:intro_fig_gradients}, we verify that this carré du champ gradient computation aligns with the ground truth when computed from a sparse sample of data from a manifold (second and third columns).
However, this method does not assume that the data lie on a manifold.
We can directly apply it to \boldblue{non-manifold} data that has noise, variable density, variable dimension, and singularities, and get meaningful results (fourth column).

\vspace{1em}
\begin{figure}[h!]
    \centering
      \begin{overpic}[width=\linewidth,grid=false]{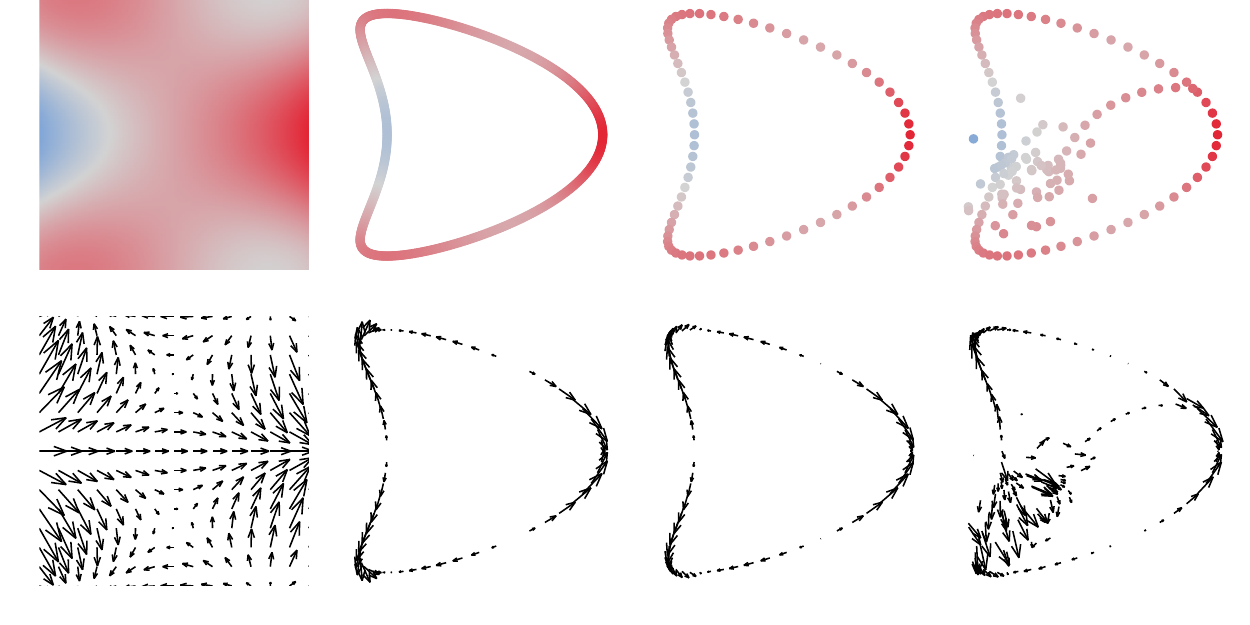}
\put(14.2,52.5){\makebox(0,0)[c]{$\R^2$}}
\put(38.8,52.5){\makebox(0,0)[c]{$\M$}}
\put(63.3,52.5){\makebox(0,0)[c]{sample from $\M$}}
\put(87.8,52.5){\makebox(0,0)[c]{non-manifold}}
  \end{overpic}
    \caption{\textbf{Gradients of functions on manifolds and non-manifolds.}
    The top row shows a function $f$ on different spaces, and the bottom row shows its gradient $\nabla f$.
    The first column is in $\R^2$ and the second column is a 1-dimensional \boldblue{manifold} $\M$, where we can compute $\nabla f$ from $f$ exactly.
    The third column uses diffusion geometry to estimate $\nabla f$ on $\M$ from a sample of data, using the carré du champ formula (\ref{eq: cdc covariance computation intro}).
    The fourth column shows \boldblue{non-manifold} data, where we can still use diffusion geometry to compute a sensible notion of gradient in this more general setting.
    The data is not from a manifold, because it is both 1d and 2d in different regions (2d on the patch on the left, and 1d elsewhere), and has a branching point on the right where three 1-dimensional paths meet.
    }
    \label{fig:intro_fig_gradients}
\end{figure}

\subsection{General recipe for computing diffusion geometry}

The example of computing gradients demonstrates a simple and general procedure for computing calculus objects with diffusion geometry:
\begin{enumerate}
    \item \textbf{Theoretical step.}
    \begin{enumerate}
        \item Find a definition for the object in terms of functions and the \boldblue{carré du champ} $\Gamma$.
    \end{enumerate}
    \item \textbf{Practical steps.}
    \begin{enumerate}
    \item Construct a \boldblue{Markov chain} on the data $(P_{ij})_{1 \leq i,j \leq n}$.
    \item Compute the carré du champ of functions $f,h$ as the \boldblue{covariance}
    \[
    \Gamma(f,h)(p_i) = \text{Cov}(f(X), h(X) : X \sim \mathbf{P}_i).
    \]
    \item Plug these computations into the formula for the object.
    \end{enumerate}
\end{enumerate}
We will now apply this method to introduce a wide array of computational methods for calculus, geometry, and topology.

\newpage

\section{Functions, vector fields, forms, and tensors}
\label{sec: functions_vector_fields_forms_tensors}

The fundamental objects of calculus and geometry are \boldblue{tensors}, such as \boldblue{functions}, \boldblue{vector fields}, and \boldblue{differential forms}.
These can all be computed using the carré du champ $\Gamma$ constructed from a Markov chain on the data, as demonstrated above.




\subsection{Carré du champ and measure from a Markov chain}

The \q{geometry} in diffusion geometry is derived from a \boldblue{diffusion process} on the space $M$, which provides us with a \boldblue{measure} $\mu$ and a \boldblue{carré du champ} operator $\Gamma$.
When we have a finite point cloud $\textbf{M} = \{p_1,...,p_n\} \subset \R^d$, we can approximate the diffusion process on $M$ by constructing a \boldblue{Markov chain} on $\textbf{M}$.
However, the computational methods that follow would work for any choice of measure and carré du champ on $\textbf{M}$, which could be constructed in different ways depending on the application.


\subsubsection{Constructing a Markov chain from a kernel}

An easy way to construct a Markov chain from data is to use a \boldblue{kernel} $k : \R^d \times \R^d \to \R_+$, which measures the \q{affinity} between a pair of points.
A standard choice is the gaussian kernel\footnote{also called the \q{heat kernel} or \q{radial basis function}}
\begin{equation}
\label{eq: fixed bandwidth heat kernel}
    k(x,y) = \exp\Big( -\frac{\parallel x - y \parallel^2}{t^2} \Big)
\end{equation}
for some \boldblue{bandwidth} parameter $t > 0$.
The bandwidth $t$ sets the length scale of the kernel and needs to be tuned so that the Markov chain will encode the local geometry of the data.
When $t$ is too small, the kernel can only \q{see} the point on which it is centred, and the Markov chain becomes the identity matrix, failing to encode any of the relationships between points.
When $t$ is too large, the kernel is too smoothed out, and the Markov transition probabilities become uniform, missing the subtle details.
In practice, it is standard to use a \boldblue{variable bandwidth}\footnote{also called \q{adaptive} or \q{self-tuning}}
kernel
\begin{equation}
\label{eq: variable bandwidth heat kernel}
k(x,y) = \exp\Big( -\frac{\parallel x - y \parallel^2}{\rho(x)\rho(y)} \Big)
\end{equation}
for some \boldblue{bandwidth function} $\rho: \R^d \to \R_+$.
These lead to better performance on variable-density and heterogeneous data, because the bandwidth can be increased at low-density points (where the gaps between data are larger) and decreased at high-density points (to capture the finer details).

Although finding a good bandwidth function $\rho$ is essential for the performance of the kernel, there are simple heuristics that attain very good results, such as setting $\rho(p_i)$ to be the distance to the $k\thupper$ nearest neighbour of $p_i$ (usually $k=8$).
In our implementation, we closely follow \cite{berry2016variable} in constructing a local density estimate $q(x)$, and then set $\rho(x) = q(x)^{b}$ for some negative power $b$.

We can obtain a Markov chain from $k$ by computing the \boldblue{kernel matrix} and then row-normalising:

\begin{enumerate}
    \item Let $\mathbf{K}_{ij} = k(p_i,p_j)$ be the $n \times n$ kernel matrix.
    \item Let $\mathbf{D}_i = \sum_{j=1}^n \mathbf{K}_{ij}$ be the row sums (so $\mathbf{D} \in \R^n$).
    \item Define a Markov chain $\mathbf{P}_{ij} = \mathbf{K}_{ij}/\mathbf{D}_i$.
\end{enumerate}

\begin{computationalnote}
Computing the whole kernel matrix $\mathbf{K}$ takes $\ord(n^2 d)$ time and $\ord(n^2)$ space, which is infeasible for large datasets.
To avoid this, we can compute only the $k$ nearest neighbours of each point (for some small $k$, usually $k=32$), and set $\mathbf{K}_{ij} = 0$ for all other pairs $(i,j)$.
This leads to a sparse kernel matrix with only $\ord(kn)$ nonzero entries, and costs only $\ord(n \log n + d)$ time and $\ord(kn)$ space to compute using a KD-tree.
\end{computationalnote}


\subsubsection{Measure}

The vector $\mathbf{D}$ can be interpreted as an estimate of the density of the underlying measure $\mu$ from which the data points $\mathbf{M}$ were sampled.
For example, if the points in $\mathbf{M}$ are drawn from a distribution with density $q$, then for large $n$,
\[
\frac{\mathbf{D}_i}{n} = \frac{1}{n} \sum_{j=1}^n k(p_i, p_j) \approx \int_M k(p_i, y) q(y) dy
\]
is a \boldblue{kernel density estimate} of the function $q$. 
We can normalise $\mathbf{D}$ into a probability measure (i.e. its entries sum to 1) by setting
\[
\boldsymbol{\mu} = \frac{\mathbf{D}}{\sum_{k=1}^n \mathbf{D}_k} \in \R^n,
\]
which will play the role of the measure $\mu$ in our computations.

\subsubsection{Carré du champ}
\label{sub: cdc}

We can compute a data-driven carré du champ using the covariance formula (\ref{eq: covarianceeuclideanformula}), which acts on functions defined on the data points.
These are represented by vectors $\textbf{f} \in \R^n$, i.e. $\textbf{f}_i = f(x_i)$.
If the kernel has a variable bandwidth $\rho$ as in (\ref{eq: variable bandwidth heat kernel}), then we compute
\begin{equation}
\label{eq: cdc covariance variable bandwidth}
\begin{split}
\boldsymbol\Gamma_i(f,h)
&= \frac{1}{2\rho(x_i)^2} \, \textnormal{Cov}\!\left[f(X), h(X)\,:\, X \sim \textbf{P}_i\right] \\
&= \frac{1}{2\rho(x_i)^2} \sum_{j=1}^n \textbf{P}_{ij} \Big(\textbf{f}_j - \sum_{k=1}^n \textbf{P}_{jk}\textbf{f}_k \Big) \Big(\textbf{h}_j - \sum_{k=1}^n \textbf{P}_{jk}\textbf{h}_k \Big),
\end{split}
\end{equation}
which reduces to (\ref{eq: cdc covariance computation intro}) when $\rho = t$ is constant (\ref{eq: fixed bandwidth heat kernel}).
In \cite{bamberger2025carr}, we found that this \q{covariance} formulation of the carré du champ led to scalable and robust results on high-dimensional data.

\newpage
\begin{computationalnote}
The summand in the covariance formula \eqref{eq: cdc covariance variable bandwidth} could be replaced by the simpler $\textbf{P}_{ij} (\textbf{f}_j - \textbf{f}_i) (\textbf{h}_j - \textbf{h}_i)$ that we used in \eqref{eq: cdc covariance computation intro}.
This would be equivalent to computing the carré du champ by the formula $\Gamma_i(f,h) = \frac{1}{2} (f \mathbf{L}(h) + h \mathbf{L}(f) - \mathbf{L}(fh))$ where $\mathbf{L} = \text{diag}(\rho(x_i)^{-2})(\mathbf{P} - \mathbf{I})$ is the generator of the Markov chain.
This approach is equivalent to computing the carré du champ with a graph Laplacian, which has previously been applied in \cite{lin2010ricci, lin2011ricci, berry2020spectral, jones2024diffusion}.
However, we find that using the mean-centred covariance gives slightly better robustness to noise and more precise results on manifold data.
\end{computationalnote}


\subsection{Functions}

In the theory of diffusion geometry \cite{jones2024diffusion}, the most fundamental objects are the measure space $M$ and the \boldblue{functions} $f: M \to \R$.
We call the space of functions $\A$, and $\A$ is an \boldblue{algebra}: it is a vector space that also has a notion of {multiplication} (because, if $f,h \in \A$ are functions, then so is $fh \in \A$).

Given a finite dataset $\textbf{M}$, we could define a discrete function space $\textbf{A}$ as the space of all functions on $\textbf{M}$.
We would then identify $\textbf{A} = \R^n$, where a function $f : \textbf{M} \to \R$ corresponds to the vector $(f(p_1),...,f(p_n)) \in \R^n$, and function multiplication is just componentwise multiplication of vectors.
However, while $\R^n$ contains all the functions that can possibly be represented on the data, it has dimension $n$, and so the time and memory complexity of downstream computations will gain a compounding factor of at least $\ord(n)$.
To improve this, we would like to restrict to a lower-dimensional subspace.

\subsubsection{Compressed function space}
\label{sub: function space}

There are many choices of subspace, and the properties of the functions they contain represent an \textit{inductive bias}.
A natural and general choice is to pick a subspace of the smoothest possible functions, partly because smoothness is an often desirable property, and partly because the smoothness of the function controls the numerical error of the carré du champ (see Theorem 3.1 in \cite{jones2024manifold}).
By restricting to the smoothest possible subspace, we are therefore considering only the functions on which our methods are the most precise.

The notion of \q{smoothness} we use will be data-driven: we can use the Markov chain $\mathbf{P}$ to define an energy functional
\begin{equation}
E(\textbf{f}) 
:= \frac{\inp{\textbf{f}}{\mathbf{P}\textbf{f}}_{L^2(\R^n, \boldsymbol{\mu})} }{\|\textbf{f}\|_{L^2(\R^n, \boldsymbol{\mu})}^2}
= \frac{\sum_{i,j=1}^n \textbf{f}_i \textbf{f}_j \mathbf{P}_{ij}\boldsymbol{\mu}_i}{\sum_{i=1}^n \textbf{f}_i^2\boldsymbol{\mu}_i},
\end{equation}
where $0 \leq E(f) \leq 1$\footnote{the upper bound is because $\mathbf{P}$ is a Markov chain so its largest eigenvalue is 1} measures the smoothness of $f$ with respect to $P$.
Since $P$ is a diffusion operator, $1 - E$ is a discrete analogue of the \boldblue{Dirichlet energy}.
If $n_0$ is our desired subspace dimension, the $n_0$ orthonormal functions in $L^2(\R^n, \boldsymbol{\mu})$ that maximise $E$ are precisely the first $n_0$ eigenvectors of the $n \times n$ matrix $P$.
The first eigenfunction $\phi_1$ will equal 1 everywhere, and the other $\phi_i$ will get progressively more oscillatory as $i$ increases.
See the left column of Figures \ref{fig:function-vf-basis-2d} and \ref{fig:function-vf-basis-3d}.

Since $P$ was constructed from a symmetric kernel, it is self-adjoint with respect to the inner product on $L^2(\R^n, \boldsymbol{\mu})$\footnote{see appendix for details}.
This makes $E$ a symmetric quadratic form which can be efficiently diagonalised, and the eigenvectors $\p{1},...,\p{n}$ of $P$ form an orthonormal basis of $L^2(\R^n, \boldsymbol{\mu})$.
We define the \boldblue{compressed function space}
\[
\textbf{A} = \text{Span}\{\p{1},...,\p{n_0}\} \subseteq L^2(\R^n, \boldsymbol{\mu}),
\]
and store the $n_0$ basis functions in an $n \times n_0$ matrix $\textbf{U}$.
Since the eigenfunctions $\p{i}$ minimise a Dirichlet energy, we can think of $\textbf{A}$ as a space of \boldblue{bandlimited functions} on the data.
The fact that the functions are orthonormal with respect to $\boldsymbol{\mu}$ means that $\textbf{U}^T \diag(\boldsymbol{\mu}) \textbf{U} = \textbf{I}_{n_0}$. 

\begin{computationalnote}
Computing the first $n_0$ eigenvectors of $P$ is a symmetric eigenvalue problem, which can be solved efficiently using iterative methods.
Since we construct a sparse $P$, this costs just $\ord(n n_0)$ time and $\ord(n n_0)$ space.
\end{computationalnote}

\begin{relatedwork}
If the bandwidths in the kernel are constant, then the eigenfunctions $\p{i}$ converge to the eigenfunctions of the \textit{Laplace-Beltrami operator} on a manifold as $n \to \infty$ \cite{belkin2003laplacian,COIFMAN20065,von2008consistency,hein2005graphs,belkin2006convergence,garcia2020error}.
A slightly different normalisation is required to get this convergence when using variable bandwidths \cite{berry2016variable}, which ensures that the resulting eigenfunctions do not depend on the sampling density of the data.
We do not use that normalisation here and directly diagonalise the Markov chain, based on the heuristic that incorporating the density into the basis choice is a more data-driven approach.
\end{relatedwork}

\subsubsection{Projection into the subspace}

We can use the matrix $\textbf{U}$ and measure $\boldsymbol{\mu}$ to project functions $\textbf{f} \in \R^n$ into the compressed function space.
If $\textbf{f} \in \R^n$ is a function, then
\[
\textbf{f}^* := \textbf{U}^T \diag(\boldsymbol{\mu}) \textbf{f} \in \R^{n_0}
\]
contains the coefficients of the $n_0$ eigenfunctions that best approximate $\textbf{f}$ with respect to the squared $L^2(\R^n, \boldsymbol{\mu})$ norm.
If $n_0 = n$, then this is just a change of basis, but we will usually take $n_0 \ll n$ so the projection is into a proper subspace, resulting in an approximation
\[
\textbf{f} \approx \sum_i \textbf{f}^*_i \p{i} = \textbf{U} \textbf{f}^* \in \R^n.
\]
We can think of $\textbf{f}^*$ as the \boldblue{bandlimited Fourier transform} of $\textbf{f}$.

\begin{computationalnote}
\label{note: function space is not an algebra}
One cost of using a compressed function space is that function multiplication is no longer closed in $\textbf{A}$.
If $\textbf{f}, \textbf{h} \in \textbf{A}$, we can compute their product $\textbf{f} \odot \textbf{h} \in \R^n$ by componentwise multiplication and then projecting back into $\textbf{A}$ as $(\textbf{f} \odot \textbf{h})^*$, i.e. 
\begin{equation}
\label{eq: function product projection}
(\boldsymbol{\phi}_i \boldsymbol{\phi}_{i'})_k
= \sum_{p=1}^n \textbf{U}_{pk} (\textbf{U}_{pi} \textbf{U}_{pi'}) \boldsymbol{\mu}_p
\end{equation}
for $k = 1,...,n_0$, but this generally loses information when $n_0 < n$.
In other words, $\textbf{A}$ is a subspace of $L^2(\R^n, \boldsymbol{\mu})$, but not a subalgebra.
\end{computationalnote}

\begin{computationalnote}
\label{note: multiplication tensor}
In this work, we will always evaluate function products using \eqref{eq: function product projection}, which costs $\ord(n n_0 m)$ time to compute the product of $m$ pairs of functions.
An alternative approach, used in Spectral Exterior Calculus (SEC) \cite{berry2020spectral}, is to precompute the $n_0 \times n_0 \times n_0$ tensor of function products $\textbf{c}_{i i' k}$ \eqref{eq: function product projection} in $\ord(n n_0^3)$ time, which allows $m$ function products to be computed in $\ord(n_0^3 m)$ time.
This gives a faster multiplication when $n$ and $m$ are both greater than $\ord(n_0^2)$.
This trade-off is justified in the SEC, where it is used extensively in backend computation to approximate expressions in the eigenfunction basis (so $m$ is very high), but this leads to a compounding projection error.
In this work, we never perform backend computation with these expansions, and so this error does not arise.
\end{computationalnote}

\subsection{Vector fields}

Formally, vector fields in diffusion geometry are defined as linear combinations
\[
X = \sum f_i \nabla h_i
\]
for some functions $f_i$ and $h_i$.
Instead of working with arbitrary functions $h_i$, we will restrict to the \boldblue{coordinate functions} $x_1,...,x_d : M \to \R$ defined by the ambient embedding of the data in $\R^d$, as in Section \ref{sec: overview}.
In general, we can use any coordinates $x_1,...,x_d : M \to \R$ that define an \boldblue{immersion}\footnote{It is important to point out that this assumption is not presently justified in the theory of diffusion geometry.
The correct definitions of morphisms between Markov triples, immersions, embeddings, and isometries are just some of the many outstanding questions in the field.
However, it will certainly be the case that, when these things have been defined, the above assumption will hold (it may even itself serve as a good definition of an immersion into $\R^d$).
} $(x_1,...,x_d) : M \to \R^d$.
We will suppose that an arbitrary vector field $X$ can be written as a sum
\[
X = \sum_{j=1}^d f_j \nabla x_j,
\]
where $x_j$ is the $j^{th}$ coordinate function.
In order to compute vector fields, we will discretise the coefficient functions using $n_1 \leq n_0$ basis functions from $\textbf{A}$, to get
\[
\textbf{X} = \sum_{i=1}^{n_1} \sum_{j=1}^d \textbf{X}_{ij} \p{i} \nabla x_j,
\]
so vector fields can be represented by the $n_1 \times d$ coefficient matrices $(\textbf{X}_{ij})_{i\leq n_1, j\leq d}$.
We flatten these into vectors of length $n_1d$, and so will think of the space of vector fields $\mathfrak{X}(M)$ as being represented by $\mathfrak{X}(\textbf{M}) = \R^{n_1d}$.
This choice of spanning set is illustrated for 2d data in Figure \ref{fig:function-vf-basis-2d} and for 3d data in Figure \ref{fig:function-vf-basis-3d}.

\begin{figure}[h!]
  \centering
  \begin{overpic}[width=\linewidth,grid=false]{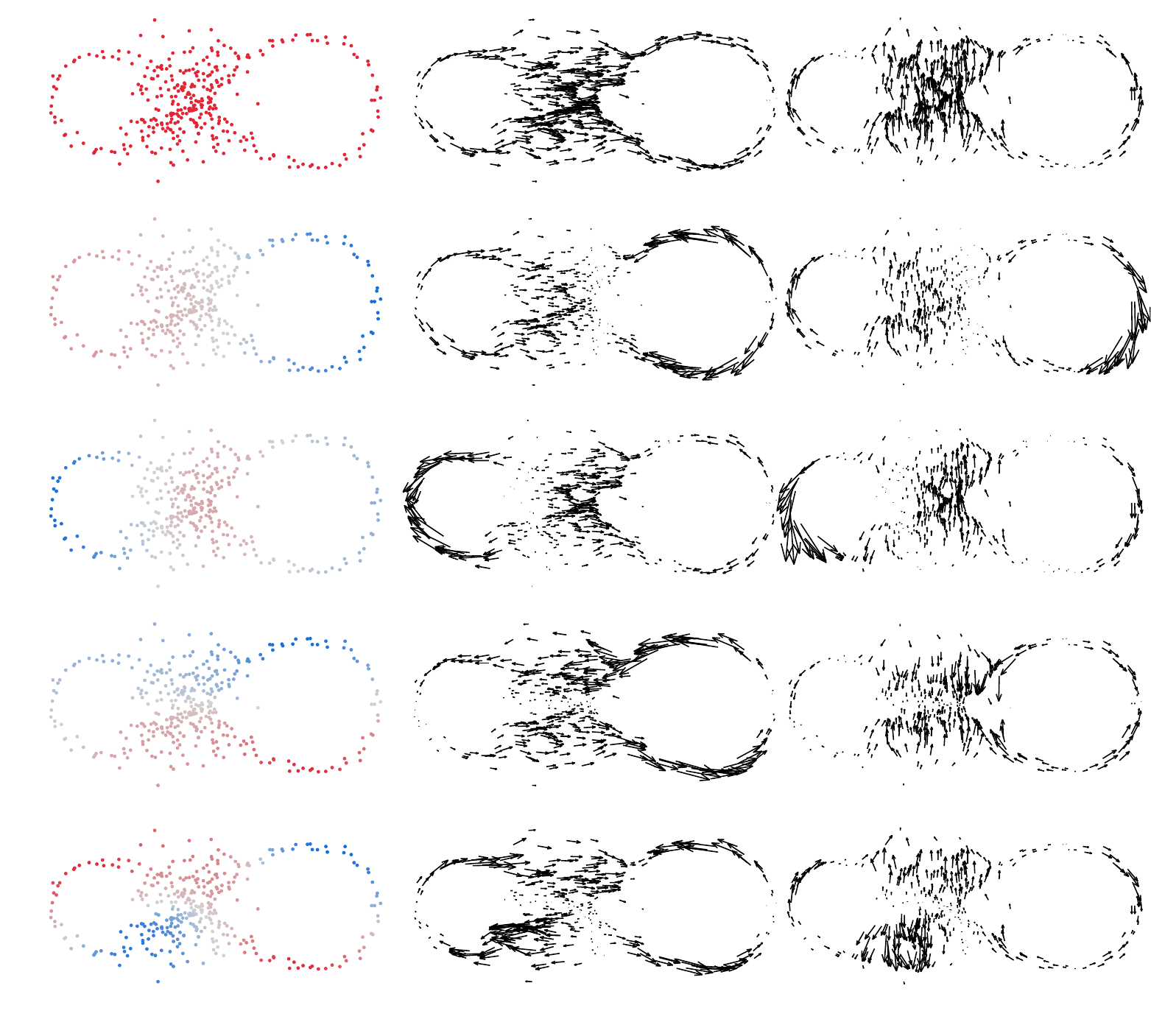}
\put(17.7,73.4){\makebox(0,0)[c]{$\phi_1 = 1$}}
\put(50.0,73.4){\makebox(0,0)[c]{$\phi_1 \nabla x = \nabla x$}}
\put(82.3,73.4){\makebox(0,0)[c]{$\phi_1 \nabla y = \nabla y$}}
\put(17.7,56.0){\makebox(0,0)[c]{$\phi_2$}}
\put(50.0,56.0){\makebox(0,0)[c]{$\phi_2 \nabla x$}}
\put(82.3,56.0){\makebox(0,0)[c]{$\phi_2 \nabla y$}}
\put(17.7,38.5){\makebox(0,0)[c]{$\phi_3$}}
\put(50.0,38.5){\makebox(0,0)[c]{$\phi_3 \nabla x$}}
\put(82.3,38.5){\makebox(0,0)[c]{$\phi_3 \nabla y$}}
\put(17.7,21.0){\makebox(0,0)[c]{$\phi_4$}}
\put(50.0,21.0){\makebox(0,0)[c]{$\phi_4 \nabla x$}}
\put(82.3,21.0){\makebox(0,0)[c]{$\phi_4 \nabla y$}}
\put(17.7,3.6){\makebox(0,0)[c]{$\phi_5$}}
\put(50.0,3.6){\makebox(0,0)[c]{$\phi_5 \nabla x$}}
\put(82.3,3.6){\makebox(0,0)[c]{$\phi_5 \nabla y$}}
  \end{overpic}
  \caption{\textbf{Spanning sets for functions and vector fields in 2d.}
  We can use eigenfunctions $\phi_i$ of the Markov chain as the \textit{smoothest possible} function basis (left column).
  The two ambient coordinates $x$ and $y$ are an embedding of the data, so we can use their gradients $\nabla x$ and $\nabla y$ to construct a spanning set for the space of vector fields by multiplying them by basis functions (middle and right columns).}
  \label{fig:function-vf-basis-2d}
\end{figure}

\newpage
\begin{computationalnote}
We generally set $n_1 = n_0$, so that the coefficient functions can be just as complex as the functions in $\textbf{A}$, but setting $n_1 < n_0$ would save computation and memory.
By using functions from $\textbf{A}$ as coefficients, the space of vector fields inherits the inductive bias encoded by the choice of function space basis.
\end{computationalnote}

\begin{computationalnote}
\label{note: immersion dimension and coords}
The role of the coordinate functions $x_1,...,x_d$ is just to generate a spanning set for the space of vector fields as a module over the function algebra $\A$, which requires that they give an immersion of the data in $\R^d$.
However, any other set of functions that give an immersion could be used instead, and in practice we will often \q{smooth out} the original coordinates by multiplying them by the Markov chain, i.e. using $\mathbf{P} x_1,..., \mathbf{P} x_d$, or projecting them into the compressed function space $\textbf{A}$.
If the data are very high-dimensional, a smaller set of functions obtained from a dimensionality reduction method could also be used to reduce computation and memory.
The different choices of immersion will lead to different spanning sets, which, like the choice of function space, encode an inductive bias on the vector fields.
However, they do not lead to essentially different geometries (because that depends only on the carré du champ), and so they are not equivalent to dimensionality reduction applied beforehand as data preprocessing.
\end{computationalnote}

\begin{figure}[]
  \centering
  \begin{overpic}[width=\linewidth,grid=false]{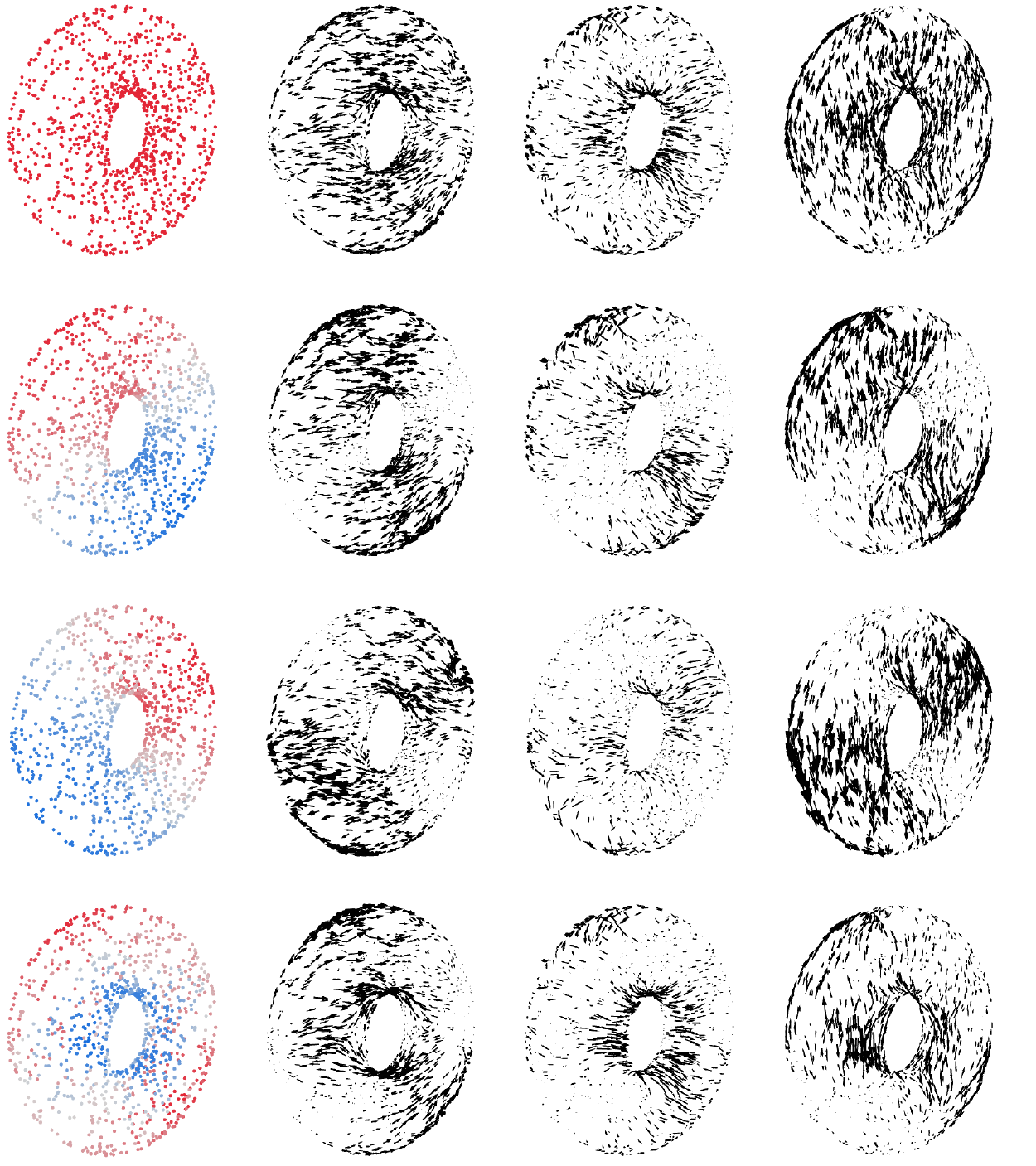}
\put(10.1,76.1){\makebox(0,0)[c]{$\phi_1 = 1$}}
\put(32.3,76.1){\makebox(0,0)[c]{$\phi_1 \nabla x = \nabla x$}}
\put(54.6,76.1){\makebox(0,0)[c]{$\phi_1 \nabla y = \nabla y$}}
\put(76.8,76.1){\makebox(0,0)[c]{$\phi_1 \nabla z = \nabla z$}}
\put(10.1,50.4){\makebox(0,0)[c]{$\phi_2$}}
\put(32.3,50.4){\makebox(0,0)[c]{$\phi_2 \nabla x$}}
\put(54.6,50.4){\makebox(0,0)[c]{$\phi_2 \nabla y$}}
\put(76.8,50.4){\makebox(0,0)[c]{$\phi_2 \nabla z$}}
\put(10.1,24.6){\makebox(0,0)[c]{$\phi_3$}}
\put(32.3,24.6){\makebox(0,0)[c]{$\phi_3 \nabla x$}}
\put(54.6,24.6){\makebox(0,0)[c]{$\phi_3 \nabla y$}}
\put(76.8,24.6){\makebox(0,0)[c]{$\phi_3 \nabla z$}}
\put(10.1,-1.1){\makebox(0,0)[c]{$\phi_4$}}
\put(32.3,-1.1){\makebox(0,0)[c]{$\phi_4 \nabla x$}}
\put(54.6,-1.1){\makebox(0,0)[c]{$\phi_4 \nabla y$}}
\put(76.8,-1.1){\makebox(0,0)[c]{$\phi_4 \nabla z$}}
  \end{overpic}
  \vspace{2em}
  \caption{\textbf{Spanning sets for functions and vector fields in 3d.}
  For 3d data, there are three ambient coordinates $x$, $y$, and $z$, and their gradients $\nabla x$, $\nabla y$, and $\nabla z$ can be used to construct a spanning set for the space of vector fields by multiplying them by basis functions (right three columns).}
  \label{fig:function-vf-basis-3d}
\end{figure}

\subsubsection{Riemannian metric}

The Riemannian metric defines a \boldblue{pointwise inner product} $g(X,Y)$ between any pair of vector fields $X$ and $Y$, and is related to the carré du champ by the formula $g(f \nabla h, f' \nabla h') = ff' \Gamma(h,h')$.
We can compute the metric of $\p{i}\nabla x_j$ and $\p{i'}\nabla x_{j'}$ at a point $p$ in the $n \times n_1 \times d \times n_1 \times d$ 5-tensor
\begin{equation}
\label{eq: g1}
\begin{split}
\textbf{g}_{pijij'} 
&= g(\p{i}\nabla x_j, \p{i'}\nabla x_{j'})(p) \\
&= \p{i}(p)\p{i'}(p)\Gamma(x_j,x_{j'})(p) \\
&= \textbf{U}_{pi}\textbf{U}_{pi'} \boldsymbol\Gamma_p(\textbf{x}_{j},\textbf{x}_{j'}). \\
\end{split}
\end{equation}
If we flatten $\textbf{g}_{piji'j'}$ into an $n \times n_1d \times n_1d$ 3-tensor $\textbf{g}_{pIJ}$ then, at a fixed point $p$, $\textbf{g}_p$ is a symmetric $n_1d \times n_1d$ matrix that represents the metric at $p$.
So, if $\textbf{X},\textbf{Y} \in \R^{n_1d}$ are vector fields, their metric is given pointwise by $(\textbf{X}^T \textbf{g}_p \textbf{Y})_{p=1,...,n} \in \R^n$.
The pointwise norm of a vector field $X$ is given by $\|X\| = \sqrt{g(X,X)}$, which we can compute as $\|\textbf{X}\| = \big((\textbf{X}^T \textbf{g}_p \textbf{X})^{1/2} \big)_{p=1,...,n} \in \R^n$.

\begin{computationalnote}
The metric $\textbf{g}_p$ at a point $p$ should be a positive semi-definite matrix.
Our implementation of the carré du champ \eqref{eq: cdc covariance variable bandwidth} is guaranteed to be positive semi-definite up to numerical precision, and so the metric computed using \eqref{eq: g1} will also be positive semi-definite.
\end{computationalnote}

\begin{figure}[h!]
  \centering
  \begin{overpic}[width=\linewidth,grid=false]{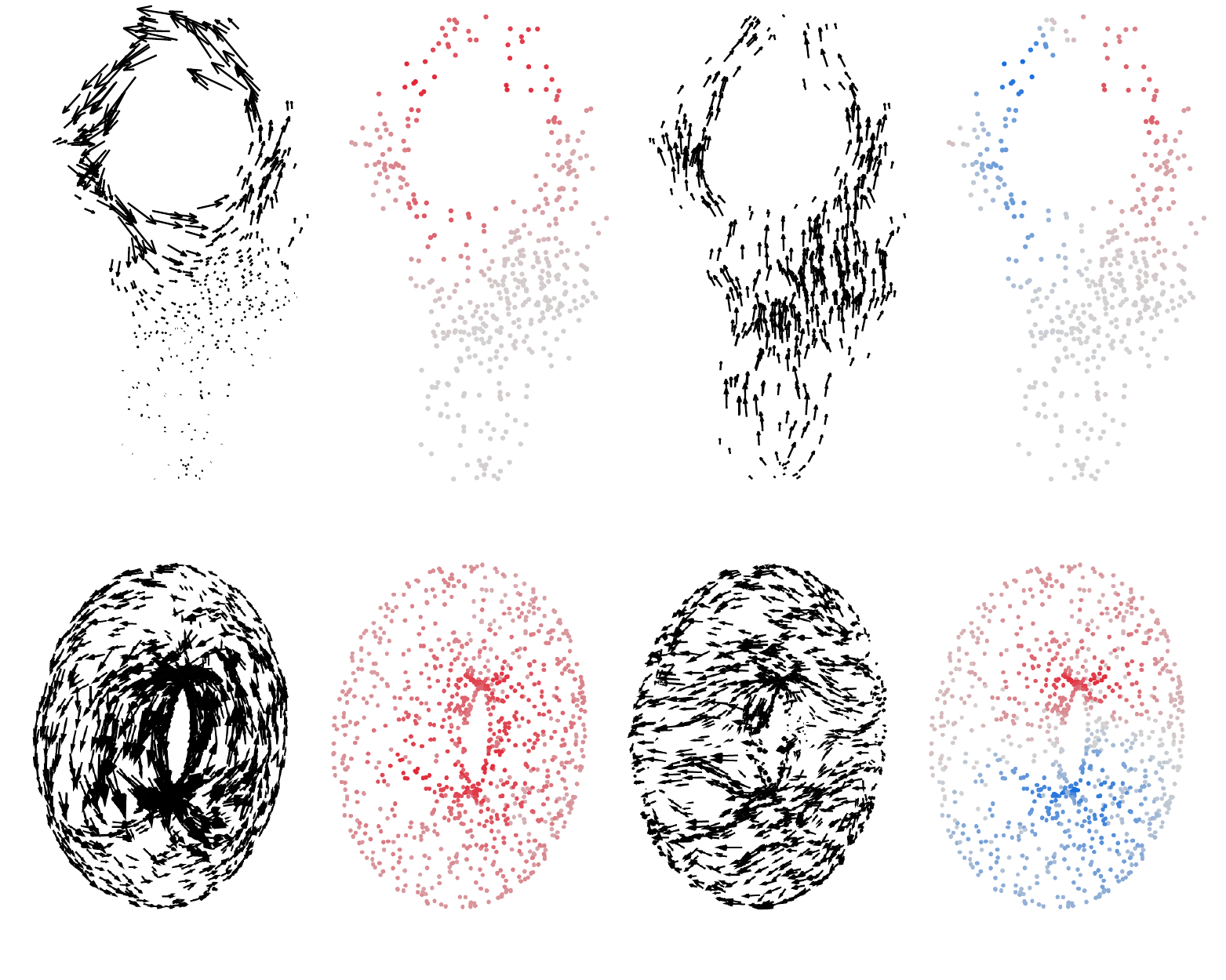}
\put(13.2,38.5){\makebox(0,0)[c]{$X$}}
\put(37.8,38.5){\makebox(0,0)[c]{$\|X\| = \sqrt{g(X,X)}$}}
\put(62.3,38.5){\makebox(0,0)[c]{$Y$}}
\put(86.8,38.5){\makebox(0,0)[c]{$g(X,Y)$}}
\put(13.2,2.6){\makebox(0,0)[c]{$Z$}}
\put(37.8,2.6){\makebox(0,0)[c]{$\|Z\| = \sqrt{g(Z,Z)}$}}
\put(62.3,2.6){\makebox(0,0)[c]{$W$}}
\put(86.8,2.6){\makebox(0,0)[c]{$g(Z,W)$}}
  \end{overpic}
  \caption{\textbf{Riemannian metric and pointwise norm of vector fields.}
  The carré du champ defines a Riemannian metric (pointwise inner product) on vector fields.
  We can use this to obtain the pointwise norm as $\|X\| = \sqrt{g(X,X)}$.}
  \label{fig:vf-metric}
\end{figure}

\subsubsection{Inner product}

The \boldblue{global inner product} of vector fields is given by integrating the metric, i.e. $\inp{X}{Y} = \int g(X,Y) d\mu$.
We can compute the $n_1d \times n_1d$ Gram matrix of vector fields as
\begin{equation}
\label{eq: G1}
\textbf{G}_{IJ} = \sum_{p=1}^n \textbf{g}_{pIJ} \boldsymbol{\mu}_p,
\end{equation}
so, if $\textbf{X},\textbf{Y} \in \R^{n_1d}$ are vector fields, their inner product is given by $\textbf{X}^T \textbf{G} \textbf{Y}\in \R$.

\begin{computationalnote} \label{comp:gram}
Although (\ref{eq: G1}) is a valid formula for the Gram matrix, it is unnecessarily expensive to materialise the entire $n \times n_1d \times n_1d$ 3-tensor $\textbf{g}_{pIJ}$ in memory if it has not already been computed for another purpose.
Instead, we can compute $\textbf{G}$ directly from (\ref{eq: g1}) in a single optimised tensor contraction.
This is a common theme that applies to all the inner product computations and many others later: most formulas for global objects are defined as the integral of pointwise objects.
By writing out the whole expression as a single tensor contraction, we can use the most optimal order of operations to save time and memory.
\end{computationalnote}

\subsubsection{Visualising vector fields}
\label{sub: visualising vector fields}

As discussed in Section \ref{sec: overview}, we can visualise a vector field $X \in \mathfrak{X}(M)$ as a collection of \q{arrows} on the data points by computing the Riemannian metric with the ambient coordinate vector fields
\[
(g(X, \nabla x_1),...,g(X, \nabla x_d)) \in \R^d.
\]
If $\textbf{X} = (\textbf{X}_{ij})_{i\leq n_1, j\leq d} \in \R^{n_1d}$ we can compute this vector at each point $p$ using the carré du champ as
\begin{equation}
\label{eq: VF vis}
\begin{split}
g(X, \nabla x_c)(p)
& = \sum_{i=1}^{n_1} \sum_{j=1}^d \textbf{X}_{ij} g(\p{i}\nabla x_j, \nabla x_c)(p) \\
& = \sum_{i=1}^{n_1} \sum_{j=1}^d \textbf{X}_{ij} \p{i}(p) \Gamma(x_j, x_c)(p) \\
& = \sum_{i=1}^{n_1} \sum_{j=1}^d \textbf{X}_{ij} \textbf{U}_{pi} \boldsymbol{\Gamma}_p(x_j, x_c)
\end{split}
\end{equation}
for each $c = 1,...,d$.

\subsection{Differential forms}

Differential forms are a fundamental object in geometry and topology that provide a general theory of \boldblue{volumes} and \boldblue{integrands}. 
We can also define them in diffusion geometry and compute with this framework.

\subsubsection{1-forms}

In degree 1, forms measure the \boldblue{lengths of curves}.
In diffusion geometry, the space of 1-forms is denoted $\Omega^1(M)$ and contains the formal sums
\[
\alpha = \sum_i f_i d h_i,
\]
which are \q{dual} to vector fields.
However, 1-forms and vector fields are essentially equivalent, due to the \boldblue{musical isomorphism} $\sharp : \Omega^1(M) \to \mathfrak{X}(M)$ given by $f dh \mapsto f \nabla h$, which is an isometric isomorphism.
As such, we use exactly the same representation for 1-forms as for vector fields, and write a 1-form $\alpha$ as a linear combination
\[
\boldsymbol\alpha = \sum_{i=1}^{n_1} \sum_{j=1}^d \boldsymbol\alpha_{ij} \p{i} d x_j,
\]
where $\p{i}$ is the $i^{th}$ basis function of $\textbf{A}$ and $x_j$ is the $j^{th}$ coordinate function.
We flatten the $n_1 \times d$ coefficient matrix $(\boldsymbol\alpha_{ij})_{i\leq n_1, j\leq d}$ into a vector of length $n_1d$, and so $\Omega^1(M)$ is represented by $\R^{n_1d}$.
As this choice of spanning set is just the dual to the spanning set for vector fields, it is also illustrated for 2d data in Figure \ref{fig:function-vf-basis-2d} and for 3d data in Figure \ref{fig:function-vf-basis-3d}.
Likewise, the metric on 1-forms is illustrated in Figure \ref{fig:vf-metric}.

The fact that the isomorphism $\sharp : \Omega^1(M) \to \mathfrak{X}(M)$ is \boldblue{isometric} means that 
1-forms and their dual vector fields share the same metric and inner product, i.e. $g(\alpha, \beta) = g(\alpha^\sharp, \beta^\sharp)$ and $\inp{\alpha}{\beta} = \inp{\alpha^\sharp}{\beta^\sharp}$.
We therefore equip the discrete space of 1-forms $\R^{n_1d}$ with the same metric tensor $\textbf{g}$ from (\ref{eq: g1}) and Gram matrix $\textbf{G}$ from (\ref{eq: G1}).
This makes the duality trivial, because the musical isomorphism $\sharp: \R^{n_1d} \to \R^{n_1d}$ is the identity map.
The inverse to $\sharp$ is denoted $\flat : \mathfrak{X}(M) \to \Omega^1(M)$, and is also the identity map on $\R^{n_1d}$.

\subsubsection{Visualising 1-forms}

A 1-form $\alpha$ can \boldblue{act on a vector field} $X$ to produce a function $\alpha(X)$, given by $\alpha(X) = g(\alpha, X^\flat)$.
We can use this action to visualise 1-forms as collections of \q{arrows} on the data points by computing
\[
(\alpha(\nabla x_1),...,\alpha(\nabla x_d))
= (g(\alpha, dx_1),...,g(\alpha, dx_d))
 \in \R^d
\]
at each point.
This is exactly the same as the visualisation of the dual vector field $\alpha^\sharp$, as described in (\ref{eq: VF vis}).

\subsubsection{$k$-forms}

Forms of higher degree measure volumes in higher dimensions.
For example, 2-forms measure the \boldblue{area of surfaces}, and 3-forms measure the \boldblue{volume of spaces}.
We write $k$-forms in the space $\Omega^k(M)$ as the sums
\[
\alpha = \sum_i f_i dh_i^1 \wedge \dots \wedge dh_i^k.
\]
Using the ambient coordinate functions, we get the expression
\[
\alpha 
= \sum_{1 \leq j_1<...<j_k \leq d} f_{j_1,...,j_k} dx_{j_1} \wedge \dots \wedge dx_{j_k}
= \sum_{J = 1}^{\binom{d}{k}} f_J dx_{(J)},
\]
where $J = (j_1,...,j_k)$ is a multi-index and $dx_{(J)}$ denotes the form $dx_{j_1} \wedge \dots \wedge dx_{j_k}$
We will compute $k$-forms by discretising the coefficient functions using $n_1$ basis functions from $\textbf{A}$, to get
\[
\boldsymbol\alpha = \sum_{i=1}^{n_1} \sum_{J = 1}^{\binom{d}{k}} \boldsymbol\alpha_{iJ} \p{i} dx_{(J)},
\]
so $k$-forms are represented by the $n_1 \times \binom{d}{k}$ coefficient matrices $(\boldsymbol\alpha_{iJ})$.
We flatten these into vectors of length $n_1 \binom{d}{k}$, and so identify the space of $k$-forms $\Omega^k(M)$ with $\R^{n_1\binom{d}{k}}$.
Notice that this representation of $k$-forms includes what was described above for $k=1$ as a special case.
Another special case is $k=0$, where the 0-forms are just functions $f \in \A = \Omega^0(M)$, which we can interpret as measuring the volume of points, i.e. \boldblue{signed densities}.
However, if $n_1 < n_0$, our discrete representations of $\textbf{A} = \R^{n_0}$ and $\Omega^0(\textbf{M}) = \R^{n_1}$ will differ, and so, to avoid confusion, we will never directly work with $\Omega^0(\textbf{M})$.
This choice of spanning set is illustrated for 2-forms on 2d data in Figure \ref{fig:2form-basis-2d} and on 3d data in Figure \ref{fig:2form-basis-3d}.
The spanning set for 3-forms on 3d data is illustrated in Figure \ref{fig:3form-basis-3d}.

\begin{figure}[h!]
  \centering
  \begin{overpic}[width=\linewidth,grid=false]{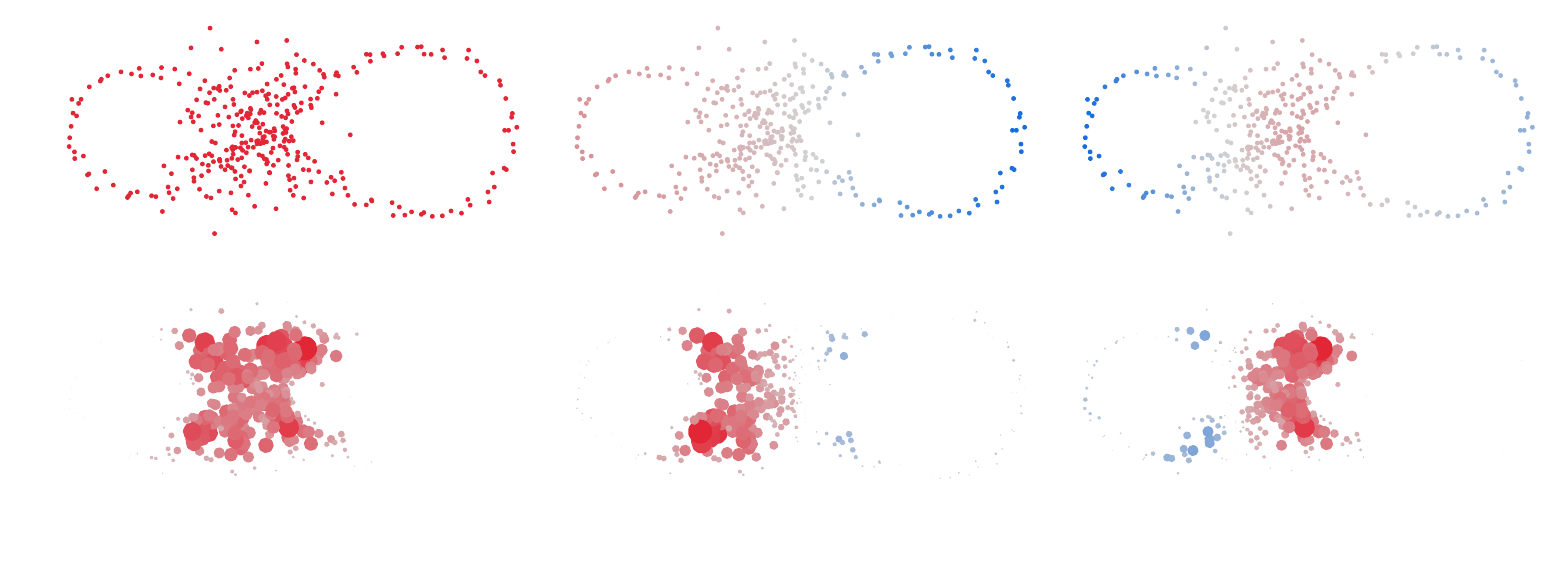}
\put(17.3,20.1){\makebox(0,0)[c]{$\phi_1 = 1$}}
\put(50.0,20.1){\makebox(0,0)[c]{$\phi_2$}}
\put(82.7,20.1){\makebox(0,0)[c]{$\phi_3$}}
\put(17.3,2.9){\makebox(0,0)[c]{$\phi_1 dx \wedge dy = dx \wedge dy$}}
\put(50.0,2.9){\makebox(0,0)[c]{$\phi_2 dx \wedge dy$}}
\put(82.7,2.9){\makebox(0,0)[c]{$\phi_3 dx \wedge dy$}}
  \end{overpic}
  \caption{\textbf{A spanning set for 2-forms in 2d.}
  In 2d, every 2-form is a multiple of $dx \wedge dy$.
  Since 2-forms measure \textit{area}, they are only non-zero where the space is at least 2-dimensional.
  In this example, the local dimension of the space varies from 1 to 2, and this is measured by the 2-forms.
  }
  \label{fig:2form-basis-2d}
\end{figure}

\begin{figure}[h!]
  \centering
  \begin{overpic}[width=\linewidth,grid=false]{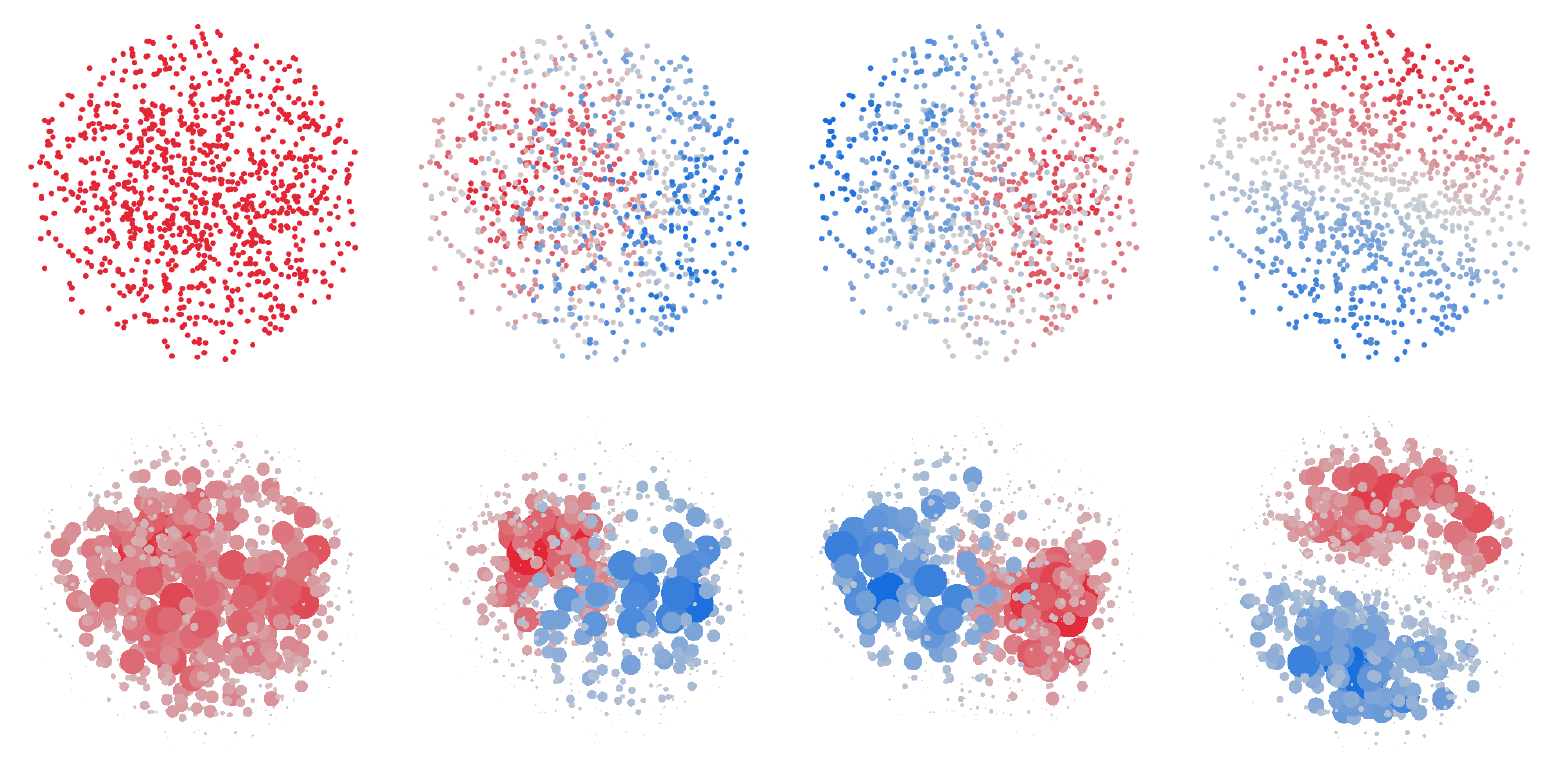}
\put(13.0,24.4){\makebox(0,0)[c]{$\phi_1 = 1$}}
\put(38.0,24.4){\makebox(0,0)[c]{$\phi_2$}}
\put(63.0,24.4){\makebox(0,0)[c]{$\phi_3$}}
\put(88.0,24.4){\makebox(0,0)[c]{$\phi_4$}}
\put(13.0,0.6){\makebox(0,0)[c]{$\phi_1 dx \wedge dy \wedge dz = dx \wedge dy \wedge dz$}}
\put(38.0,0.6){\makebox(0,0)[c]{$\phi_2 dx \wedge dy \wedge dz$}}
\put(63.0,0.6){\makebox(0,0)[c]{$\phi_3 dx \wedge dy \wedge dz$}}
\put(88.0,0.6){\makebox(0,0)[c]{$\phi_4 dx \wedge dy \wedge dz$}}
  \end{overpic}
  \caption{\textbf{A spanning set for 3-forms in 3d.}
  The data forms a dense ball in $\R^3$. 
  In 3d, every 3-form is a multiple of $dx \wedge dy \wedge dz$.
  Since 3-forms measure \textit{volume}, they are only non-zero where the space is at least 3-dimensional.
  In this example, they have more mass in the centre of the ball and vanish at the boundary sphere where the space loses volume.}
  \label{fig:3form-basis-3d}
\end{figure}

\begin{figure}[]
  \centering
  \begin{overpic}[width=\linewidth,grid=false]{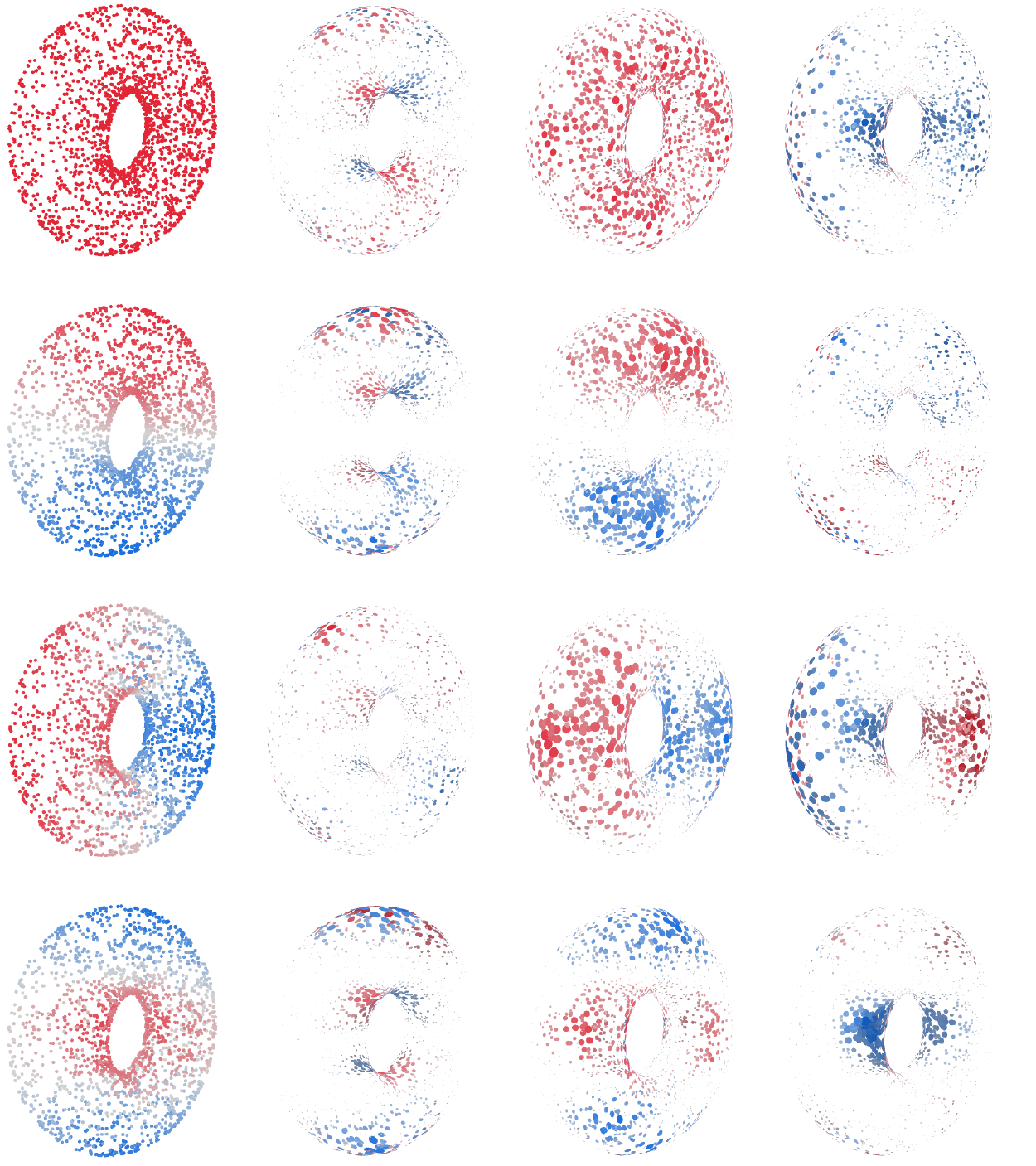}
\put(10.1,76.1){\makebox(0,0)[c]{$\phi_1 = 1$}}
\put(32.3,76.1){\makebox(0,0)[c]{$\phi_1 dx \wedge dy = dx \wedge dy$}}
\put(54.6,76.1){\makebox(0,0)[c]{$\phi_1 dx \wedge dz = dx \wedge dz$}}
\put(76.8,76.1){\makebox(0,0)[c]{$\phi_1 dy \wedge dz = dy \wedge dz$}}
\put(10.1,50.4){\makebox(0,0)[c]{$\phi_2$}}
\put(32.3,50.4){\makebox(0,0)[c]{$\phi_2 dx \wedge dy$}}
\put(54.6,50.4){\makebox(0,0)[c]{$\phi_2 dx \wedge dz$}}
\put(76.8,50.4){\makebox(0,0)[c]{$\phi_2 dy \wedge dz$}}
\put(10.1,24.6){\makebox(0,0)[c]{$\phi_3$}}
\put(32.3,24.6){\makebox(0,0)[c]{$\phi_3 dx \wedge dy$}}
\put(54.6,24.6){\makebox(0,0)[c]{$\phi_3 dx \wedge dz$}}
\put(76.8,24.6){\makebox(0,0)[c]{$\phi_3 dy \wedge dz$}}
\put(10.1,-1.1){\makebox(0,0)[c]{$\phi_4$}}
\put(32.3,-1.1){\makebox(0,0)[c]{$\phi_4 dx \wedge dy$}}
\put(54.6,-1.1){\makebox(0,0)[c]{$\phi_4 dx \wedge dz$}}
\put(76.8,-1.1){\makebox(0,0)[c]{$\phi_4 dy \wedge dz$}}
  \end{overpic}
  \vspace{3em}
  \caption{\textbf{A spanning set for 2-forms in 3d.}
  We can use eigenfunctions $\phi_i$ of the Markov chain as a function basis (left column).
  The three ambient coordinates $x$, $y$, and $z$ are an embedding of the data, so we can use their gradients $\nabla x$, $\nabla y$, and $\nabla_z$ to construct a spanning set for the space of vector fields by multiplying them by basis functions (middle and right columns).}
  \label{fig:2form-basis-3d}
\end{figure}

\subsubsection{Riemannian metric}

The metric of $k$-forms uses the determinant to compute a \boldblue{signed volume} in the formula
\[
g(f dh_1 \wedge \dots \wedge dh_k, f' dh_1' \wedge \dots \wedge dh_k') = ff' \det(\Gamma(h_i, h_j')).
\]
In order to compute the metric of $dx_{(J)}$ and $dx_{(J')}$ for multi-indices $J, J'$, we note that the $\binom{d}{k} \times \binom{d}{k}$ matrix of determinants $\det \big[ (\Gamma(x_{j_s},x_{j'_t})(p))_{s \in J,t \in J'}\big]$ is precisely the $k\thupper$ \boldblue{compound matrix} of the $d \times d$ matrix $(\Gamma(x_j,x_{j'})(p))_{j,j' \leq d}$.
For each $p$, we can then define the $\binom{d}{k} \times \binom{d}{k}$ matrix
\[
(\boldsymbol\Gamma_p(\textbf{x}_{(J)},\textbf{x}_{(J')}))_{J,J' \leq \binom{d}{k}} =
(g(dx_{(J)},dx_{(J')})(p))_{J,J' \leq \binom{d}{k}}
\]
to be the $k\thupper$ compound matrix of $(\boldsymbol\Gamma_p(\textbf{x}_j,\textbf{x}_{j'}))_{j,j' \leq d}$, so $(\boldsymbol\Gamma_p(\textbf{x}_{(J)},\textbf{x}_{(J')}))_{J,J' \leq \binom{d}{k}}$ is a 3-tensor of shape $(n, \binom{d}{k}, \binom{d}{k})$.
Compound matrices are a fundamental object in \boldblue{exterior algebra} (where differential forms are classically defined), and this construction of $\boldsymbol\Gamma_p(\textbf{x}_{(J)},\textbf{x}_{(J')})$ as a compound matrix naturally includes the cases $k = 1$, where $\binom{d}{1} = d$, as well as the degenerate case $k = 0$, where $\binom{d}{0} = 1$ and $\boldsymbol\Gamma(\cdot, \cdot)(p) = (1)$, the $1 \times 1$ identity matrix representing the \q{metric of functions}.

We can now compute the metric of $\p{i}dx_{(J)}$ and $\p{i'}dx_{(J')}$ at a point $p$ in the 5-tensor
\begin{equation}
\label{eq: gk}\textbf{g}^{(k)}_{pi J i' J'} 
= \boldsymbol{\phi}_i(p)\boldsymbol{\phi}_{i'}(p) \boldsymbol\Gamma_p(\textbf{x}_{(J)},\textbf{x}_{(J')}),
\end{equation}
which has dimension $n \times n_1 \times \binom{d}{k} \times n_1 \times \binom{d}{k}$.
We flatten $\textbf{g}^{(k)}$ into an $n \times n_1\binom{d}{k} \times n_1\binom{d}{k}$ 3-tensor $\textbf{g}^{(k)}_{pII'}$ so, at a fixed point $p$, $\textbf{g}^{(k)}_p$ is a symmetric $n_1\binom{d}{k} \times n_1\binom{d}{k}$ matrix which represents the metric at $p$.
If $\boldsymbol\alpha,\boldsymbol\beta \in \R^{n_1\binom{d}{k}}$ are $k$-forms, their metric is given by
$$
(\boldsymbol\alpha^T \textbf{g}^{(k)}_p \boldsymbol\beta)_{p=1,...,n} \in \R^n.
$$
The metric on 2-forms and 3-forms is illustrated in Figure \ref{fig:forms-metric}.

\begin{figure}[h!]
  \centering
  \begin{overpic}[width=\linewidth,grid=false]{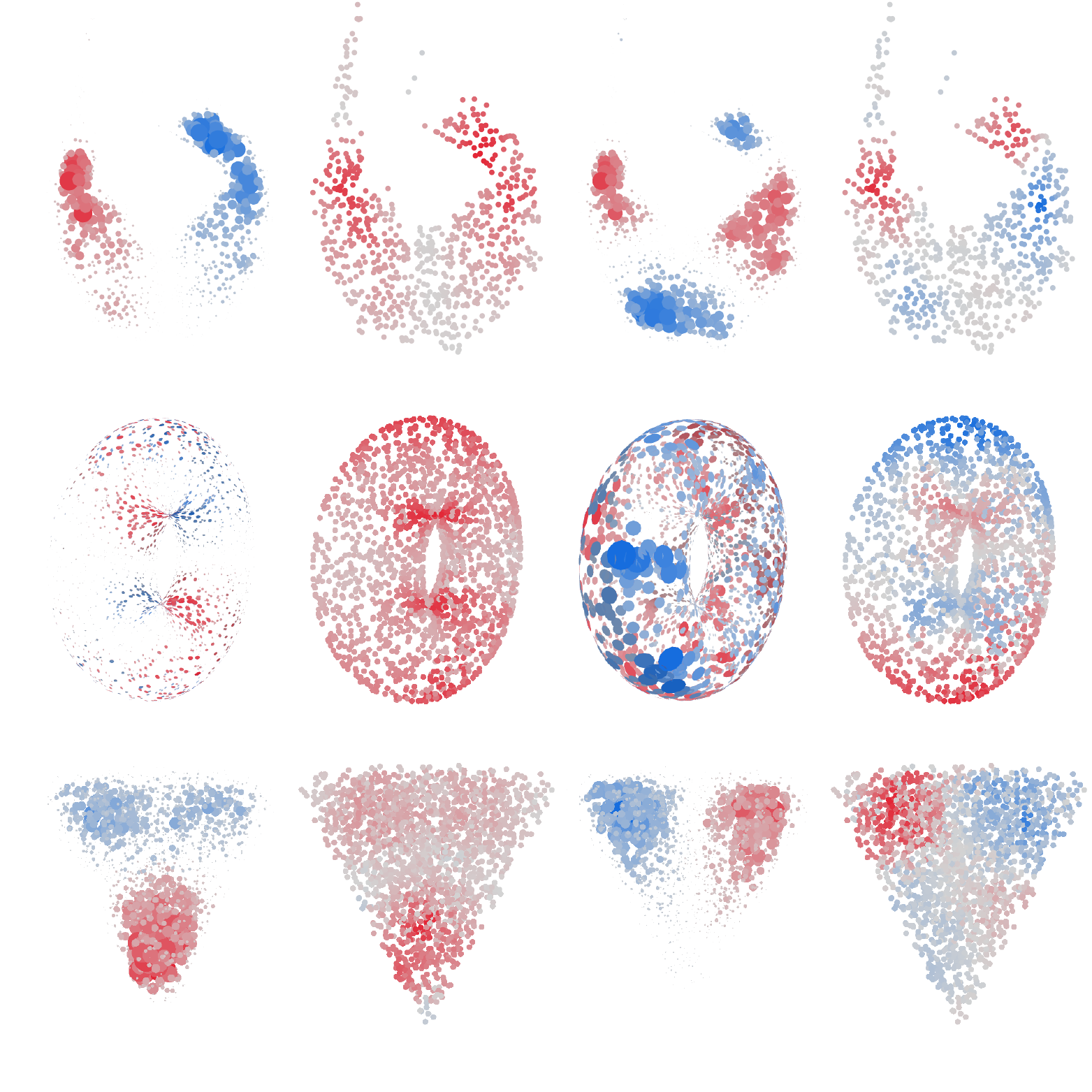}
\put(13.2,64.8){\makebox(0,0)[c]{$\alpha$}}
\put(37.8,64.8){\makebox(0,0)[c]{$\|\alpha\| = \sqrt{g(\alpha,\alpha)}$}}
\put(62.3,64.8){\makebox(0,0)[c]{$\beta$}}
\put(86.8,64.8){\makebox(0,0)[c]{$g(\alpha,\beta)$}}
\put(13.2,33.5){\makebox(0,0)[c]{$\alpha$}}
\put(37.8,33.5){\makebox(0,0)[c]{$\|\alpha\| = \sqrt{g(\alpha,\alpha)}$}}
\put(62.3,33.5){\makebox(0,0)[c]{$\beta$}}
\put(86.8,33.5){\makebox(0,0)[c]{$g(\alpha,\beta)$}}
\put(13.2,2.2){\makebox(0,0)[c]{$\alpha$}}
\put(37.8,2.2){\makebox(0,0)[c]{$\|\alpha\| = \sqrt{g(\alpha,\alpha)}$}}
\put(62.3,2.2){\makebox(0,0)[c]{$\beta$}}
\put(86.8,2.2){\makebox(0,0)[c]{$g(\alpha,\beta)$}}
  \end{overpic}
  \caption{\textbf{Riemannian metric and pointwise norm of differential forms.}
  The carré du champ defines a Riemannian metric (pointwise inner product) and pointwise norm as $\|\alpha\| = \sqrt{g(\alpha,\alpha)}$ on forms.
  We apply this to 2-forms in 2d (top row) and 3d (middle row) and to 3-forms in a dense 3d cone (bottom row).
  }
  \label{fig:forms-metric}
\end{figure}

\subsubsection{Inner product}

The inner product of $k$-forms is given by the formula $\inp{\alpha}{\beta} = \int g(\alpha,\beta) d\mu$, so we can compute the $n_1\binom{d}{k} \times n_1\binom{d}{k}$ Gram matrix $\mathbf{G}^{(k)}$ of $k$-forms as
\begin{equation}
\label{eq: Gk}
\mathbf{G}^{(k)}_{IJ} = \sum_{p=1}^n \mathbf{g}^{(k)}_{pIJ} \boldsymbol{\mu}_p.
\end{equation}
If $\boldsymbol{\alpha}, \boldsymbol{\beta} \in \R^{n_1\binom{d}{k}}$ are $k$-forms, their inner product is given by $\boldsymbol{\alpha}^T \textbf{G}^{(k)} \boldsymbol{\beta}\in \R$.

\subsubsection{Wedge product}

Differential forms have a notion of multiplication that extends the usual multiplication of functions (i.e. 0-forms).
If $\alpha \in \Omega^k(M)$ and $\beta \in \Omega^l(M)$, we can form their \boldblue{wedge product} $\alpha \wedge \beta \in \Omega^{k+l}(M)$ by
\[
(f dh_1 \wedge \dots \wedge dh_k) \wedge (f' dh_1' \wedge \dots \wedge dh_l') = ff' dh_1 \wedge \dots \wedge dh_k \wedge dh_1' \wedge \dots \wedge dh_l'.
\]
In our discretisation, we can compute the wedge product of
\[
\boldsymbol\alpha = \sum_{i=1}^{n_1} \sum_{J = 1}^{\binom{d}{k}} \boldsymbol\alpha_{iJ} \p{i} dx_{(J)}
\qquad
\boldsymbol\beta = \sum_{i'=1}^{n_1} \sum_{J' = 1}^{\binom{d}{l}} \boldsymbol\beta_{i'J'} \p{i'} dx_{(J')}
\]
as
\[
\boldsymbol\alpha \wedge \boldsymbol\beta
= \sum_{i,i'=1}^{n_1} \sum_{J = 1}^{\binom{d}{k}}\sum_{J' = 1}^{\binom{d}{l}} \boldsymbol\alpha_{iJ} \boldsymbol\beta_{i'J'} \p{i} \p{i'} dx_{(J)} \wedge dx_{(J')}.
\]
Note that the multi-indices $J = (j_1,...,j_k)$ and $J' = (j'_1,...,j'_l)$ are in lexicographic (increasing) order, and we would like to obtain length-$(k+l)$ multi-indices in lexicographic order as well.
However, while it is true that $dx_{(J)} \wedge dx_{(J')} = dx_{(j_1,...,j_k,j'_1,...,j'_l)}$, the concatenated multi-index $(j_1,...,j_k,j'_1,...,j'_l)$ is not necessarily in lexicographic order.
We therefore need to sort this list and multiply the result by the sign of the permutation, so let
\[
\sign(J,J')
=
\begin{cases}
    0 & J \text{ and } J' \text{ are not disjoint} \\
    \sign(\pi) & \pi \text{ is a permutation of } (j_1,...,j_k,j'_1,...,j'_l) \text{ into lexicographic order}
\end{cases}
\]
to give the wedge product formula
\begin{equation}
\label{eq: wedge product formula}
\boldsymbol\alpha \wedge \boldsymbol\beta
= \sum_{i,i'=1}^{n_1} \sum_{J = 1}^{\binom{d}{k}}\sum_{J' = 1}^{\binom{d}{l}} \sign(J,J') \boldsymbol\alpha_{iJ} \boldsymbol\beta_{i'J'} (\boldsymbol{\phi}_i \boldsymbol{\phi}_{i'}) d\textbf{x}_{(\pi (j_1,...,j_k,j'_1,...,j'_l))}.
\end{equation}
As in \compnote{note: function space is not an algebra}, the product $(\boldsymbol{\phi}_i \boldsymbol{\phi}_{i'})$ is approximated in terms of the first $n_1$ functions from $\textbf{A}$ by
\begin{equation}
\label{eq: coefficient function projection wedge}
(\boldsymbol{\phi}_i \boldsymbol{\phi}_{i'})_k
= \sum_{p=1}^n \textbf{U}_{pk} (\textbf{U}_{pi} \textbf{U}_{pi'}) \boldsymbol{\mu}_p
\end{equation}
for $k = 1,...,n_1$.
The wedge product is defined between forms of all degrees, and so can encode a range of geometric and topological information.
See Figure \ref{fig:products} for several examples, as well as the function products (i.e. $k=0$) in Figures \ref{fig:function-vf-basis-2d}, \ref{fig:function-vf-basis-3d}, \ref{fig:2form-basis-2d}, \ref{fig:3form-basis-3d}, \ref{fig:2form-basis-3d}, and \ref{fig:2-tensor-basis}.


\begin{computationalnote}
\label{note: wedge product truncation}
Recall from \compnote{note: function space is not an algebra} that our compressed function space $\textbf{A}$ is not closed under multiplication, and so computing the product in $\textbf{A}$ will incur some loss of detail unless $n_0 = n$.
This limitation extends to the wedge product, because the projection \eqref{eq: coefficient function projection wedge} incurs the same loss unless $n_1 = n$. In addition, we make use of the fact that only disjoint multi-indices contribute nonzero terms to the wedge product: whenever $J \cap J' \neq \emptyset$, the sign factor in \eqref{eq: wedge product formula} vanishes.
Consequently, the number of admissible index pairs is reduced from $\binom{d}{k}\binom{d}{\ell}$ to $\binom{d}{k}\binom{d-k}{l}$, taking the overall computational complexity to
$\mathcal{O}\left(n n_1\binom{d}{k}\binom{d-k}{l} \right)$.
This substantially decreases the effective computational cost of forming wedge products. For instance, if $d=10$ and $k=l=3$, roughly $70\%$ of all possible pairs are immediately zero.
\end{computationalnote}

\begin{figure}[]
  \centering
\begin{overpic}[width=\linewidth,grid=false, yshift=2cm]{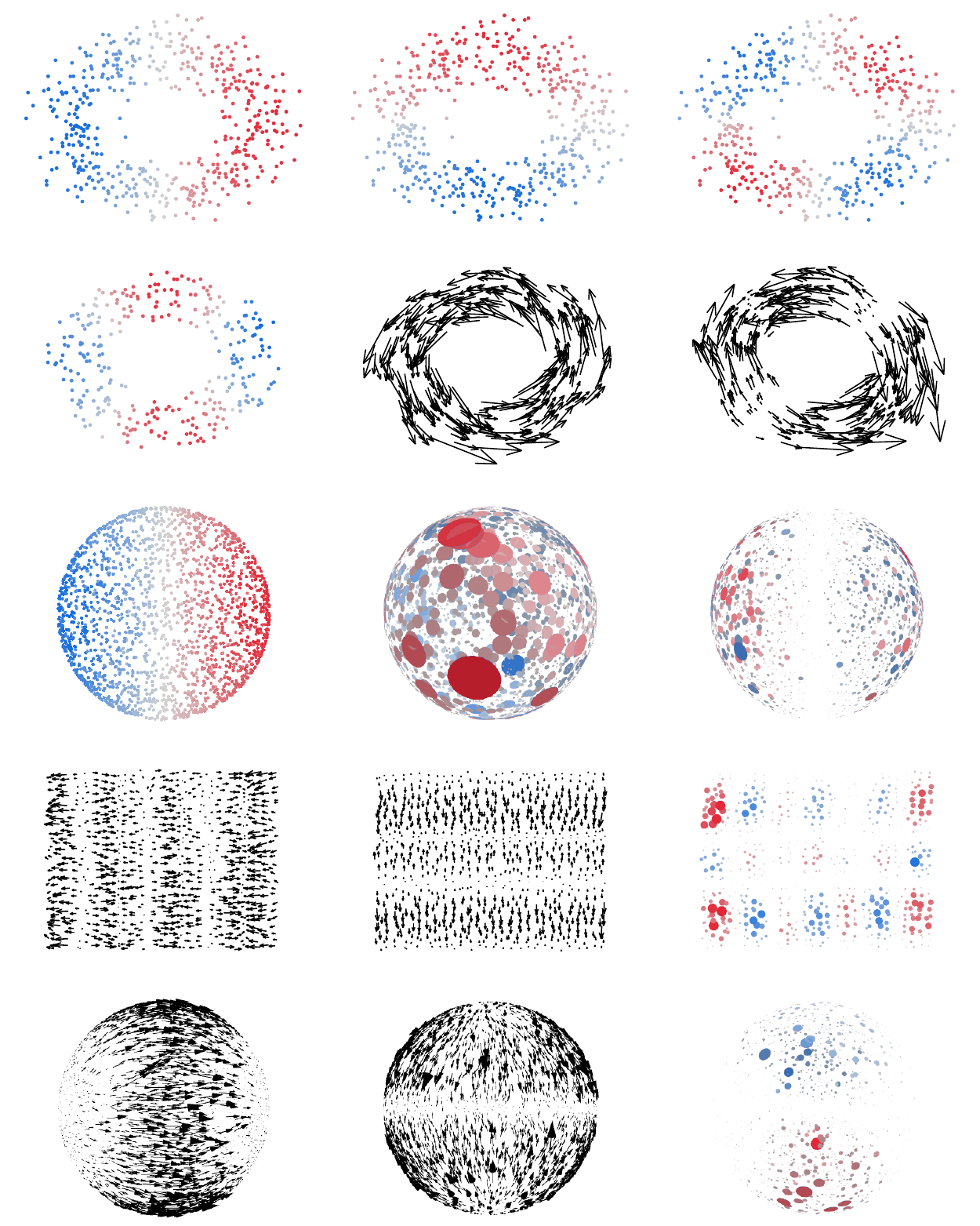}
\put(14.1,80.0){\makebox(0,0)[c]{$\phi_1$}}
\put(40.0,80.0){\makebox(0,0)[c]{$\phi_2$}}
\put(65.9,80.0){\makebox(0,0)[c]{$\phi_1\cdot\phi_2$}}
\put(14.1,61.0){\makebox(0,0)[c]{$\phi_3$}}
\put(40.0,61.0){\makebox(0,0)[c]{$\alpha$}}
\put(65.9,61.0){\makebox(0,0)[c]{$\phi_3\cdot \alpha$}}
\put(14.1,39.5){\makebox(0,0)[c]{$\phi_1$}}
\put(40.0,39.5){\makebox(0,0)[c]{$\beta$}}
\put(65.9,39.5){\makebox(0,0)[c]{$\phi_1\cdot \beta$}}
\put(14.1,20.5){\makebox(0,0)[c]{$f_1 dx$}}
\put(40.0,20.5){\makebox(0,0)[c]{$f_2 dy$}}
\put(65.9,20.5){\makebox(0,0)[c]{$f_1 f_2\, dx\wedge dy$}}
\put(14.1,-1.0){\makebox(0,0)[c]{$h_1 dx$}}
\put(40.0,-1.0){\makebox(0,0)[c]{$h_2 dy$}}
\put(65.9,-1.0){\makebox(0,0)[c]{$h_1 h_2\, dx\wedge dy$}}
  \end{overpic}
  \vspace{2em}
 \caption{
\textbf{Wedge product of forms.}
Each row illustrates a product of increasing degree:
(1) eigenfunctions $\phi_1$, $\phi_2$, and their product $\phi_1\phi_2$ on an annulus;
(2) eigenfunction $\phi_3$ and a rotational 1-form $\alpha$ centered around the origin;
(3) eigenfunction $\phi_1$ on the sphere $S^2$ and the Riemannian volume form $\beta = \mathrm{vol}_{S^2}$, giving $\phi_1\beta$;
(4) 1-forms $f_1dx$, $f_2dy$, and their wedge $f_1f_2\,dx\wedge dy$;
(5) analogous 1-forms $h_1dx$, $h_2dy$ on $S^2$, with wedge product $h_1h_2\,dx\wedge dy$.
}
  \label{fig:products}
\end{figure}

\subsubsection{Visualising $k$-forms}

\label{sec: visualising k forms}

We visualised 1-forms by considering their action on the ambient coordinate vector fields, to get pointwise vectors $(g(\alpha, dx_i))_{i=1,...,d}$.
These measure 1-dimensional volumes (lengths) and so are represented by a \boldblue{1-dimensional space} (a direction) as well as a \boldblue{signed magnitude} at each point (and directions with a signed magnitude are just vectors).

General $k$-forms measure $k$-dimensional volumes, so are represented by a \boldblue{$k$-dimensional subspace} (a $k$-plane) and a \boldblue{signed magnitude} at each point.
For example, a 2-form corresponds to a 2d plane with a signed magnitude.
We can visualise this as a disk aligned with that space and whose size varies with the magnitude, and opposite colours on either side to represent orientation.
A 3-form corresponds to a 3d volume with a signed magnitude, which we can visualise as a coloured sphere at each point, whose size varies with the magnitude and whose colour represents the sign.
To compute these objects, we generalise the approach for 1-forms to $k$-forms by computing, at each point, a $d \times \dots \times d$ $k$-tensor whose entries are given by
\[
\big(g(\alpha, dx_{j_1} \wedge \dots \wedge dx_{j_k})\big)_{j_1,...,j_k=1,...,d} \in \R^{d \times \dots \times d}
\]
which is skew-symmetric in its $k$ indices.
For example, a 2-form $\alpha$ will yield a skew-symmetric $d \times d$ matrix, and a 3-form $\beta$ will yield a skew-symmetric $d \times d \times d$ 3-tensor.

The method for computing the visualisation depends on the degree $k$ of the form and the ambient dimension $d$.
In degree 1, this directly gives a collection of vectors in $\R^d$ which we can plot as arrows.
A 2-form in 2d gives a skew symmetric $2 \times 2$ matrix
\[\begin{pmatrix}
0 & a \\
-a & 0
\end{pmatrix},\]
where $a \in \R$ is the signed area magnitude, which we can visualise as a coloured disk of size $|a|$ and signed colour $a$ (see Figure \ref{fig:2form-basis-2d}).
A 2-form in 3d gives a skew-symmetric $3 \times 3$ matrix which can be orthogonally diagonalised into the form
\[\begin{pmatrix}
0 & a & 0 \\
-a & 0 & 0 \\
0 & 0 & 0
\end{pmatrix},\]
where $a \in \R$ is the signed area magnitude in the plane spanned by the first two eigenvectors.
We can visualise it as a coloured disk in that plane of size $|a|$ with opposite colours on each side, depending on the sign of $a$ (see Figure \ref{fig:2form-basis-3d}).
A skew-symmetric 3-form in 3d takes the form
\[
\begin{bmatrix}
\begin{bmatrix}
0 & 0 & 0\\
0 & 0 & a\\
0 & -a & 0
\end{bmatrix}
\;
\begin{bmatrix}
0 & 0 & -a\\
0 & 0 & 0\\
a & 0 & 0
\end{bmatrix}
\;
\begin{bmatrix}
0 & a & 0\\
-a & 0 & 0\\
0 & 0 & 0
\end{bmatrix}
\end{bmatrix},
\]
where $a \in \R$ is the signed volume magnitude, which we can visualise as a coloured sphere of size $|a|$ and signed colour $a$ (see Figure \ref{fig:3form-basis-3d}).

\subsection{Other tensors}

Functions, vector fields, and differential forms are all examples of \boldblue{tensor fields}, meaning they represent a tensor (in the linear-algebraic sense) attached to each point in the space.
Vector fields and 1-forms have degree 1, and so correspond to rank-1 tensors (i.e. vectors) at each point.
2-forms correspond to rank-2 tensors (matrices) that are antisymmetric.
It is precisely these constructions that we use for visualisation in Section \ref{sec: visualising k forms}.
In general, tensor algebra is a powerful framework for encoding calculus and geometric information, and we can form tensor fields of any degree.

We can define any tensor in diffusion geometry and compute it with this framework, but we focus here on 2-tensors, which can be interpreted as a square matrix at each point.
General $(0,2)$-tensors in the space $\Omega^1(M)^{\otimes2}$ are the sums
\[
\alpha = \sum_i f_i dh_i^1 \otimes dh_i^2.
\]
The notation $(0,2)$ means that they correspond to a pair of 1-forms: in general, a $(p,q)$-tensor corresponds to $p$ vector fields and $q$ 1-forms.
We will compute 2-tensors in terms of the coordinate functions as
\[
\alpha = \sum_{j_1,j_2 = 1}^d f_J dx_{j_1} \otimes dx_{j_2},
\] 
where $J = (j_1,j_2)$, and discretise the coefficient functions
\[
\boldsymbol\alpha = \sum_{i=1}^{n_1} \sum_{j_1,j_2 = 1}^d \boldsymbol\alpha_{ij_1j_2} \p{i} dx_{j_1} \otimes dx_{j_2},
\]
so 2-tensors are represented by the $n_1 \times d \times d$ coefficient 3-tensors $(\boldsymbol\alpha_{ij_1j_2})_{i\leq n_1; j_1,j_2\leq d}$.
We flatten these into vectors of length $n_1d^2$, and so identify the space of 2-tensors $\Omega^k(M)^{
\otimes2}$ with $\R^{n_1d^2}$.

\begin{figure}[h]
  \centering
  \begin{overpic}[width=\linewidth,grid=false]{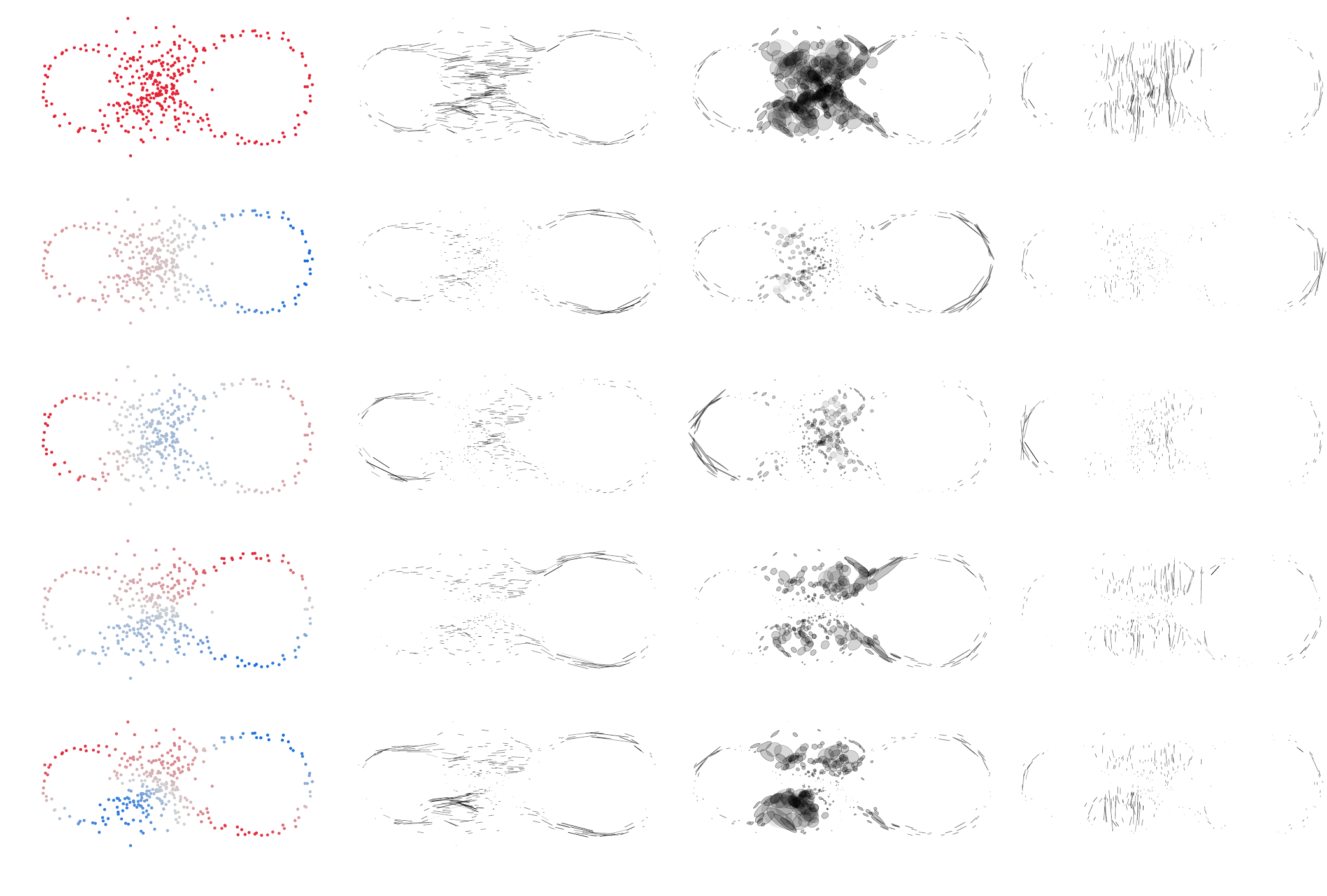}
\put(12.9,54.4){\makebox(0,0)[c]{$\phi_1 = 1$}}
\put(37.6,54.4){\makebox(0,0)[c]{$\phi_1 dx \otimes dx = dx \otimes dx$}}
\put(62.4,54.4){\makebox(0,0)[c]{$\phi_1 dx \otimes dy = dx \otimes dy$}}
\put(87.1,54.4){\makebox(0,0)[c]{$\phi_1 dy \otimes dy = dy \otimes dy$}}
\put(12.9,41.3){\makebox(0,0)[c]{$\phi_2$}}
\put(37.6,41.3){\makebox(0,0)[c]{$\phi_2 dx \otimes dx$}}
\put(62.4,41.3){\makebox(0,0)[c]{$\phi_2 dx \otimes dy$}}
\put(87.1,41.3){\makebox(0,0)[c]{$\phi_2 dy \otimes dy$}}
\put(12.9,28.2){\makebox(0,0)[c]{$\phi_3$}}
\put(37.6,28.2){\makebox(0,0)[c]{$\phi_3 dx \otimes dx$}}
\put(62.4,28.2){\makebox(0,0)[c]{$\phi_3 dx \otimes dy$}}
\put(87.1,28.2){\makebox(0,0)[c]{$\phi_3 dy \otimes dy$}}
\put(12.9,15.2){\makebox(0,0)[c]{$\phi_4$}}
\put(37.6,15.2){\makebox(0,0)[c]{$\phi_4 dx \otimes dx$}}
\put(62.4,15.2){\makebox(0,0)[c]{$\phi_4 dx \otimes dy$}}
\put(87.1,15.2){\makebox(0,0)[c]{$\phi_4 dy \otimes dy$}}
\put(12.9,2.1){\makebox(0,0)[c]{$\phi_5$}}
\put(37.6,2.1){\makebox(0,0)[c]{$\phi_5 dx \otimes dx$}}
\put(62.4,2.1){\makebox(0,0)[c]{$\phi_5 dx \otimes dy$}}
\put(87.1,2.1){\makebox(0,0)[c]{$\phi_5 dy \otimes dy$}}
  \end{overpic}
  \caption{\textbf{Spanning sets for symmetric 2-tensors.}
  We can use eigenfunctions $\phi_i$ of the Markov chain as a function basis (left column).
  The two ambient coordinates $x$ and $y$ are an embedding of the data, so we can use their gradients $\nabla x$ and $\nabla y$ to construct a basis for the space of vector fields by multiplying them by basis functions (middle and right columns).}
  \label{fig:2-tensor-basis}
\end{figure}

\subsubsection{Riemannian metric}

The metric of 2-tensors is given by the formula
\[
g(f dh_1 \otimes dh_2, f' dh'_1 \otimes dh'_2) = ff' \Gamma(h_1, h'_1)\Gamma(h_2, h'_2).
\]
We can compute the metric of $\p{i}dx_{j_1} \otimes dx_{j_2}$ and $\p{i'}dx_{j_1'} \otimes dx_{j_2'}$ at a point $p$ in the 7-tensor
\begin{equation}
\label{eq: g02}
\begin{split}
\textbf{g}^{(0,2)}_{pij_1j_2i'j_1'j_2'} 
&= \boldsymbol{\phi}_i(p)\boldsymbol{\phi}_{i'}(p) \boldsymbol\Gamma(\textbf{x}_{j_1},\textbf{x}_{j_1'})(p)
\boldsymbol\Gamma(\textbf{x}_{j_2},\textbf{x}_{j_2'})(p) \\
&= \textbf{U}_{pi}\textbf{U}_{pi'} \boldsymbol\Gamma_p(\textbf{x}_{j_1},\textbf{x}_{j_1'})
\boldsymbol\Gamma_p(\textbf{x}_{j_2},\textbf{x}_{j_2'}),\\
\end{split}
\end{equation}
which has dimension $n \times n_1 \times d \times d \times n_1 \times d \times d$.
We reshape $\textbf{g}^{(0,2)}$ into an $n \times n_1d^2 \times n_1d^2$ 3-tensor $\textbf{g}^{(0,2)}_{pIJ}$ so, at a fixed point $p$, $\textbf{g}^{(0,2)}_p$ is a symmetric $n_1d^2 \times n_1d^2$ matrix which represents the metric at $p$.
If $\boldsymbol\alpha,\boldsymbol\beta \in \R^{n_1d}$ are 2-tensors, their metric is given by
\[
(\boldsymbol\alpha^T \textbf{g}^{(0,2)}_p \boldsymbol\beta)_{p=1,...,n} \in \R^n.
\]

\subsubsection{Inner product}

The inner product of 2-tensors is given by the formula $\inp{\alpha}{\beta} = \int g(\alpha,\beta) d\mu$, so we can compute the $n_1d^2 \times n_1d^2$ Gram matrix $\mathbf{G}^{(0,2)}$ of 2-tensors as
\begin{equation}
\label{eq: G02}
\mathbf{G}^{(0,2)}_{IJ} = \sum_{p=1}^n \mathbf{g}^{(0,2)}_{pIJ} \boldsymbol{\mu}_p.
\end{equation}
If $\boldsymbol{\alpha}, \boldsymbol{\beta} \in \R^{n_1d}$ are 2-tensors, their inner product is given by $\boldsymbol{\alpha}^T \textbf{G}^{(0,2)} \boldsymbol{\beta}\in \R$.

\begin{computationalnote}
\label{note: symmetric 2 tensors}
We will often want to work with symmetric 2-tensors (for example, the Hessian is symmetric).
As with the alternating 2-tensors (2-forms), we can more efficiently represent these with fewer indices in the sums
\[
\boldsymbol\alpha = \sum_{i=1}^{n_1} \sum_{j_1 \leq j_2} \boldsymbol\alpha_{ij_1j_2} \p{i} dx_{j_1} \otimes dx_{j_2},
\]
so 2-tensors are represented by the $n_1 \times d(d+1)/2$ coefficient matrices $(\boldsymbol\alpha_{iJ})_{i\leq n_1, J\leq d(d+1)/2}$.
We flatten these into vectors of length $n_1d(d+1)/2$, and so identify the space of symmetric 2-tensors $\Omega^{1}(M)^{
\otimes2,\ \textnormal{sym}}$ with $\R^{n_1d(d+1)/2}$.
In this reduced basis, we need to correct the metric by doubling the off-diagonal entries
\[
\textbf{g}^{(0,2),\ \textnormal{sym}}_{pIJ}
= \begin{cases}
    \textbf{g}^{(0,2)}_{pIJ}
    & I = J \\
    2\textbf{g}^{(0,2)}_{pIJ}
    & I \neq J
\end{cases}
\]
which compensates for the fact that the off-diagonal entries are represented half as many times in $\Omega^{1}(\textbf{M})^{\otimes2,\ \textnormal{sym}}$ as they are in $\Omega^k(\textbf{M})^{\otimes2}$.
We make the corresponding change to the Gram matrix.
\end{computationalnote}

\subsubsection{Action on vector fields}
\label{sub: action of 2 tensors}

Just as a 1-form $\alpha \in \Omega^1(M)$ can act on a vector field $X \in \mathfrak{X}(M)$ to produce a function $\alpha(X) = g(\alpha, X^\flat) \in \A$, a $(0,2)$-tensor $\alpha \in \Omega^1(M)^{\otimes2}$ can \boldblue{act on a pair of vector fields} $X,Y \in \mathfrak{X}(M)$ to produce a function $\alpha(X,Y) \in \A$ via
\[\alpha(X,Y) = g(\alpha, X^\flat \otimes Y^\flat).\]
This offers another interpretation of the $(0,2)$ notation, as describing a map from two vector fields to zero vector fields (i.e. functions).
If 
\[
\boldsymbol{\alpha} = \sum_{i=1}^{n_1} \sum_{j_1, j_2 = 1}^{d} \boldsymbol\alpha_{ij_1j_2} \p{i} dx_{j_1} \otimes dx_{j_2}
\qquad
\textbf{X} = \sum_{s_1 = 1}^{n_1} \sum_{t_1 = 1}^{d} \textbf{X}_{s_1 t_1} \p{s_1} \nabla x_{t_1}
\qquad
\textbf{Y} = \sum_{s_2 = 1}^{n_1} \sum_{t_2 = 1}^{d} \textbf{Y}_{s_2 t_2} \p{s_2} \nabla x_{t_2},
\]
then we can compute the action $\boldsymbol{\alpha}(\textbf{X},\textbf{Y})$ at a point $p$ as
\begin{align*}
\boldsymbol{\alpha}(\textbf{X},\textbf{Y})_p
&= \sum_{i,s_1,s_2=1}^{n_1} \sum_{j_1,j_2,t_1,t_2 = 1}^{d} \boldsymbol\alpha_{ij_1j_2} \textbf{X}_{s_1 t_1} \textbf{Y}_{s_2 t_2} 
\p{i} \p{s_1} \p{s_2}
\boldsymbol\Gamma_p(\textbf{x}_{j_1},\textbf{x}_{t_1})\boldsymbol\Gamma_p(\textbf{x}_{j_2},\textbf{x}_{t_2}) \\
&= \sum_{i,s_1,s_2=1}^{n_1} \sum_{j_1,j_2,t_1,t_2 = 1}^{d} \boldsymbol\alpha_{ij_1j_2} \textbf{X}_{s_1 t_1} \textbf{Y}_{s_2 t_2} \textbf{U}_{pi} \textbf{U}_{ps_1} \textbf{U}_{ps_2}
\boldsymbol\Gamma_p(\textbf{x}_{j_1},\textbf{x}_{t_1})\boldsymbol\Gamma_p(\textbf{x}_{j_2},\textbf{x}_{t_2}).
\end{align*}


\subsubsection{Visualising 2-tensors}

We can visualise a 2-tensor $\alpha$ with the same approach as for forms (\ref{sec: visualising k forms}).
At each point, we compute a $d \times d$ matrix whose entries are given by the action of $\alpha$ on the coordinate vector fields
\[
\big(g(\alpha, dx_{j_1} \otimes dx_{j_2})\big)_{j_1,j_2=1,...,d} \in \R^{d \times d}.
\]
This is generally much harder to visualise, but if $\alpha \in \Omega^{k,\textnormal{sym}}(M)^{\otimes2}$ then the matrix is symmetric, and so can be represented as an ellipsoid (or ellipse in 2d).
This is still a crude representation, because each axis of the ellipsoid has a signed magnitude which cannot all be represented together by one colour.

\section{Frame theory and weak formulations}
\label{sec: frame_theory_weak_formulations}

The spaces of functions, vector fields, forms, and other tensors provide a general framework for representing calculus and geometry objects, but much of the interesting information is contained in maps between these spaces.
For example, given some function $f \in \A$, we would like to know its gradient $\nabla f \in \mathfrak{X}(M)$.
Section \ref{sec: overview} provides a direct way to visualise $\nabla f$ by computing its action on coordinate functions, but we would really like to represent $\nabla f$ in the spanning set for vector fields
\begin{equation}
\label{eq: weak formulation gradient}
\nabla f \approx \sum_{i=1}^{n_1}\sum_{j=1}^d \textbf{X}_{ij}\phi_i \nabla x_j.
\end{equation}
This would allow us to compute more things, like its Riemannian metric with other vector fields, or the divergence of $\nabla f$ to get the Laplacian of $f$.
However, the coefficients $\textbf{X}_{ij}$ are not immediately available from $f$, and depend on the choice of spanning set.

We can resolve this problem by solving an inverse problem using a \boldblue{weak formulation}.
If $\nabla f$ satisfies \eqref{eq: weak formulation gradient}, we can take the inner product of both sides with each spanning set element $\phi_{i'} \nabla x_{j'}$ to get
\begin{equation}
\label{eq: weak formulation inner product demo}
\inp{\phi_{i'} \nabla x_{j'}}{\nabla f}
= \sum_{i=1}^{n_1}\sum_{j=1}^d \textbf{X}_{ij} \inp{\phi_{i'} \nabla x_{j'}}{\phi_i \nabla x_j},
\end{equation}
which is the weak formulation of \eqref{eq: weak formulation gradient}.
If $\textbf{X} = (\textbf{X}_{ij})_{i \leq n_1, j \leq d} \in \R^{n_1 d}$ is the vector of coefficients we want to compute, then the right-hand side of \eqref{eq: weak formulation inner product demo} is just the product $\textbf{G}^{(1)} \textbf{X}$, where $\textbf{G}^{(1)}$ is the $n_1d \times n_1d$ Gram matrix of vector fields defined in \eqref{eq: G1}.
The left-hand side can be written using the carré du champ as
\[
\inp{\phi_{i'} \nabla x_{j'}}{\nabla f}
= \int_M g(\phi_{i'} \nabla x_{j'}, \nabla f) d\mu
= \int_M \phi_{i'} \Gamma(x_{j'}, f) d\mu,
\]
and so easily computed from $f$, as discussed below.
This vector, called the \boldblue{load vector} in numerical analysis, is denoted $(\boldsymbol{\nabla f})^\textnormal{weak} \in \R^{n_1 d}$, so \eqref{eq: weak formulation inner product demo} becomes
\begin{equation}
\label{eq: weak linear system grad demo}
(\boldsymbol{\nabla f})^\textnormal{weak} = \textbf{G}^{(1)} \textbf{X},
\end{equation}
which we can solve for $\textbf{X}$ by inverting $\textbf{G}^{(1)}$.
However, there are several questions of numerical stability:
\begin{enumerate}
    \item When can $\textbf{G}^{(1)}$ be inverted?
    \item When can $\textbf{G}^{(1)}$ be inverted stably?
    \item In practice, we will only have estimates of the Gram matrix and load vector from finite data.
    Will these estimates converge to the true solution as $n \to \infty$ and $n_1 \to \infty$?
\end{enumerate}
We will address these in this section using \boldblue{frame theory} \cite{mallat1999wavelet}.
However, the conclusion will be that weak formulations like \eqref{eq: weak linear system grad demo} are well-behaved, and can be solved by standard numerical methods for rank-deficient systems.
\boldblue{
If you are mainly interested in applying these methods, you can safely skip ahead.
}

\subsection{Bessel sequences and frames for Hilbert spaces}

We will address these questions in the setting of \boldblue{Hilbert spaces}, which will simultaneously cover the infinite-dimensional continuous case of $\mathfrak{X}(M)$, $\Omega^k(M)$ etc (and their Hilbert space completions $L^2\mathfrak{X}(M)$, $L^2\Omega^k(M)$ etc), the finite-dimensional discrete analogues $\mathfrak{X}(\textbf{M})$, $\Omega^k(\textbf{M})$ etc, and the limiting behaviour as $n, n_1 \to \infty$.

The computational approach described above involves representing vectors $x$ in a Hilbert space $H$ in terms of a spanning set $\{ u_i \}_{i \in \N}$.
This set is called a \boldblue{frame} if it satisfies two inequalities called the \boldblue{frame conditions}, which guarantee that these representations are well-behaved, in the sense that the coefficients of $x$ in the spanning set are bounded and we can solve weak formulations like \eqref{eq: weak linear system grad demo} stably.
We consider the two frame conditions separately, starting with the most important one.

\subsubsection{Bessel sequences}

A crucial property of a spanning set is that, given any vector $x \in H$, the sequence of inner products between $x$ and elements of the set remains bounded.

\begin{definition}
A sequence of vectors $ \{ u_i \}_{i \in \N} $ in a Hilbert space $H$ is called a 
\boldblue{$B$-Bessel sequence} if there is a constant $B > 0$ (called the \boldblue{Bessel bound}) such that
\begin{equation}
\label{eq: bessel condition}
\sum_{i = 1}^\infty |\langle x, u_i \rangle|^2 \leq B \|x\|^2
\end{equation}
for all $x \in H$.
\end{definition}

We can interpret this definition as saying that the sequence of inner products with a Bessel sequence $(\langle x, u_i \rangle)_{i \in \N}$ is in $\ell_2$, and that the linear map $x \mapsto (\langle x, u_i \rangle)_{i \in \N}$ is bounded.

\begin{definition}
If $\{ u_i \}_{i \in \N}$ is a $B$-Bessel sequence, then the \boldblue{analysis operator} is the linear map $T : H \to \ell_2$ given by $x \mapsto (\langle x, u_i \rangle)_{i \in \N}$, which is bounded with $\|T\| \leq \sqrt{B}$.
Its adjoint $T^* : \ell_2 \to H$ is called the \boldblue{synthesis operator}, which is also bounded with $\|T^*\| = \|T\|$.
\end{definition}

If $T$ is the analysis operator of a Bessel sequence $\{ u_i \}_{i \in \N}$, then $\|T\|^2$ is the smallest $B$ such that $\{ u_i \}_{i \in \N}$ is a $B$-Bessel sequence.
We can easily derive a formula for the synthesis operator because, if $a = (a_i) \in \ell_2$ and $x \in H$, then
\[
\inp{a}{Tx}_{\ell_2} = \sum_{i=1}^\infty a_i \inp{x}{u_i}_H = \inp{x}{\sum_{i=1}^\infty a_iu_i}_H,
\]
so $T^*a = \sum_{i=1}^\infty a_iu_i$.
If $x \in H$, we will say that a sequence $a \in \ell_2$ contains the \boldblue{sequence coefficients} of $x$ if $x = \sum_{i=1}^\infty a_i u_i = T^*a$.
As such, the analysis operator \textit{analyses} an element of $H$ by computing its inner product sequence, and the synthesis operator \textit{synthesises} an element of $H$ from such a sequence.

The fact that the sequence $\{ u_i \}_{i \in \N}$ is a spanning set means that $T$ is injective, and $T^*$ has a dense range in $H$.
Furthermore, the set $\{ u_i \}_{i \in \N}$ is linearly dependent if and only if  $T^*$ is non-injective, because they both happen exactly when there is some non-zero $a \in \ell_2$ where $T^*a = \sum a_iu_i = 0$.
In this case, any $x \in H$ can be represented by multiple coefficient sequences: if $x = T^*a$ then also $x = T^*(a + \ker(T^*))$.

\begin{definition}
If $\{ u_i \}_{i \in \N}$ is a Bessel sequence in a Hilbert space $H$ with analysis operator $T$ and synthesis operator $T^*$, the \boldblue{Gram operator} is the map $G = TT^*: \ell_2 \to \ell_2$, 
which can be written
\begin{equation}
(Ga)_i = \sum_{j=1}^\infty \langle u_i, u_j \rangle a_j
\end{equation}
for $a \in \ell_2$.
\end{definition}

If $\{u_i\}$ is finite, then $G$ is the familiar \boldblue{Gram matrix}.
The Gram operator $G$ is self-adjoint, positive, and bounded with $\|G\| = \|T\|^2 \leq B$.
Since $T$ is injective, $G$ satisfies $\ker(G) = \ker(T^*)$, so is injective if and only if $\{ u_i \}_{i \in \N}$ is linearly independent (so form a basis for $H$).

\subsubsection{Weak formulations in Bessel sequences}

We can use the language of Bessel sequences to describe weak formulations like \eqref{eq: weak linear system grad demo}.
In this setting, we have a Hilbert space $H$ (e.g. the complete space of vector fields $L^2\mathfrak{X}(M)$), and want to represent a vector $x \in H$ (e.g. $\nabla f$) as a linear combination of Bessel sequence elements.
As in \eqref{eq: weak linear system grad demo}, we will assume that we have access only to its sequence of inner products with sequence elements, i.e. $Tx$, and would like to recover $x$ from that.
If $a \in \ell_2$ contains sequence coefficients for $x$, i.e. $x = T^*a$, then we would like to solve
\begin{equation}
\label{eq: weak formulation}
Tx = TT^*a = Ga
\end{equation}
for $a$.
Recall that the Gram operator $G$ will have a non-zero kernel if the set $\{ u_i \}_{i \in \N}$ is overcomplete, and so will not be invertible.
This is not really a problem, because it just means that (\ref{eq: weak formulation}) will have multiple solutions, and we only need one.
A natural choice of solution is the one with the \boldblue{smallest $\ell_2$ norm}, which is given by the following construction.


\begin{definition}
If $H'$ is a Hilbert space and $S:H' \to H'$ is a bounded self-adjoint operator, then $S$ restricts to a bijection $\ker(S)^\perp \to \textnormal{ran}(S)$, where $\textnormal{ran}(S)$ denotes the range of $S$.
The \boldblue{Moore-Penrose pseudo-inverse} of $S$ is the map $S^+: \textnormal{ran}(S) \oplus \ker(S) \to H'$ given by $S^+ =  (S|_{\ker(S)^\perp})\inv \oplus 0|_{\ker(S)}$.
\end{definition}

The Moore-Penrose pseudo-inverse $S^+$ generalises the inverse, because $S^+ = S^{-1}$ when $S$ is bijective.
It is \boldblue{densely defined} because $\textnormal{ran}(S)$ is dense in $\ker(S)^\perp$, and generally \boldblue{unbounded}.
To see that the solution is the \textit{smallest}, notice that, if $y \in \textnormal{ran}(S)$ and $y = Sx$, then the coset $x + \ker(S)$ contains all the solutions of $y = Sx$.
Its minimal element is orthogonal to $\ker(S)$, and this is precisely the element obtained from $S^+$.

\begin{example}
If $H'$ has finite dimension $n$, then $S$ can be represented by an $n \times n$ matrix $\textbf{S}$.
If $\textbf{S} = \textbf{U}\boldsymbol\Sigma \textbf{V}^T$ is the singular value decomposition of $\textbf{S}$, then define $\boldsymbol{\Sigma}^+$ by inverting the non-zero singular values in $\boldsymbol\Sigma$, and leaving the zeros as zeros.
Then $\textbf{S}^+ = \textbf{U}\boldsymbol\Sigma^+ \textbf{V}^T$.
\end{example}

In our case, we can solve the weak formulation (\ref{eq: weak formulation}) by applying the pseudo-inverse of $G : \ell_2 \to \ell_2$ to $Tx \in \ell_2$.
If we suppose that $x = T^*a \in \textnormal{ran}(T^*)$, we know that $Tx \in \textnormal{ran}(TT^*) = \textnormal{ran}(G)$, so
$$
a = G^+Tx = (G|_{\ker(T^*)^\perp})\inv Tx \in \ker(T^*)^\perp,
$$
where we use that $\ker(G) = \ker(T^*)$.
In this sense, the pseudo-inverse of $G$ can guarantee a solution to a weak formulation in a Bessel sequence, which is optimal in the sense of $\ell_2$ norm.

However, there are two problems.
First, the fact that $G^+$ is \textit{generally unbounded} means that, in practice, using it to compute solutions to (\ref{eq: weak formulation}) might be numerically unstable, as tiny perturbations could be magnified uncontrollably.
Second, the fact that $\text{ran}(T^*)$ is only \textit{dense} in $H$ means that there might be some $x \in H$ where $Tx \notin \textnormal{ran}(G)$, so (\ref{eq: weak formulation}) has no solution at all.
These two problems are actually equivalent, as we now show.
First, we prove the following standard Lemma about general Moore-Penrose pseudo-inverses.

\begin{lemma}
\label{lemma: Moore penrose bounded conditions}
Let $H'$ be a Hilbert space and $S:H' \to H'$ be a bounded self-adjoint operator.
Then the following are equivalent:
\begin{enumerate}
    \item $S^+$ is defined everywhere on $H'$,
    \item $\textnormal{ran}(S)$ is closed,
    \item $S^+$ is bounded.
\end{enumerate}
\end{lemma}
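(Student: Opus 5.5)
Since $S$ is self-adjoint, we have $\ker(S)^\perp = \overline{\textnormal{ran}(S^*)} = \overline{\textnormal{ran}(S)}$, and so $H' = \overline{\textnormal{ran}(S)} \oplus \ker(S)$. The pseudo-inverse $S^+$ is defined on $\textnormal{ran}(S) \oplus \ker(S)$. The plan is to prove the cycle of implications $1 \Rightarrow 2 \Rightarrow 3 \Rightarrow 1$, using this orthogonal decomposition throughout.

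For $1 \Rightarrow 2$: if $S^+$ is defined on all of $H'$, then $\textnormal{ran}(S) \oplus \ker(S) = H' = \overline{\textnormal{ran}(S)} \oplus \ker(S)$. Take any $y \in \overline{\textnormal{ran}(S)}$ and write $y = r + k$ with $r \in \textnormal{ran}(S)$ and $k \in \ker(S)$. Then $k = y - r$ lies in $\overline{\textnormal{ran}(S)} \cap \ker(S) = \{0\}$, so $y = r \in \textnormal{ran}(S)$. Hence $\textnormal{ran}(S)$ is closed.

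For $2 \Rightarrow 3$: if $\textnormal{ran}(S)$ is closed, then $S|_{\ker(S)^\perp} : \ker(S)^\perp \to \textnormal{ran}(S)$ is a bounded bijection between Hilbert spaces. Here $\ker(S)^\perp$ is closed, and $\textnormal{ran}(S)$ is closed by assumption. By the bounded inverse theorem (open mapping theorem), its inverse is bounded. Moreover $S^+$ is defined on $\textnormal{ran}(S) \oplus \ker(S) = H'$, and is the orthogonal direct sum of this bounded inverse with the zero map. An orthogonal sum of bounded maps is bounded, with norm equal to the maximum of the two norms, so $S^+$ is bounded.

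For $3 \Rightarrow 1$, which I expect to need the most care: suppose $S^+$ is bounded on its dense domain. Take $y \in \overline{\textnormal{ran}(S)}$ and a sequence $y_n = Sx_n$ with $y_n \to y$. We may take $x_n = S^+ y_n \in \ker(S)^\perp$. Boundedness gives $\|x_n - x_m\| \leq \|S^+\|\|y_n - y_m\|$, so $(x_n)$ is Cauchy and converges to some $x$. Continuity of $S$ then gives $Sx = y$, so $y \in \textnormal{ran}(S)$. Thus $\textnormal{ran}(S)$ is closed, and therefore the domain $\textnormal{ran}(S) \oplus \ker(S)$ equals $\overline{\textnormal{ran}(S)} \oplus \ker(S) = H'$, which is statement 1. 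This closes the cycle.

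The subtle points are interpreting \q{bounded} in statement 3 as bounded on the dense domain, and checking that the sum $\textnormal{ran}(S) \oplus \ker(S)$ is orthogonal. The latter follows from $\textnormal{ran}(S) \subseteq \ker(S)^\perp$, which uses self-adjointness.
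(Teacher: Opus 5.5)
Your proof is correct and follows essentially the same route as the paper: the decomposition $H' = \overline{\textnormal{ran}(S)} \oplus \ker(S)$ for the domain equivalence, the bounded inverse theorem for $2 \Rightarrow 3$, and a completeness argument for the converse. Your explicit Cauchy-sequence step in $3 \Rightarrow 1$ is slightly cleaner than the paper's remark that $\textnormal{ran}(S)$ is ``homeomorphic to the complete space $\ker(S)^\perp$''. Completeness is preserved there only because the map is linear with bounded inverse, and that is exactly what your estimate $\|x_n - x_m\| \leq \|S^+\|\|y_n - y_m\|$ makes explicit.
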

\begin{proof}
The domain of $S^+$ is
\[
\textnormal{ran}(S) \oplus \ker(S) 
= \textnormal{ran}(S) \oplus \textnormal{ran}(S)^\perp \\
\subseteq \overline{\textnormal{ran}(S)} \oplus \textnormal{ran}(S)^\perp \\
= H',
\]
so we see that (1) and (2) are equivalent.
To see that (2) implies (3), if $\textnormal{ran}(S)$ is closed then the bijective linear map $S|_{\ker(S)^\perp} : \ker(S)^\perp \to \textnormal{ran}(S)$ is a map between Hilbert spaces, and so the bounded inverse (open mapping) theorem ensures that $(S|_{\ker(S)^\perp})\inv$ is bounded, and hence so is $S^+$.
To see that (3) implies (2), notice that if $S^+$ is bounded, then $S|_{\ker(S)^\perp} : \ker(S)^\perp \to \textnormal{ran}(S)$ is a bounded bijection with bounded inverse, so it is a homeomorphism.
Then $\textnormal{ran}(S) = S(\ker(S)^\perp)$ is homeomorphic to the complete space $\ker(S)^\perp$, so is itself complete and hence closed in $H'$.
\end{proof}

We now apply this Lemma to the Gram operator of a Bessel sequence to find the following standard properties.

\begin{corollary}
\label{cor: frame equivalence}
Let $\{ u_i \}_{i \in \N}$ be a Bessel sequence in $H$ with Gram operator $G$ and synthesis operator $T^*$, and assume the span of $\{ u_i \}$ is dense in $H$.
Then the following are equivalent:
\begin{enumerate}
    \item $G^+$ is bounded (and defined everywhere on $\ell_2$),
    \item $\text{ran}(G)$ is closed,
    \item there exists $A > 0$ such that $A\|x\|^2 \leq \sum_{i=1}^\infty |\langle x, u_i \rangle|^2$ for all $x \in H$,
    \item $\textnormal{ran}(T^*)$ is closed (and thus equals $H$).
\end{enumerate}
\end{corollary}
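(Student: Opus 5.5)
(1)$\Leftrightarrow$(2) follows immediately from Lemma \ref{lemma: Moore penrose bounded conditions} applied with $H' = \ell_2$ and $S = G$, which is bounded and self-adjoint. For the remaining conditions I will use the standard closed-range facts for a bounded operator $T$ between Hilbert spaces. Recall that $\textnormal{ran}(T)$ is closed iff $\textnormal{ran}(T^*)$ is closed. Recall also that $\textnormal{ran}(T)$ is closed iff $T$ is bounded below on $\ker(T)^\perp$. Since the span of $\{u_i\}$ is dense, $T$ is injective, so $\ker(T)^\perp = H$.

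For (3)$\Leftrightarrow$(4): condition (3) says exactly $\|Tx\|^2 \geq A\|x\|^2$, i.e.\ $T$ is bounded below. If $T$ is bounded below, then $\textnormal{ran}(T)$ is closed: a Cauchy sequence $Tx_n$ forces $x_n$ to be Cauchy, and continuity of $T$ gives the limit. Conversely, if $\textnormal{ran}(T)$ is closed, then $T : H \to \textnormal{ran}(T)$ is a bounded bijection between Hilbert spaces. The bounded inverse theorem then gives $\|T\inv\| < \infty$, and we may take $A = \|T\inv\|^{-2}$. Next, I will use the closed range theorem: $\textnormal{ran}(T)$ is closed iff $\textnormal{ran}(T^*)$ is closed. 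I would either cite this or prove the direction needed directly. The parenthetical in (4) follows because $\overline{\textnormal{ran}(T^*)} = \ker(T)^\perp = H$.

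For (2)$\Leftrightarrow$(4), I will relate $\textnormal{ran}(G) = \textnormal{ran}(TT^*)$ to $\textnormal{ran}(T)$. Suppose (4) holds. Then $\textnormal{ran}(T^*) = H$, so $\textnormal{ran}(G) = T(H) = \textnormal{ran}(T)$, which is closed by the equivalence above. Conversely, suppose $\textnormal{ran}(G)$ is closed. By Lemma \ref{lemma: Moore penrose bounded conditions}, $G$ restricted to $\ker(G)^\perp = \ker(T^*)^\perp$ is bounded below, say $\|Ga\| \geq c\|a\|$. Then
\[
c\|a\|^2 \leq \inp{Ga}{a} = \|T^*a\|^2 \quad \text{for } a \in \ker(T^*)^\perp,
\]
using positivity together with $\|Ga\|\,\|a\| \geq \inp{Ga}{a}$. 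To justify the first inequality, note that $G$ is positive and bounded below by $c$ on the invariant subspace $\ker(G)^\perp$. By the spectral theorem its spectrum there lies in $[c, \|G\|]$, so $\inp{Ga}{a} \geq c\|a\|^2$. Hence $T^*$ is bounded below on $\ker(T^*)^\perp$, so $\textnormal{ran}(T^*)$ is closed, which is (4).

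The main obstacle is this last step, converting bounded-belowness of $G$ into bounded-belowness of $T^*$. The naive bound $\|Ga\| \leq \|T\|\,\|T^*a\|$ also works and avoids the spectral theorem: from $c\|a\| \leq \|Ga\| \leq \sqrt{B}\,\|T^*a\|$ we get $\|T^*a\| \geq (c/\sqrt{B})\|a\|$. I would use that simpler route, and it suffices.
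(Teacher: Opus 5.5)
Your proposal is correct and follows essentially the paper's route: (1)$\Leftrightarrow$(2) from the lemma, (3)$\Leftrightarrow$(4) via bounded-belowness of $T$ and the closed range theorem, $(4)\Rightarrow(2)$ via $\textnormal{ran}(G)=T(H)=\textnormal{ran}(T)$, and the return to (4) by transferring the lower bound of $G$ on $\ker(G)^\perp=\ker(T^*)^\perp$ to $T^*$. Your final estimate $c\|a\|\le\|Ga\|\le\sqrt{B}\,\|T^*a\|$ is a slightly more elementary replacement for the paper's spectral-theorem step $\langle Ga,a\rangle\ge C\|a\|^2$. Drop the passing appeal to Cauchy--Schwarz, $\langle Ga,a\rangle\le\|Ga\|\,\|a\|$: it points the wrong way for that inequality, though your spectral justification and your simpler bound are both fine.
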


\begin{proof}
Notice that (1) and (2) are equivalent by Lemma \ref{lemma: Moore penrose bounded conditions}.
To see that (3) and (4) are equivalent, we write condition (3) as $\|Tx\|_{\ell_2} \geq \sqrt{A}\|x\|_H$, which states that the analysis operator $T$ is bounded below.
By the closed range theorem and the open mapping theorem, $T$ is bounded below if and only if its adjoint $T^*$ is surjective, so $\textnormal{ran}(T^*) = H$. 

We now show that (3) and (4) imply (2).
Condition (3) implies that $\text{ran}(T)$ is closed, because if $(Tx_n)$ is a sequence in $\text{ran}(T)$ converging to some $y \in \ell_2$, then $(x_n)$ is a Cauchy sequence in $H$ by the lower bound on $T$, and so converges to some $x \in H$.
By (4), $\text{ran}(G) = T(\text{ran}(T^*)) = T(H) = \text{ran}(T)$ is closed.

Finally, we show that (1) implies (4).
If $G^+$ is bounded, then $G$ is bounded below on $\ker(G)^\perp$, meaning there exists $C > 0$ such that $\|Ga\| \ge C\|a\|$ for all $a \in \ker(G)^\perp$.
Since $G$ is a positive self-adjoint operator, the spectral mapping theorem implies that $\inp{Ga}{a} \ge C\|a\|^2$ for all $a \in \ker(G)^\perp$.
Using the identity $\inp{Ga}{a} = \inp{TT^*a}{a} = \|T^*a\|^2$ and the fact that $\ker(T^*) = \ker(G)$, we have $\|T^*a\| \ge \sqrt{C}\|a\|$ for all $a \in \ker(T^*)^\perp$.
This lower bound implies that $\textnormal{ran}(T^*)$ is closed, by applying the same argument as above in the Hilbert space $\ker(T^*)^\perp$.
\end{proof}

Applying this Lemma to the Gram operator $G : \ell_2 \to \ell_2$ of a Bessel sequence $\{ u_i \}_{i \in \N}$, we see that the two problems with using $G^+$ to solve weak formulations are both resolved by requiring the condition (3) on $\{ u_i \}_{i \in \N}$, which we discuss next.

\subsubsection{Frames}

Corollary \ref{cor: frame equivalence} motivates the following definition of a special type of Bessel sequence $\{ u_i \}_{i \in \N}$.

\begin{definition}
A sequence of vectors $\{ u_i \}_{i \in \N}$ in a Hilbert space $H$ is called an \boldblue{$(A,B)$-frame} if there are constants $0 < A \leq B < \infty$ (called the \boldblue{frame bounds}) such that
\begin{equation}
\label{eq: frame condition}
A\|x\|^2 \leq \sum_{i=1}^\infty |\langle x, u_i \rangle|^2 \leq B\|x\|^2
\end{equation}
for all $x \in H$. If $A = B$, the frame is called a \boldblue{tight frame}, and if $A = B = 1$, it is called a \boldblue{Parseval frame}.
\end{definition}

The lower bound condition ensures that the bijective map $G|_{\ker(T^*)^\perp} : \ker(T^*)^\perp \to \textnormal{ran}(T)$ has a bounded inverse, with 
\[
\|(G|_{\ker(T^*)^\perp})\inv\| \leq \frac{1}{A},
\]
and so $\|G^+\| \leq 1/A$.
Following Corollary \ref{cor: frame equivalence}, it also means that $\textnormal{ran}(T^*) = H$, so that every $x \in H$ has at least one sequence of coefficients $a \in \ell_2$ where $x = T^*a$.
In particular, we can now solve the weak formulation \eqref{eq: weak formulation} for every $x \in H$ using $G^+$, and the solution will be numerically stable because $G^+$ is bounded.
We can quantify the stability of $G^+$ through its \boldblue{condition number} $B/A$, which measures the sensitivity of $G^+a$ to small changes in $a$.
We should try to pick a spanning set $\{ u_i \}_{i \in \N}$ that minimises $B/A$, with the best possible scenario being when $\{ u_i \}_{i \in \N}$ is a \textit{tight frame} so $B/A = 1$.

We can interpret the lower frame bound as forcing $G$ to have a positive \boldblue{spectral gap}, meaning its non-zero eigenvalues must be at least $A$.
Concretely, the spectrum of $G$ must be a subset of $\{0\} \cup [A,B]$, where $\{0\}$ is the spectrum of $G|_{\ker(T^*)} = 0$, and the spectrum of $G|_{\ker(T^*)^\perp}$ is contained in $[A,B]$.
Just as $\|G\|$ is the smallest possible upper bound $B$ for a given set $\{ u_i \}_{i \in \N}$, $1/\|G^+\|$ is the largest possible lower bound $A$.
As a very special case, if the spanning set forms a tight frame, then $G^+ = (1/A)I$ on $\textnormal{ran}(G)$.

\begin{example}
\label{eg: finite frame}
In the special case that $H$ is finite-dimensional and $\{ u_i \}_{i=1,...,N}$ is a finite spanning set, $\{ u_i \}_{i=1,...,N}$ is always a frame with optimal frame bounds given by the smallest and largest non-zero eigenvalues of the $N \times N$ Gram matrix $\textbf{G}$.
If $H$ is $d$-dimensional then $\textbf{G}$ has rank $d$, and so we can factorise $\textbf{G} = \textbf{P}^T \boldsymbol{\Sigma} \textbf{P}$ where $\textbf{P}$ is a $d \times N$ orthonormal matrix, and $\boldsymbol{\Sigma}$ is a $d \times d$ diagonal matrix containing the positive eigenvalues $\lambda_i$ of $\textbf{G}$.
We can then write $\textbf{G}^+ = \textbf{P}^T \boldsymbol{\Sigma}\inv \textbf{P}$, where $\boldsymbol{\Sigma}\inv$ is diagonal with entries $1/\lambda_i$.
If $A$ is the smallest eigenvalue of $\textbf{G}$ then $1/A$ is the largest eigenvalue (i.e. operator norm) of $\textbf{G}^+$.
\end{example}

\subsection{Frame bounds for the continuous spanning set}

Before considering the discrete spanning sets for $\Omega^1(\textbf{M})$, we will first analyse the frame conditions for a continuous spanning set for $\Omega^1(M)$.
Although the numerical stability concerns discussed above only apply to the discrete spanning set, we will need the following analysis of the continuous case to prove the convergence of the discrete case to the continuous case as $n, n_1 \to \infty$.
We would like to know the frame bounds for the spanning sets for all the different spaces of tensors, but it will suffice to consider only the case of 1-forms, as we explain below.

We first show that the global inner product on the space of 1-forms $\Omega^1(M)$ is given by integration of the pointwise inner product $g$, and so testing the global frame conditions reduces to testing their pointwise equivalents.
Given a 1-form $\alpha \in \Omega^1(M)$, we can evaluate the $\ell_2$ norm in the frame condition as follows.

\begin{lemma}
\label{lemma: frame inner product integral}
Suppose $\{ \p{i}dx_j \}_{i \in \N, j\leq d}$ is a spanning set of $\Omega^1(M)$, where $\p{i}$ is an orthonormal basis of $\A = L^2(\mu)$.
Then
\[
\sum_{i=1}^\infty \sum_{j=1}^d |\inp{\p{i}dx_j}{\alpha}|^2 = \int \Big( \sum_{j=1}^d g(dx_j, \alpha)^2 \Big)d\mu
\]
for all $\alpha \in \Omega^1(M)$.
\end{lemma}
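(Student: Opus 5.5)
The plan is to rewrite each inner product as a Fourier coefficient of a function in $\A = L^2(\mu)$ and then apply Parseval's identity, one coordinate index $j$ at a time.

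First, for fixed $i$ and $j$, expand the global inner product as the integral of the pointwise metric. Since the metric is $\A$-bilinear, $g(\p{i}dx_j, \alpha) = \p{i}\, g(dx_j,\alpha)$. This gives
\[
\inp{\p{i}dx_j}{\alpha} = \int g(\p{i}dx_j,\alpha)\, d\mu = \int \p{i}\, g(dx_j,\alpha)\, d\mu = \inp{\p{i}}{h_j}_{L^2(\mu)},
\]
where $h_j := g(dx_j,\alpha)$. So the inner product with the frame element $\p{i}dx_j$ is just the $i$th coefficient of the function $h_j$ in the orthonormal basis $\{\p{i}\}$.

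Second, for each fixed $j$, Parseval's identity for the orthonormal basis $\{\p{i}\}_{i\in\N}$ of $L^2(\mu)$ gives
\[
\sum_{i=1}^\infty |\inp{\p{i}}{h_j}|^2 = \|h_j\|_{L^2(\mu)}^2 = \int g(dx_j,\alpha)^2\, d\mu .
\]
Summing over the finitely many $j \le d$ and exchanging the finite sum with the integral gives the stated identity. The reordering of the double series is harmless because every term is nonnegative.

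The main obstacle is justifying that $h_j \in L^2(\mu)$, which Parseval needs. Pointwise Cauchy–Schwarz gives $|g(dx_j,\alpha)| \le \Gamma(x_j,x_j)^{1/2}\, |\alpha|$, where $|\alpha| = g(\alpha,\alpha)^{1/2}$. I would then assume, as the diffusion geometry setting allows, that the coordinate functions satisfy $\Gamma(x_j,x_j) \in L^\infty(\mu)$. This is true for the smooth embedding or immersion coordinates, and it is the same boundedness that will later produce the Bessel bound. Under it, $\|h_j\|_{L^2} \le \|\Gamma(x_j,x_j)\|_\infty^{1/2}\,\|\alpha\|$ is finite. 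The same bound also makes $\p{i}\,h_j$ integrable, so the first step's pairing is well-defined.

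If one prefers not to assume this, the identity still holds in $[0,\infty]$. Parseval's identity extends to functions in $L^1$ with $\p{i} h_j$ integrable, with both sides infinite when $h_j \notin L^2$. Passing to this extended case needs a monotone convergence argument over truncations of $h_j$.
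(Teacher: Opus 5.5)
Your argument is the paper's proof: write $\langle \phi_i\,dx_j, \alpha\rangle = \langle \phi_i, g(dx_j,\alpha)\rangle_{L^2(\mu)}$ and apply Parseval for each of the finitely many $j$. Your check that $g(dx_j,\alpha)\in L^2(\mu)$, using $\Gamma(x_j,x_j)\in L^\infty(\mu)$, supplies a detail the paper leaves implicit; the paper relies on the same boundedness in the next proposition.

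You should drop your final paragraph, though. For an arbitrary orthonormal basis, Parseval does not extend to functions $h\in L^1(\mu)\setminus L^2(\mu)$. The reason is that the bounded $f$ with $\int f h\,d\mu=0$ form a dense subspace of $L^2(\mu)$, which therefore contains an orthonormal basis, and for that basis the left-hand side is $0$. That case never arises in this lemma anyway.
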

\begin{proof}
We can rewrite
\[
\sum_{i=1}^\infty \sum_{j=1}^d |\inp{\p{i}dx_j}{\alpha}|^2
= \sum_{i=1}^\infty \sum_{j=1}^d \big|\int \p{i} g(dx_j, \alpha) d\mu \big|^2
= \sum_{j=1}^d \Big( \sum_{i=1}^\infty \big|\inp{\p{i}}{g(dx_j, \alpha)}_{L^2(\mu)}\big|^2 \Big).
\]
Since $\{ \p{i} \}_{i \in \N}$ is an orthonormal basis of $L^2(\mu)$ (or more generally, a Parseval frame\footnote{
if $\{ \p{i} \}_{i \in \N}$ were only a frame with bounds $A'$ and $B'$, we would have $A' \|g(dx_j, \alpha)\|^2 \leq
\sum_{i=1}^\infty \big|\inp{\p{i}}{g(dx_j, \alpha)}\big|^2
\leq B' \|g(dx_j, \alpha)\|^2$, and the following analysis would all still apply.
}), this is equal to
\[
\sum_{j=1}^d \|g(dx_j, \alpha)\|_{L^2(\mu)}^2
= \sum_{j=1}^d \int g(dx_j, \alpha)^2 d\mu
= \int \Big( \sum_{j=1}^d g(dx_j, \alpha)^2 \Big)d\mu
\]
for all $\alpha \in \Omega^1(M)$.
\end{proof}
This lets us test the frame conditions for $\{ \p{i}dx_j \}$ globally by testing them locally for $\{dx_j\}_{j\leq d}$.
In particular, if $\{dx_j\}$ satisfies the \boldblue{pointwise frame condition}
\begin{equation}
\label{eq: pointwise frame condition}
A g(\alpha,\alpha) \leq \sum_{j=1}^d g(dx_j, \alpha)^2 \leq Bg(\alpha,\alpha)
\end{equation}
$\mu$-almost everywhere for constants $A$ and $B$, then integration of \eqref{eq: pointwise frame condition} implies the global frame condition \eqref{eq: frame condition}.
In general, we have the following lemma.

\begin{lemma}
\label{lemma: pointwise frame eigenvalues}
Let $A(p)$ and $B(p)$ denote the smallest and largest non-zero eigenvalues of the carré du champ matrix $(\Gamma(x_i,x_j)(p))_{i,j=1,...d}$ at the point $p$.
Then $\alpha \in \Omega^1(M)$ satisfies
\begin{equation}
\label{eq: pointwise variable frame condition}
A(p) g(\alpha,\alpha)(p) \leq \sum_{j=1}^d g(dx_j, \alpha)(p)^2 \leq B(p) g(\alpha,\alpha)(p)
\end{equation}
for all $p$.
\end{lemma}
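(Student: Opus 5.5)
Fix a point $p$ and reduce everything to finite-dimensional linear algebra with the $d\times d$ matrix $\Gamma_p := (\Gamma(x_i,x_j)(p))_{i,j\le d}$, which is symmetric positive semi-definite. The natural setting is the pointwise fibre: 1-forms at $p$ are (locally) combinations $\alpha = \sum_j a_j\, dx_j$ with $a\in\R^d$. This uses the standing assumption from the previous subsection that the coordinate differentials $\{dx_j\}$ span $\Omega^1(M)$ as a module over $\A$. If $\alpha = \sum_j f_j dx_j$, evaluating at $p$ gives $a_j = f_j(p)$.

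Next, I compute both sides of \eqref{eq: pointwise variable frame condition} in terms of $a$ and $\Gamma_p$, using bilinearity of the metric and $g(dx_i,dx_j)=\Gamma(x_i,x_j)$. The first identity is
\[
g(\alpha,\alpha)(p) = a^T \Gamma_p a .
\]
The second is
\[
g(dx_j,\alpha)(p) = \sum_i a_i \Gamma(x_j,x_i)(p) = (\Gamma_p a)_j ,
\]
which gives
\[
\sum_{j=1}^d g(dx_j,\alpha)(p)^2 = \|\Gamma_p a\|^2 = a^T \Gamma_p^2 a .
\]
The claim therefore becomes the matrix inequality
\[
A(p)\, a^T\Gamma_p a \le a^T \Gamma_p^2 a \le B(p)\, a^T \Gamma_p a \qquad \text{for all } a\in\R^d .
\]

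To prove this, diagonalise $\Gamma_p = \sum_k \lambda_k v_k v_k^T$ with an orthonormal eigenbasis and $\lambda_k\ge 0$, and write $c_k = \inp{a}{v_k}$. Then
\[
a^T\Gamma_p a = \sum_k \lambda_k c_k^2, \qquad a^T\Gamma_p^2 a = \sum_k \lambda_k^2 c_k^2 .
\]
Terms with $\lambda_k = 0$ vanish on both sides. For each remaining term, $A(p)\le \lambda_k\le B(p)$ because these are the extreme non-zero eigenvalues, so $A(p)\lambda_k c_k^2 \le \lambda_k^2 c_k^2 \le B(p)\lambda_k c_k^2$. Summing over $k$ gives the result. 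If $\Gamma_p=0$, all quantities are zero and the statement holds trivially with any convention for $A(p)$ and $B(p)$.

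The only real subtlety, and the step I expect to be the main obstacle, is the first one: justifying that $\alpha$ can be evaluated pointwise as a coefficient vector $a$. This is delicate because the coefficients need not be unique, since $\{dx_j\}$ can be linearly dependent at $p$. I would note that the argument never needs uniqueness: any choice of $a$ gives the same values of $\Gamma_p a$ and $a^T\Gamma_p a$, because components of $a$ in $\ker\Gamma_p$ contribute nothing to either. For a general element of the $L^2$ completion, I would approximate by such finite sums and pass to the limit $\mu$-almost everywhere, since both sides of the inequality are continuous in $\alpha$.
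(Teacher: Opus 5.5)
Your proof is correct and takes the paper's route. The paper reduces to the (semi-)inner product space at $p$ spanned by $\{dx_j\}$, whose Gram matrix is $(\Gamma(x_i,x_j)(p))_{i,j}$, and then cites Example~\ref{eg: finite frame}; you make the same reduction but prove that finite-frame fact directly through $A(p)\,a^T\Gamma_p a \le a^T\Gamma_p^2 a \le B(p)\,a^T\Gamma_p a$, and your remark that coefficient vectors are determined only modulo $\ker\Gamma_p$ (which changes neither side) handles the degeneracy the paper glosses over.
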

\begin{proof}
This is a direct application of Example \ref{eg: finite frame}.
At a point $p$, we can view $\alpha$ as an element of the semi-inner product space $(\Omega^1(M), g(p))$ (on a manifold, this is just the cotangent space $T_p^*M$) which is finite-dimensional and spanned by the set $\{dx_j\}_{j=1,...,d}$.
The gram matrix of this set is $(\Gamma(x_i,x_j)(p))_{i,j=1,...d}$, and so the set forms an $(A(p), B(p))$-frame.
\end{proof}

We can use Lemma \ref{lemma: pointwise frame eigenvalues} to obtain the pointwise frame condition (\ref{eq: pointwise frame condition}), and hence the global one (\ref{eq: frame condition}), by showing that $A(p)$ is uniformly bounded away from zero $\mu$-almost everywhere and $B(p)$ is uniformly bounded above $\mu$-almost everywhere.

Notice that this Lemma also gives the conditions under which the frame bounds will be satisfied for all the different spaces of forms and tensors.
Since the Riemannian metric in these cases is given by compound matrices and tensor products formed from the base metric on 1-forms (or vector fields), its eigenvalues can also be obtained from it.
For example, the Riemannian metric on $k$-forms is, at each point $p$, given by the $k$-th compound matrix of the metric on 1-forms, so its eigenvalues are all possible products of $k$ eigenvalues of the metric on 1-forms.
Thus, if the frame bounds hold for 1-forms with constants $A$ and $B$, then they will hold for $k$-forms with constants $A^k$ and $B^k$.
Something analogous holds for all tensor spaces, so it suffices to consider only the frame bounds for 1-forms.

\subsubsection{Upper frame condition}

It is easy to find an essentially uniform upper bound on $B(p)$ in the general case.

\begin{prop}
\label{prop: upper frame bound theoretical}
We have $B(p) \leq \|\sum_{i=1}^d \Gamma(x_i,x_i)\|_\infty$, and so the set $\{ \p{i}dx_j \}$ satisfies the upper bound
$$
\sum_{i=0}^\infty \sum_{j=1}^d |\inp{\p{i}dx_j}{\alpha}|^2 \leq B \|\alpha\|^2
$$
for $B = \|\sum_{i=1}^d \Gamma(x_i,x_i)\|_\infty$.
\end{prop}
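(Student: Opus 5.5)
The proof has two steps: a pointwise bound on $B(p)$ by a trace, then integration via Lemma \ref{lemma: frame inner product integral}.

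\textbf{Step 1 (pointwise).} At each point $p$ the carré du champ matrix $\boldsymbol\Gamma(p) = (\Gamma(x_i,x_j)(p))_{i,j\le d}$ is positive semi-definite, since it is the Gram matrix of $\{dx_j\}$ under the semi-inner product $g(p)$. Hence all its eigenvalues are non-negative. The largest one, $B(p)$, is therefore at most the sum of all eigenvalues, which is the trace:
\[
B(p) \le \operatorname{tr}\boldsymbol\Gamma(p) = \sum_{i=1}^d \Gamma(x_i,x_i)(p).
\]
Taking the essential supremum over $p$ gives $B(p) \le \|\sum_{i=1}^d \Gamma(x_i,x_i)\|_\infty$ for $\mu$-almost every $p$. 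This proves the first claim.

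\textbf{Step 2 (global).} By Lemma \ref{lemma: pointwise frame eigenvalues}, for every $\alpha \in \Omega^1(M)$ and every $p$,
\[
\sum_{j=1}^d g(dx_j,\alpha)(p)^2 \le B(p)\, g(\alpha,\alpha)(p) \le B\, g(\alpha,\alpha)(p),
\]
with $B = \|\sum_{i=1}^d \Gamma(x_i,x_i)\|_\infty$. Integrating against $\mu$ and applying Lemma \ref{lemma: frame inner product integral} turns the left-hand side into $\sum_{i}\sum_j |\inp{\p{i}dx_j}{\alpha}|^2$. The right-hand side becomes $B\int g(\alpha,\alpha)\,d\mu = B\|\alpha\|^2$. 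This is the stated upper frame bound.

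\textbf{Main obstacle.} The argument is essentially routine. The only point needing care is that Lemma \ref{lemma: pointwise frame eigenvalues} is stated in terms of the largest \emph{non-zero} eigenvalue. If $\boldsymbol\Gamma(p)=0$, the middle quantity vanishes and the inequality holds trivially. Otherwise the quantity $\sum_j g(dx_j,\alpha)^2 = \boldsymbol{v}^T\boldsymbol{v}$, where $\boldsymbol{v}_j = g(dx_j,\alpha)$, should be bounded by $\lambda_{\max}(\boldsymbol\Gamma(p))\, g(\alpha,\alpha)$ directly. To do this, write $\alpha$ at $p$ as $\sum_j c_j dx_j$ modulo null directions. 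Then $\boldsymbol v = \boldsymbol\Gamma \boldsymbol c$ and $g(\alpha,\alpha) = \boldsymbol c^T\boldsymbol\Gamma\boldsymbol c$, so
\[
\boldsymbol v^T\boldsymbol v = \boldsymbol c^T\boldsymbol\Gamma^2\boldsymbol c \le \lambda_{\max}\, \boldsymbol c^T\boldsymbol\Gamma \boldsymbol c.
\]
This recovers the bound needed in Step 2. One should also note that the sum index starting at $i=0$ or $i=1$ is immaterial.
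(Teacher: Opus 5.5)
Your proof is correct and follows the paper's argument. The paper bounds the largest eigenvalue of the positive semi-definite matrix $(\Gamma(x_i,x_j)(p))$ by its trace and uses boundedness of the $\Gamma(x_i,x_i)$; the global bound then comes from integrating the pointwise frame condition via Lemma \ref{lemma: frame inner product integral}. Your explicit check that $\boldsymbol v^T\boldsymbol v = \boldsymbol c^T\boldsymbol\Gamma^2\boldsymbol c \le \lambda_{\max}\,\boldsymbol c^T\boldsymbol\Gamma\boldsymbol c$, and your $\mu$-a.e.\ phrasing, simply make precise two points the paper leaves implicit.
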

\begin{proof}
The eigenvalues of a positive semi-definite matrix are bounded above by the trace, so 
\[
B(p) \leq \textnormal{tr}(\Gamma(x_i,x_j)) = \sum_{i=1}^d \Gamma(x_i,x_i)
\]
for all $p$.
Each $\Gamma(x_i,x_i) \in \A$ is bounded, so $B(p) \leq \|\sum_{i=1}^d \Gamma(x_i,x_i)\|_\infty$ uniformly.
\end{proof}

\subsubsection{Lower frame condition}

Obtaining a lower bound on $A(p)$ is harder.
We know that $A(p) > 0$ for all $p$, but if $A(p) \to 0$ then the lower frame condition will fail, since it requires $A(p) > A > 0$ $\mu$-almost everywhere for a uniform constant $A$.
We will not fully address this question here, besides the following observations:
\begin{enumerate}
    \item If $M$ is an \boldblue{isometrically embedded} manifold then the eigenvalues of $\Gamma(x_i,x_j)$ are either 0 or 1, so $A(p) = B(p) = 1$ everywhere, and the set $\{ \p{i}dx_j \}$ forms a Parseval frame.
    Although we have not yet developed a theory of isometric embeddings for general Markov triples in diffusion geometry, we expect that the same will be true in the general case.
    Since we almost always work with the induced metric from the ambient space, we will usually be in this situation.
    \item Suppose that $M$ is \boldblue{compact} with respect to some topology and the functions $\Gamma(x_i,x_j)$ are continuous in that topology. Then $A(p)$ is a continuous positive function on a compact space, so $\inf_p A(p) > 0$ and the lower frame condition is satisfied.
    In particular, this holds for any compact manifold, even if the embedding is not isometric.
\end{enumerate}
It would be interesting to investigate and formalise these conditions in future work.
Note that, if we have an exact expression for the carré du champ operator $\Gamma$ (for example, if we are working with a known manifold) instead of a data-driven approximation, then these results are sufficient to guarantee stable solutions to the weak formulation using the spanning set $\{ \p{i}dx_j \}$.

\subsection{Frame bounds for the discrete spanning set}

The frame bounds that matter practically are the bounds on the set $\{ \boldsymbol{\phi}_{i}d\textbf{x}_j \}$, because this is what we actually use for computation.
Since the spaces involved are now finite-dimensional, any spanning set is automatically a frame (Example \ref{eg: finite frame}), and so the first two of the three questions posed at the start of this section are trivially true.
The important question is \boldblue{how the frame bounds depend on $n$}, and whether the solution of the weak formulation \eqref{eq: weak formulation} remains numerically stable as $n$ increases.
We now perform the same analysis as above, swapping integrals for sums to obtain a discrete analogue of Lemma \ref{lemma: frame inner product integral}.

\begin{lemma}
\label{lemma: frame inner product sum}
The spanning set $\{ \boldsymbol{\phi}_{i}d\textbf{x}_j \}_{i\leq n_1, j\leq d}$ of $\Omega^1(\textbf{M}) = \R^{n_1d}$ satisfies
\[
\sum_{i=1}^{n_1} \sum_{j=1}^d |\inp{\boldsymbol{\phi}_{i}d\textbf{x}_j}{\boldsymbol\alpha}|^2 
\leq \sum_{p=1}^n \Big( \sum_{j=1}^d g(d\textbf{x}_j, \boldsymbol\alpha)(p)^2 \Big)\boldsymbol\mu_p
\]
for all $\boldsymbol\alpha \in \Omega^1(\textbf{M})$, with equality when $n_1 = n$.
\end{lemma}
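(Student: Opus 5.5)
We follow the proof of Lemma \ref{lemma: frame inner product integral}, replacing integrals against $\mu$ by weighted sums against $\boldsymbol\mu$. The first step is to express each inner product with a spanning element as an $L^2(\R^n,\boldsymbol\mu)$ inner product of functions. The global inner product of 1-forms is the $\boldsymbol\mu$-weighted sum of the pointwise metric. By \eqref{eq: g1}, the metric is $C^\infty$-linear in the coefficient function, so $g(\boldsymbol\phi_i d\textbf{x}_j, \boldsymbol\alpha)(p) = \boldsymbol\phi_i(p)\, g(d\textbf{x}_j,\boldsymbol\alpha)(p)$. Writing $\textbf{h}_j := \big(g(d\textbf{x}_j,\boldsymbol\alpha)(p)\big)_{p=1,\dots,n} \in \R^n$, this gives
\[
\inp{\boldsymbol\phi_i d\textbf{x}_j}{\boldsymbol\alpha} = \sum_{p=1}^n \boldsymbol\phi_i(p)\, \textbf{h}_j(p)\, \boldsymbol\mu_p = \inp{\boldsymbol\phi_i}{\textbf{h}_j}_{L^2(\R^n,\boldsymbol\mu)}.
\]
This is simply the $j$-th block of the entry $(\textbf{G}\boldsymbol\alpha)$ read off from \eqref{eq: G1}, so it is a direct check.

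The second step is to swap the order of summation, which gives
\[
\sum_{j=1}^d \sum_{i=1}^{n_1} \inp{\boldsymbol\phi_i}{\textbf{h}_j}_{L^2(\boldsymbol\mu)}^2 .
\]
The key input is that $\{\boldsymbol\phi_1,\dots,\boldsymbol\phi_n\}$ is an orthonormal basis of $L^2(\R^n,\boldsymbol\mu)$, since $\textbf{P}$ is self-adjoint with respect to $\boldsymbol\mu$. The first $n_1$ of these vectors are therefore orthonormal, and Bessel's inequality gives
\[
\sum_{i=1}^{n_1}\inp{\boldsymbol\phi_i}{\textbf{h}_j}^2 \le \|\textbf{h}_j\|^2_{L^2(\boldsymbol\mu)} = \sum_p \textbf{h}_j(p)^2\boldsymbol\mu_p .
\]
When $n_1 = n$, Parseval's identity turns this into an equality.

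The final step is to sum over $j$ and exchange the finite sums, which yields the stated right-hand side. There is no real obstacle here. The only subtle point is that $\textbf{h}_j$ is an arbitrary vector in $\R^n$ and need not lie in $\textbf{A}$. This is exactly why the statement is an inequality for $n_1<n$, in contrast with the equality in Lemma \ref{lemma: frame inner product integral}.
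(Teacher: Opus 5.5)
Your proof is correct and is essentially the paper's argument: you rewrite each $\inp{\boldsymbol\phi_i d\textbf{x}_j}{\boldsymbol\alpha}$ as $\inp{\boldsymbol\phi_i}{g(d\textbf{x}_j,\boldsymbol\alpha)}_{L^2(\R^n,\boldsymbol\mu)}$, swap the finite sums, and bound the truncated sum using orthonormality of the eigenvectors, with equality when $n_1 = n$. The paper extends the sum from $n_1$ to $n$ terms and applies Parseval, which is exactly your Bessel inequality. One small correction to your closing remark: for a given $\boldsymbol\alpha$, equality requires each $\textbf{h}_j$ to lie in $\textnormal{span}\{\boldsymbol\phi_1,\dots,\boldsymbol\phi_{n_1}\}$, not merely in $\textbf{A}$, since $n_1 \leq n_0$.
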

\begin{proof}
As before, we can evaluate the $\ell^2$ norm in the frame condition to get
\[
\sum_{i=1}^{n_1} \sum_{j=1}^d |\inp{\boldsymbol{\phi}_{i}d\textbf{x}_j}{\boldsymbol\alpha}|^2
= \sum_{i=1}^{n_1} \sum_{j=1}^d \big|\sum_{p=1}^n \boldsymbol{\phi}_{i}(p) g(d\textbf{x}_j, \boldsymbol\alpha)(p) \boldsymbol\mu_p \big|^2
= \sum_{j=1}^d \Big( \sum_{i=1}^{n_1} \big|\inp{\boldsymbol{\phi}_{i}}{g(d\textbf{x}_j, \boldsymbol\alpha)}_{L^2(\R^n, \boldsymbol\mu)}\big|^2 \Big).
\]
Since $n_1 \leq n$, this is bounded above by
\[
\sum_{j=1}^d \Big( \sum_{i=1}^{n} \big|\inp{\boldsymbol{\phi}_{i}}{g(d\textbf{x}_j, \boldsymbol\alpha)}_{L^2(\R^n, \boldsymbol\mu)}\big|^2 \Big),
\]
which, by Parseval's identity\footnote{in finite dimensions Parseval's identity is just Pythagoras' theorem.} in $L^2(\R^n, \boldsymbol\mu)$, is equal to
\[
\sum_{j=1}^d \|g(d\textbf{x}_j, \boldsymbol\alpha)\|_{L^2(\R^n, \boldsymbol\mu)}^2
= \sum_{j=1}^d \sum_{p=1}^n g(d\textbf{x}_j, \boldsymbol\alpha)(p)^2 \boldsymbol\mu_p
= \sum_{p=1}^n \Big( \sum_{j=1}^d g(d\textbf{x}_j, \boldsymbol\alpha)(p)^2 \Big)\boldsymbol\mu_p
\]
for all $\boldsymbol\alpha \in \Omega^1(\textbf{M}) = \R^{n_1d}$.
\end{proof}
As in the continuous case, this directly relates the global inner product on $\Omega^1(\textbf{M})$ to the $\boldsymbol\mu$-weighted sum of the pointwise inner product given by the Riemannian metric $g$.

\subsubsection{Upper bound}

We can find an upper frame bound (Bessel condition) by finding a uniform bound
\[
\sum_{j=1}^d g(d\textbf{x}_j, \boldsymbol\alpha)(p)^2 \leq B g(\boldsymbol\alpha,\boldsymbol\alpha)(p),
\]
that holds for all $p$.
This would then imply that 
\[
\sum_{i=1}^{n_1} \sum_{j=1}^d |\inp{\boldsymbol{\phi}_{i}d\textbf{x}_j}{\boldsymbol\alpha}|^2
\leq
\sum_{p=1}^n B g(\boldsymbol\alpha,\boldsymbol\alpha)(p) \boldsymbol\mu_p
= B \| \boldsymbol\alpha \|^2.
\]
We can find such a bound by the same argument as Proposition \ref{prop: upper frame bound theoretical}, here adapted to the discrete setting.

\begin{prop}
\label{prop: upper frame bound estimated}
We have
\[
\sum_{j=1}^d g(d\textbf{x}_j, \boldsymbol\alpha)(p)^2 
\leq \max_p \Big(\sum_{i=1}^d \boldsymbol\Gamma_p(\textbf{x}_i,\textbf{x}_i) \Big) g(\boldsymbol\alpha, \boldsymbol\alpha)(p)
\]
for all $p$, and so the spanning set $\{ \boldsymbol{\phi}_{i}d\textbf{x}_j \}$ satisfies the upper frame bound
\[
\sum_{i=1}^{n_1} \sum_{j=1}^d |\inp{\boldsymbol{\phi}_{i}d\textbf{x}_j}{\boldsymbol\alpha}|^2
\leq B \|\boldsymbol\alpha\|^2
\]
for $B = \max_p \big(\sum_{i=1}^d \boldsymbol\Gamma_p(\textbf{x}_i,\textbf{x}_j) \big)$.
\end{prop}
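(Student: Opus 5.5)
The plan is to mirror the proof of Proposition \ref{prop: upper frame bound theoretical}, working pointwise at each data point $p$ and then summing against $\boldsymbol\mu$ using Lemma \ref{lemma: frame inner product sum}.

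\textbf{Step 1: fix a point.} Fix $p$ and view $\boldsymbol\alpha$ as an element of the finite-dimensional semi-inner product space spanned by $\{d\textbf{x}_j\}_{j\le d}$ with the metric $g(p)$. The Gram matrix of this spanning set is the $d\times d$ carré du champ matrix $\boldsymbol\Gamma_p := (\boldsymbol\Gamma_p(\textbf{x}_i,\textbf{x}_j))_{i,j\le d}$. This matrix is positive semi-definite, by the computational note on the covariance formula \eqref{eq: cdc covariance variable bandwidth}.

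\textbf{Step 2: pointwise bound.} Write $\boldsymbol\alpha$ at $p$ as $\sum_j c_j d\textbf{x}_j$ with $c = \sum_i \boldsymbol\alpha_{ij}\boldsymbol\phi_i(p)$. Then
\[
g(d\textbf{x}_j,\boldsymbol\alpha)(p) = (\boldsymbol\Gamma_p c)_j, \qquad g(\boldsymbol\alpha,\boldsymbol\alpha)(p) = c^T\boldsymbol\Gamma_p c .
\]
So the left-hand side is $c^T\boldsymbol\Gamma_p^2 c$. Diagonalise $\boldsymbol\Gamma_p$ with eigenvalues $\lambda_k \ge 0$; this gives
\[
c^T\boldsymbol\Gamma_p^2 c = \sum_k \lambda_k^2 \tilde c_k^2 \le \lambda_{\max}\sum_k \lambda_k \tilde c_k^2 = \lambda_{\max}\, c^T\boldsymbol\Gamma_p c .
\]
This is exactly Lemma \ref{lemma: pointwise frame eigenvalues} / Example \ref{eg: finite frame} in discrete form. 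Positive semi-definiteness also gives $\lambda_{\max} \le \textnormal{tr}\,\boldsymbol\Gamma_p = \sum_i \boldsymbol\Gamma_p(\textbf{x}_i,\textbf{x}_i)$. Bounding this trace by its maximum over the finitely many points yields the first claimed inequality. Here the constant is the trace $\sum_i \boldsymbol\Gamma_p(\textbf{x}_i,\textbf{x}_i)$; the index $j$ in the displayed $B$ is presumably a typo for $i$.

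\textbf{Step 3: globalise.} By Lemma \ref{lemma: frame inner product sum}, the frame sum is at most
\[
\sum_p \Big(\sum_j g(d\textbf{x}_j,\boldsymbol\alpha)(p)^2\Big)\boldsymbol\mu_p .
\]
Applying Step 2 at each $p$ bounds this by
\[
B\sum_p g(\boldsymbol\alpha,\boldsymbol\alpha)(p)\boldsymbol\mu_p = B\,\boldsymbol\alpha^T\textbf{G}^{(1)}\boldsymbol\alpha = B\|\boldsymbol\alpha\|^2,
\]
using \eqref{eq: G1} and \eqref{eq: g1}.

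\textbf{Main obstacle.} The only genuinely delicate point is Step 2's reliance on positive semi-definiteness of the discrete $\boldsymbol\Gamma_p$. Without it, the trace need not dominate the eigenvalues, and $c^T\boldsymbol\Gamma_p c$ could be negative. I would verify this directly: \eqref{eq: cdc covariance variable bandwidth} writes $\boldsymbol\Gamma_p$ as a nonnegatively weighted sum $\sum_j \textbf{P}_{pj}\,v_jv_j^T$ of outer products (up to the positive factor $1/2\rho^2$), which is manifestly positive semi-definite. Everything else is routine.
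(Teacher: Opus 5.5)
Your proposal is correct and follows the paper's proof. Both bound the pointwise sum by the largest eigenvalue of the carré du champ matrix $(\boldsymbol\Gamma_p(\textbf{x}_i,\textbf{x}_j))_{i,j}$ times $g(\boldsymbol\alpha,\boldsymbol\alpha)(p)$, dominate that eigenvalue by the trace using positive semi-definiteness, and then sum against $\boldsymbol\mu$ via Lemma \ref{lemma: frame inner product sum}. Your explicit inequality $c^T\boldsymbol\Gamma_p^2 c \le \lambda_{\max}\, c^T\boldsymbol\Gamma_p c$ and your direct check of positive semi-definiteness spell out what the paper invokes as "the same argument as Lemma \ref{lemma: pointwise frame eigenvalues}", and you are right that $\textbf{x}_j$ in the displayed $B$ should be $\textbf{x}_i$.
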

\begin{proof}
By the same argument as Lemma \ref{lemma: pointwise frame eigenvalues}, the pointwise sum
\[
\sum_{j=1}^d g(d\textbf{x}_j, \boldsymbol\alpha)(p)^2
\]
is bounded above by $B(p)$, the largest eigenvalue of the carré du champ matrix $(\boldsymbol\Gamma(\textbf{x}_i,\textbf{x}_j)(p))_{i,j = 1,...,d}$.
The eigenvalues of a positive semi-definite matrix are bounded above by the trace, so
\[
B(p) 
\leq \textnormal{tr}(\boldsymbol\Gamma_p(\textbf{x}_i,\textbf{x}_j))
= \sum_{i=1}^d \boldsymbol\Gamma_p(\textbf{x}_i,\textbf{x}_i)
\leq \max_p \Big(\sum_{i=1}^d \boldsymbol\Gamma_p(\textbf{x}_i,\textbf{x}_i)\Big)
\]
uniformly.
\end{proof}

We can use this estimate to assess the numerical stability of the spanning set as we vary $n$.
Recall from \eqref{eq: cdc covariance variable bandwidth} that we defined the carré du champ operator $\boldsymbol\Gamma$ to be
\[
\boldsymbol\Gamma_p(f,h)
= \frac{1}{2\rho_p^2} \, \textnormal{Cov}\!\left[f(X), h(X)\,:\, X \sim \textbf{P}_p\right].
\]
When $f = h = \textbf{x}_i$ is a coordinate function, the upper frame bound in Proposition \ref{prop: upper frame bound estimated} is equal to 
\begin{equation}
\label{eq: variance expression for coords cdc}
\max_p \frac{1}{2\rho_p^2} \E \big[ \|X - \E(X)\|^2 : X \sim P_p \big].
\end{equation}
This is just an {empirical variance estimate}, and so, if we fix the bandwidth function $\rho$, we can expect this to be stable as $n$ increases.
In particular, we expect a stable upper bound for the spectrum of the Gram matrix and the norm of the weak formulation vectors, so equations like \eqref{eq: weak linear system grad demo} will remain \boldblue{stable for all $n$}.

\begin{computationalnote}
It is hard to make this sort of claim precise, because the kernel bandwidth $\rho_p$ we use in practice is self-tuned, and so will depend on the density, and that depends on $n$.
To obtain a stronger guarantee, we can use the fact that the kernel $\textbf{P}_p$ is truncated to be supported on just its $k_{\mathrm{nn}}$ nearest neighbours. 
We can rewrite \eqref{eq: variance expression for coords cdc} as
\[
\max_p \frac{1}{2\rho_p^2}\,\mathrm{Var}\!\big[ X : X \sim \textbf{P}_p \big]
= \max_p \frac{1}{2\rho_p^2}\sum_{j=1}^n \textbf{P}_{pj} \Big\| \textbf{x}_j - \sum_{k=1}^n \textbf{P}_{pk}\,\textbf{x}_k \Big\|^2 .
\]
If $\mathcal{N}_p$ contains the indices of the $k_{\mathrm{nn}}$ nearest neighbours of $p$, let $r_p := \max_{j \in \mathcal{N}_p} \|\textbf{x}_j - \textbf{x}_p\|$.
Using the fact that the variance minimises the second moment about any centre,
\[
\sum_{j \in \mathcal{N}_p} \textbf{P}_{pj}\Big\|\textbf{x}_j - \sum_{k}\textbf{P}_{pk}\,\textbf{x}_k\Big\|^2
\leq \sum_{j \in \mathcal{N}_p} \textbf{P}_{pj}\|\textbf{x}_j - \textbf{x}_p\|^2
\leq r_p^2,
\]
so we obtain the strict upper bound
\[
\max_p \sum_{i=1}^d \boldsymbol\Gamma_p(\textbf{x}_i,\textbf{x}_i)
\leq \max_p \frac{r_p^2}{2\rho_p^2} .
\]
The adaptive bandwidth $\rho_p$ at least $\ord(r_p)$, so $r_p^2 / 2\rho_p^2$ remains of at most \boldblue{constant order}.
Again, this is not a precise claim, because the bandwidth selection method is arbitrary and the upper bound for the frame is given by the \textit{worst outlier} point $p$, but using the standard method in our implementation, we find a perfectly stable upper bound between 1 and 2 in every case.
\end{computationalnote}

\subsubsection{Lower bound}

As before, obtaining a lower frame bound is more difficult.
We need to find a uniform lower bound
\[\sum_{j=1}^d g(d\textbf{x}_j, \boldsymbol\alpha)(p)^2 \geq A g(\boldsymbol\alpha,\boldsymbol\alpha)(p)\]
that holds for all $p$.
Even if we had access to the exact carré du champ operator $\Gamma$ and could compute the exact Gram matrix, there would still be the problem of spectral pollution when working with a finite truncation of the spanning set.
We observe this in practice, where our carré du champ matrices will \boldblue{almost never have exact zero eigenvalues}, due to spectral pollution and the inherent uncertainty in finite data.
Even when the data are densely sampled from a compact manifold, the eigenvalues of the carré du champ matrices tend to exist on a continuous spectrum, and so there is no clear distinction between \q{zero} and \q{non-zero} eigenvalues.

The easy practical solution is to compute the upper bound $B$ (the largest eigenvalue of the Gram matrix) and then \boldblue{pick a desired lower bound $A$} that would ensure the condition number $B/A$ is low enough for reasonable stability (e.g. less than $10^5$).
The idea is to restrict to the \textit{effective support} of the Gram matrix, but in practice, it is just an arbitrary choice.
We can then project the spanning set onto the span of the Gram matrix eigenvectors whose eigenvalues are at least $A$.
This approach of \boldblue{spectral cutoff} is discussed further below.
While it is theoretically unsatisfying, it is likely the best practical solution, because we can never know what the \q{true} frame bounds are \q{meant} to be when presented with finite data from an unknown source.

\begin{computationalnote}
Fortunately, the choice of lower bound $A$ seems to have little effect, even taking the condition number as low as 5 or as high as $10^{12}$.
This is perhaps because, by picking a higher $A$ (i.e. lower condition number), we exclude more \q{bad} vectors from the space, but, if we leave them in, they have little effect anyway because their norm (square root of their eigenvalue) is low.
\end{computationalnote}

\subsection{Weak formulations of operators}

We can apply the method of weak formulations to compute discrete representations of linear operators as matrices (or other tensors, where appropriate).
In general terms, let $A: \R^p \to \R^q$ be a linear operator represented by an unknown $q \times p$ matrix $\textbf{A}$.
Let $e_i$ and $f_i$ be the basis elements for $\R^p$ and $\R^q$ respectively, and let $\textbf{G}^{(q)}_{ij} = \inp{f_i}{f_j}$ be the Gram matrix for an inner product on $\R^q$.
If we can compute
$$
\textbf{A}^{\textnormal{weak}}_{ij} = \inp{f_i}{A(e_j)} = \inp{f_i}{\textbf{A}e_j} = f_i^T \textbf{G}^{(q)} \textbf{A}e_j  = (\textbf{G}^{(q)} \textbf{A})_{ij},
$$
then we can recover $\textbf{A} = (\textbf{G}^{(q)}\big)^+ \textbf{A}^{\textnormal{weak}}$ using the Moore--Penrose pseudoinverse $(\textbf{G}^{(q)})^+$.
This is simply solving the weak formulation for each basis vector $e_j$ of $\R^p$.
In the language of the finite-element method, $\textbf{G}^{(q)}$ is the \boldblue{stress matrix} and the columns of $\textbf{A}^{\textnormal{weak}}$ are \boldblue{load vectors}.

We can recycle the same weak formulation to compute the \boldblue{adjoint} of $A$.
Suppose that $\textbf{G}^{(p)}_{ij} = \inp{e_i}{e_j}$ is the Gram matrix for an inner product on $\R^p$, so $A$ has an adjoint $A^* : \R^q \to \R^p$.
Then
$$
\textbf{A}^{\textnormal{weak}}_{ij} 
= \inp{f_i}{A(e_j)}
= \inp{A^*(f_i)}{e_j}
= (\textbf{A}^*)^{\textnormal{weak}}_{ji},
$$
meaning $(\textbf{A}^*)^{\textnormal{weak}} = (\textbf{A}^{\textnormal{weak}})^T$, so $\textbf{A}^* = (\textbf{G}^{(p)})^+ (\textbf{A}^{\textnormal{weak}})^T$.

\subsection{Convergence to continuous differential operators}

In practice, we do not work with the continuous spanning set $\{ \p{i}dx_j \}_{i \in \N, j\leq d}$, but rather the discrete spanning set $\{ \boldsymbol{\phi}_{i}d\textbf{x}_j \}_{i\leq n_1, j\leq d}$ computed from finite data.
This is a \boldblue{finite-dimensional truncation} of the continuous spanning set, which is \boldblue{approximated} from finite data.
In this section, we analyse the convergence of the computed weak formulations to the true continuous operators as both the truncation size $n_1$ increases and the approximation error decreases.
The approximation error depends on the number of data points $n$ and the kernel bandwidth, and so we can combine all these factors to get a convergence result in the limit of large $n$, small bandwidth, and large $n_1$.

\subsubsection{Convergence of weak formulations in truncated frames}

The frame condition guarantees a stable solution to weak formulations in the Hilbert space $H$ given the entire sequence.
An important practical question is whether, if we restrict to the first $k$ elements $\mathcal{U}_k = \{ u_1, \dots, u_k \}$ and solve the corresponding finite-dimensional weak formulation \eqref{eq: weak formulation}, we obtain a good approximation to the true solution as $k \to \infty$.
We show here that this is indeed the case in the \boldblue{noise-free} setting, where the Gram matrix and load vector are computed exactly.

Let $H_k = \textnormal{span}(\mathcal{U}_k) \subset H$ be the finite-dimensional subspace spanned by $\mathcal{U}_k$.
We saw in Example \ref{eg: finite frame} that $\mathcal{U}_k$ is automatically a frame for the subspace $H_k$, with frame bounds $A_k$ and $B_k$ given by the smallest and largest non-zero eigenvalues of the $k \times k$ Gram matrix $\mathbf{G}_k$.
We can define an approximate coefficient vector $a^{(k)} \in \mathbb{K}^k$ by solving the finite system
\begin{equation}
\label{eq: finite weak formulation}
a^{(k)} = \mathbf{G}_k^+ (Tx)_k,
\end{equation}
where $(Tx)_k = (\langle x, u_1 \rangle, \dots, \langle x, u_k \rangle)^\top$ is the truncation of the load vector, which corresponds to
\begin{equation}
x_k = \sum_{j=1}^k a^{(k)}_j u_j \in H_k.
\end{equation}
The following observation guarantees that $x_k$ converges to $x$ as $k \to \infty$.

\begin{lemma}
The approximation $x_k$ is the orthogonal projection of $x$ onto $H_k$.
\end{lemma}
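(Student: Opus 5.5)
The plan is to characterise the orthogonal projection $P_k x$ of $x$ onto $H_k$ by the normal equations, and then check that $x_k$ satisfies them. A vector $y \in H_k$ equals $P_k x$ if and only if $x - y \perp H_k$. Since $H_k = \textnormal{span}(\mathcal{U}_k)$, this is equivalent to
\[
\inp{y}{u_i} = \inp{x}{u_i}, \qquad i = 1,\dots,k.
\]
So it suffices to show that $x_k$ has the same inner products with each $u_i$ as $x$ does.

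First I would compute the inner products of $x_k$ with the $u_i$. Let $T_k : H \to \mathbb{K}^k$ be the truncated analysis operator, $T_k y = (\inp{y}{u_i})_{i\le k}$. Then $T_k^* a = \sum_j a_j u_j$ and $\mathbf{G}_k = T_k T_k^*$. By definition $x_k = T_k^* a^{(k)}$, so
\[
T_k x_k = T_k T_k^* \mathbf{G}_k^+ (Tx)_k = \mathbf{G}_k \mathbf{G}_k^+ (Tx)_k .
\]
Since $\mathbf{G}_k$ is symmetric positive semidefinite, $\mathbf{G}_k\mathbf{G}_k^+$ is the orthogonal projection onto $\textnormal{ran}(\mathbf{G}_k)$. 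This follows from the definition of the Moore--Penrose pseudo-inverse or from the SVD form in Example~\ref{eg: finite frame}. Hence $T_k x_k = (Tx)_k$ exactly when $(Tx)_k \in \textnormal{ran}(\mathbf{G}_k)$.

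The key step, and the only real obstacle, is showing that the load vector lies in the range of the Gram matrix. I would use $\textnormal{ran}(\mathbf{G}_k) = \ker(\mathbf{G}_k)^\perp = \ker(T_k^*)^\perp$, which holds because $\mathbf{G}_k$ is self-adjoint and $\ker(\mathbf{G}_k) = \ker(T_k^*)$ (as observed for Gram operators earlier). Take any $b \in \ker(T_k^*)$, so $\sum_j b_j u_j = 0$. Then
\[
\inp{(Tx)_k}{b} = \sum_j \overline{b_j}\inp{x}{u_j} = \inp{x}{\textstyle\sum_j b_j u_j} = 0 ,
\]
up to the conjugation convention. Equivalently, $(Tx)_k = T_k x \in \textnormal{ran}(T_k) = \ker(T_k^*)^\perp$.

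Putting these together gives $T_k x_k = T_k x$. Thus $x - x_k \perp u_i$ for every $i \le k$, and since $x_k \in H_k$, the normal equations show $x_k = P_k x$. Because this is a purely finite-dimensional statement, no frame bounds are needed. The result is also independent of which solution of the truncated system is chosen, since any two solutions differ by an element of $\ker(T_k^*)$, which $T_k^*$ maps to zero.
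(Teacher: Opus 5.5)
Your proof is correct and follows the same route as the paper: both compute $\inp{x_k}{u_i}$ as the $i$-th entry of $\mathbf{G}_k\mathbf{G}_k^+(Tx)_k$ and conclude that $x - x_k \perp u_i$ for all $i \le k$. The one difference is that you explicitly justify $\mathbf{G}_k\mathbf{G}_k^+(Tx)_k = (Tx)_k$ by showing $(Tx)_k \in \textnormal{ran}(T_k) = \ker(\mathbf{G}_k)^\perp = \textnormal{ran}(\mathbf{G}_k)$, a step the paper simply asserts; your truncated operator $T_k$ is also cleaner than the paper's informal use of $T$ and $Tu_i$.
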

\begin{proof}
For each $i = 1, \dots, k$, we have
\[
\langle x_k, u_i \rangle_H
= \langle T^* \mathbf{G}_k^+ (Tx)_k, u_i \rangle_H
= \langle \mathbf{G}_k^+ (Tx)_k, Tu_i \rangle_{\ell_2},
\]
which is the $i^{th}$ entry of
\[
\big( \mathbf{G}_k \mathbf{G}_k^+ (Tx)_k \big)_i 
= \big( \mathbf{G}_k \mathbf{G}_k^+ (Tx)_k \big)_i 
= \big( (Tx)_k \big)_i 
= \big( Tx \big)_i 
= \langle x, u_i \rangle
\]
so $\langle x - x_k, u_i \rangle = 0$.
\end{proof}

It matters here that $\{ u_i \}_{i \in \N}$ is a frame for $H$, because then $\text{ran}(T^*) = H$ (by Proposition \ref{cor: frame equivalence}), so the union of the subspaces $H_k$ is dense in $H$, and so $\|x - x_k\| \to 0$ as $k \to \infty$.
This means that solving the finite matrix system associated with the first $k$ frame elements is numerically stable (by the frame property of $\mathcal{U}_k$) and converges to the true solution (by the frame property of the infinite sequence).

\subsubsection{The problem of stable convergence in noisy truncated frames}

While we know that each $\mathcal{U}_k$ is a frame for $H_k$, the weak formulation may become asymptotically unstable.
The upper bound $B_k$ is well-behaved and is a non-decreasing sequence converging to $B$, but the lower bound $A_k$ is more complicated.
The frame could be overcomplete and contain linear dependencies, but those dependencies may only appear at $k = \infty$ (or, for practical purposes, at some very large $k$).
This leads to elements with very small norm for finite $k$, but which disappear in the limit $k \to \infty$, and so $A_k \to 0$.
This means that $\kappa(\mathbf{G}_k) = B_k/A_k \to \infty$ as $k \to \infty$, so the \boldblue{finite-dimensional solutions are asymptotically unstable}.

In our setting, we do not have access to the exact entries of the Gram matrix $\mathbf{G}_k$ or the load vector $(Tx)_k$, and instead compute empirical estimates $\tilde{\mathbf{G}}_k$ and $(\tilde{T}x)_k$.
We therefore solve the perturbed system
\begin{equation}
\label{eq: perturbed matrix system}
\tilde{\mathbf{G}}_k \tilde{a}^{(k)} = (\tilde{T}x)_k,
\end{equation}
which leads to the error
\begin{equation}
\label{eq: total error decomposition}
\| a - \tilde{a}^{(k)} \|
\leq \underbrace{\| a - a^{(k)} \|}_{\text{truncation error}} + \underbrace{\| a^{(k)} - \tilde{a}^{(k)} \|}_{\text{approximation error}}.
\end{equation}
The above analysis shows that the truncation error vanishes as $k \to \infty$, but the approximation error depends on the condition number of the Gram matrix $\mathbf{G}_k$.
Standard perturbation theory for linear systems tells us that the relative error is bounded by the condition number $\kappa(\mathbf{G}_k)$
\begin{equation}
\label{eq: perturbation bound}
\frac{\|\tilde{a}^{(k)} - a^{(k)}\|}{\|a^{(k)}\|} \leq \frac{\kappa(\mathbf{G}_k)}{1 - \kappa(\mathbf{G}_k)\frac{\|\delta \mathbf{G}_k\|}{\|\mathbf{G}_k\|}} \left( \frac{\|\delta \mathbf{G}_k\|}{\|\mathbf{G}_k\|} + \frac{\|\delta (Tx)_k\|}{\|(Tx)_k\|} \right),
\end{equation}
where $\delta \mathbf{G}_k = \tilde{\mathbf{G}}_k - \mathbf{G}_k$ and $\delta (Tx)_k = (\tilde{T}x)_k - (Tx)_k$ represent the approximation errors due to finite sampling and carré du champ estimation\footnote{assuming $\kappa(\mathbf{G}_k) \|\delta \mathbf{G}_k\|/\|\mathbf{G}_k\| < 1$}.
If the true Gram matrix $\mathbf{G}_k$ has a uniformly bounded condition number $\kappa(\mathbf{G}_k)$, then, provided that the perturbations $\|\delta \mathbf{G}_k\|$ and $\|\delta (Tx)_k\|$ are sufficiently small, we can guarantee that the data-driven solution $\tilde{a}^{(k)}$ converges to the true solution $a^{(k)}$ as the sample size $N \to \infty$.
However, if $\kappa(\mathbf{G}_k) \to \infty$ as $k \to \infty$, then the error bound \eqref{eq: perturbation bound} only applies when $\|\delta \mathbf{G}_k\|$ and $\|\delta (Tx)_k\|$ are zero.

This is a standard problem in numerical analysis, which can be resolved using \boldblue{regularisation}.
We replace the operator $G$ with a modified operator $G_\epsilon$, where $\epsilon > 0$ is a small regularisation parameter, chosen so that $G_\epsilon$ has a uniformly bounded condition number, and $\|G_\epsilon - G\| \to 0$ as $\epsilon \to 0$.
This leads to the modified error decomposition
\begin{equation}
\| a - \tilde{a}_\epsilon^{(k)} \|
\leq 
\underbrace{\| a - a_\epsilon \|}_{\text{regularisation bias}} +
\underbrace{\| a_\epsilon - a_\epsilon^{(k)} \|}_{\text{truncation error}} + 
\underbrace{\| a_\epsilon^{(k)} - \tilde{a}_\epsilon^{(k)} \|}_{\text{approximation error}},
\end{equation}
where the perturbation bound \eqref{eq: perturbation bound} now applies to the approximation error term and ensures that it vanishes as $N \to \infty$ for fixed $n$ and $\epsilon$.
The operator-norm convergence $\|G_\epsilon - G\| \to 0$ also ensures that the regularisation bias vanishes as $\epsilon \to 0$.
The challenge is now to show that the truncation errors for the modified operators also vanish as $k \to \infty$, which would then ensure the convergence
\[
\lim_{\epsilon \to 0} \lim_{k \to \infty} \lim_{N \to \infty} \| a - \tilde{a}_\epsilon^{(k)} \| = 0.
\]
We will consider two regularisation methods here and discuss convergence results for both.
They both involve picking some small $\epsilon > 0$ and modifying the finite-dimensional weak formulation to enforce a uniform lower bound $A_k \geq \epsilon$ for all $k$.
\begin{enumerate}
  \item \boldblue{Tikhonov regularisation} replaces $\mathbf{G}_k$ with $\mathbf{G}_{k, \epsilon} = \mathbf{G}_k + \epsilon \mathbf{I}_k$.
  \item \boldblue{Spectral cutoff} diagonalises $\mathbf{G}_k$ and then defines $\mathbf{G}_{k, \epsilon}$ by replacing any eigenvalues smaller than $\epsilon$ with zero.
\end{enumerate}
Tikhonov regularisation is simpler to analyse and implement computationally, and has condition number $\kappa(\mathbf{G}_{k, \epsilon}) \leq (B + \epsilon)/\epsilon$, but leads to a biased solution with $\|a - a_\epsilon\| \neq 0$ for any fixed $\epsilon > 0$.
Spectral cutoff has several interesting advantages: it has an improved condition number $\kappa(\mathbf{G}_{k, \epsilon}) \leq B/\epsilon$ and produces an unbiased solution with $a_\epsilon = a$ for all $\epsilon < A$.
It can also reduce the computational cost by projecting onto the smaller subspace spanned by the large eigenvectors of $\mathbf{G}_k$.
However, showing that the truncation error $\| a_\epsilon - a_\epsilon^{(k)} \|$ vanishes as $k \to \infty$ is non-trivial and does not hold for general frames.

We will first show, by a standard argument, that both of these regularisation methods have uniformly bounded pseudoinverses $\|\mathbf{G}_{k, \epsilon}^+\| \leq 1/\epsilon$, ensuring numerical stability and vanishing approximation error, and also have vanishing regularisation biases as $\epsilon \to 0$.
We then recall classical results that Tikhonov regularisation has vanishing truncation error, and that, under a strong assumption, so does spectral cutoff.

\subsubsection{Bias convergence and stability of spectral regularisations}

Tikhonov regularisation and spectral cutoff are both examples of \boldblue{spectral regularisation}, because the operators $G_\epsilon$ and $\mathbf{G}_{n, \epsilon}$ are obtained by applying a \boldblue{spectral map} $g_\epsilon : \R_+ \to \R_+$ to define $G_\epsilon = g_\epsilon(G)$ and $\mathbf{G}_{n, \epsilon} = g_\epsilon(\mathbf{G}_{n})$.

\begin{enumerate}
  \item Tikhonov regularisation uses $g_{\epsilon}(\lambda) = \lambda + \epsilon$, and
  \item spectral cutoff uses $g_{\epsilon}(\lambda) = \lambda \mathbb{I}_{\lambda > \epsilon}$.
\end{enumerate}

The following Lemma shows that, for a general class of spectral regularisations, the Moore-Penrose pseudoinverse $G_\epsilon ^+$ is bounded, and the regularisation bias $\| a - a_\epsilon \|$ vanishes as $\epsilon \to 0$.

\begin{lemma}
Let $g_\epsilon : \R_+ \to \R_+$ be a piecewise continuous function. 
Define the pseudo-inverse function $g_\epsilon^+(\lambda) = 1/g_\epsilon(\lambda)$ where $g_\epsilon(\lambda) \neq 0$ and $0$ otherwise. 
Assume:
\begin{enumerate}
  \item $g_\epsilon(\lambda) : \R_+ \to \{0\} \cup [\epsilon, \infty)$,
  \item $\lim_{\epsilon \to 0} \lambda g_\epsilon^+(\lambda) = 1$ for all $\lambda > 0$, and
  \item there exists a constant $C$ such that $\sup_{\epsilon, \lambda \ge 0} \lambda g_\epsilon^+(\lambda) \le C$.
\end{enumerate}
Let $G = TT^*$ be the Gram operator of a frame for a Hilbert space $H$, which is bounded and self-adjoint, so we can apply the spectral mapping theorem to define $G_\epsilon = g_\epsilon(G)$. 
Then $G_\epsilon$ satisfies
\begin{enumerate}
  \item $\| G_\epsilon ^+ \| \leq 1/\epsilon$, and
  \item if $x \in H$, then $\| G_\epsilon ^+ Tx - G^+ Tx \| \to 0$ as $\epsilon \to 0$.
\end{enumerate}
\end{lemma}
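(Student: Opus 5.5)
The plan is to work entirely through the Borel functional calculus for the bounded, self-adjoint, positive operator $G$ on $\ell_2$. Let $E$ be its spectral measure, supported in $\sigma(G) \subseteq \{0\} \cup [A,B]$ by the frame condition and Corollary \ref{cor: frame equivalence}. Since $g_\epsilon$ is piecewise continuous, hence Borel and bounded on $\sigma(G)$, both $G_\epsilon = g_\epsilon(G)$ and $g_\epsilon^+(G)$ are well defined. The first step is to identify $G_\epsilon^+$ with $g_\epsilon^+(G)$. The product $g_\epsilon g_\epsilon^+$ is the indicator of the set $\{g_\epsilon \neq 0\}$, so $g_\epsilon(G) g_\epsilon^+(G)$ is the orthogonal projection onto $\ker(G_\epsilon)^\perp = \overline{\mathrm{ran}(G_\epsilon)}$. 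Hence $g_\epsilon^+(G)$ inverts $G_\epsilon$ on $\ker(G_\epsilon)^\perp$ and vanishes on $\ker(G_\epsilon)$, which is exactly the definition of the Moore--Penrose pseudo-inverse. It is bounded and defined everywhere, consistent with Lemma \ref{lemma: Moore penrose bounded conditions}.

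For part (1), assumption (1) gives $g_\epsilon^+(\lambda) \in \{0\} \cup (0, 1/\epsilon]$ for every $\lambda$. The functional calculus bound $\|h(G)\| \le \sup_{\sigma(G)} |h|$ then yields $\|G_\epsilon^+\| \le 1/\epsilon$.

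For part (2), I would first reduce to the range. Since $x \in H = \mathrm{ran}(T^*)$ (frame condition plus Corollary \ref{cor: frame equivalence}), write $x = T^*a$ with $a = G^+Tx \in \ker(G)^\perp$, so that $Tx = Ga$. Then
\[
G_\epsilon^+ Tx - G^+Tx = \big(g_\epsilon^+(G)\,G - P_{\ker(G)^\perp}\big) a = h_\epsilon(G)\, a,
\]
where $h_\epsilon(\lambda) = \lambda g_\epsilon^+(\lambda) - \mathbb{I}_{\lambda > 0}$. Computing the norm through the spectral measure gives
\[
\|h_\epsilon(G) a\|^2 = \int |h_\epsilon(\lambda)|^2 \, d\langle E_\lambda a, a\rangle .
\]
At $\lambda = 0$ we have $h_\epsilon(0) = 0$. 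For $\lambda > 0$, assumption (2) gives $h_\epsilon(\lambda) \to 0$, and assumption (3) gives $|h_\epsilon| \le C + 1$ uniformly. Because the measure $\langle E a, a\rangle$ is finite (with total mass $\|a\|^2$), dominated convergence sends the integral to $0$ along any sequence $\epsilon \to 0$, which proves (2).

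I expect the main obstacle to be the rigorous identification $G_\epsilon^+ = g_\epsilon^+(G)$ and the bookkeeping around kernels. In particular, one must note that $\ker(G_\epsilon)$ may be strictly larger than $\ker(G)$, as happens under spectral cutoff, and that this causes no problem because $h_\epsilon$ handles it pointwise. Beyond that, the argument is a direct application of the spectral theorem. In fact, because $\sigma(G) \cap (0,\infty) \subseteq [A,B]$ is compact, one could even try to obtain operator-norm convergence if $\lambda g_\epsilon^+(\lambda) \to 1$ uniformly on $[A,B]$. That uniformity is not assumed, however, so I would settle for the strong convergence given by dominated convergence.
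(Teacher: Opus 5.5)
Your proposal is correct and follows essentially the same route as the paper: identify $G_\epsilon^+$ with $g_\epsilon^+(G)$, bound it by $\sup|g_\epsilon^+| \le 1/\epsilon$, write $Tx = Ga$ with $a = G^+Tx \in \ker(G)^\perp$, and apply dominated convergence to $\int (1-\lambda g_\epsilon^+(\lambda))^2\,d\langle E_\lambda a, a\rangle$. The differences are cosmetic: you justify $G_\epsilon^+ = g_\epsilon^+(G)$ explicitly via the projection $E(\{g_\epsilon \neq 0\})$, where the paper only asserts it, and you handle the atom at $\lambda = 0$ with the indicator $\mathbb{I}_{\lambda>0}$ rather than by restricting the integral to $[A,B]$.
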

\begin{proof}
The fact that $g_\epsilon(\lambda) : \R_+ \to \{0\} \cup [\epsilon, \infty)$ means that $\sigma(G_\epsilon) \subseteq \{0\} \cup [\epsilon, \infty)$, so $\| G_\epsilon ^+ \| \leq 1/\epsilon$.
To prove the second claim, we apply the spectral theorem to $G$.
Let $E$ be the spectral measure of $G$. 
Let $a = G^+ Tx$ be the true solution, which lies in $\ker(G)^\perp = \textnormal{ran}(G^*) = \textnormal{ran}(G)$.
We can express the regularised solution as
\[
a_\epsilon = G_\epsilon ^+ Tx= G_\epsilon ^+ G a.
\]
Notice that the Moore-Penrose pseudoinverse is itself defined by a spectral map $\lambda \mapsto \frac{1}{\lambda}\mathbb{I}_{\lambda > 0}$.
Using the fact that $g_\epsilon(\lambda) : \R_+ \to \{0\} \cup [\epsilon, \infty)$, we can express $G_\epsilon ^+$ as the application of the spectral map
\[
\lambda \mapsto 
\frac{1}{g_\epsilon(\lambda)}\mathbb{I}_{g_\epsilon(\lambda) > 0}
\]
to $G$.
We can then use spectral calculus to write
\[
\| a - a_\epsilon \|^2 
 = \| (I - G_\epsilon ^+ G) a \|^2
 = \int_{\sigma(G)} \left( 1 - \frac{\lambda}{g_\epsilon(\lambda)}\mathbb{I}_{g_\epsilon(\lambda) > 0} \right)^2 d\mu_a(\lambda).
\]
Since $\sigma(G) \subseteq \{0\} \cup [A, B]$ and $a \in \ker(G)^\perp$ we can restrict the integral to $\sigma(G) \setminus \{0\} \subset [A, B]$ to get
\[
\| a - a_\epsilon \|^2 
 = \int_A ^B \left( 1 - \frac{\lambda}{g_\epsilon(\lambda)}\mathbb{I}_{g_\epsilon(\lambda) > 0} \right)^2 d\mu_a(\lambda).
\]
The integrand is uniformly bounded for small enough $\epsilon$ (by the third assumption) and converges pointwise to 0 as $\epsilon \to 0$ (by the second assumption), so we can apply the dominated convergence theorem to get
\begin{align*}
\lim_{\epsilon \to 0} \| a - a_\epsilon \|^2 
& = \lim_{\epsilon \to 0} \int_A ^B \left( 1 - \frac{\lambda}{g_\epsilon(\lambda)}\mathbb{I}_{g_\epsilon(\lambda) > 0} \right)^2 d\mu_a(\lambda) \\
& = \int_A ^B \lim_{\epsilon \to 0} \left( 1 - \frac{\lambda}{g_\epsilon(\lambda)}\mathbb{I}_{g_\epsilon(\lambda) > 0} \right)^2 d\mu_a(\lambda) \\
& = 0
\end{align*}
which proves the result.
\end{proof}

\subsubsection{Truncation convergence of Tikhonov Regularisation}

We now show, by a standard argument, that solutions of the truncated Tikhonov-regularised formulations converge.
Tikhonov regularisation is special because the finite-dimensional matrix $\mathbf{G}_{k, \epsilon} = \mathbf{G}_k + \epsilon \mathbf{I}_k$ is just the restriction of $G_\epsilon = G + \epsilon I$ to the first $k$ components.
In other words, Tikhonov regularisation commutes with projection onto the frame, which we can exploit to prove convergence of the truncations in arbitrary frames.

Let $X : \ell_2 \times \ell_2 \to \R$ be a bilinear form.
We say that $X$ is \boldblue{bounded} if $|X(u, v)| \leq M \|u\| \|v\|$, and \boldblue{coercive} if $X(u, u) \geq c \|u\|^2$.
Given a bounded linear functional $f$, we can define an abstract weak formulation as
\begin{equation}
\label{eq: abstract weak formulation}
X(a, v) = f(v)
\end{equation}
for all $v \in \ell_2$.
If $X$ is bounded and coercive, then the \boldblue{Lax-Milgram theorem} guarantees the existence of a unique solution $a \in \ell_2$.
If \eqref{eq: abstract weak formulation} is truncated to a finite-dimensional weak formulation by evaluating $v$ in a frame, we can recover a solution $a^{(k)}$ in the span of the truncated frame.
\boldblue{Céa's Lemma} bounds the truncation error as
\[
\| a - a^{(k)} \| 
\leq \frac{M}{c} \inf_{b \in E_k} \| a - b \|,
\]
where $E_k$ is the $k$-dimensional subspace generated by the first $k$ basis elements in $\ell_2$.

We can apply this to Tikhonov regularisation by setting $X(a, v) = \langle G_\epsilon a, v \rangle$ and $f(v) = \langle Tx, v \rangle$, which gives $M = B + \epsilon$ and $c = \epsilon$.
As we noted above, the finite-dimensional matrix $\mathbf{G}_{k, \epsilon} = \mathbf{G}_k + \epsilon \mathbf{I}_k$ is exactly the matrix representation of $X$ restricted to the first $k$ components, and so the solution to the matrix problem for a given $k$ is governed by Céa's Lemma.
The union of the subspaces $E_k$ is dense in $\ell_2$, so the best approximation error $\inf_{b \in E_k} \| a_\epsilon - b \|$ converges to zero as $k \to \infty$, and hence
\[
\lim_{k \to \infty} \| a_\epsilon - a^{(k)}_\epsilon \| = 0.
\]

\subsubsection{Truncation convergence of spectral cutoff}

General spectral regularisations have no guarantee of truncation convergence, because we cannot know in general whether the spectra of the finite sections $\mathbf{G}_k$ converge to the spectrum of $G$.
Truncation of general frames may exhibit \boldblue{spectral pollution}, where eigenvalues of the finite sections $\mathbf{G}_k$ converge to points outside the spectrum of $G$.

\begin{definition}
\label{def: no spectral pollution}
Let $G$ be the Gram operator of an $(A,B)$-frame for a Hilbert space $H$, and let $\mathbf{G}_k$ be its finite sections.
We say that the frame has \boldblue{no spectral pollution in the gap} if the set of eigenvalues of $\mathbf{G}_k$ has no limit points in $(0,A)$.
\end{definition}

Proving that a given frame has no spectral pollution in the gap is highly non-trivial, and we will not attempt to do so here.
This assumption means that spectral cutoff regularisation has vanishing truncation error, provided that the cutoff parameter $\epsilon$ is chosen in the gap $(0, A)$.
In this case, the spectral cutoff on the true operator is exact, so there is no regularisation bias, and we can show that the regularised and truncated solutions converge directly to the true solution.

\begin{theorem}
Let $G$ be the Gram operator of a frame with no spectral pollution in the gap $(0,A)$, and let $\mathbf{G}_k$ be its finite sections (extended by zero to $\ell_2$).
Fix $\epsilon \in (0,A)$, and let $a = G^+ Tx$ be the true solution for some $x \in H$, and let
\[
a_\epsilon^{(k)} = \mathbf{G}_{k,\epsilon}^+ (Tx)_k
\]
denote the finite-dimensional spectral-cutoff solution, where $\mathbf{G}_{k,\epsilon}^+$ is obtained by applying the spectral map $h_\epsilon(\lambda) = \lambda^{-1}\mathbb{I}_{\{\lambda > \epsilon\}}$ to $\mathbf{G}_k$.
Then
\[
\lim_{k\to\infty} \|a_\epsilon^{(k)} - a\| = 0.
\]
\end{theorem}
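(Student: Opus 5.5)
The plan is to view the finite sections as compressions $\mathbf{G}_k = P_k G P_k$ of $G$, where $P_k$ is the orthogonal projection of $\ell_2$ onto the span $E_k$ of the first $k$ standard basis vectors. With this convention $(Tx)_k$, extended by zero, equals $P_k T x = P_k G a$, because $Tx = TT^*a = Ga$ when $x = T^*a$. Two facts follow immediately: $\|\mathbf{G}_k\| \le \|G\| \le B$, and $\mathbf{G}_k \to G$ strongly, since $P_k \to I$ strongly. The obstacle is that the spectral map $h_\epsilon(\lambda) = \lambda^{-1}\mathbb{I}_{\{\lambda>\epsilon\}}$ is discontinuous at $\epsilon$, so strong convergence of $\mathbf{G}_k$ does not directly give convergence of $h_\epsilon(\mathbf{G}_k)$. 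I will remove the discontinuity using the no-pollution hypothesis of Definition \ref{def: no spectral pollution}.

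\textbf{Step 1 (uniform gap; the main obstacle).} Set $J = [\epsilon/2, (\epsilon+A)/2] \subset (0,A)$. I claim there is $K$ such that $\sigma(\mathbf{G}_k) \cap J = \emptyset$ for all $k \ge K$. If not, there are infinitely many $k$ with an eigenvalue $\lambda_k \in J$. Since $J$ is compact, a subsequence of the $\lambda_k$ converges to a point of $J \subset (0,A)$, and this point is a limit point of eigenvalues of the finite sections, contradicting the hypothesis. Some care is needed to read ``limit point of the set of eigenvalues'' as limit points of sequences $\lambda_k \in \sigma(\mathbf{G}_k)$ taken along $k$; I expect this interpretation to be the delicate point of the argument.

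\textbf{Step 2 (continuous replacement).} Define a continuous $f$ on $[0,B]$ by $f = 0$ on $[0,\epsilon/2]$, $f(\lambda) = 1/\lambda$ on $[(\epsilon+A)/2, B]$, and $f$ linear on $J$. Then $|f| \le 2/\epsilon$. By Step 1, $f$ agrees with $h_\epsilon$ on $\sigma(\mathbf{G}_k)$ for $k \ge K$, so $\mathbf{G}_{k,\epsilon}^+ = f(\mathbf{G}_k)$. Since $\sigma(G) \subseteq \{0\} \cup [A,B]$, we also have $f(G) = G^+$.

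\textbf{Step 3 (strong convergence of the functional calculus).} I show $f(\mathbf{G}_k) \to f(G)$ strongly. For any polynomial $p$, $p(\mathbf{G}_k) \to p(G)$ strongly, because products of uniformly bounded, strongly convergent operators converge strongly. Given $\eta > 0$, Weierstrass gives a polynomial $p$ with $\sup_{[0,B]}|f-p| < \eta$, so $\|f(\mathbf{G}_k) - p(\mathbf{G}_k)\| < \eta$ and $\|f(G) - p(G)\| < \eta$. A $3\eta$ argument then finishes the step.

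\textbf{Step 4 (conclusion).} For $k \ge K$,
\[
a_\epsilon^{(k)} - a = f(\mathbf{G}_k)(P_k G a - G a) + \big(f(\mathbf{G}_k) - f(G)\big) G a + \big(G^+ G a - a\big).
\]
The first term has norm at most $(2/\epsilon)\|P_k G a - G a\| \to 0$. The second term tends to $0$ by Step 3. The third term vanishes because $a = G^+Tx \in \ker(G)^\perp$, so $G^+ G a = a$. Hence $\|a_\epsilon^{(k)} - a\| \to 0$.
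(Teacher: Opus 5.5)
Your proposal is correct and follows essentially the same route as the paper. The no-pollution hypothesis gives a uniform spectral gap around the cutoff for large $k$. Then $h_\epsilon$ is replaced by a continuous $f$ agreeing with it on $\sigma(G)\cup\sigma(\mathbf{G}_k)$, and the error splits into a $(P_k - I)Tx$ term and a strong-convergence term $(f(\mathbf{G}_k)-f(G))Tx$.

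The differences are minor and in your favour. You place the gap interval around $\epsilon$ rather than at $[\epsilon,\delta]$. You also prove the uniform gap by compactness, rightly flagging that this needs the sequential reading of ``limit point'', which the paper uses implicitly. Finally, you prove strong continuity of the functional calculus via Weierstrass approximation instead of citing it.
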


\begin{proof}
Since $\sigma(G) \subset \{0\} \cup [A,B]$ and $\epsilon < A$, we have $h_\epsilon(G) = G^+$, so $a = G^+ Tx = h_\epsilon(G) Tx$.
By the assumption of no spectral pollution, the eigenvalues of $\mathbf{G}_k$ have no limit points in $(0,A)$.
If we pick $\delta$ with $\epsilon < \delta < A$, then there exists an $N \in \mathbb{N}$ such that
\[
\sigma(\mathbf{G}_k) \cap [\epsilon,\delta] = \emptyset
\]
for all $k \ge N$.
We also have $\sigma(G) \cap [\epsilon,\delta] = \emptyset$ because $\sigma(G) \subset \{0\}\cup[A,B]$.
We now let $f \in C([0,B])$ be the continuous function such that
\[
f(\lambda) = 0 \quad\text{for } 0 \le \lambda \le \epsilon,
\qquad
f(\lambda) = \frac{1}{\lambda} \quad\text{for } \lambda \ge \delta,
\]
and which is linear on $[\epsilon, \delta]$ connecting $(\epsilon, 0)$ to $(\delta, 1/\delta)$.
Then for all $\lambda \in \sigma(G) \cup \sigma(\mathbf{G}_k)$ and all $k \ge N$ we have $f(\lambda) = h_\epsilon(\lambda)$, so by spectral calculus
\[
f(G) = h_\epsilon(G) = G^+
\qquad
f(\mathbf{G}_k) = h_\epsilon(\mathbf{G}_k) = \mathbf{G}_{k,\epsilon}^+
\]
for all $k \ge N$.
Let $y = Tx \in \ell_2$, and let $P_k$ be the orthogonal projection onto the first $k$ coordinates, so $(Tx)_k = P_k y$.
For $k \ge N$ we have
\[
a_\epsilon^{(k)} = \mathbf{G}_{k,\epsilon}^+ (Tx)_k = f(\mathbf{G}_k) P_k y
\qquad
a = G^+ Tx = f(G) y
\]
so
\[
\|a_\epsilon^{(k)} - a\|
= \|f(\mathbf{G}_k) P_k y - f(G)y\|
\le \|f(\mathbf{G}_k)(P_k - I)y\| + \|(f(\mathbf{G}_k) - f(G))y\|.
\]
The operators $f(\mathbf{G}_k)$ are uniformly bounded by $1/\delta$ because $f \le 1/\delta$ on $[0, B]$, so
\[
\|a_\epsilon^{(k)} - a\|
\le \frac{1}{\delta} \|(P_k - I)y\| + \|(f(\mathbf{G}_k) - f(G))y\|.
\]
The first term tends to zero because $P_k \to I$ strongly.
Since $\mathbf{G}_k \to G$ strongly and $f$ is continuous, the continuity of functional calculus for bounded self-adjoint operators means $f(\mathbf{G}_k) \to f(G)$ strongly, so the second term also tends to zero.
Therefore $\|a_\epsilon^{(k)} - a\| \to 0$ as $k \to \infty$.
\end{proof}

\subsubsection{Towards overall convergence results}

In this section, we have shown that the solutions to suitably regularised solutions to weak formulations will converge to the true solutions as long as the entries of the relevant matrices and vectors converge.
This reduces the problem to the question of convergence for the carré du champ and its associated integral expressions.
For example, the entries in our Gram matrices and load vectors are of the form
\[
\langle \phi_i dx_j, \alpha \rangle
= \int \phi_i g(dx_j, \alpha) d\mu
\]
for some 1-form $\alpha$, and it remains to show that these converge.

Although the computational framework here applies to any carré du champ estimator, the approach we laid out in Section \ref{sec: functions_vector_fields_forms_tensors} uses heat kernel estimates, for which we have several convergence guarantees in the special case of manifolds.
Our estimates involve several steps:
\begin{enumerate}
  \item \boldblue{Eigenfunction estimation}: we estimate the eigenfunctions $\phi_i$ by diagonalising the heat kernel Markov chain. This is essentially \boldblue{diffusion maps} eigenfunction estimation \cite{COIFMAN20065}, and requires \boldblue{spectral convergence} of the Markov chain \cite{trillos2018error, calder2022improved, dunson2021spectral, belkin2006convergence, trillos2018variational, shi2015convergence}.
  \item \boldblue{Carré du champ estimation}: we estimate the inner products $g(x_j, \alpha)$ using the carré du champ operator, which only requires \boldblue{pointwise convergence} of the Markov chain.
  See Theorem 3.1 in \cite{jones2024manifold} for an example result: such results are easily inherited from pointwise convergence of the Markov chain \cite{berry2016variable, belkin2008towards, gine2006empirical}.
  An essential point is that we can only reliably estimate the carré du champ of functions that are sufficiently smooth.
  Like most methods for discrete geometry, we use local inter-point variability as a proxy for the derivative, and if the function varies too quickly, this is not a good estimate.
  \item \boldblue{Monte Carlo integration}: we estimate the integrals using a weighted sum over the data points. 
  This is easily justified by \boldblue{uniform convergence} of the carré du champ estimates, but can be justified under weaker conditions depending on the carré du champ convergence rates and the empirical measure estimation.
\end{enumerate}

However, there is a significant gap between theory and practice in the use of these methods.
First, the whole motivation for diffusion geometry is its use on \boldblue{general data sets}, which may not lie on a manifold, but nothing is known about the above convergence in this setting\footnote{an interesting development is \cite{smale2009geometry}, which proves spectral convergence of eigenfunctions for a fixed-bandwidth diffusion kernel on a general probability space, but not in the small-bandwidth limit}.
Second, in practice, diffusion methods are usually constructed with variable bandwidth kernels, and even for manifold data, we only have formal guarantees of pointwise convergence of the Markov chain \cite{berry2016variable}, and do not currently know whether the eigenfunctions converge correctly.
Third, the spectral cutoff regularisation method we proposed is common in numerical analysis (for example, it is used in Spectral Exterior Calculus \cite{berry2020spectral}), but is only justified under the strong assumption of no spectral pollution in the gap.

\begin{table*}[h!]
\centering
\begin{tabular}{ccc}
& theoretically justified & used in practice \\
\hline
sample space & manifold & general geometry \\
kernel bandwidths & fixed & variable \\
regularisation & Tikhonov & spectral cutoff \\
\end{tabular}
\end{table*}

The lag of theory behind practice presents an open problem, where empirical performance (such as on the non-manifold examples in this paper) demonstrates that much stronger theoretical results could be obtained than those currently known.
For now, we simply summarise that, at least under overly idealised conditions, the methods presented here can be shown to converge, and we leave the study of non-manifold carré du champ convergence for future work.

\begin{computationalnote}
We defined spanning sets like $\{\phi_i dx_j\}_{i \le n_1, j \le d}$.
It is essential that the immersion coordinates $x_j$ are \textit{smooth enough} because of the issue of {carré du champ estimation}.
The largest possible set we could work with is $\{e_i de_j\}_{i, j \le n}$ for pointwise basis functions $e_i$ (or, by a basis change, $\{\phi_i d\phi_j\}_{i, j \le n}$), but this would include computing the derivatives of functions that are too irregular.
We could interpret the smaller spanning set $\{\phi_i dx_j\}_{i \le n_1, j \le d}$ merely as a computational cost reduction, or as an opportunity to inject inductive bias through the choice of $x_j$, but this analysis reveals that \boldblue{picking smooth $x_j$ is essential to maintain sufficient accuracy}.

In a similar vein, if we set $n_1 = n$, then computations like $d(\phi_i dx_j) = d\phi_i \wedge dx_j$ would involve the derivatives of overly irregular $\phi_i$.
This means that, for any operators that involve derivatives, the choice of $n_1 \ll n$ is also essential in constructing a kind of \q{empirical Sobolev space} of forms on which the derivative is reasonably accurate.
\end{computationalnote}

\section{Differential operators}
\label{sec: differential_operators}

Differential operators, like the exterior derivative and Lie bracket, generalise the standard vector calculus methods from $\R^3$ to more general spaces.
They are fundamental in the theory of Riemannian geometry on manifolds, and diffusion geometry generalises them to more general spaces.
We can apply the method of weak formulations to compute discrete representations of differential operators from diffusion geometry, in order to apply them in data analysis.

\subsection{Gradient and divergence}

The \boldblue{gradient} is a map $\nabla : \A \to \mathfrak{X}(M)$.
If $f\in\A$ is a function, its gradient $\nabla f$ is a vector field that measures the direction and rate of steepest increase of $f$. 
We discretise it as an $n_1d \times n_0$ matrix $\boldsymbol{\nabla}$ that maps functions in $\R^{n_0}$ to vector fields in $\R^{n_1d}$.
We obtain this via a weak formulation $\boldsymbol{\nabla}^{\textnormal{weak}}$ that computes the inner product of $\nabla(\p{i})$ and $\p{i'}\nabla x_{j'}$, in the 3-tensor
\begin{equation}
\label{eq: weak gradient}
\boldsymbol{\nabla}^{\textnormal{weak}}_{i' j' i}
= \inp{\p{i'}\nabla x_{j'}}{\nabla \p{i}}
= \int \p{i'} \Gamma(x_{j'}, \p{i}) d\mu
= \sum_{p=1}^n \textbf{U}_{pi'}
\boldsymbol\Gamma_p(\textbf{x}_{j'}, \boldsymbol{\phi}_{i})
\boldsymbol{\mu}_p,
\end{equation}
which has dimension $n_1 \times d \times n_0$.
We reshape $\boldsymbol{\nabla}^{\textnormal{weak}}$ into an $n_1d \times n_0$ matrix $\boldsymbol{\nabla}^{\textnormal{weak}}_{I' i}$, the weak gradient operator, and recover the strong operator $\boldsymbol{\nabla} : \R^{n_0} \to \R^{n_1d}$ as $\boldsymbol{\nabla} = (\textbf{G}^{(1)})^+ \boldsymbol{\nabla}^{\textnormal{weak}}$.
We illustrate the action of the gradient operator in Figure \ref{fig:gradient}.

\begin{figure}[h!]
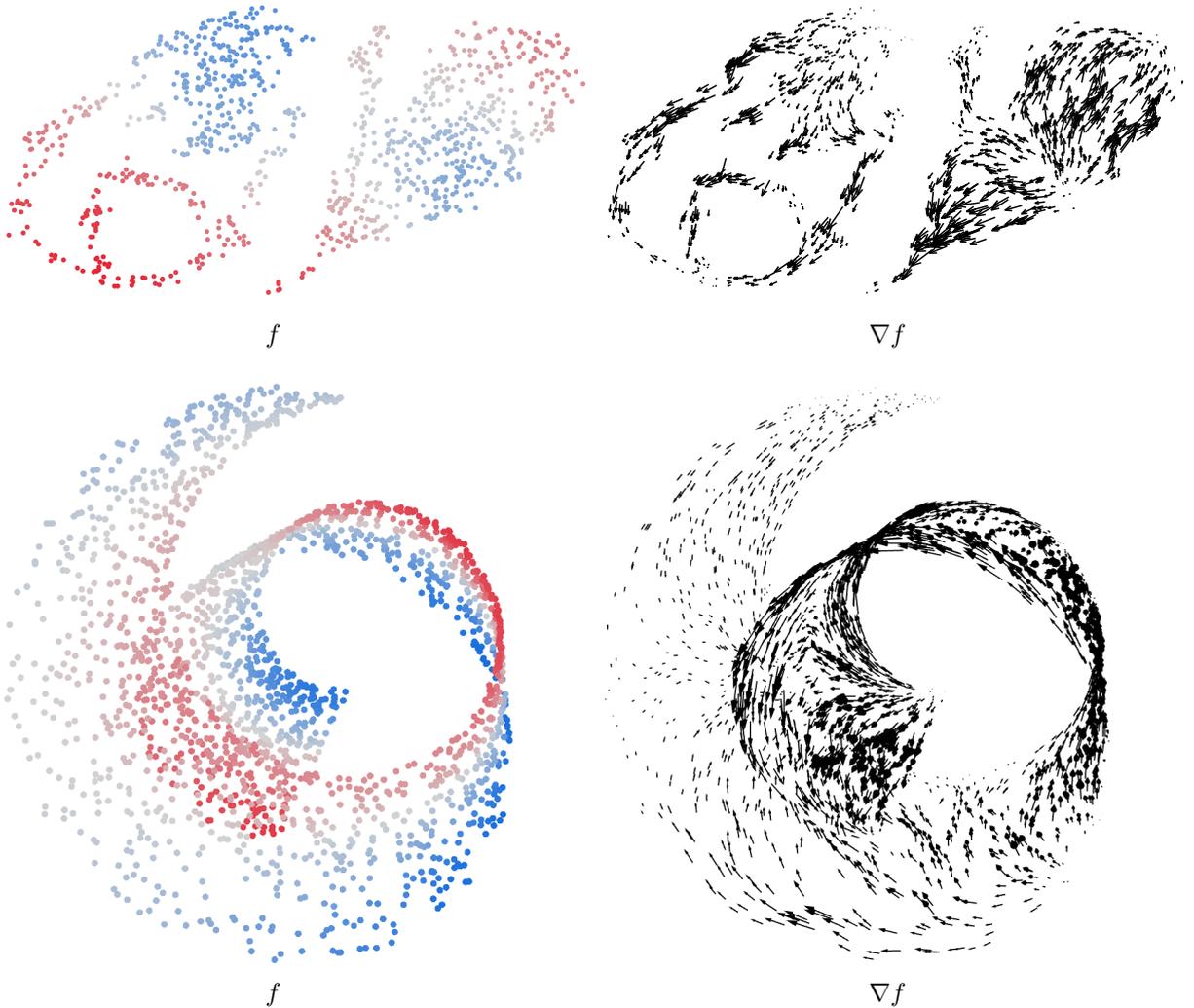

  \vspace{1em}
  \centering
  \begin{overpic}[width=\linewidth,grid=false]{figs/gradients.pdf}
\put(25.0,52.8){\makebox(0,0)[c]{$f$}}
\put(75.0,52.8){\makebox(0,0)[c]{$\nabla f$}}
\put(25.0,-1.2){\makebox(0,0)[c]{$f$}}
\put(75.0,-1.2){\makebox(0,0)[c]{$\nabla f$}}
  \end{overpic}
  \vspace{1em}
  \caption{\textbf{Gradient of functions.}
  A function $f$ is mapped to its gradient vector field $\nabla f$.
  The data in the top row is in 2d, and the bottom row is in 3d.
  }
  \label{fig:gradient}
\end{figure}

The \boldblue{divergence} is defined as the negative adjoint $- \nabla^* : \mathfrak{X}(M) \to \A$, and $-\nabla^*X$ measures the local expansion and contraction of space under the flow of $X$.
We can compute the divergence $-\boldsymbol{\nabla}^* : \R^{n_1d} \to \R^{n_0}$ as $-\boldsymbol{\nabla}^* = -(\textbf{G}^{(0)})^+ (\boldsymbol{\nabla}^{\textnormal{weak}})^T$.
When the function basis is orthonormal, $\textbf{G}^{(0)} = \textbf{I}_{n_0}$ so the divergence operator is represented by the matrix $-(\boldsymbol{\nabla}^{\textnormal{weak}})^T$.
We illustrate the divergence operator in Figure \ref{fig:divergence}.

\begin{figure}[h!]
  \vspace{1em}
  \centering
  \begin{overpic}[width=\linewidth,grid=false]{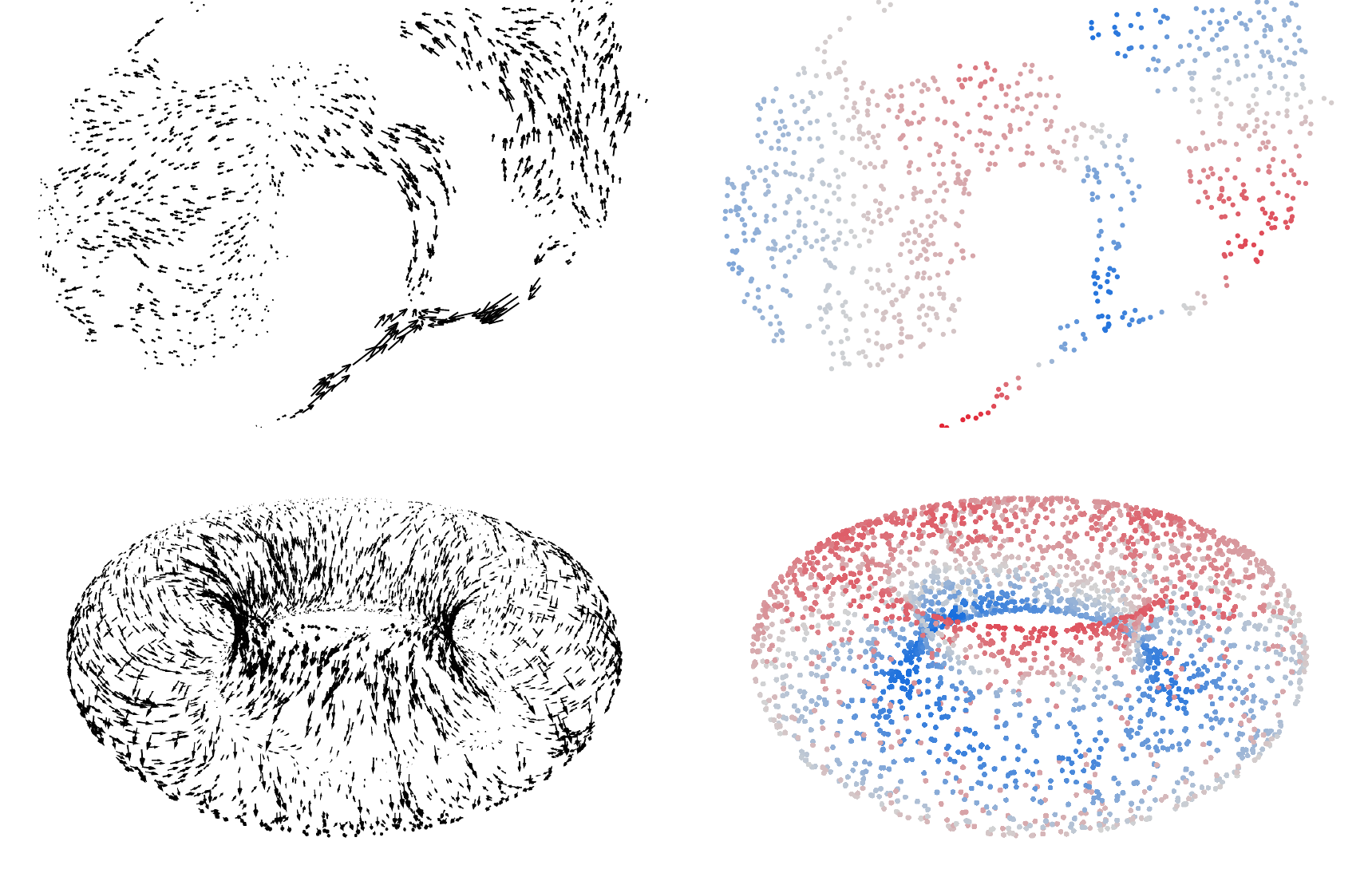}
\put(25.0,31.5){\makebox(0,0)[c]{$X$}}
\put(75.0,31.5){\makebox(0,0)[c]{$-\nabla^* X$}}
\put(25.0,-0.9){\makebox(0,0)[c]{$X$}}
\put(75.0,-0.9){\makebox(0,0)[c]{$-\nabla^* X$}}
  \end{overpic}
    \vspace{0.2em} 
  \caption{\textbf{Divergence of vector fields.}
  A vector field $X \in \mathfrak{X}(M)$ is mapped to its divergence $-\nabla^* X \in \A$, which is positive where $X$ expands space and negative where $X$ contracts space.
  The data in the top row is in 2d, and the bottom row is in 3d.
  }
  \label{fig:divergence}
\end{figure}

\subsection{Exterior derivative and codifferential}

The \boldblue{exterior derivative} is a map $d^{(k)} : \Omega^k(M) \to \Omega^{k+1}(M)$.
When $k=0$, $d^{(0)} : \A \to \Omega^{1}(M)$ is simply the dual to the gradient operator, i.e. $d^{(0)}f = (\nabla f)^\flat$, and $d^{(k)}$ provides a generalisation of the gradient to higher-degree forms for $k > 1$.
We discretise it as an $n_1\binom{d}{k+1} \times n_1 \binom{d}{k}$ matrix $\textbf{d}^{(k)}$ that maps $k$-forms in $\R^{n_1\binom{d}{k}}$ to $(k+1)$-forms in $\R^{n_1\binom{d}{k+1}}$.
We obtain this via a weak formulation $\textbf{d}^{(k),\textnormal{weak}}$ that computes the inner product of $d^{(k)}(\p{i}dx_{(J)})$ and $\p{i'}dx_{(J')}$, where $J=(j_1<\dots<j_k)$ and $J'=(j'_1<\dots<j'_{k+1})$ are multi-indices, in the 4-tensor
\begin{equation}
\label{eq: weak d_k}
\begin{split}
	\textbf{d}^{(k),\textnormal{weak}}_{i' J' i J}
&= \inp{\p{i'}dx_{(J')}}{d^{(k)}(\p{i}dx_{(J)})} \\
&= \inp{\p{i'}dx_{(J')}}{d\p{i} \wedge dx_{(J)}} \\
&= \int \p{i'}
\det
\begin{bmatrix}
\Gamma(x_{j_1'}, \p{i}) & \Gamma(x_{j_1'}, x_{j_1}) & \dots & \Gamma(x_{j_1'}, x_{j_k}) \\
\Gamma(x_{j_2'}, \p{i}) & \Gamma(x_{j_2'}, x_{j_1}) & \dots & \Gamma(x_{j_2'}, x_{j_k}) \\
\dots & \dots  & \dots & \dots  \\
\Gamma(x_{j_{k+1}'}, \p{i}) & \Gamma(x_{j_{k+1}'}, x_{j_1}) & \dots & \Gamma(x_{j_{k+1}'}, x_{j_k}) \\
\end{bmatrix} 
d\mu \\
&= \sum_{p=1}^n \textbf{U}_{pi'}
\det
\begin{bmatrix}
\boldsymbol\Gamma_p(\textbf{x}_{j_1'}, \boldsymbol{\phi}_{i}) & \boldsymbol\Gamma_p(\textbf{x}_{j_1'}, \textbf{x}_{j_1}) & \dots & \boldsymbol\Gamma_p(\textbf{x}_{j_1'}, \textbf{x}_{j_k}) \\
\boldsymbol\Gamma_p(\textbf{x}_{j_2'}, \boldsymbol{\phi}_{i}) & \boldsymbol\Gamma_p(\textbf{x}_{j_2'}, \textbf{x}_{j_1}) & \dots & \boldsymbol\Gamma_p(\textbf{x}_{j_2'}, \textbf{x}_{j_k}) \\
\dots & \dots  & \dots & \dots  \\
\boldsymbol\Gamma_p(\textbf{x}_{j_{k+1}'}, \boldsymbol{\phi}_{i}) & \boldsymbol\Gamma_p(\textbf{x}_{j_{k+1}'}, \textbf{x}_{j_1}) & \dots & \boldsymbol\Gamma_p(\textbf{x}_{j_{k+1}'}, \textbf{x}_{j_k}) \\
\end{bmatrix} 
\boldsymbol{\mu}_p,
\end{split}
\end{equation}
which has dimension $n_1 \times \binom{d}{k+1} \times n_1 \times \binom{d}{k}$.
We reshape $\textbf{d}^{(k),\textnormal{weak}}$ into an $n_1\binom{d}{k+1} \times n_1\binom{d}{k}$ matrix $\textbf{d}^{(k),\textnormal{weak}}_{I' I}$, the weak exterior derivative, and recover the strong map $\textbf{d}^{(k)} : \R^{n_1\binom{d}{k}} \to \R^{n_1\binom{d}{k+1}}$ as $\textbf{d}^{(k)} = (\textbf{G}^{(k+1)})^+ \textbf{d}^{(k),\textnormal{weak}}$.
Since $d^{(0)}$ is just the dual to the gradient $\nabla$ (so the matrix represenations $\textbf{d}^{(0)} = \boldsymbol{\nabla}$ agree), it is also visualised in Figure \ref{fig:gradient}.
We illustrate the action of the exterior derivative on 1-forms in Figure \ref{fig:derivative-1forms}, where it measures the \boldblue{signed vorticity} of the dual vector field.
When the ambient dimension is 3, the vorticity 2-form represents, at each point, an oriented 2d subspace with a unique normal vector (its \textit{Hodge dual}) called the \boldblue{curl}.
Specifically, given a vector field $X \in \mathfrak{X}(M)$ in ambient dimension 3, we can define $\textnormal{curl}(X) = (* d(X^\flat))^\sharp$.


\begin{figure}[h!]
  \centering
  \begin{overpic}[width=\linewidth,grid=false]{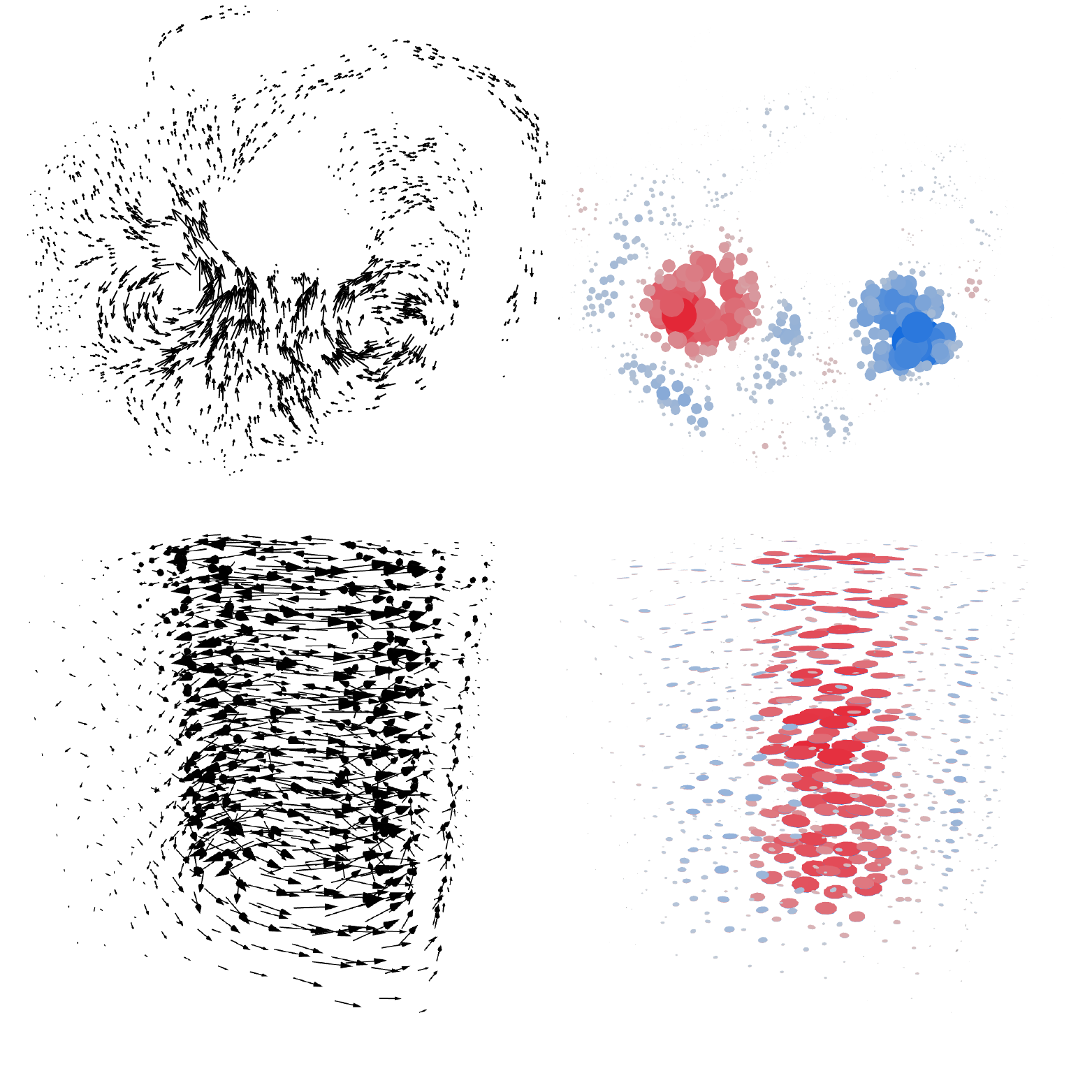}
\put(25.0,55.3){\makebox(0,0)[c]{$\alpha$}}
\put(75.0,55.3){\makebox(0,0)[c]{$d \alpha$}}
\put(25.0,6.4){\makebox(0,0)[c]{$\alpha$}}
\put(75.0,6.4){\makebox(0,0)[c]{$d \alpha$}}
  \end{overpic}
  \vspace{-2em}
  \caption{\textbf{Derivative of 1-forms.}
  If $\alpha \in \Omega^1(M)$ is a 1-form, its derivative $d\alpha \in \Omega^2(M)$ measures the signed vorticity of $\alpha^\sharp \in \mathfrak{X}(M)$ (clockwise in red, anticlockwise in blue).
  The data in the top row is in 2d, and the bottom row is a dense cube in 3d.
  }
  \label{fig:derivative-1forms}
\end{figure}

\begin{computationalnote}
The bottleneck in \eqref{eq: weak d_k} is evaluating the $(k\!+\!1)\!\times\!(k\!+\!1)$ determinants indexed by $(p,J',i,J)$.
We can ease this by exploiting the matrix structure and using a Laplace expansion down the first column, giving
\begin{equation}\label{eq:laplace-exp}
\det[\Gamma_p(x_{j'_r},\p{i}) \;|\; \Gamma_p(x_{j'_r},x_{j_m})]
= \sum_{r=1}^{k+1} (-1)^{r+1} 
\Gamma_p(x_{j'_r},\p{i}) 
\det[\Gamma_p(x_{j'_s},x_{j_m})]_{s\ne r}.
\end{equation}
The minors 
$\det[\Gamma_p(x_{j'_s},x_{j_m})]_{s\ne r}$,
depend only on $(p,J',J)$ but not $i$, so we can cheaply precompute them in $\mathcal{O}\left( k^3\binom{d}{k}\binom{d}{k+1}\right)$ time. Taking the sum in Equation (\ref{eq:laplace-exp}) then takes the total complexity to $\mathcal{O}\left( \binom{d}{k}\binom{d}{k+1}nk(k^2+n_1^2)\right)$.
We can also reorder the operators and perform the sum over $p$ before the sum over $r$, which yields another small improvement.
\end{computationalnote}

We can compute the \boldblue{codifferential} $\boldsymbol\partial^{(k)} = (\textbf{d}^{(k-1)})^* : \R^{n_1\binom{d}{k}} \to \R^{n_1\binom{d}{k-1}}$ for minimal extra effort, because
\[
\boldsymbol\partial^{(k)} = (\textbf{G}^{(k-1)})^+ (\textbf{d}^{(k-1),\textnormal{weak}})^T
\]
for $k \geq 1$.
Since $\partial^{(1)} : \Omega^1(M) \to \A$ is just the negative of the dual to the divergence $-\nabla^* : \mathfrak{X}(M) \to \A$, we have $\boldsymbol\partial^{(1)} = - \boldsymbol{\nabla}^*$, which is visualised in Figure \ref{fig:divergence}.
We illustrate the action of the codifferential on 2-forms in Figure \ref{fig:codifferential-2forms}, where it defines a 1-form whose dual vector field is a \boldblue{rotational flow} around the boundaries of the support of the 2-form.
The direction of rotation reflects the orientation of the 2-form.

\begin{figure}[h!]
  \vspace{1em}
  \centering
  \begin{overpic}[width=\linewidth,grid=false]{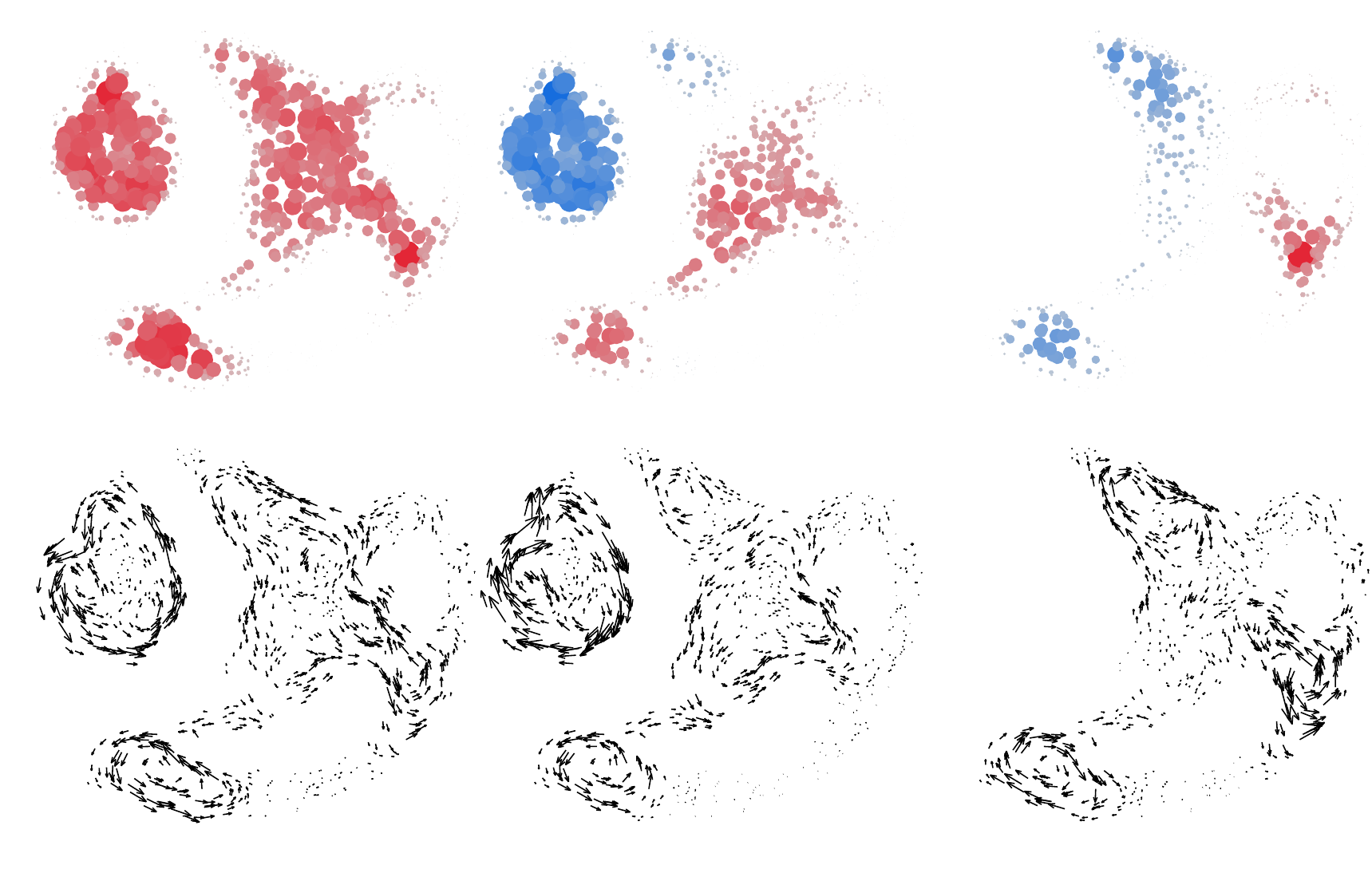}
\put(16.7,33.5){\makebox(0,0)[c]{$\alpha$}}
\put(50.0,33.5){\makebox(0,0)[c]{$\beta$}}
\put(83.3,33.5){\makebox(0,0)[c]{$\gamma$}}
\put(16.7,1.1){\makebox(0,0)[c]{$d^* \alpha$}}
\put(50.0,1.1){\makebox(0,0)[c]{$d^* \beta$}}
\put(83.3,1.1){\makebox(0,0)[c]{$d^* \gamma$}}
  \end{overpic}
  \vspace{0.5em}
  \caption{\textbf{Codifferential of 2-forms.}
  A 2-form $\alpha$ is mapped to its codifferential 1-form $d^* \alpha$, which defines a purely rotational flow around the boundaries of support of $\alpha$.
  We plot three 2-forms defined on data in 2d.
  }
  \label{fig:codifferential-2forms}
\end{figure}

\subsection{Directional derivatives}

\label{sub: directional derivatives}

Vector fields are first-order differential operators and act on functions by $X(f) = g(X, \nabla f)$, for $f \in \A$ and $X \in \mathfrak{X}(M)$, which we can interpret as the \boldblue{directional derivative} of $f$ in the direction of $X$.
If $f\nabla h \in \mathfrak{X}(M)$ is a vector field and $f' \in \A$ is a function, then we can evaluate
\[
(f \nabla h)(f') = f g(\nabla h, \nabla f') = f \Gamma(h,f') \in \A.
\]
Now taking
\[
\textbf{X} = \sum_{i=1}^{n_1} \sum_{j=1}^d \textbf{X}_{ij} \p{i} \nabla x_j
\]
we see that, if $f \in \A$ is a function, then
\[
\textbf{X}(f) = \sum_{i=1}^{n_1} \sum_{j=1}^d \textbf{X}_{ij} \p{i} \Gamma(x_j, f).
\]
We previously used this fact to visualise vector fields by applying them to the coordinate functions $x_j$ (subsection \ref{sub: visualising vector fields}), but will now compute the action of a vector field as an operator on functions via a weak formulation.
Any vector field $\textbf{X} \in \R^{n_1d}$ will correspond to a linear operator on functions, which is represented by an $n_0 \times n_0$ matrix $\textbf{X}^\textnormal{op}$, which we can access via its weak formulation $\textbf{X}^\textnormal{op, weak}$.
We compute the action of $\textbf{X}$ on the function $\p{t}$ via its inner product with $\p{s}$ in the $n_0 \times n_0$ matrix
\begin{equation*}
\begin{split}
\textbf{X}_{st}^\textnormal{op, weak}
&= \inp{\p{s}}{X(\p{t})} \\
&= \sum_{i=1}^{n_1} \sum_{j=1}^d \textbf{X}_{ij} \inp{\p{s}}{(\p{i} \nabla x_j)(\p{t})} \\
&= \sum_{i=1}^{n_1} \sum_{j=1}^d \textbf{X}_{ij} \inp{\p{s}}{\p{i} \Gamma(x_j, \p{t})} \\
&= \sum_{i=1}^{n_1} \sum_{j=1}^d \textbf{X}_{ij} \int \p{s} \p{i} \Gamma(x_j, \p{t}) d\mu \\
&= \sum_{i=1}^{n_1} \sum_{j=1}^d \sum_{p=1}^n \textbf{X}_{ij} \textbf{U}_{ps} \textbf{U}_{pi} \boldsymbol\Gamma_p(\textbf{x}_j, \boldsymbol{\phi}_t) \boldsymbol{\mu}_p.
\end{split}
\end{equation*}
We recover $\textbf{X}^\textnormal{op}$ as $\textbf{X}^\textnormal{op} = (\textbf{G}^{(0)})^+ \textbf{X}^\textnormal{op, weak}$.
In the standard case where the function basis is orthonormal, $\textbf{G}^{(0)} = \textbf{I}_{n_0}$ so $\textbf{X}^\textnormal{op} = \textbf{X}^\textnormal{op, weak}$, but this general approach will work for any function basis.
So, if $\textbf{f} \in \R^{n_0}$ is a function and $\textbf{X} \in \R^{n_1d}$ is a vector field, then $\textbf{X}(\textbf{f}) = \textbf{X}^\textnormal{op} \textbf{f} \in \R^{n_0}$.
We illustrate the action of vector fields on functions in Figure \ref{fig:directional-derivative}.

\begin{figure}[h!]
  \vspace{1em}
  \centering
  \begin{overpic}[width=\linewidth,grid=false]{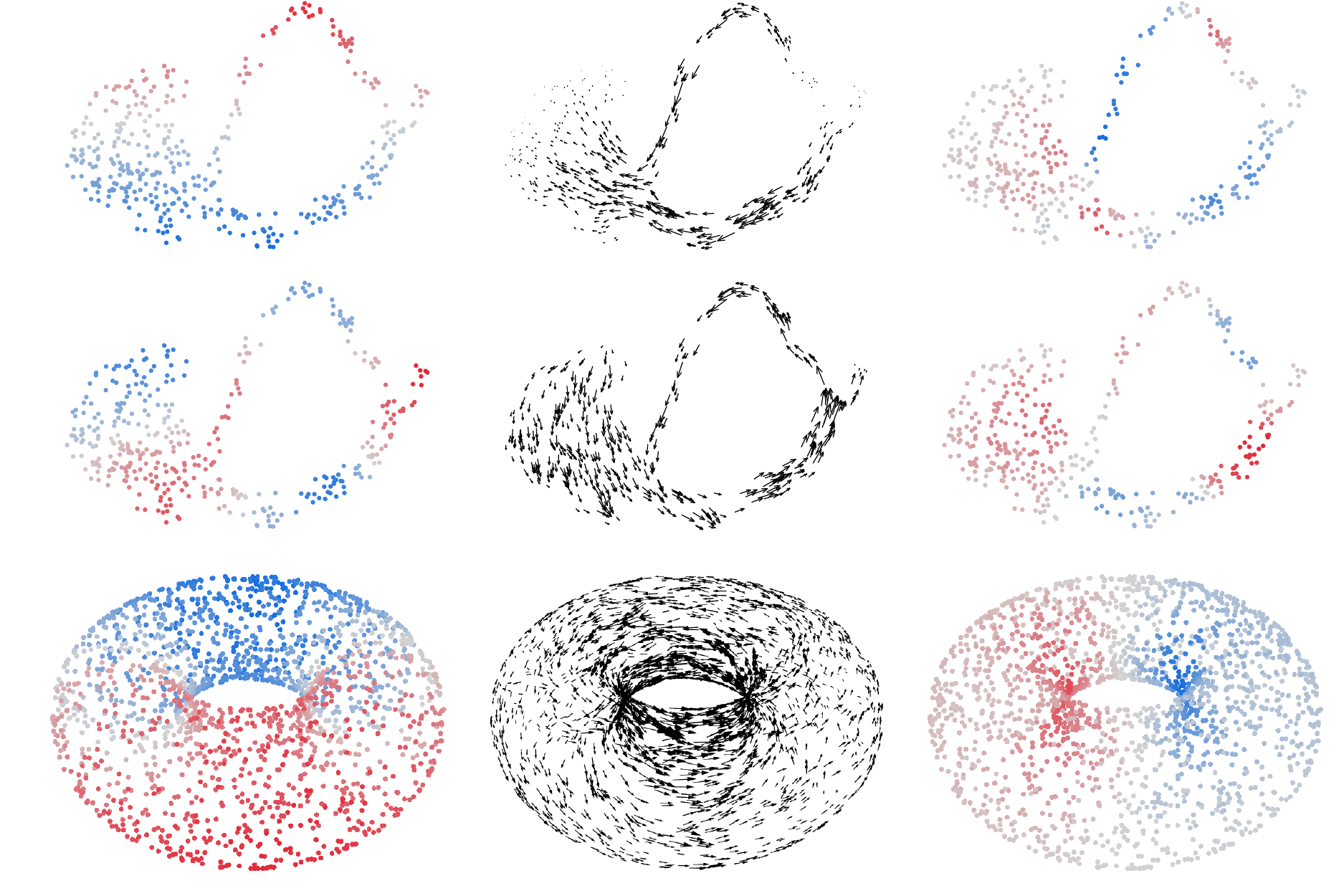}
\put(17.7,46.3){\makebox(0,0)[c]{$f$}}
\put(51.0,46.3){\makebox(0,0)[c]{$X$}}
\put(84.3,46.3){\makebox(0,0)[c]{$X(f)$}}
\put(17.7,25.5){\makebox(0,0)[c]{$h$}}
\put(51.0,25.5){\makebox(0,0)[c]{$Y$}}
\put(84.3,25.5){\makebox(0,0)[c]{$Y(h)$}}
\put(17.7,0.){\makebox(0,0)[c]{$k$}}
\put(51.0,0.){\makebox(0,0)[c]{$Z$}}
\put(84.3,0.){\makebox(0,0)[c]{$Z(k)$}}
  \end{overpic}
  \vspace{0.5em}
  \caption{\textbf{Directional derivative of functions along vector fields.}
  A function $f$ is mapped by a vector field $X$ to its derivative $X(f)$ in the direction of $X$.
  The data in the top two rows is in 2d, and the bottom row is in 3d.
  }
  \label{fig:directional-derivative}
\end{figure}

\subsection{Hodge Laplacian}
\label{sub: hodge laplacian}

The \boldblue{Hodge Laplacian} is the map $\Delta^{(k)} = \partial^{(k+1)}d^{(k)} + d^{(k-1)}\partial^{(k)} : \Omega^k(M) \to \Omega^k(M)$ which generalises the Laplacian on functions.
It has the powerful property of measuring the topology of the space through its spectrum: see Section \ref{sec: TDA}.
We will discretise $\Delta^{(k)}$ as a $n_1\binom{d}{k} \times n_1 \binom{d}{k}$ matrix $\boldsymbol\Delta^{(k)}$ that acts on $k$-forms in $\R^{n_1\binom{d}{k}}$.
We will compute the weak formulation (which is called the \boldblue{Hodge energy})
\[
\inp{\alpha}{\Delta^{(k)}\beta} = 
\inp{d^{(k)}\alpha}{d^{(k)}\beta} + \inp{\partial^{(k)}\alpha}{\partial^{(k)}\beta}
\]
as an $n_1\binom{d}{k} \times n_1 \binom{d}{k}$ matrix $\boldsymbol\Delta^{(k),\textnormal{weak}}$ and recover $\boldsymbol\Delta^{(k)} = (\textbf{G}^{(k)})^+ \boldsymbol\Delta^{(k),\textnormal{weak}}$.
If we only want to know the spectrum of $\boldsymbol\Delta^{(k)}$, we can efficiently solve the generalised eigenproblem
\begin{equation}
\label{eq: hodge generalised eigenproblem}
\boldsymbol\Delta^{(k),\textnormal{weak}}\boldsymbol\alpha = \lambda \textbf{G}^{(k)}\boldsymbol\alpha,
\end{equation}
which is fully symmetric because $\Delta^{(k)}$ is self-adjoint.
To compute the Hodge energy $\boldsymbol\Delta^{(k),\textnormal{weak}}$, we address the \q{up} and \q{down} energy terms separately.
We can write the \q{down} energy $\inp{\partial^{(k)}\alpha}{\partial^{(k)}\beta}$ as
\begin{equation*}
\begin{split}
\inp{\partial^{(k)}\alpha}{\partial^{(k)}\beta} 
&= \inp{\alpha}{d^{(k-1)}\partial^{(k)}\beta} \\
&= \boldsymbol\alpha \textbf{G}^{(k)} \textbf{d}^{(k-1)}\boldsymbol\partial^{(k)}\boldsymbol\beta \\
&= \boldsymbol\alpha \textbf{G}^{(k)} \big( (\textbf{G}^{(k)})^+ \textbf{d}^{(k-1),\textnormal{weak}} \big) \big( (\textbf{G}^{(k-1)})^+ (\textbf{d}^{(k-1),\textnormal{weak}})^T \big) \boldsymbol\beta \\
&= \boldsymbol\alpha \textbf{d}^{(k-1),\textnormal{weak}} (\textbf{G}^{(k-1)})^+ (\textbf{d}^{(k-1),\textnormal{weak}})^T \boldsymbol\beta, \\
\end{split}
\end{equation*}
so the \q{down} energy has matrix representation 
$$
\textbf{Down}^{(k),\textnormal{weak}} = \textbf{d}^{(k-1),\textnormal{weak}} (\textbf{G}^{(k-1)})^+ (\textbf{d}^{(k-1),\textnormal{weak}})^T
$$
which is the weak formulation of the \boldblue{down Laplacian} $d^{(k-1)}\partial^{(k)}$.
We could compute the \q{up} energy in the same way as 
$$
	\textbf{Up}^{(k)} = (\textbf{d}^{(k),\textnormal{weak}})^T (\textbf{G}^{(k+1)})^+ \textbf{d}^{(k),\textnormal{weak}}
$$
but this involves mapping explicitly from $\R^{n_1\binom{d}{k}}$ into the possibly larger space $\R^{n_1\binom{d}{k+1}}$ and inverting the $n_1\binom{d}{k+1} \times n_1\binom{d}{k+1}$ matrix $\textbf{G}^{(k+1)}$ there, which would be computationally expensive.
Instead, we employ the \boldblue{kernel trick} and directly evaluate the \q{up} energy only on $k$-forms in $\R^{n_1\binom{d}{k}}$.
We compute the inner product of $d^{(k)}(\p{i}dx_{(J)})$ and $d^{(k)}(\p{i'}dx_{(J')})$ (with $J=(j_1<\dots<j_k)$ and $J'=(j'_1<\dots<j'_k)$) and collect these into the 4-tensor
\begin{equation}
\label{eq: up hodge}
\begin{split}
	\textbf{Up}^{(k),\textnormal{weak}}_{i' J' i J}
&= \inp{d^{(k)}(\p{i'} dx_{(J')})}
{d^{(k)}(\p{i} dx_{(J)})} \\
&= \inp{d\p{i'} \wedge dx_{j'_1} \wedge \dots \wedge dx_{j'_k}}
{d\p{i} \wedge dx_{j_1} \wedge \dots \wedge dx_{j_k}} \\
&= \int
\det
\begin{bmatrix}
\Gamma(\p{i'}, \p{i}) & \Gamma(\p{i'}, x_{j_1}) & \dots & \Gamma(\p{i'}, x_{j_k}) \\
\Gamma(x_{j_1'}, \p{i}) & \Gamma(x_{j_1'}, x_{j_1}) & \dots & \Gamma(x_{j_1'}, x_{j_k}) \\
\dots & \dots  & \dots & \dots  \\
\Gamma(x_{j_k'}, \p{i}) & \Gamma(x_{j_k'}, x_{j_1}) & \dots & \Gamma(x_{j_k'}, x_{j_k}) \\
\end{bmatrix} 
d\mu \\
&= \sum_{p=1}^n
\det
\begin{bmatrix}
\boldsymbol\Gamma_p(\boldsymbol{\phi}_{i'}, \boldsymbol{\phi}_{i}) & \boldsymbol\Gamma_p(\boldsymbol{\phi}_{i'}, \textbf{x}_{j_1}) & \dots & \boldsymbol\Gamma_p(\boldsymbol{\phi}_{i'}, \textbf{x}_{j_k}) \\
\boldsymbol\Gamma_p(\textbf{x}_{j_1'}, \boldsymbol{\phi}_{i}) & \boldsymbol\Gamma_p(\textbf{x}_{j_1'}, \textbf{x}_{j_1}) & \dots & \boldsymbol\Gamma_p(\textbf{x}_{j_1'}, \textbf{x}_{j_k}) \\
\dots & \dots  & \dots & \dots  \\
\boldsymbol\Gamma_p(\textbf{x}_{j_k'}, \boldsymbol{\phi}_{i}) & \boldsymbol\Gamma_p(\textbf{x}_{j_k'}, \textbf{x}_{j_1}) & \dots & \boldsymbol\Gamma_p(\textbf{x}_{j_k'}, \textbf{x}_{j_k}) \\
\end{bmatrix} 
\boldsymbol{\mu}_p, \\
\end{split}
\end{equation}
which has dimension $n_1 \times \binom{d}{k} \times n_1 \times \binom{d}{k}$.
We then reshape $\textbf{Up}^{(k)}$ into an $n_1\binom{d}{k} \times n_1\binom{d}{k}$ matrix $\textbf{Up}^{(k)}_{IJ}$, which is the weak \boldblue{up Laplacian} $\partial^{(k+1)}d^{(k)}$.
Summing the \q{up} and \q{down} terms we obtain the Hodge energy
$$
\boldsymbol\Delta^{(k),\textnormal{weak}} = \textbf{Down}^{(k),\textnormal{weak}} + \textbf{Up}^{(k),\textnormal{weak}}.
$$
Computing the spectrum of the Hodge Laplacian via the weak eigenproblem (\ref{eq: hodge generalised eigenproblem}) leads to powerful new tools for topological data analysis, which we will explore in Section \ref{sec: TDA}.

We illustrate the Hodge Laplacian acting on functions and 1-forms in Figure \ref{fig:laplacian}, where we separate out the \q{up} and \q{down} components of the Laplacian.
When $f \in \A$ is a function, $\Delta^{(0)} f = \partial^{(1)} d^{(0)} f$ is the classical Laplacian of $f$, which measures how $f$ differs from its local average.
If we rewrite $\Delta^{(0)} f = \nabla^* \nabla f$, we see that it measures how much the gradient vector field $\nabla f$ expands and contracts space, which is visualised in the first two rows of Figure \ref{fig:laplacian}.
When $\alpha \in \Omega^1(M)$ is a 1-form, $\Delta^{(1)} \alpha = d^{(0)} \partial^{(1)} \alpha + \partial^{(2)} d^{(1)} \alpha$.
The \q{down} term $d^{(0)} \partial^{(1)} \alpha$ measures how and where the dual vector field $\alpha^\sharp$ expands and contracts space (like the Laplacian of a function), while the \q{up} term $\partial^{(2)} d^{(1)} \alpha$ measures how much $\alpha^\sharp$ rotates, and this is visualised in the last two rows of Figure \ref{fig:laplacian}.

\begin{figure}
  \vspace{-1em}
  \centering
  \begin{overpic}[width=0.9\linewidth,grid=false]{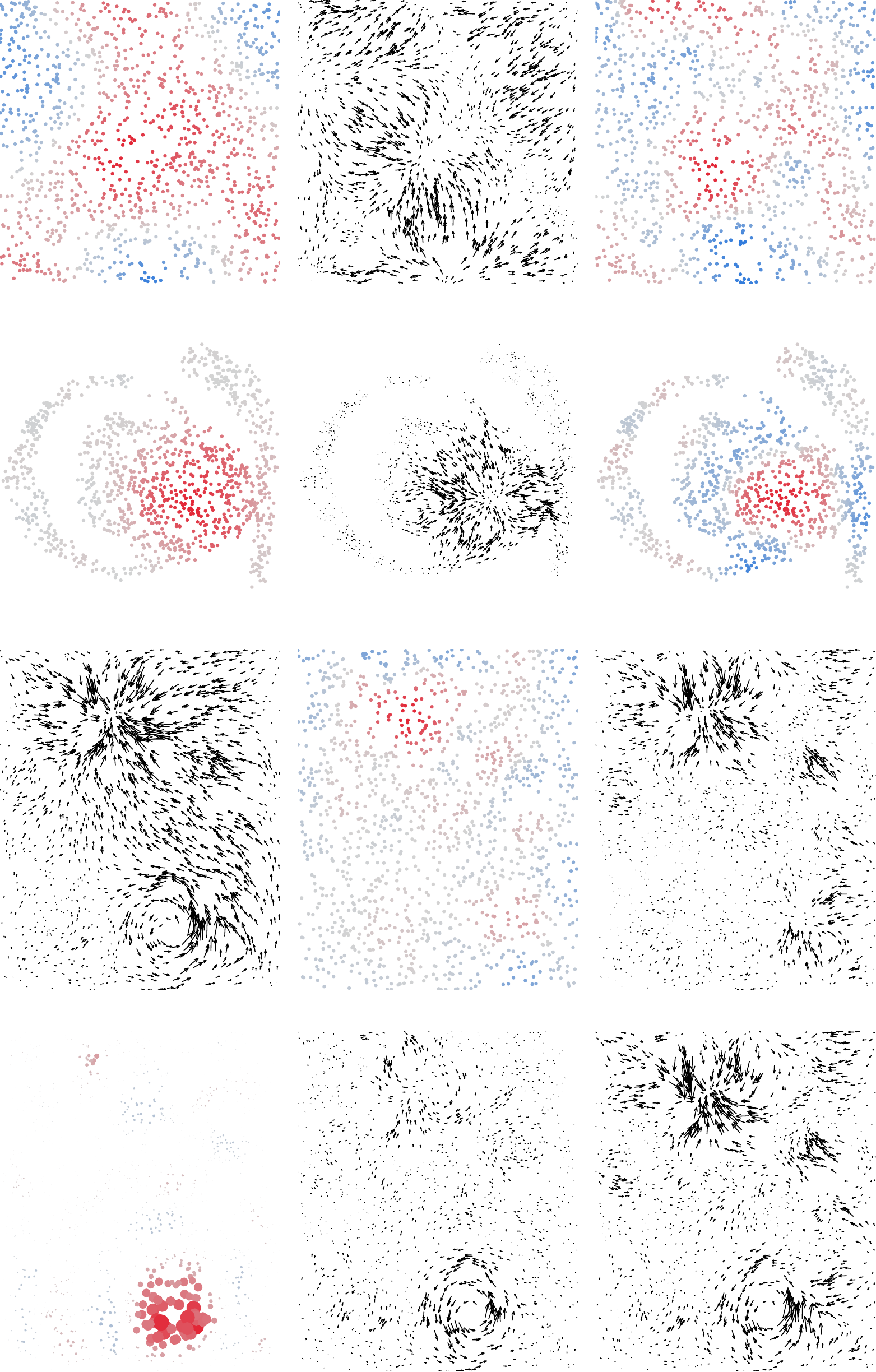}
\put(10.2,77.0){\makebox(0,0)[c]{$f$}}
\put(31.9,77.0){\makebox(0,0)[c]{$d^{(0)} f$}}
\put(53.6,77.0){\makebox(0,0)[c]{$\Delta^{(0)} f = \partial^{(1)} d^{(0)} f$}}
\put(10.2,55.2){\makebox(0,0)[c]{$h$}}
\put(31.9,55.2){\makebox(0,0)[c]{$d^{(0)} h$}}
\put(53.6,55.2){\makebox(0,0)[c]{$\Delta^{(0)} h = \partial^{(1)} d^{(0)} h$}}

\put(10.2,26.4){\makebox(0,0)[c]{$\alpha$}}
\put(31.9,26.4){\makebox(0,0)[c]{$\partial^{(1)} \alpha$}}
\put(53.6,26.4){\makebox(0,0)[c]{$d^{(0)} \partial^{(1)} \alpha$}}
\put(10.2,-1.4){\makebox(0,0)[c]{$d^{(1)} \alpha$}}
\put(31.9,-1.4){\makebox(0,0)[c]{$\partial^{(2)} d^{(1)} \alpha$}}
\put(53.6,-1.4){\makebox(0,0)[c]{$\Delta^{(1)} \alpha = (d^{(0)} \partial^{(1)} + \partial^{(2)} d^{(1)}) \alpha$}}
  \end{overpic}
  \vspace{2em}
  \caption{\textbf{Hodge Laplacian of functions and 1-forms.}
  All data is 2d.
  If $f \in \A$ is a function, then $\Delta^{(0)} f = \partial^{(1)} d^{(0)} f$ measures how much $d^{(0)} f = (\nabla f)^\flat$ expands and contracts space, i.e. the curvature of $f$. If $\alpha \in \Omega^1(M)$ is a 1-form, the down Laplacian $d^{(0)} \partial^{(1)} \alpha$ measures the failure of $\alpha$ to preserve volumes, and the up Laplacian $\partial^{(2)} d^{(1)} \alpha$ measures how locally rotational $\alpha$ is.
  }
  \label{fig:laplacian}
\end{figure}

\begin{computationalnote}
The bottleneck in \eqref{eq: up hodge} is evaluating the $(k\!+\!1)\!\times\!(k\!+\!1)$ determinants indexed by $(p,i',J',i,J)$.
We can exploit the block matrix structure using the Schur determinant identity.
Let $\textbf{D}_p(J',J) = [\boldsymbol\Gamma_p(\textbf{x}_{j'_r}, \textbf{x}_{j_c})]_{r,c\leq k}$ be the $k\!\times\!k$ bottom-right submatrix,
$\textbf{b}_p(i',J) = [\boldsymbol\Gamma_p(\boldsymbol{\phi}_{i'}, \textbf{x}_{j_c})]_{c\leq k}$ be the $1\!\times\!k$ top-right row vector,
and $\textbf{c}_p(J',i) = [\boldsymbol\Gamma_p(\textbf{x}_{j'_r}, \boldsymbol{\phi}_{i})]_{r\leq k}^T$ be the $k\!\times\!1$ bottom-left column vector.
The determinant is then given by
\[
\det
\begin{bmatrix}
\boldsymbol\Gamma_p(\boldsymbol{\phi}_{i'}, \boldsymbol{\phi}_{i}) & \textbf{b}_p(i',J) \\
\textbf{c}_p(J',i) & \textbf{D}_p(J',J)
\end{bmatrix} 
= \det(\textbf{D}_p) 
\left( \boldsymbol\Gamma_p(\boldsymbol{\phi}_{i'}, \boldsymbol{\phi}_{i}) - \textbf{b}_p \textbf{D}_p^{-1} \textbf{c}_p \right).
\]
The $k\!\times\!k$ matrix $\textbf{D}_p(J',J)$ and its determinant $\det(\textbf{D}_p)$ depend only on $(p,J',J)$ but not on $i$ or $i'$, so we can cheaply precompute them.
We can then perform the batched linear solves $\textbf{D}_p \textbf{x} = \textbf{c}_p(J',i)$ for all $(p,i',J',J)$ with no dependence on $i$, and so form the quadratic forms $\textbf{b}_p \textbf{D}_p^{-1} \textbf{c}_p$ efficiently for all $(p,i',i,J',J)$.
As before, we also reorder the operators and perform the sum over $p$ first, i.e.
\[
\sum_{p=1}^n\det(\textbf{D}_p) 
\left( \boldsymbol\Gamma_p(\boldsymbol{\phi}_{i'}, \boldsymbol{\phi}_{i}) - \textbf{b}_p \textbf{D}_p^{-1} \textbf{c}_p \right)
=
\sum_{p=1}^n\det(\textbf{D}_p) 
\boldsymbol\Gamma_p(\boldsymbol{\phi}_{i'}, \boldsymbol{\phi}_{i}) - 
\sum_{p=1}^n\det(\textbf{D}_p) \textbf{b}_p \textbf{D}_p^{-1} \textbf{c}_p,
\]
which yields another small improvement.
\end{computationalnote}

\subsection{Hodge decomposition}

We can perform the \boldblue{Hodge decomposition} of a $k$-form $\alpha \in \Omega^k(M)$ into
\[
\alpha = d_{k-1}\beta + \gamma + \partial_{k+1}\delta,
\]
where $\beta \in \Omega^{k-1}(M)$, $\gamma \in \Omega^k(M)$, and $\delta \in \Omega^{k+1}(M)$, with $d_k \gamma = 0$ and $\partial_k \gamma = 0$.
We call $d_{k-1}\beta$ the \boldblue{exact part} of $\alpha$, and $\gamma$ the \boldblue{harmonic part}, and $\partial_{k+1}\delta$ the \boldblue{coexact part}.
We will refer to $\beta$ as the \boldblue{exact potential} of $\alpha$, and $\delta$ as the \boldblue{coexact potential}.
The Hodge decomposition is unique, and the three parts are orthogonal.
The harmonic part $\gamma$ lies in the kernel of the Hodge Laplacian $\Delta_k$.
Intuitively, the harmonic and coexact parts of $\alpha$ both capture its rotational behaviour: the coexact part corresponds to vortices whose centre is in the space, and the harmonic part corresponds to global circular flows around holes in the space.
We can solve for the exact potential $\beta \in \Omega^{k-1}(M)$ using the fact that
\[
\Delta_{k-1}^{\textnormal{up}}\beta 
= \partial_k d_{k-1} \beta
= \partial_k (d_{k-1} \beta + \gamma + \partial_{k+1} \delta)
= \partial_k \alpha,
\]
and so $\beta = \big(\Delta_{k-1}^{\textnormal{up}}\big)^+ \partial_k \alpha$, where $\big(\Delta_{k-1}^{\textnormal{up}}\big)^+$ is the pseudoinverse of $\Delta_{k-1}^{\textnormal{up}}$.
In our discretisation, we can rewrite this equation using only weak formulations as
\begin{align*}
\boldsymbol{\beta} 
&= \big(\boldsymbol{\Delta}_{k-1}^{\textnormal{up}}\big)^+ \boldsymbol{\partial}_k \boldsymbol{\alpha} \\
&= \big((\mathbf{G}^{(k-1)})^+\boldsymbol{\Delta}_{k-1}^{\textnormal{up, weak}}\big)^+ 
(\textbf{G}^{(k-1)})^+ (\textbf{d}^{(k-1),\textnormal{weak}})^T \boldsymbol{\alpha} \\
&= \big(\boldsymbol{\Delta}_{k-1}^{\textnormal{up, weak}}\big)^+ \mathbf{G}^{(k-1)}
(\textbf{G}^{(k-1)})^+ (\textbf{d}^{(k-1),\textnormal{weak}})^T \boldsymbol{\alpha} \\
&= \big(\boldsymbol{\Delta}_{k-1}^{\textnormal{up, weak}}\big)^+
(\textbf{d}^{(k-1),\textnormal{weak}})^T \boldsymbol{\alpha},
\end{align*}
which we can easily solve.
We likewise compute the coexact potential $\delta \in \Omega^{k+1}(M)$ as
\[
\boldsymbol{\delta} = \big(\boldsymbol{\Delta}_{k+1}^{\textnormal{down, weak}}\big)^+
(\textbf{d}^{(k+1),\textnormal{weak}})^T \boldsymbol{\alpha},
\]
and obtain the harmonic part of $\boldsymbol{\alpha}$ as $\boldsymbol{\gamma} = \boldsymbol{\alpha} - \textbf{d}^{(k-1)}\boldsymbol{\beta} - \boldsymbol\partial^{(k+1)}\boldsymbol{\delta}$.
We illustrate the Hodge decomposition of functions and 1-forms in Figure \ref{fig:hodge-decomp}.
When $f \in \A$ is a function, its harmonic part is locally constant, and its coexact part captures the local variability of $f$.
The exact part is always zero because there are no $(-1)$-forms.
When $\alpha \in \Omega^1(M)$ is a 1-form, the exact potential is a function $f \in \A$ that measures the displacement of the flow along $\alpha^\sharp$, and the coexact part contains its vortices.
A 1-form $\beta \in \Omega^1(M)$ on a torus can have a harmonic part that captures a circular flow around the holes in the space.

\begin{computationalnote}
In practice, the harmonic part $\boldsymbol{\gamma}$ is always nearly zero.
Any forms that \q{look} harmonic will have very low Hodge energy, but will not be exactly in the kernel of $\Delta^{(k)}$ due to data uncertainty and numerical error.
When we perform their Hodge decomposition, the harmonic behaviour is absorbed into the coexact part, with a very large coexact potential $\boldsymbol{\delta}$.
This applies in the \q{harmonic part} of the flow on the torus in Figure \ref{fig:hodge-decomp}, where the visually harmonic part is actually computed as coexact.
On the other hand, the harmonic part of the function in the top row is genuinely computed as harmonic because of the k-nearest neighbour kernel construction, which makes the computational graph genuinely disconnected.
\end{computationalnote}

\begin{figure}[h!]
  \vspace{-1em}
  \centering
  \begin{overpic}[width=\linewidth,grid=false]{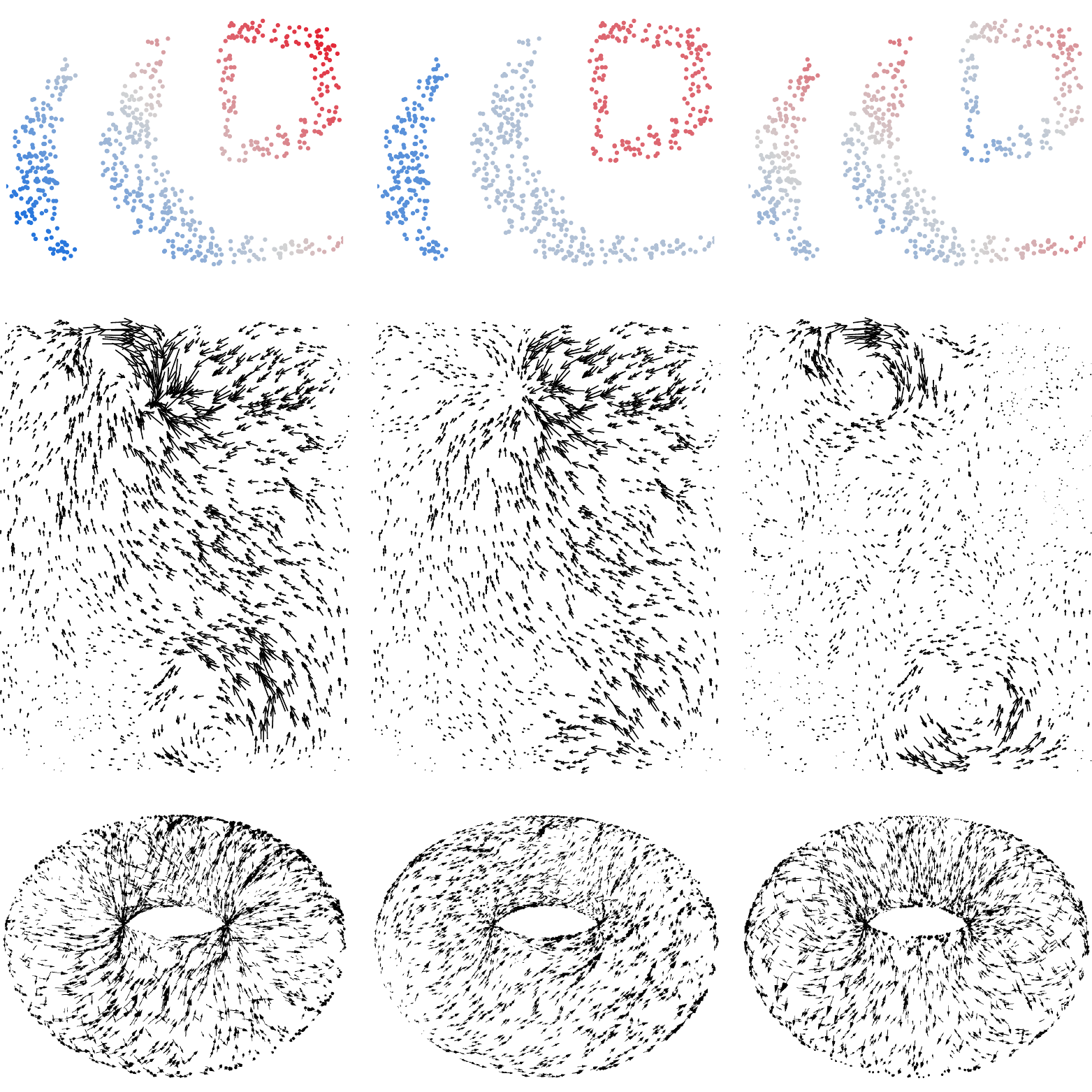}
\put(16.0,73.8){\makebox(0,0)[c]{$f$}}
\put(50.0,73.8){\makebox(0,0)[c]{harmonic part of $f$}}
\put(84.0,73.8){\makebox(0,0)[c]{coexact part of $f$}}
\put(16.0,-0.8){\makebox(0,0)[c]{$\beta$}}
\put(50.0,-0.8){\makebox(0,0)[c]{exact part of $\beta$}}
\put(84.0,-0.8){\makebox(0,0)[c]{harmonic part of $\beta$}}

\put(16.0,27.5){\makebox(0,0)[c]{$\alpha$}}
\put(50.0,27.5){\makebox(0,0)[c]{exact part of $\alpha$}}
\put(84.0,27.5){\makebox(0,0)[c]{coexact part of $\alpha$}}
  \end{overpic}
  \vspace{1em}
  \caption{\textbf{Hodge decomposition of functions and 1-forms.}
  If $f \in \A$ is a function, then its harmonic part is locally constant, and its coexact part captures the local variability of $f$.
  The fact that they are orthogonal means that the coexact part integrates to 0, i.e. it has measures \textit{only} variability but has mean 0.
  If $\alpha \in \Omega^1(M)$ is a 1-form, the exact part has the form $d f$ and measures the displacement of the flow along $\alpha$.
  The coexact part is purely rotational, and captures just the vortices of $\alpha$.
  The data in the top two rows are 2d, and the bottom row is 3d.
  }
  \label{fig:hodge-decomp}
\end{figure}

\subsection{Lie bracket}

The directional derivative of a function $f \in \A$ along a vector field $X \in \mathfrak{X}(M)$ is also called the \boldblue{Lie derivative} and is denoted $\mathcal{L}_X f$.
This notion extends to vector fields: if $X, Y \in \mathfrak{X}(M)$ are vector fields, then the Lie derivative of $Y$ along $X$ is another vector field $\mathcal{L}_X Y \in \mathfrak{X}(M)$ defined by
\begin{equation}
\label{eq: lie derivative of vector fields}
\mathcal{L}_X Y (f) = X(Y(f)) - Y(X(f))
\end{equation}
for any $f \in \A$.
This operation is also called the \boldblue{Lie bracket} of $X$ and $Y$, denoted $[X,Y] = \mathcal{L}_X Y$, and defines a bilinear map 
\begin{equation}\label{eq: def of lie bracket}
[\cdot,\cdot] : \mathfrak{X}(M) \times \mathfrak{X}(M) \to \mathfrak{X}(M).
\end{equation}

\begin{wrapfigure}{r}{0.3\textwidth}
\centering
\begin{tikzpicture}[scale=2,>=stealth,thick]

\coordinate (p1) at (0,0);
\coordinate (p2) at (1,0.1);
\coordinate (p3) at (0.9,1.1);
\coordinate (p4) at (-0.1,1.);
\coordinate (p5) at (-0.2,0.3);

\draw[->,smooth,shorten >=2pt] (p1) .. controls (0.2,-0.2) and (0.8,0.2)  .. (p2)
   node[pos=0.4,below right] {$\Phi^X_{\varepsilon}$};
\draw[->,smooth,shorten >=2pt] (p2) .. controls (1.2,0.3) and (0.75,0.7)  .. (p3)
   node[pos=0.7,below right] {$\Phi^Y_{\varepsilon}$};
\draw[->,smooth,shorten >=2pt] (p3) .. controls (0.5,1.2) and (0.4,0.9)  .. (p4)
   node[pos=0.8,above right, xshift=-0.25em, yshift=0.2em] {$\Phi^X_{-\varepsilon}$};
\draw[->,smooth,shorten >=2pt] (p4) .. controls (-0.2,0.6) and (0,0.6)  .. (p5)
node[pos=0.15,below left] {$\Phi^Y_{-\varepsilon}$};

\draw[dotted,->,shorten >=2.5pt] (p1) -- (p5)
   node[pos=0.3,below left, yshift=7pt] {$\varepsilon^2 [X,Y](p)$};

\fill (p1) circle (0.04) node[below,yshift=-0.5em ] {$p$};
\fill (p2) circle (0.03);
\fill (p3) circle (0.03);
\fill (p4) circle (0.03);
\fill (p5) circle (0.03);
  

\end{tikzpicture}
\caption{Non-commuting flows of two vector fields $X$ and $Y$ lead to non-trivial Lie bracket $[X,Y]$.} \label{fig:flowparallelogram}
\end{wrapfigure}
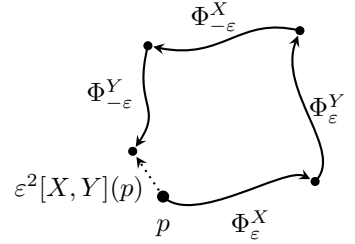

We can get some intuition for the Lie bracket by considering what happens on a classical manifold, where it quantifies the failure of the flows generated by $X$ and $Y$ to commute. 
Let $\Phi_\epsilon^X,\Phi_\epsilon^Y:M\to M$ denote the infinitesimal flows generated by $X$ and $Y$, respectively\footnote{%
The infinitesimal flow $\Phi_\epsilon^X(p)$ is the point reached by following the integral curve of $X$ starting at $p$ for a time $\epsilon$, satisfying $\frac{d}{d\epsilon}\Phi_\epsilon^X(p)=X(\Phi_\epsilon^X(p))$ and $\Phi_0^X(p)=p$.}. 
Composing them in the opposite order does not, in general, return to the same point:
\[
\Phi^Y_{-\varepsilon}\,\Phi^X_{-\varepsilon}\,\Phi^Y_{\varepsilon}\,\Phi^X_{\varepsilon}(p)
  = p + \varepsilon^2 [X,Y](p) + O(\varepsilon^3).
\]
The infinitesimal discrepancy $\epsilon^2 [X,Y](p)$ represents the small closing edge of the flow diagram spanned by $X$ and $Y$ at a point $p\in M$: see Figure \ref{fig:flowparallelogram}.

In our framework, we discretise the Lie bracket \eqref{eq: def of lie bracket} as an $n_1d \times n_1d \times n_1d$ 3-tensor $\textbf{Lie}$ that maps a pair of vector fields in $\R^{n_1d}$ to a vector field in $\R^{n_1d}$.
We compute the weak formulation $\textbf{Lie}^{\textnormal{weak}}$ as the inner product of $[\p{i_1}\nabla x_{j_1}, \p{i_2}\nabla x_{j_2}]$ and $\p{i'}\nabla x_{j'}$ in the 6-tensor
\begin{equation}
\label{eq: weak lie}
\begin{split}
\textbf{Lie}^{\textnormal{weak}}_{i' j' i_1j_1 i_2j_2}
&= \inp{\p{i'}\nabla x_{j'}}
{[\p{i_1}\nabla x_{j_1}, \p{i_2}\nabla x_{j_2}]} \\
&= \int \p{i'}
[\p{i_1}\nabla x_{j_1}, \p{i_2}\nabla x_{j_2}](x_{j'})
d\mu \\
&= \int \p{i'}
\big(
\p{i_1}\nabla x_{j_1} (\p{i_2}\Gamma(x_{j_2}, x_{j'}))
- \p{i_2}\nabla x_{j_2} (\p{i_1}\Gamma(x_{j_1}, x_{j'}))
\big)
d\mu \\
&= \int \p{i'}
\big(
\p{i_1}\Gamma (x_{j_1}, \p{i_2}\Gamma(x_{j_2}, x_{j'}))
- \p{i_2}\Gamma (x_{j_2}, \p{i_1}\Gamma(x_{j_1}, x_{j'}))
\big)
d\mu \\
&= \sum_{p=1}^n \textbf{U}_{pi'}
\big(
\textbf{U}_{pi_1}
\boldsymbol\Gamma_p(\textbf{x}_{j_1}, \boldsymbol{\phi}_{i_2} \boldsymbol\Gamma(\textbf{x}_{j_2}, \textbf{x}_{j'}))
- \textbf{U}_{pi_2}
\boldsymbol\Gamma_p(\textbf{x}_{j_2}, \boldsymbol{\phi}_{i_1} \boldsymbol\Gamma(\textbf{x}_{j_1}, \textbf{x}_{j'}))
\big)
\boldsymbol{\mu}_p,
\end{split}
\end{equation}
which has dimension $n_1 \times d \times n_1 \times d \times n_1 \times d$.
We then reshape $\textbf{Lie}^{\textnormal{weak}}$ into an $n_1d \times n_1d \times n_1d$ 3-tensor, and recover the strong formulation $\textbf{Lie} : \R^{n_1d} \times \R^{n_1d} \to \R^{n_1d}$ by computing $\textbf{Lie} = (\textbf{G}^{1})^+ \textbf{Lie}^{\textnormal{weak}}$, which represents the Lie bracket as a bilinear map $\R^{n_1d} \times \R^{n_1d} \to \R^{n_1d}$ given by $(X,Y) \mapsto [X, Y]$.

We visualise the Lie bracket in Figure~\ref{fig:lie-bracket}.
To verify the computation, we compute two examples on spaces where the Lie bracket can be evaluated analytically.
On a square, we expect $[x\nabla y, y\nabla x] = x\nabla x-y\nabla y$.
On a sphere, the latitude and longitude angles $\theta$ and $\phi$ define vector fields $\partial_\theta$ and $\partial_\phi$, and we expect $[\partial_\theta, \frac{1}{\sin(\theta)}\partial_\phi] = -\mathrm{cot}(\theta)\partial_\phi$.
In both cases, we recover the expected results computationally.

\begin{figure}[]
  \centering
\begin{overpic}[width=\linewidth,grid=false, yshift=2cm]{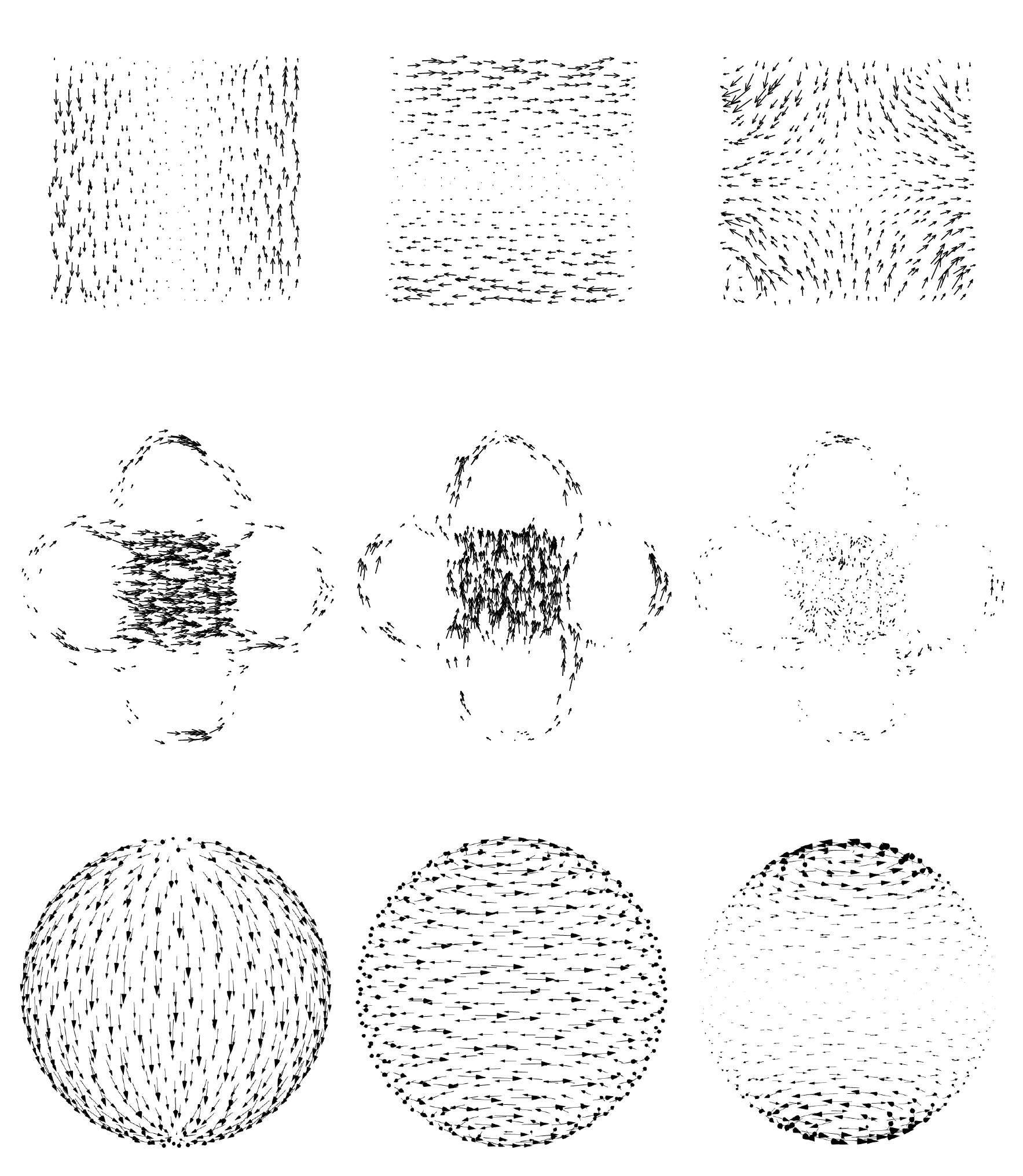}
\put(15.3,71.1){\makebox(0,0)[c]{$X=x\nabla y$}}
\put(43.5,71.1){\makebox(0,0)[c]{$Y=y\nabla x$}}
\put(71.7,71.1){\makebox(0,0)[c]{$[X, Y] \approx x\nabla x-y\nabla y$}}
\put(15.3,35.0){\makebox(0,0)[c]{$X=\nabla x$}}
\put(43.5,35.0){\makebox(0,0)[c]{$Y=\nabla y$}}
\put(71.7,35.0){\makebox(0,0)[c]{$[X, Y]$}}
\put(15.3,-1.0){\makebox(0,0)[c]{$e_\theta= \partial_\theta$}}
\put(43.5,-1.0){\makebox(0,0)[c]{$e_\phi=\frac{1}{\sin(\theta)}\partial_\phi$}}
\put(71.7,-1.0){\makebox(0,0)[c]{$[e_\theta, e_\phi] \approx -\mathrm{cot}(\theta)e_\phi$}}
\end{overpic}
\vspace{2em}
\caption{
\textbf{Lie bracket of vector fields.}
The Lie bracket measures the derivative of a vector field along the flow of another.
The first and third rows correctly recover the ground truth, which can be computed analytically.
The first row illustrates the Leibniz rule for the Lie bracket, where non-constant coefficients cause the flows of the vector fields to fail to commute.
The vector fields in the second row do commute, leading to (approximately) zero Lie bracket.
The vector fields in the third row form an \textit{orthonormal frame} on the sphere $S^2$ (meaning they are pointwise orthonormal), and their non-zero value captures the intrinsic geometry.}
\label{fig:lie-bracket}
\end{figure}

\subsection{Hessian}

If $f: \R^d \to \R$ is a smooth function, its \boldblue{Hessian} at a point $p \in \R^d$ is the square matrix of second derivatives
\[H(f)_p = 
\begin{bmatrix}
\frac{\partial^2 f}{\partial x_1 \partial x_1}(p) & \dots & \frac{\partial^2 f}{\partial x_1 \partial x_d}(p) \\
\vdots & \ddots & \vdots \\
\frac{\partial^2 f}{\partial x_d \partial x_1}(p) & \dots & \frac{\partial^2 f}{\partial x_d \partial x_d}(p) \\
\end{bmatrix},\]
which we can think of as a rank-2 tensor field (i.e. a field of matrices) on $\R^d$.
This is generalised in Riemannian and diffusion geometry to the Hessian operator $H : \A \to \Omega^1(M)^{\otimes 2}$, which maps functions to $(0,2)$-tensors.
We would like to discretise it as an $n_1d^2 \times n_0$ matrix $\textbf{H}$ that maps functions in $\R^{n_0}$ to 2-tensors in $\R^{n_1d^2}$.
We can access the Hessian using the carré du champ as
\[
H(f)(\nabla h_1, \nabla h_2) =
\frac{1}{2}
\Big(
\Gamma(h_1, \Gamma(h_2, f))
+ \Gamma(h_2, \Gamma(h_1, f))
- \Gamma(f, \Gamma(h_1, h_2))
\Big),
\]
which evaluates the action of the Hessian $H(f)$ on the pair of vector fields $\nabla h_1$ and $\nabla h_2$ as described in Section \ref{sub: action of 2 tensors}.
We will use the weak formulation $\textbf{H}^{\textnormal{weak}}$, which we compute as the inner product of $\p{i'}dx_{j'_1} \otimes dx_{j'_2}$ and $H(\p{i})$ in the 4-tensor
\begin{equation*}
\label{eq: weak Hessian}
\begin{split}
\textbf{H}^{\textnormal{weak}}_{i'j'_1j'_2i}
&= \inp{\p{i'}dx_{j'_1} \otimes dx_{j'_2}}
{H(\p{i})}\\
&= \int \p{i'} g(H(\p{i}), dx_{j'_1} \otimes dx_{j'_2}) d\mu \\
&= \int \p{i'} H(\p{i})(\nabla x_{j'_1}, \nabla x_{j'_2}) d\mu \\
&= \frac{1}{2}\int \p{i'}
\big[ \Gamma(x_{j'_1}, \Gamma(x_{j'_2}, \p{i}))
+ \Gamma(x_{j'_2}, \Gamma(x_{j'_1}, \p{i}))
- \Gamma(\p{i}, \Gamma(x_{j'_1}, x_{j'_2}))
\big] d\mu \\
&= \frac{1}{2}\sum_{p=1}^n
\textbf{U}_{pi'}
\big[ 
\boldsymbol\Gamma_p(\textbf{x}_{j'_1}, \boldsymbol\Gamma(\textbf{x}_{j'_2},\boldsymbol{\phi}_{i}))
 + \boldsymbol\Gamma_p(\textbf{x}_{j'_2}, \boldsymbol\Gamma(\textbf{x}_{j'_1},\boldsymbol{\phi}_{i}))
 - \boldsymbol\Gamma_p(\boldsymbol{\phi}_{i}, \boldsymbol\Gamma(\textbf{x}_{j'_1}, \textbf{x}_{j'_2}))
\big] 
\boldsymbol{\mu}_p,
\end{split}
\end{equation*}
which has dimension $n_1 \times d \times d \times n_0$.
We reshape $\textbf{H}^{\textnormal{weak}}$ into an $n_1d^2 \times n_0$ matrix, and recover the strong formulation $\textbf{H} : \R^{n_0} \to \R^{n_1d^2}$ as $\textbf{H} = (\textbf{G}^{(0,2)})^+ \textbf{H}^{\textnormal{weak}}$.

\begin{computationalnote}
The Hessian is a symmetric tensor field, and so we can implement an optimised version that only computes the $d(d+1)/2$ unique components of the $d \times d$ Hessian matrix at each point, and maps directly into the smaller space of symmetric 2-tensors $\Omega^{1}(\textbf{M})^{\otimes2,\ \textnormal{sym}}$ described in \compnote{note: symmetric 2 tensors}.
\end{computationalnote}

We can compute the action of the Hessian $\textbf{H} \textbf{f} \in \R^{n_1d^2}$ on any pair of vector fields $\textbf{X}, \textbf{Y} \in \R^{n_1d}$ using the method described in Section \ref{sub: action of 2 tensors}, which represents the second derivative of $\textbf{f}$ in the directions $\textbf{X}$ and $\textbf{Y}$.
We illustrate the Hessian of functions on 2d and 3d data in Figure \ref{fig:hessian}.
In the bottom row, we compute the Hessian of the $z$ coordinate function, which measures the curvature of the surface in the vertical direction.
This is explicitly connected to the curvature of the space through the second fundamental form, which we exploited to compute the various curvature tensors in \cite{jones2024manifold}.

\begin{computationalnote}
This approach to computing the Hessian as a bilinear operator $\mathfrak{X}(M) \times \mathfrak{X}(M) \rightarrow \A$ is indirect, because it passes via an expansion in the $\phi_i dx_{j_1} \otimes dx_{j_2}$ basis for the $(0,2)$-tensors.
Alternatively, we could directly implement the bilinear operator form, which we describe in the Appendix.
This latter approach may be more direct, but the indirect approach we favour here is more extrinsic and creates the opportunity for regularisation through the immersion coordinate functions.
\end{computationalnote}

One application of the Hessian operator is when we are given $k$ vector fields $X_1, \dots, X_k$ and compute the $k \times k$ matrix of second derivatives $[H(f)(X_i, X_j)]_{i,j\leq k}$.
This can be used for classifying the critical points of a function, as we discuss in Section \ref{sec: morse}, where we use this method for Morse theory.

\begin{figure}
  \vspace{-1em}
  \centering
  \begin{overpic}[width=\linewidth,grid=false]{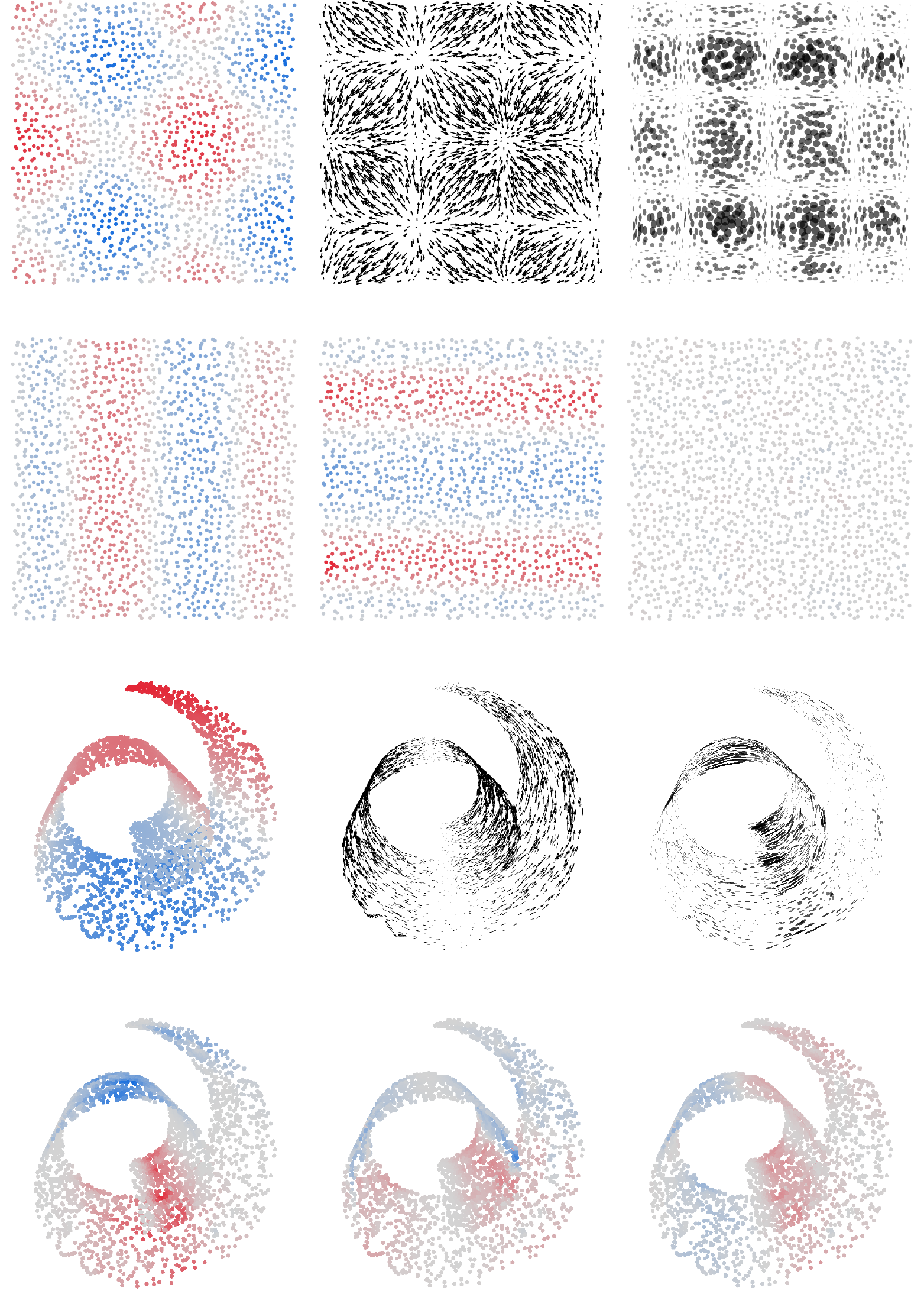}
\put(11.9,76.0){\makebox(0,0)[c]{$f$}}
\put(35.7,76.0){\makebox(0,0)[c]{$\nabla f$}}
\put(59.5,76.0){\makebox(0,0)[c]{$H(f)$}}
\put(11.9,50.0){\makebox(0,0)[c]{$H(f)(\nabla x, \nabla x)$}}
\put(35.7,50.0){\makebox(0,0)[c]{$H(f)(\nabla y, \nabla y)$}}
\put(59.5,50.0){\makebox(0,0)[c]{$H(f)(\nabla x, \nabla y)$}}
\put(11.9,24.5){\makebox(0,0)[c]{$f$}}
\put(35.7,24.5){\makebox(0,0)[c]{$\nabla f$}}
\put(59.5,24.5){\makebox(0,0)[c]{$H(f)$}}
\put(11.9,-1.5){\makebox(0,0)[c]{$H(f)(\nabla x, \nabla x)$}}
\put(35.7,-1.5){\makebox(0,0)[c]{$H(f)(\nabla z, \nabla z)$}}
\put(59.5,-1.5){\makebox(0,0)[c]{$H(f)(\nabla x, \nabla z)$}}
  \end{overpic}
  \vspace{0.4em}
  \caption{\textbf{Hessian of functions.}
  If $f \in \A$ is a function, then $H(f) \in \Omega^1(M)^{\otimes 2}$ measures the second derivatives of $f$ (its acceleration) in the direction of a pair of vector fields.
  The function on 2d data in the top two rows has no \q{diagonal} acceleration, meaning $H(f)(\nabla x, \nabla y) = 0$.
  The function on 3d data in the bottom two rows does not vary in the $y$ (forwards-backwards) direction, and so the only non-zero components of $H(f)$ are those depending on $x$ (left-right) and $z$ (up-down), which we plot here.
  }
  \label{fig:hessian}
\end{figure}

\subsection{Operator form of $(0,2)$ tensors}
\label{sub: operator form of 2 tensors}

In Section \ref{sub: action of 2 tensors} we defined the action of a $(0,2)$-tensor $\alpha \in \Omega^1(M)^{\otimes2}$ on a pair of vector fields $X, Y \in \mathfrak{X}(M)$ as the function $\alpha(X,Y) \in \A$ given by
\[
\alpha(X,Y) = g(\alpha, X^\flat \otimes Y^\flat).
\]
We can use this action to define a linear map $\alpha: \mathfrak{X}(M) \to \mathfrak{X}(M)$ via
\[
\inp{\alpha(X)}{Y} = \int \alpha(X,Y) d\mu,
\]
which we call the \boldblue{operator form} of $\alpha$, and has several applications, such as in defining the Levi-Civita connection below.
Given
\[
\boldsymbol{\alpha} = \sum_{i=1}^{n_1} \sum_{j_1,j_2=1}^d \boldsymbol{\alpha}_{ij_1j_2} \phi_i dx_{j_1} \otimes dx_{j_2} \in \Omega^1(M)^{\otimes2},
\]
we can discretise its operator form $\boldsymbol{\alpha}^{\textnormal{op}} : \R^{n_1d} \to \R^{n_1d}$ as an $n_1d \times n_1d$ matrix via its weak formulation $\boldsymbol{\alpha}^{\textnormal{op, weak}}$.
We compute the inner product of $\phi_{s_1} \nabla x_{t_1}$ and $\alpha(\phi_{s_2} \nabla x_{t_2})$ in the 4-tensor
\begin{align*}
\boldsymbol{\alpha}^{\textnormal{op, weak}}_{s_1 t_1 s_2 t_2}
&= \inp{\phi_{s_1} \nabla x_{t_1}}{\alpha(\phi_{s_2} \nabla x_{t_2})} \\
&= \inp{\alpha(\phi_{s_2} \nabla x_{t_2})}{\phi_{s_1} \nabla x_{t_1}} \\
&= \int \alpha(\phi_{s_2} \nabla x_{t_2}, \phi_{s_1} \nabla x_{t_1})\, d\mu \\
&= \sum_{i=1}^{n_1} \sum_{j_1,j_2=1}^d \boldsymbol{\alpha}_{i j_1 j_2}
\int \phi_i \phi_{s_1} \phi_{s_2}
g(dx_{j_1} \otimes dx_{j_2}, d x_{t_2} \otimes d x_{t_1})\, d\mu \\
&= \sum_{i=1}^{n_1} \sum_{j_1,j_2=1}^d \boldsymbol{\alpha}_{i j_1 j_2}
\int \phi_i \phi_{s_1} \phi_{s_2}
\Gamma(x_{j_1}, x_{t_2}) \Gamma(x_{j_2}, x_{t_1}) d\mu \\
&= \sum_{p=1}^n \sum_{i=1}^{n_1} \sum_{j_1,j_2=1}^d
\boldsymbol{\alpha}_{i j_1 j_2}
\textbf{U}_{p i} \textbf{U}_{p s_1} \textbf{U}_{p s_2}
\boldsymbol{\Gamma}_p(x_{j_1}, x_{t_2})
\boldsymbol{\Gamma}_p(x_{j_2}, x_{t_1})
\boldsymbol{\mu}_p
\end{align*}

which has dimension $n_1 \times d \times n_1 \times d$.
We reshape $\boldsymbol{\alpha}^{\textnormal{op, weak}}$ into an $n_1d \times n_1d$ matrix, and recover the strong formulation $\boldsymbol{\alpha}^{\textnormal{op}} : \R^{n_1d} \to \R^{n_1d}$ as $\boldsymbol{\alpha}^{\textnormal{op}} = (\textbf{G}^{(1)})^+ \boldsymbol{\alpha}^{\textnormal{op, weak}}$.

\subsection{Levi-Civita connection}

If $f$ is a function on a manifold, and $X \in \mathfrak{X}(M)$ is a vector field, let $\gamma: (-\epsilon,\epsilon)\to M$ be a smooth curve with $\gamma(0)=x$ and $\gamma'(0)=X(x)$.
We can define the directional derivative of $f$ along $X$ as
\[
X(f) = \nabla_X(f) := \lim_{t\to 0} \frac{f(\gamma(t))-f(\gamma(0))}{t},
\]
which depends only on the values of $f$ along the path $\gamma$, and not on its neighbouring values.
If we try to naively extend this definition to a directional derivative of a vector field $Y$, we find that
\[
``\nabla_X(Y) := \lim_{t\to 0} \frac{Y(\gamma(t))-Y(\gamma(0))}{t}"
\]
is \textit{not} well-defined. 
Indeed, $Y(\gamma(t))$ and $Y(\gamma(0))$ belong to different tangent spaces, and generally, there is no canonical way to identify tangent spaces. 
\boldblue{Covariant derivative operators} remedy this by providing a consistent way to compare tangent spaces, and there are many possible choices of covariant derivative that all encode different geometric information, such as curvature.

\begin{wrapfigure}{r}{0.4\textwidth}

\vspace{-2em}
\begin{center}
\begin{tikzpicture}[>=stealth,scale=1.2]

\coordinate (A) at (0,0);

\coordinate (xshift) at (0.5,0);
\coordinate (B) at ($(3,0)+(xshift)$);

\draw[
    thick,
    decoration={
        markings,
        mark=at position 0.52 with {\arrow[scale=1.5]{stealth}}
    },
    postaction=decorate
]
    (A)
    .. controls (1.5,0.5) and (1.5,-0.5) ..
    (B)
    node[midway,below, yshift=-0.1cm, xshift=0.15cm] {$\gamma$};

\node[below] at (A) {$\gamma(0)$};
\fill (A) circle(2pt);
\node[below] at (B) {$\gamma(t)$};
\fill (B) circle(2pt);


\coordinate (T0) at (A);

\fill[customgrey]
    ($(T0)+(-0.4,1.9)$) 
        -- ++(0.8,0.25)
        -- ++(0,-1)
        -- ++(-0.8,-0.25)
        -- cycle;

\fill[customgrey]
    ($(T0)+(2.6,1.9)+(xshift)$) 
        -- ++(0.9,0.15)
        -- ++(0,-1)
        -- ++(-0.9,-0.15)
        -- cycle;

\draw[dashed, ->, >=stealth] (-0,0.85) -- ($(A) + (0,0.15)$);
\draw[dashed, ->, >=stealth] ($(3,0.85)+(xshift)$) -- ($(B) + (0,0.15)$);

\draw[->, >=stealth] ($(T0)+(0,1.525)$) -- ($(T0)+(0,1.525)+(0.2,0.3)$);
\draw[->, >=stealth, customred] ($(T0)+(3,1.525)+(xshift)$) -- ($(T0)+(3,1.525)+(0.26,-0.22)+(xshift)$);
\draw[->, >=stealth, customred,  dash pattern=on 2pt off 2pt,] ($(T0)+(0,1.525)$) -- ($(T0)+(0,1.525)+(0.26,-0.22)$);

\node at (-0,2.4) {$T_{\gamma(0)}M$};
\node at ($(3,2.4)+(xshift)$) {$T_{\gamma(t)}M$};

\coordinate (shift) at (0.1,-0.08);

\draw[->, >=stealth, dashed] ($(T0)+(3,1.525)+(-0.2,0)+(shift)+(xshift)$) -- ($(T0)+(0,1.525)+(shift)+(0.16,0)$) node[midway, above] {parallel transport};;

\end{tikzpicture}
\end{center}
\vspace{-3em}
\end{wrapfigure}

Among all covariant derivatives, the \boldblue{Levi-Civita connection} is distinguished: it is the metric-compatible, torsion-free connection naturally associated with a Riemannian metric $g$, and the one we will compute in diffusion geometry. 
The Levi-Civita connection $\nabla_X Y$ is a vector field that measures the infinitesimal change of $Y$ in the direction of $X$ by \boldblue{parallel transporting} $Y$ back along the flow of $X$, preserving the lengths and angles defined by the metric $g$.

In diffusion geometry, the Levi-Civita connection is defined as an operator $\nabla : \mathfrak{X}(M) \to \Omega^1(M)^{\otimes 2}$ that maps vector fields to $(0,2)$-tensors.
We define $\nabla_X Y$ as the operator form of the $(0,2)$-tensor $\nabla Y$ applied to the vector field $X$ (see Section \ref{sub: operator form of 2 tensors}).
Given a vector field $X=\sum_{i,j} \phi_i\nabla x_j$, we define $\nabla X$ as
\[
\nabla X := \sum_{i,j} ( d\phi_i \otimes d x_j + \phi_i H(x_j)).
\]
We can discretise $\nabla$ as an $n_1d^2 \times n_1d$ matrix $\boldsymbol{\nabla}$ that maps vector fields in $\R^{n_1d}$ to 2-tensors in $\R^{n_1d^2}$.
We will use the weak formulation $\boldsymbol{\nabla}^{\textnormal{weak}}$, which we compute as the inner product of $\nabla(\p{i'}\nabla x_j)$ and $\p{i}dx_{j_1} \otimes dx_{j_2}$ in the 5-tensor
\begin{align*} 
\boldsymbol{\nabla}_{ij_1j_2i'j'}^{\textnormal{weak}} 
&= \langle \phi_{i'} dx_{j'_1}\otimes dx_{j'_2}, \nabla(\p{i}\nabla x_{j})\rangle \\
&= \langle \phi_{i'} dx_{j'_1}\otimes dx_{j'_2}, d\phi_{i} \otimes dx_{j} + \phi_{i} H(x_{j})\rangle \\
&= \int \phi_{i'} \Gamma(x_{j'_1},\phi_{i}) \Gamma(x_{j'_2}, x_{j}) d\mu 
+ \int \phi_{i'} \phi_{i} H(x_{j})(\nabla x_{j'_1}, \nabla x_{j'_2}) d\mu \\
&= \sum_{p=1}^n \textbf{U}_{pi'} \boldsymbol\Gamma_p(\textbf{x}_{j'_1}, \boldsymbol\phi_{i}) \boldsymbol\Gamma_p(\textbf{x}_{j'_2}, \textbf{x}_{j}) \boldsymbol{\mu}_p \\
& \qquad
+ \frac{1}{2}\sum_{p=1}^n
\textbf{U}_{pi} \textbf{U}_{pi'}
\big[ 
\boldsymbol\Gamma_p(\textbf{x}_{j'_1}, \boldsymbol\Gamma(\textbf{x}_{j'_2}, \textbf{x}_{j}))
 + \boldsymbol\Gamma_p(\textbf{x}_{j'_2}, \boldsymbol\Gamma(\textbf{x}_{j'_1}, \textbf{x}_{j}))
 - \boldsymbol\Gamma_p(\textbf{x}_{j}, \boldsymbol\Gamma(\textbf{x}_{j'_1}, \textbf{x}_{j'_2}))
\big] 
\boldsymbol{\mu}_p,
\end{align*}
using an adaptation of the weak formulation of the Hessian computed above.
This weak formulation 5-tensor has shape $n_1 \times d \times d \times n_1 \times d$, which we reshape into a $n_1d^2 \times n_1d$ matrix.
We can compute the strong formulation as $\boldsymbol{\nabla} = \big(\textbf{G}^{(0,2)}\big)^+ \boldsymbol{\nabla}^{\textnormal{weak}}$, which represents $\nabla$ as a linear map $\R^{n_1d} \to \R^{n_1d^2}$.
We can compute the operator form of the Levi-Civita connection $Y \mapsto \nabla_X Y$ using the method described in Section \ref{sub: operator form of 2 tensors}, which represents $\nabla_X$ as a linear operator $\R^{n_1d} \to \R^{n_1d}$.

We visualise the Levi-Civita connection in Figure \ref{fig:levi_civita_connection}.
To verify the computation, we compute examples on spaces where the connection can be evaluated analytically.
In flat 2d space, if $X = \nabla x$ and $Y = x \nabla y$, we expect $\nabla_X(Y) = \nabla y$, and if $\mathrm{rot}= x \nabla y - y\nabla x$, then $\nabla_{\mathrm{rot}}(\mathrm{rot}) = -\frac{1}{2}\nabla(x^2 + y^2)$.
We recover both the expected results computationally, as well as a similar calculation on a torus.

\begin{figure}[h!]
  \centering
\begin{overpic}[width=\linewidth,grid=false, yshift=2cm]{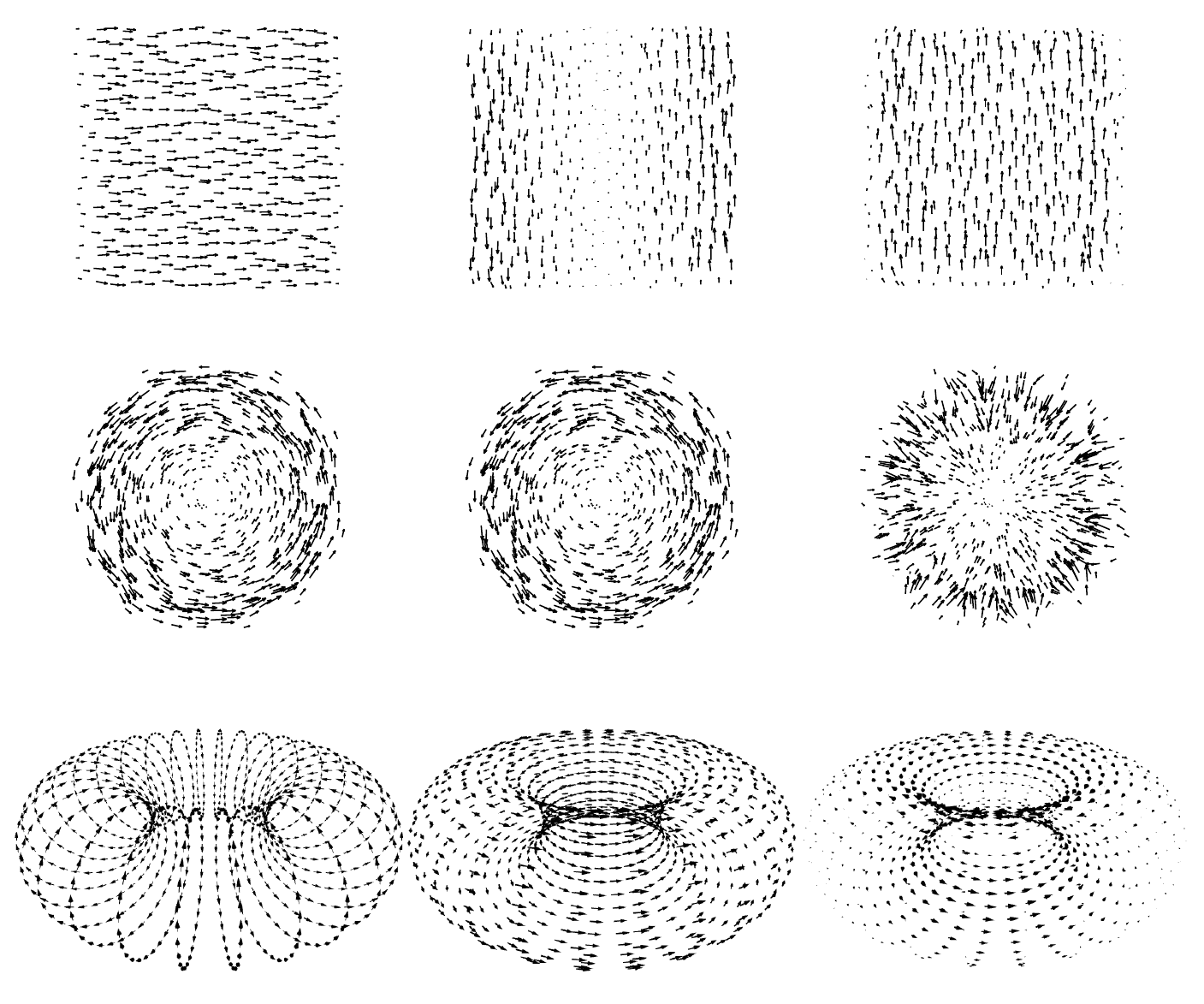}
\put(17.7,56.8){\makebox(0,0)[c]{$X_1=\nabla x$}}
\put(50.0,56.8){\makebox(0,0)[c]{$Y_1=x\nabla y$}}
\put(82.3,56.8){\makebox(0,0)[c]{$\nabla_{X_1}(Y_1)\approx\nabla y$}}
\put(17.7,27.7){\makebox(0,0)[c]{$\mathrm{rot}= -y\nabla x + x \nabla y$}}
\put(50.0,27.7){\makebox(0,0)[c]{$\mathrm{rot}$}}
\put(82.3,27.7){\makebox(0,0)[c]{$\nabla_{\mathrm{rot}}(\mathrm{rot})\approx-(x\nabla x + y\nabla y)$}}
\put(17.7,-1.5){\makebox(0,0)[c]{$e_{\theta}=\frac{1}{r}\partial_\theta$}}
\put(50.0,-1.5){\makebox(0,0)[c]{$e_{\phi}=\frac{1}{R+r\cos(\theta)}\partial_\phi$}}
\put(82.3,-1.5){\makebox(0,0)[c]{$\nabla_{e_\theta}(e_\phi)\approx\frac{-\sin(\theta)}{R+r\cos(\theta)}e_{\phi}$}}
\end{overpic}
\vspace{2em}
\caption{
\textbf{Levi-Civita connection of vector fields.}
The first row showcases the Leibniz rule for the covariant derivative. The second row illustrates that the covariant derivative of the rotational vector field with itself correctly recovers the inward-pointing vector field $-(x\nabla x + y\nabla y) = -\frac{1}{2}\nabla(x^2 + y^2)$. 
The third row highlights that in flat space, the covariant derivatives of $\nabla x$ and $\nabla y$ vanish. 
The last row computes the covariant derivative of an orthonormal frame on the torus (where $r$ and $R$ are the minor and major radius), which correctly recovers the ground truth.}
\label{fig:levi_civita_connection}
\end{figure}

\begin{computationalnote}
As with the Hessian, we could directly implement the operator form of the Levi-Civita connection as a bilinear operator $\mathfrak{X}(M) \times \mathfrak{X}(M) \to \mathfrak{X}(M)$, which we describe in the Appendix.
In the case of the Hessian, this avoids a loss of information when computing the action (which returns a function, so it is more sensitive to high-frequency errors).
However, the operator conversion of a $(0,2)$-tensor (subsection \ref{sub: operator form of 2 tensors}) uses a weak formulation expressed as a global integral, which is less sensitive to high-frequency errors in the integrand.
This makes the difference between the two implementations harder to assess, but, as with the operator form of the Hessian, we favour the approach of expanding out the $(0,2)$-tensor and then applying Section \ref{sub: operator form of 2 tensors}, in order to better inherit the regularity properties of the immersion coordinates.
\end{computationalnote}

\subsection{Comparison between Lie bracket and Levi-Civita connection}
The Lie derivative $\mathcal{L}_X = [X, \cdot]$ and the Levi-Civita connection $\nabla_X$ both define a notion of the \q{derivative of one vector field with respect to another}, and they both generalise the directional derivative of functions $f \mapsto X(f)$ to vector fields.
However, they do so in different ways.

The Levi-Civita connection $\nabla_XY$ is a \textit{point operator} in $X$, meaning the operator $\nabla_X$ at a point $p$ only depends on the pointwise evaluation $X_p$. 
In particular, if $X'$ is a different vector field such that $X'_p=X_p$, then $(\nabla_XY)_p = (\nabla_{X'}Y)_p$ for any vector field $Y$. 
This insensitivity to the neighbouring values of $X$ makes the Levi-Civita connection the true generalisation of the directional derivative (which is also a point operator) to vector fields.

On the other hand, the Lie derivative measures the \textit{failure of commutativity of the flows}.
It therefore depends on the behaviour of the fields in a neighbourhood of the point. 
This sensitivity arises from the fact that the Lie bracket satisfies the Leibniz rule in both arguments, so its value at a point cannot be determined from pointwise data alone. 
In Figure \ref{fig:comp-lie-cov}, we compute examples of the difference.

\begin{figure}[h]
  \centering
\begin{overpic}[width=\linewidth,grid=false, yshift=0cm]{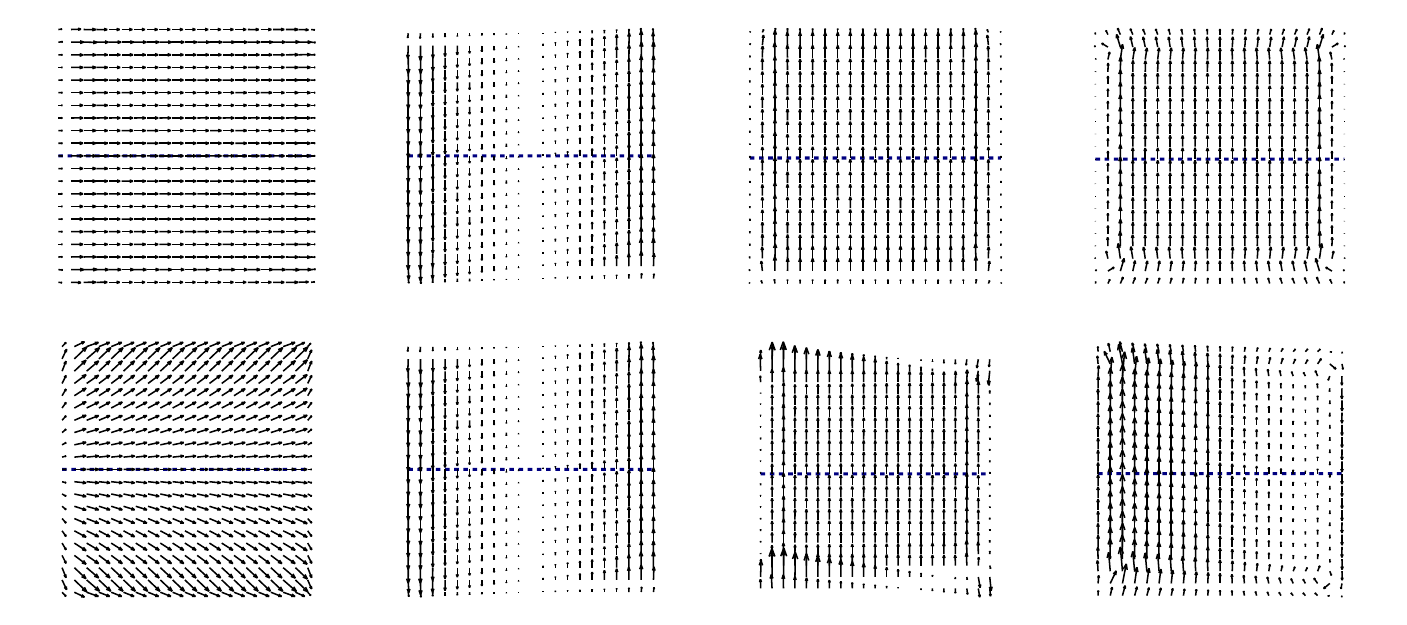}
\put(13.6,22.0){\makebox(0,0)[c]{$X_1=\nabla x$}}
\put(37.9,22.0){\makebox(0,0)[c]{$Y=x\nabla y$}}
\put(62.1,22.0){\makebox(0,0)[c]{$\nabla_{X_1}(Y)\approx\nabla y$}}
\put(86.4,22.0){\makebox(0,0)[c]{$[X_1, Y]\approx\nabla y$}}
\put(13.6,-1.0){\makebox(0,0)[c]{$X_2=\nabla x+\frac{1}{2}y\nabla y$}}
\put(37.9,-1.0){\makebox(0,0)[c]{$Y=x\nabla y$}}
\put(62.1,-1.0){\makebox(0,0)[c]{$\nabla_{X_2}(Y)\approx\nabla y$}}
\put(86.4,-1.0){\makebox(0,0)[c]{$[X_2, Y]\approx(1-\frac{1}{2}x)\nabla y$}}
\end{overpic}
\vspace{2em}
\caption{\textbf{Difference between the Levi-Civita connection and the Lie bracket.}
The Levi-Civita connection at a point depends only on the chosen direction of differentiation, whereas the Lie bracket is sensitive to how the vector fields behave in a neighbourhood of the point. The dashed blue line represents the line $y=0$ on which $X_1$ and $X_2$ agree.
In the top row, as we flow from left to right along $X_1$, the field $Y$ is increasing at a constant rate of $\nabla y$, which is observed by both the Levi-Civita connection and Lie bracket.
} \label{fig:comp-lie-cov}
\end{figure}

\section{Differential equations}
\label{sec: differential_equations}

Partial differential equations (PDEs) are ubiquitous in mathematical modelling for flows and dynamical processes.
When the domain of those equations is \textit{spatial} or otherwise \textit{geometric}, the geometry of the underlying space will affect the dynamics.
For example, an electrical wave travelling along the surface of the brain will speed up and slow down depending on the curvature of the surface at each point.
Spatial PDEs are usually solved using mesh-based methods, where the space is known in advance and explicitly triangulated, but these are hard to apply to \textit{observed} spatial data in the form of point clouds.
We can apply diffusion geometry as a \textit{mesh-free} method for solving a wide variety of spatial PDEs, by using our new computational scheme to represent differential operators.



\subsection{First-order differential equations}

We first consider differential equations that are first-order in $t$, like
\begin{equation}
\label{eq: DE first-order}
\frac{\partial u_t}{\partial t} = T(u_t)
\end{equation}
for a linear differential operator $T$, with some initial condition $u_0 = f$.
In our discretisation, functions $u_t, f \in \A$ are represented by vectors $\textbf{u}_t, \textbf{f} \in \R^{n_0}$, and linear operators $T : \A \to \A$ by $n_0 \times n_0$ matrices $\textbf{T}$.
Equation \ref{eq: DE first-order} now becomes
\begin{equation}
\label{eq: DE first-order matrix}
\frac{\partial \textbf{u}_t}{\partial t} = \textbf{T}\textbf{u}_t,
\end{equation}
with $\textbf{u}_0 = \textbf{f}$.
If we diagonalise $\textbf{T} = \textbf{Q}\inv \diag(\textbf{W}) \textbf{Q}$, then the solution to (\ref{eq: DE first-order matrix}) is given by
$$
\textbf{u}_t = \exp(t\textbf{T})\textbf{f}
$$
where $\exp(t\textbf{T}) = \textbf{Q}\inv \diag(\exp(t\textbf{W})) \textbf{Q}$.

\subsubsection{Heat equation}
We can solve the heat equation
\[
\frac{\partial\textbf{u}_t}{\partial t} = - \boldsymbol\Delta \textbf{u}_t,
\]
with initial condition $\textbf{u}_0 = \textbf{f}$, by setting $\textbf{T} = - \boldsymbol\Delta$.
We visualise these solutions in Figures \ref{fig:heat} and \ref{fig:heat_wave_tissue}.

\begin{figure}[h!]
  \centering
  \begin{overpic}[width=\linewidth,grid=false]{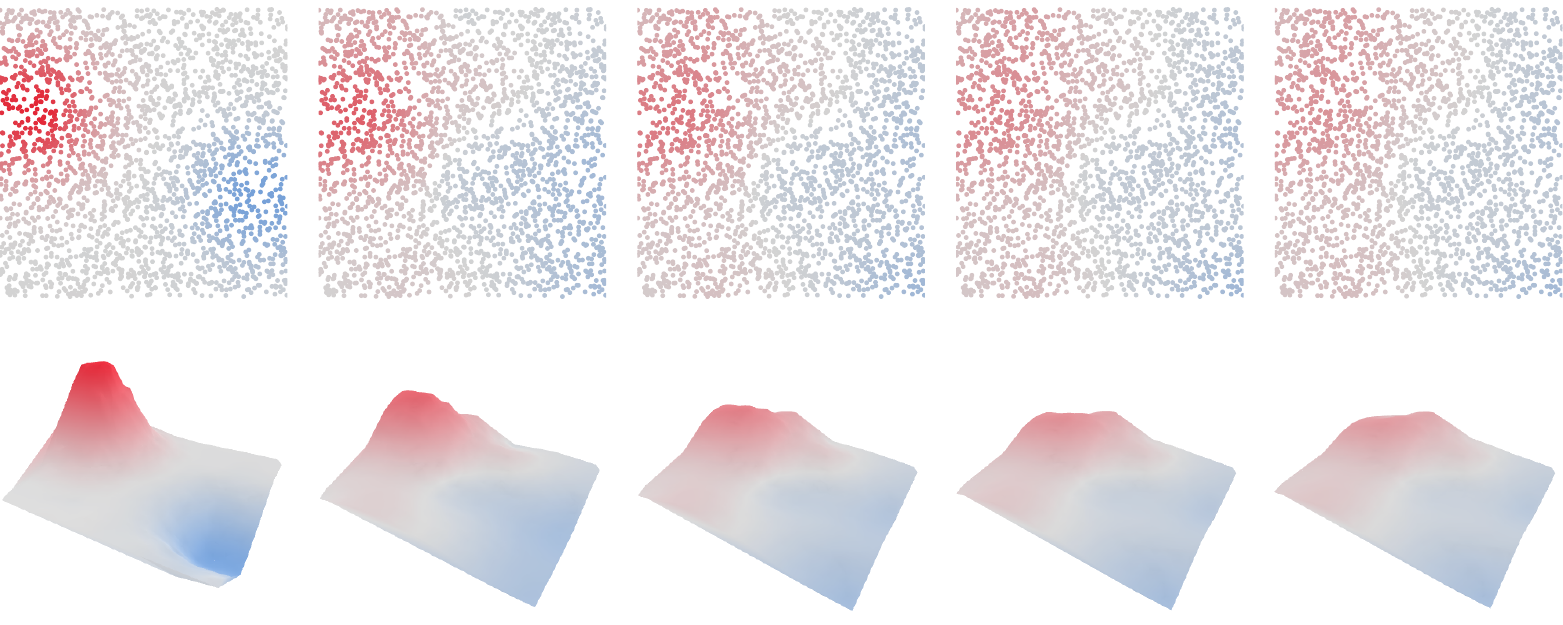}
\put(9.2,-1.2){\makebox(0,0)[c]{$t = 0.00$}}
\put(29.6,-1.2){\makebox(0,0)[c]{$t = 0.05$}}
\put(50.0,-1.2){\makebox(0,0)[c]{$t = 0.10$}}
\put(70.4,-1.2){\makebox(0,0)[c]{$t = 0.15$}}
\put(90.8,-1.2){\makebox(0,0)[c]{$t = 0.20$}}
  \end{overpic}
  \vspace{0.5em}
  \caption{\textbf{Heat equation.}
  If $u_0 \in \A$ is a function, we can solve the heat equation $\dot{u}_t = -\Delta u_t$ for $u_t \in \A$.
  We plot the evolution of $u_t$ on the data in the top row, and show the graph of $u_t$ in the bottom row.
  }
  \label{fig:heat}
\end{figure}

\subsubsection{Vector field flows}
\label{sub: pde: vector field flows}

If $\textbf{X} \in \R^{n_1d}$ is a vector field, then $\textbf{X}^{\text{op}} \in \R^{n_0 \times n_0}$ is the square matrix that describes the action of $\textbf{X}$ on functions $\textbf{f} \in \R^{n_0}$ (i.e. $\textbf{X}(\textbf{f}) = \textbf{X}^{\text{op}}\textbf{f}$, see \ref{sub: directional derivatives}).
The equation
\begin{equation}
\label{eq: vector field flow}
\frac{\partial\textbf{u}_t}{\partial t} = \textbf{X}(\textbf{u}_t)
\end{equation}
with initial condition $\textbf{u}_0 = \textbf{f}$ describes the \q{flow} of $\textbf{f}$ along $\textbf{X}$, and is solved by setting $\textbf{T} = \textbf{X}^{\text{op}}$.

On smooth (not necessarily Riemannian) manifolds, a vector field $X$ induces a one-parameter group of diffeomorphisms $\Phi_t$, which flow points along the integral curves of $X$.
In this manifold setting, equation (\ref{eq: vector field flow}) describes the pushforward of a function along $\Phi_t$.

\begin{relatedwork}
The idea of using the operator form of a vector field to define flows was introduced on meshes in \cite{azencot2013operator}.
It has been applied to point cloud data as a form of non-parametric dynamical systems forecasting \cite{berry2015nonparametric, berry2016forecasting,giannakis2019data}.
\end{relatedwork}

\subsection{Second-order differential equations}

We can also solve differential equations that are second-order in $t$, like
\begin{equation}
\label{eq: DE second-order}
\frac{\partial^2 u_t}{\partial t^2} = T(u_t) + S\left( \frac{\partial u_t}{\partial t}\right)
\end{equation}
for linear differential operators $T, S$, with the pair of initial conditions $u_0 = f$ and $\frac{\partial u_t}{\partial t}|_{t=0} = h$.
In our discretisation, this equation becomes
\begin{equation}
\label{eq: DE second-order matrix}
\frac{\partial^2 \textbf{u}_t}{\partial t^2} = \textbf{T}\textbf{u}_t + \textbf{S}\frac{\partial \textbf{u}_t}{\partial t},
\end{equation}
with $\textbf{u}_0 = \textbf{f}$ and $\frac{\partial \textbf{u}_t}{\partial t}|_{t=0} = \textbf{h}$, and $n_0 \times n_0$ matrices $\textbf{T}$ and $\textbf{S}$.
We can apply a standard trick to turn this second-order equation into a first-order one by defining $\textbf{v}_t \in \R^{2n_0}$ as
$$
\textbf{v}_t = 
\begin{bmatrix}
\textbf{u}_t \\
\frac{\partial \textbf{u}_t}{\partial t}
\end{bmatrix}
$$
so now
$$
\frac{\partial \textbf{v}_t}{\partial t}
=
\begin{bmatrix}
\frac{\partial \textbf{u}_t}{\partial t} \\
\textbf{T}\textbf{u}_t + \textbf{S}\frac{\partial \textbf{u}_t}{\partial t}
\end{bmatrix}
=
\begin{bmatrix}
0 & \textbf{I}_{n_0} \\
\textbf{T} & \textbf{S}
\end{bmatrix}
\textbf{v}_t.
$$
We can then apply the first-order method described above to the $2n_0 \times 2n_0$ block matrix
$$
\begin{bmatrix}
0 & \textbf{I}_{n_0} \\
\textbf{T} & \textbf{S}
\end{bmatrix}
$$
with initial condition $\textbf{v}_0 = (\textbf{f},\ \textbf{h}) \in \R^{2n_0}$ to obtain a solution $\textbf{v}_t \in \R^{2n_0}$, and then project onto the first $n_0$ coordinates to find $\textbf{u}_t$.

\subsubsection{Wave equation}
We can solve the wave equation
$$
\frac{\partial^2 \textbf{u}_t}{\partial t^2} = -\boldsymbol\Delta \textbf{u}_t
$$
by setting $\textbf{T} = - \boldsymbol\Delta$ and $\textbf{S} = 0$.
The \textit{damped} wave equation introduces a friction coefficient $\gamma > 0$, giving
$$
\frac{\partial^2 \textbf{u}_t}{\partial t^2} = -\boldsymbol\Delta \textbf{u}_t - \gamma \frac{\partial \textbf{u}_t}{\partial t},
$$
which we can solve with $\textbf{T} = - \boldsymbol\Delta$ and $\textbf{S} = - \gamma \textbf{I}_{n_0}$.
We visualise these solutions in Figures \ref{fig:wave} and \ref{fig:heat_wave_tissue}.

\begin{figure}[h!]
  \centering
  \begin{overpic}[width=\linewidth,grid=false]{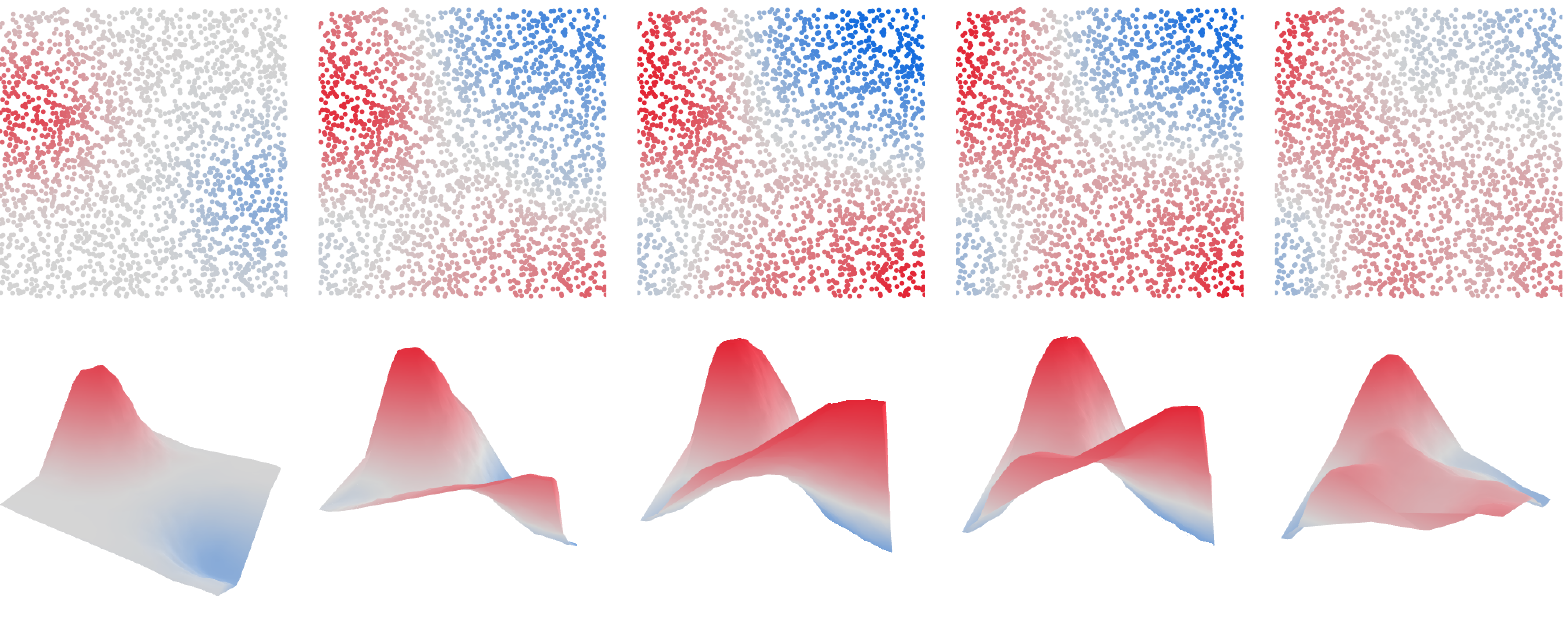}
\put(9.2,-1.2){\makebox(0,0)[c]{$t = 0.00$}}
\put(29.6,-1.2){\makebox(0,0)[c]{$t = 0.05$}}
\put(50.0,-1.2){\makebox(0,0)[c]{$t = 0.10$}}
\put(70.4,-1.2){\makebox(0,0)[c]{$t = 0.15$}}
\put(90.8,-1.2){\makebox(0,0)[c]{$t = 0.20$}}
  \end{overpic}
  \vspace{0.5em}
  \caption{\textbf{Wave equation.}
  If $u_0 \in \A$ is a function and $\dot{u}_0 = 0$, we can solve the heat equation $\ddot{u}_t = -\Delta u_t$ for $u_t \in \A$.
  We plot the evolution of $u_t$ on the data in the top row, and show the graph of $u_t$ in the bottom row.
  }
  \label{fig:wave}
\end{figure}

\begin{figure}[]
  \centering
  \begin{overpic}[width=\linewidth,grid=false]{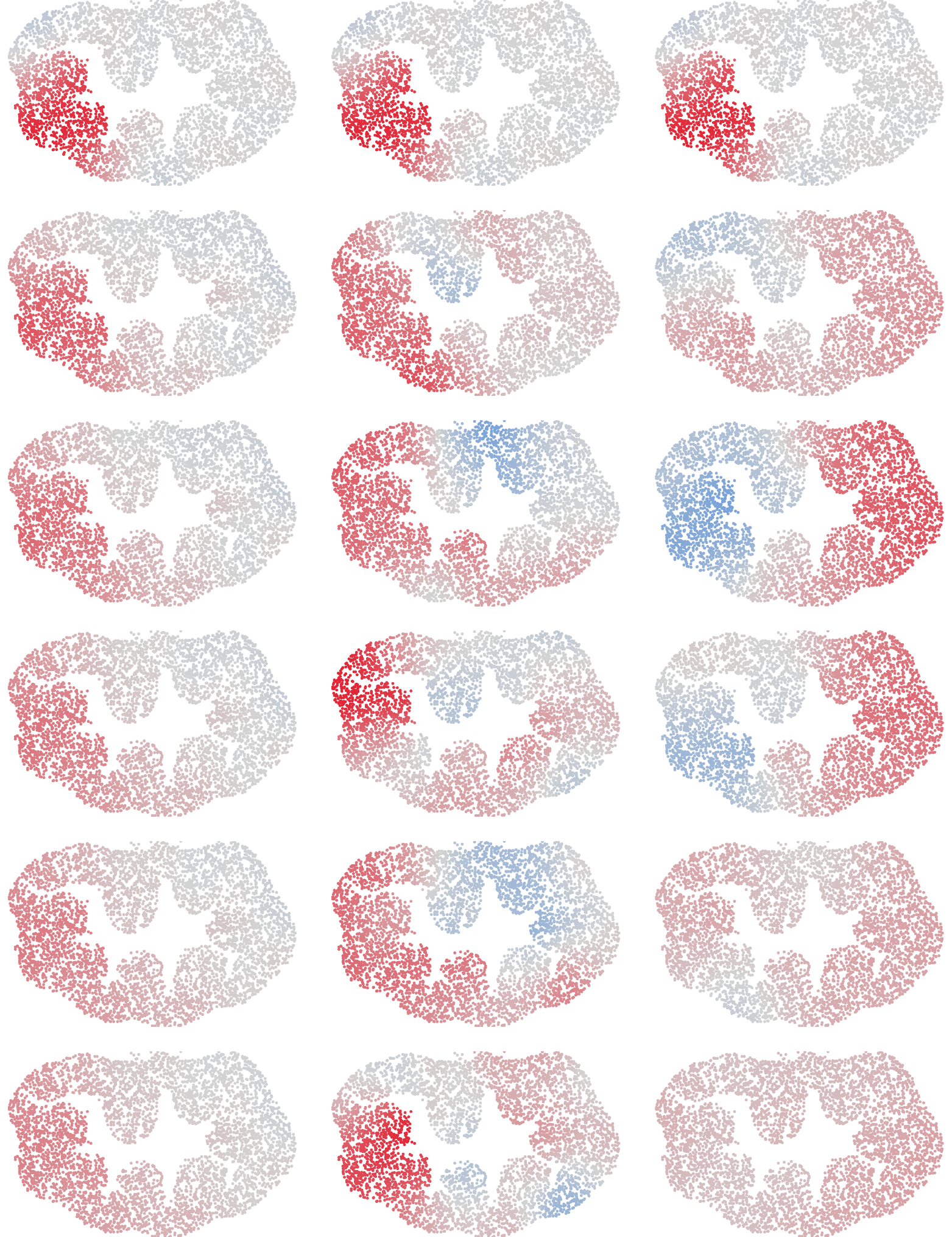}
\put(12.3,102.5){\makebox(0,0)[c]{$\frac{\partial u_t}{\partial t} = - \Delta u_t$}}
\put(38.5,102.5){\makebox(0,0)[c]{$\frac{\partial^2 u_t}{\partial t^2} = - \Delta u_t$}}
\put(64.6,102.5){\makebox(0,0)[c]{$\frac{\partial^2 u_t}{\partial t^2} = - \Delta u_t - 0.1 \frac{\partial u_t}{\partial t}$}}
  \end{overpic}
  \vspace{1em}
  \caption{\textbf{Heat, wave, and damped wave equations.}
  The data is a 2d histology image of a slice of colon tissue.
  Starting from an initial function $u_0\in\A$, we plot the evolution of the signal $u_t$ under the heat, wave, and damped wave equations.
  The heat equation (left column) models the diffusion of a chemical throughout the tissue.
  In the wave equations (which are second order), we use the initial speed $\dot{u}_t = 0$.
  In the damped wave equation (right column), the small friction coefficient $0.1$ has a smoothing effect that dissipates the wave over time.
  }
  \label{fig:heat_wave_tissue}
\end{figure}

\subsection{Integral curves}\label{sec:int-curv}

If $X \in \mathfrak{X}(M)$ is a vector field, then it induces a \textit{flow} defined by the equation
\begin{equation}
\label{eq: integral curve local}
\frac{d p_t}{d t} = X(p_t),
\end{equation}
where $p_t \in M$ is the position of a point at time $t$. 
The solution to this ordinary differential equation (ODE) for an initial point $p_0$ is a path $p_t$, called an \textit{integral curve}, that traces out the vector field from that starting point.
We can compute these curves for all points in $M$ simultaneously by solving the PDEs
\begin{equation}
\label{eq: integral curve global}
\frac{d x_{i,t}}{d t} = X(x_{i,t}),
\end{equation}
where $x_{i,t}$ denotes the $i\thupper$ coordinate function at time $t$, for each $i$.
This then reduces to solving the vector field flow of $X$ using the method described above in \ref{sub: pde: vector field flows}.
Evolving the coordinate functions forward and backwards in $t$ allows us to visualise the one-parameter group of diffeomorphisms induced by $X$.
We visualise these integral curves in Figure \ref{fig:integral_curve}.

\begin{figure}[t]
\centering
\begin{overpic}[width=0.8\linewidth]{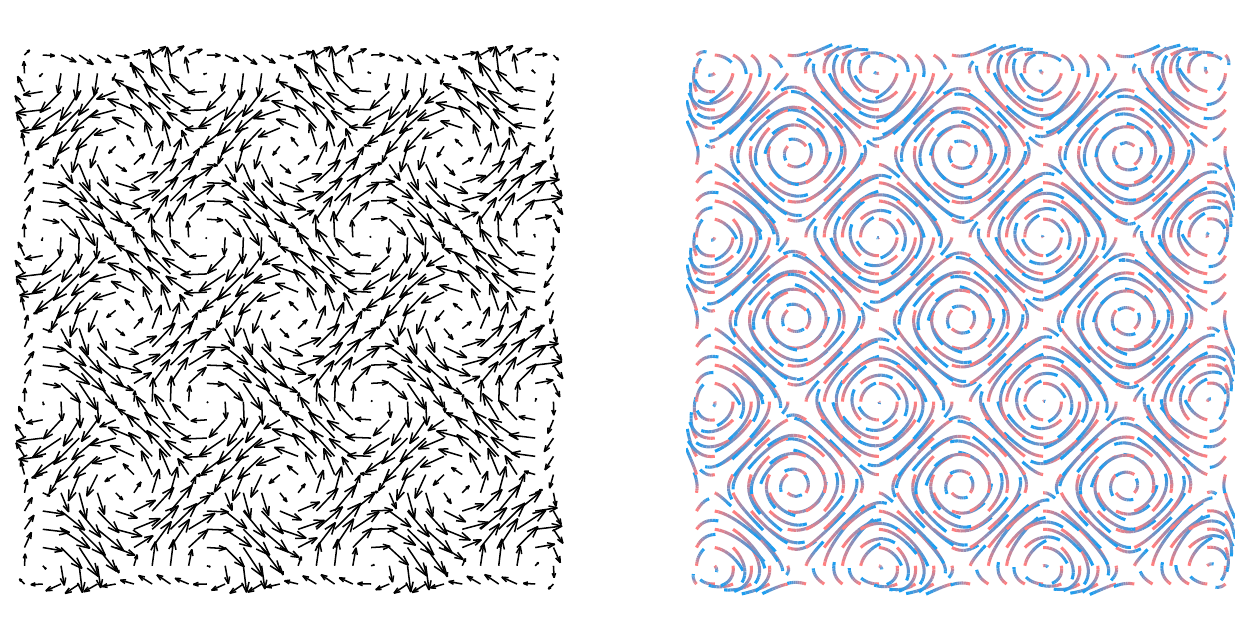}
\end{overpic}
\caption{
\textbf{Flow lines of a vector field.}
The colour represents flow time.}
\label{fig:integral_curve}
\end{figure}

\begin{computationalnote}
Changing the local equation (\ref{eq: integral curve local}) into the global one (\ref{eq: integral curve global}) is essential for this method to work.
It would be possible to directly solve equation (\ref{eq: integral curve local}) by representing the point $p_i$ as the Dirac delta function $e_i$, but this would be extremely unstable and inaccurate.
Recall that the error of the Laplacian estimation is bounded by the Dirichlet energy of the function it is operating on, and a delta function has \q{infinite energy}.
The global approach of solving for all points simultaneously (\ref{eq: integral curve global}), while superficially less efficient, has a regularising effect from the smoother coordinate functions $x_i$, which have much lower energy.
\end{computationalnote}






\section{Geometric data analysis}
\label{sec: geometric data analysis}

\subsection{Geodesic distances}

We can apply these tool to approximate the function of geodesic distances from a point $p$, $f_p := d(p,\cdot)$.
Notice that $f_p$ is 1-Lipschitz, meaning that $\Gamma(f_p,f_p) \leq 1$ $\mu$-almost everywhere, and that $f_p(p) = 0$.
Among all the functions that meet these criteria, $f_x$ has the largest \textit{mean value} $\int f_x d\mu$.
We can therefore characterise the function of geodesic distances from a point $x$ as
\[
d(p,\cdot) = \text{argmax}\{f \mapsto \int f d\mu : f(p) = 0 \textnormal{ and } \Gamma(f,f) \leq 1 \ \mu\textnormal{-a.e.} \}.
\]
In our discretisation, we represent functions in the eigenfunction basis as vectors $\textbf{f} \in \R^{n_0}$, and, in a general choice of function basis, we here suppose that the first function in the basis $\phi_0 = 1$.
The mean value then satisfies
$$
\int f d\mu = \inp{f}{1} = \inp{f}{\phi_0} = \textbf{f}_0
$$
and that, if $f = \sum_{i=1}^{n_0} \textbf{f}_i \phi_i$, then
$$
f(p_i) = \sum_{i=j}^{n_0} \textbf{f}_j \phi_j(p_i) = \sum_{j=1}^{n_0} \textbf{f}_j \textbf{U}_{ij} = \textbf{U}_i\textbf{f},
$$
where $\textbf{U}_i$ is the $1 \times n_0$ row vector representing \q{evaluation at $p_i$}.
The 1-Lipschitz condition corresponds to $\Gamma(f,f)(p_j) \leq 1$ for all $j = 1,...,n$, and so is discretised as the collection of $n$ quadratic constraints $\textbf{f}^T \boldsymbol\Gamma_j \textbf{f} \leq 1$ for $j=1,...,n$, where $\boldsymbol\Gamma_j$ is the $n_0 \times n_0$ matrix representing the carré du champ at $p_j$.
We can therefore approximate $d(x_p,\cdot)$ in our eigenfunction basis as the solution to the convex optimisation problem
\begin{equation}
\label{eq: geodesic distance optimisation}
\begin{split}
\textnormal{argmax}\ & \textbf{f}_0 \\
\textnormal{subject to}\ & \textbf{U}_p\textbf{f} = 0 \\
&\textbf{f}^T \boldsymbol\Gamma_j \textbf{f} \leq 1 \textnormal{ for } j=1,...,n,
\end{split}
\end{equation}
which can be efficiently solved with standard conic programming methods.

\begin{computationalnote}
Our compressed function space $\A$ contains only smooth, low-frequency functions, but the geodesic distance function $d(p,\cdot)$ is not smooth at $p$ (it looks like a cone with a tip at $p$).
This means that directly approximating $d(p,\cdot)$ in $\A$ as stated in \eqref{eq: geodesic distance optimisation} will lead to errors near $p$.
Instead, we model $d(p,q) = \|q-p\| + f$, where $\|q-p\|$ is the distance in the ambient space $\R^d$ and $f \in \A$ is a non-negative correction.
\end{computationalnote}

\begin{figure}[h!]
  \centering
  \begin{overpic}[width=\linewidth,grid=false]{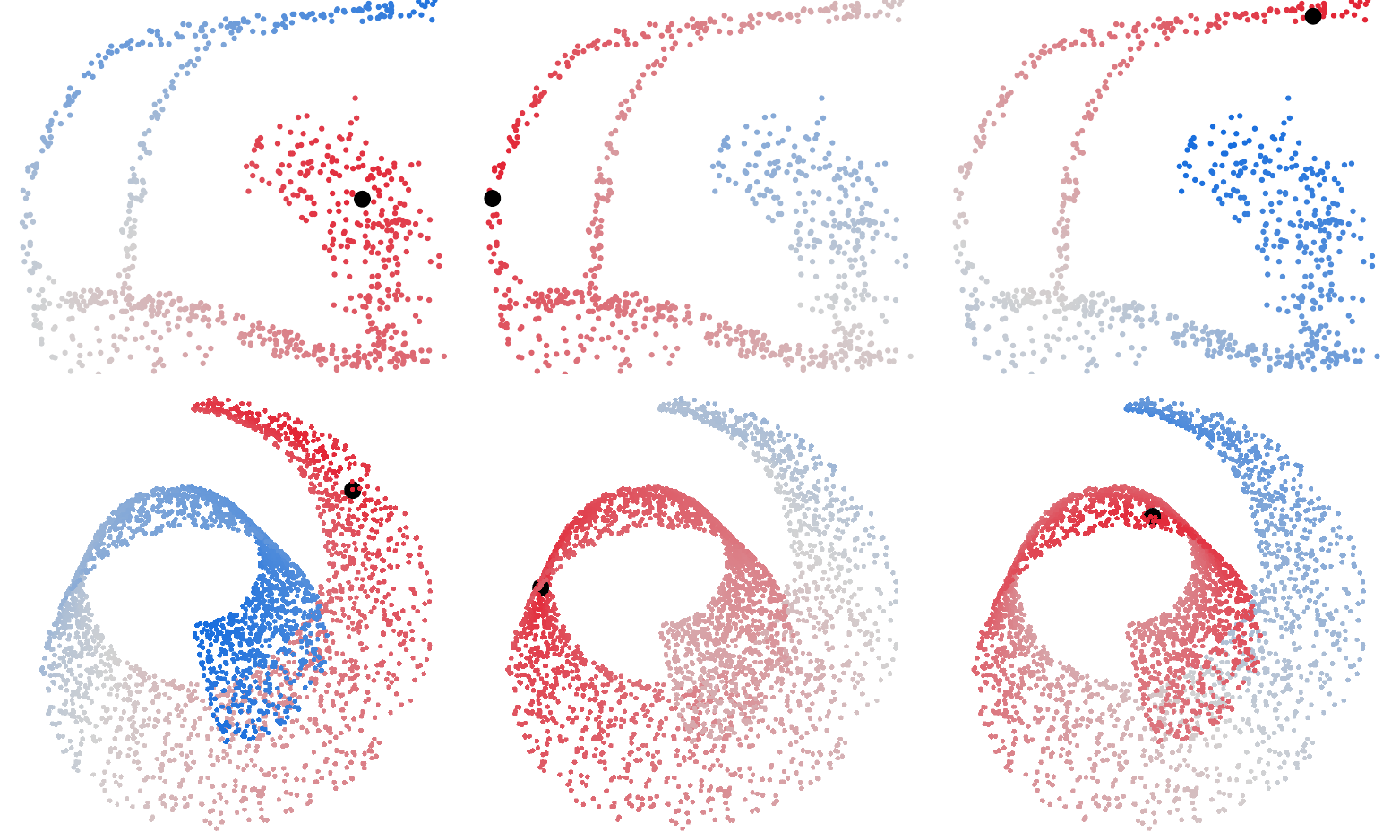}
  \end{overpic}
  \vspace{0.5em}
  \caption{\textbf{Geodesic distances.}
  If $u_0 \in \A$ is a function, we can solve the heat equation $\dot{u}_t = -\Delta u_t$ for $u_t \in \A$.
  We plot the evolution of $u_t$ on the data in the top row, and show the graph of $u_t$ in the bottom row.
  }
  \label{fig:geodesics}
\end{figure}


\begin{relatedwork}
This definition of geodesic distance functions via Lipschitz constraints is inspired by the Bakry-Émery characterisation \cite{bakry2014analysis}.
It is dual to the notion of geodesic distances as minimisers of path integrals.
\end{relatedwork}




\subsection{Curvature}

We can measure the curvature of a space with the \boldblue{Riemann curvature tensor}, which is the map
\begin{align*}
R(X,Y,Z,W)
&= g( \nabla_X \nabla_Y Z, W) - g( \nabla_Y \nabla_X Z, W) - g( \nabla_{[X,Y]} Z, W) \\
&= \nabla(\nabla_Y Z)(X, W) - \nabla(\nabla_X Z)(Y, W) - \nabla(Z)([X,Y], W),
\end{align*}
where the second line directly invokes the action of the Levi-Civita connection as a $(0,2)$-tensor.
Given a pair of vector fields $X,Y \in \mathfrak{X}(M)$, the \boldblue{sectional curvature} $K(X,Y)$ measures the curvature of the 2-dimensional surface spanned by $X$ and $Y$, defined by
\[
K(X,Y) = \frac{R(X,Y,X,Y)}{\|X\|^2\|Y\|^2 - g(X,Y)^2}.
\]
It is a symmetric functional $K : \mathfrak{X}(M) \times \mathfrak{X}(M) \to \A$.
We illustrate the sectional curvature in Figure \ref{fig:sectional_curvature}.

\begin{figure}[ht]
    \centering
    \includegraphics[width=\linewidth]{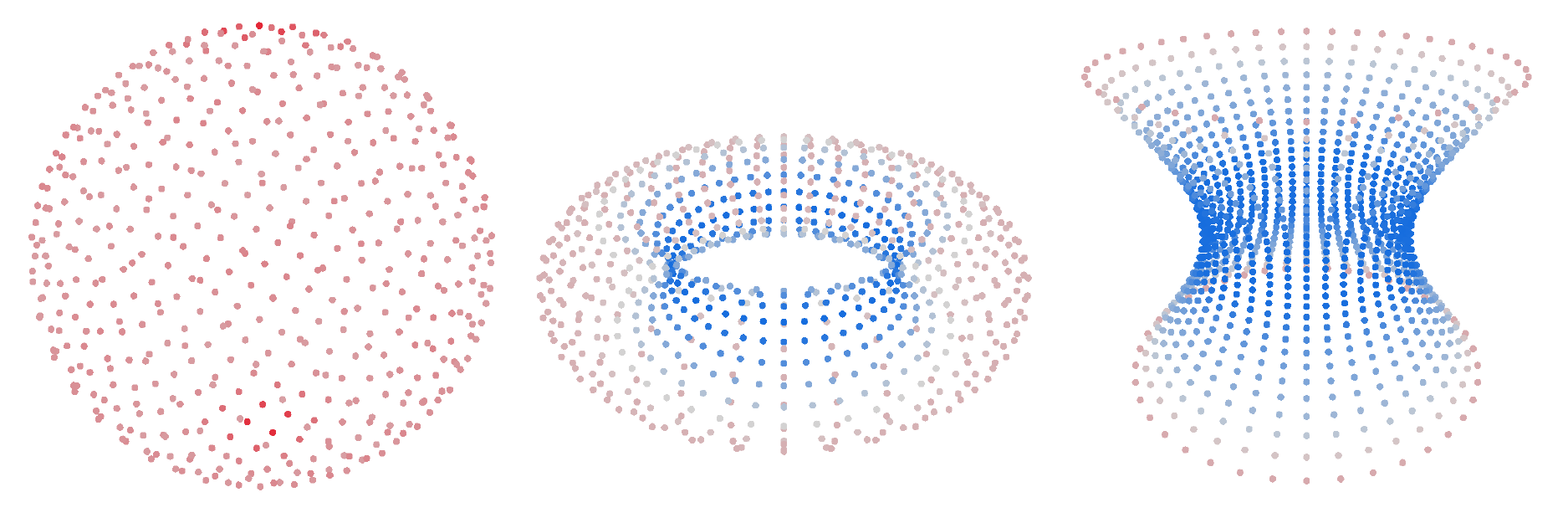}
    \caption{\textbf{Sectional curvature} of a sphere, torus, and hyperboloid.}
    \label{fig:sectional_curvature}
\end{figure}

\begin{relatedwork}
Even for data on manifolds, curvature estimation is quite underexplored.
In \cite{jones2024manifold}, we computed the scalar, Ricci, and Riemann curvatures for manifold data using the Hessian method described above.
This led to state-of-the-art results for curvature estimation on point clouds, due to the noise robustness of diffusion geometry.
This explicitly used the manifold assumption by working with the second fundamental form, an approach also used in \cite{sritharan2021computing} to compute the scalar curvature via local quadratic regression.
The scalar curvature was also computed in \cite{hickok2023intrinsic} via an intrinsic method based on volume comparison.
The use of intrinsic volume estimation makes this suitable for very high ambient dimensions, but less robust and sample efficient.
The sectional curvature presented here is, to our knowledge, the first attempt to compute any of the classical curvature tensors without the manifold assumption.
\end{relatedwork}

\section{Topological data analysis}
\label{sec: TDA}

We now introduce several tools for computing \boldblue{topological features} of data.
The topology of a space is a more abstract notion than geometry, and encodes the aspects of the space that are preserved under continuous deformations like stretching or bending.
For example, a sphere and a cube are topologically equivalent, because one can be continuously deformed into the other, even though they are geometrically distinct.

\subsection{de Rham cohomology via Hodge theory}\label{sec:deRahm}

A principal tool for measuring the topology of a space $M$ is \boldblue{cohomology}, which counts the number of \q{holes} of different dimensions.
The zeroth cohomology group $\mathcal{H}^0(M,\R)$ represents the connected components of $M$, the first cohomology group $\mathcal{H}^1(M,\R)$ represents the holes or loops, and the second cohomology group $\mathcal{H}^2(M,\R)$ represents the enclosed spaces or voids inside $M$.
The sequence continues with higher-dimensional \q{generalised holes} represented by $\mathcal{H}^k(M,\R)$ for all $k$, and, together, represents a kind of coarse-grained topological summary of the space $M$.

Formally, the \boldblue{de Rham cohomology} groups are defined to be
\[
\mathcal{H}^k(M,\R) = \frac{\ker(d^{(k)})}{\text{im}(d^{(k-1)})},
\]
where $d^{(k)} : \Omega^k(M) \to \Omega^{k+1}(M)$ is the exterior derivative on $k$-forms.
If $\alpha \in \ker(d^{(k)})$, then it is called a \boldblue{closed} $k$-form, and if $\alpha \in \text{im}(d^{(k-1)})$, then it is called an \boldblue{exact} $k$-form.
Since the carré du champ gives an inner product on forms, we can identify elements of $\mathcal{H}^k(M,\R)$, which are cosets like $\alpha + \text{im}(d^{(k-1)})$, with their unique \textit{minimal element}\footnote{the \textit{uniqueness} of the minimal element comes from the fact that we have an inner product and so the norm is strictly convex, and we also assume that the derivative is a closed linear operator so $\text{im}(d^{(k-1)})$ is a closed subspace}.
This minimal element will be orthogonal to $\text{im}(d^{(k-1)})$, or, equivalently, it will be in the kernel of the adjoint of $d^{(k-1)}$, which is the codifferential $\partial^{(k)} : \Omega^k(M) \to \Omega^{k-1}(M)$.
We will call such an element a \boldblue{coclosed} $k$-form, and forms that are both closed and coclosed are called \boldblue{harmonic}.
We can then identify
\[
\mathcal{H}^k(M,\R) \cong \ker(d^{(k)}) \cap \ker(\partial^{(k)})
= \ker(d^{(k-1)}\partial^{(k)} + \partial^{(k+1)}d^{(k)})
= \ker(\Delta^{(k)}),
\]
where $\Delta^{(k)}$ is the \boldblue{Hodge Laplacian}.
This statement is the \boldblue{Hodge theorem}, and tells us that each \q{generalised hole} in $\mathcal{H}^k(M,\R)$ corresponds to a unique harmonic form in $\Omega^{k}(M)$, and these can be found by computing the kernel of the Hodge Laplacian.
Although there is not yet any formal guarantee that this result extends to more general settings than manifolds, we conjecture that such a connection does exist and will use it to motivate statistics for cohomology based on Hodge theory.

In Section \ref{sub: hodge laplacian} we described a method for discretising the Hodge Laplacian $\Delta^{(k)}$ as a matrix $\boldsymbol\Delta^{(k)}$ that acts on $k$-forms in $\R^{n_1\binom{d}{k}}$.
The Hodge Laplacian is self-adjoint and positive definite, so it has positive real eigenvalues.
We can compute its spectrum via a generalised eigenproblem (\ref{eq: hodge generalised eigenproblem}) to find harmonic (or approximately harmonic) differential forms, which should have zero (or nearly zero) eigenvalue.
In practice, the eigenvalues are never exactly zero, but we observe that eigenvalues that are closer to zero correspond to clearer or larger cohomology classes in the data, and so can be used as a proxy for confidence or size.
Recall that our approach of weak formulations is in the Hilbert space completion $L^2\Omega^k(M)$, and so we are working with the projection of the Hodge Laplacian $\Delta_k$ into that space.
As such, we can only hope to approximate the $L^2$ cohomology of the space, and will miss out any harmonic forms with unbounded norm.
For example, a punctured disk has a cohomology class representing the missing point at the origin, but this is represented by a harmonic form whose pointwise norm grows like $\|x\|^{-1}$, and so is not in $L^2\Omega^k(M)$ and will not be approximated by this method\footnote{this is no great loss, because no statistic could distinguish the punctured disk (with a hole) and a disk (with no hole) from a finite sample of data}.
We visualise harmonic forms computed from data in Figures \ref{fig:tda1} and \ref{fig:tda2}.

\begin{figure}[h!]
  \centering
  \begin{overpic}[width=\linewidth,grid=false]{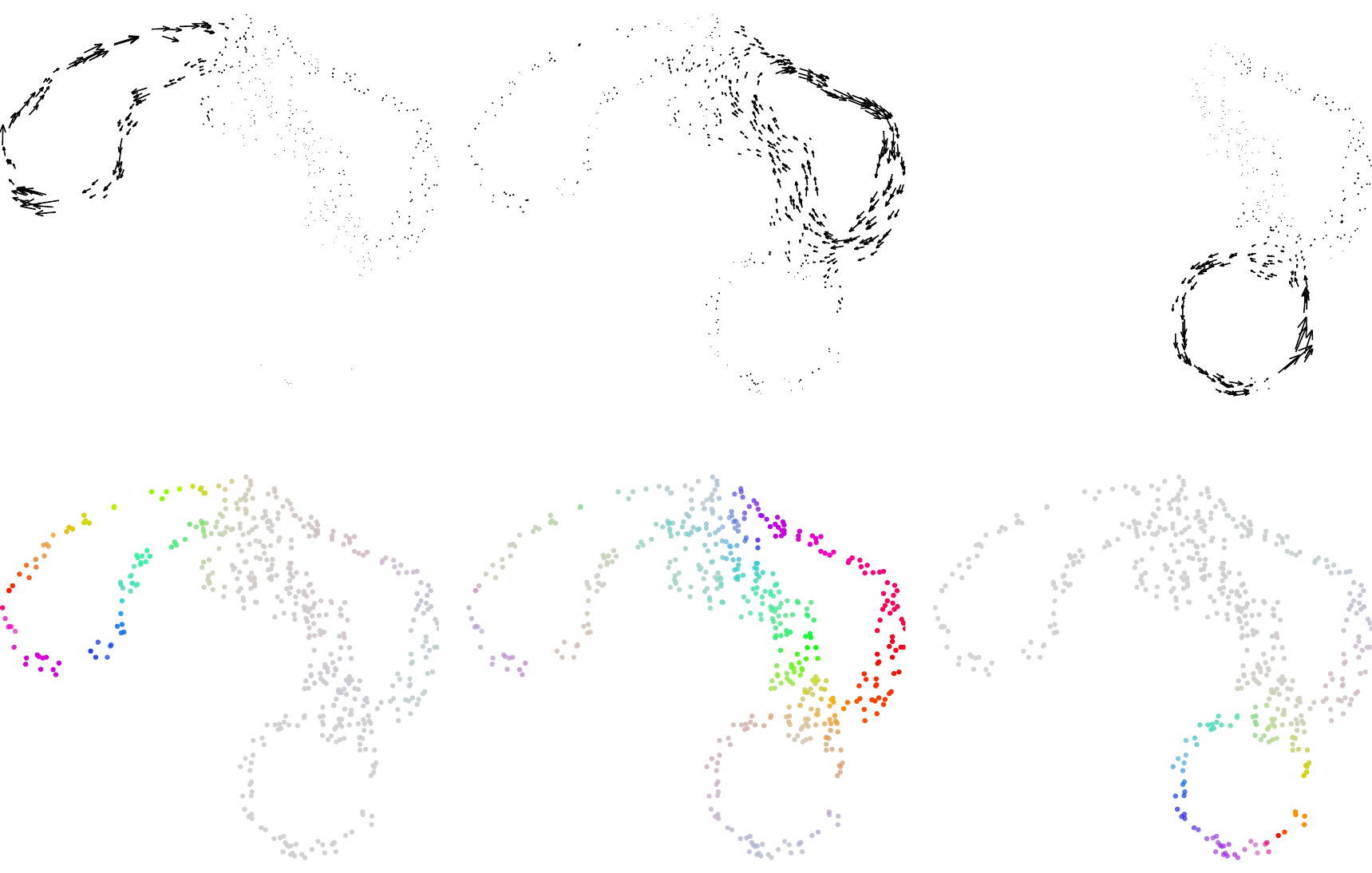}
\put(16.0,32.2){\makebox(0,0)[c]{harmonic 1-form $\alpha_1$}}
\put(50.0,32.2){\makebox(0,0)[c]{harmonic 1-form $\alpha_2$}}
\put(84.0,32.2){\makebox(0,0)[c]{harmonic 1-form $\alpha_3$}}
\put(16.0,-1.5){\makebox(0,0)[c]{circular coordinate for $\alpha_1$}}
\put(50.0,-1.5){\makebox(0,0)[c]{circular coordinate for $\alpha_2$}}
\put(84.0,-1.5){\makebox(0,0)[c]{circular coordinate for $\alpha_3$}}
  \end{overpic}
  \vspace{0.4em}
  \caption{\textbf{de Rham cohomology.}
  The first three eigenforms of the Hodge Laplacian $\Delta^{(1)}$ are approximately harmonic, and capture the three \q{holes} in this 2d data (top row).
  We can parametrise these holes by circular coordinates, which measure the \q{angle} of each point around the holes (bottom row).
  For clarity, the colour intensity of the circular coordinates is proportional to the pointwise norm of the corresponding harmonic form.
  }
  \label{fig:tda1}
\end{figure}

\begin{relatedwork}
The most direct relation to the method presented here is from Spectral Exterior Calculus (SEC) \cite{berry2020spectral}, which used spectral information from the diffusion maps Laplacian \cite{COIFMAN20065} to formulate the Hodge Laplacian on 1-forms, and which we applied for diffusion geometry in \cite{jones2024diffusion}.
We find that the formulation presented here, with the carré du champ and differential forms generated by the ambient coordinates, leads to more stable eigenforms than SEC.
See the discussion of SEC in Section \ref{sec: background}, and further comparison with persistent homology below.
\end{relatedwork}

The dimension of the $k\thupper$ cohomology group $\mathcal{H}^k(M,\R)$ is called the $k\thupper$ \boldblue{Betti number} $\beta_k$, and counts the number of independent $k$-dimensional holes in $M$.
The Betti numbers give a topological summary of $M$, but do not capture everything.
For example, the Betti numbers of the torus are $\beta_0 = 1$, $\beta_1 = 2$, and $\beta_2 = 1$, since the torus is connected, has two independent 1-dimensional holes (the \q{long} and \q{short} ways around), and has one enclosed 2-dimensional void inside.
Alternatively, a sphere with two circles attached will also have Betti numbers $\beta_0 = 1$, $\beta_1 = 2$, and $\beta_2 = 1$, since it is also connected, has two independent 1-dimensional holes (the two circles), and has one enclosed 2-dimensional void inside the sphere.
However, these two spaces are topologically distinct, because the two 1-dimensional holes in the torus \textit{interact} to enclose the 2-dimensional void, but the two 1-dimensional holes in the sphere with circles do not interact at all.

To measure this interaction, we can take two cohomology classes $\alpha + \text{im}(d^{(k-1)}) \in \mathcal{H}^k(M,\R)$ and $\beta + \text{im}(d^{(l-1)}) \in \mathcal{H}^l(M,\R)$, and compute the wedge product $\alpha \wedge \beta + \text{im}(d^{(k+l-1)}) \in \mathcal{H}^{k+l}(M,\R)$.
This new cohomology class is independent of the choice of representatives $\alpha$ and $\beta$, and so induces a well-defined bilinear map on cohomology groups called the \boldblue{cup product}
\[\cup : \mathcal{H}^k(M,\R) \times \mathcal{H}^l(M,\R) \to \mathcal{H}^{k+l}(M,\R).\]
If $\alpha \in \mathcal{H}^k(M,\R)$ and $\beta \in \mathcal{H}^l(M,\R)$ are cohomology classes represented by harmonic forms, then their cup product $\alpha \cup \beta \in \mathcal{H}^{k+l}(M,\R)$ is represented by the harmonic form that is the orthogonal projection of the wedge product $\alpha \wedge \beta$ onto $\ker(\Delta^{(k+l)})$.
We can use this to measure the interaction between cohomology classes and tell apart spaces that have the same Betti numbers but different topologies, as we illustrate in Figure \ref{fig:tda2}.

\begin{figure}[h!]
  \centering
  \begin{overpic}[width=\linewidth,grid=false]{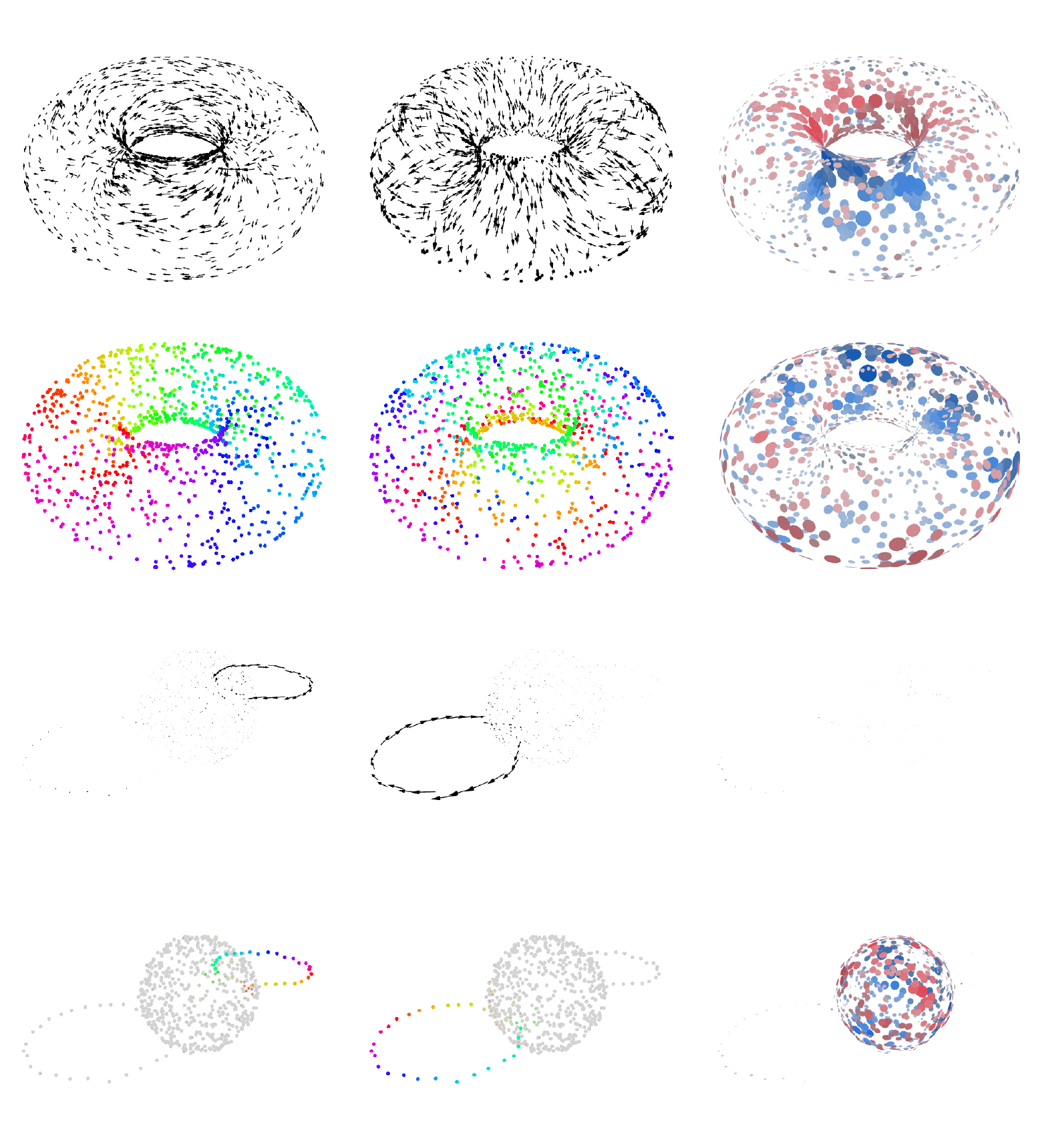}
\put(15.2,73.0){\makebox(0,0)[c]{harmonic 1-form $\alpha_1$, $\|\alpha_1\| = 1$}}
\put(45.5,73.0){\makebox(0,0)[c]{harmonic 1-form $\alpha_2$, $\|\alpha_2\| = 1$}}
\put(75.8,73.0){\makebox(0,0)[c]{$\alpha_1 \wedge \alpha_2$, $\|\alpha_1 \wedge \alpha_2\| = 0.93$}}
\put(15.2,48.0){\makebox(0,0)[c]{circular coordinate for $\alpha_1$}}
\put(45.5,48.0){\makebox(0,0)[c]{circular coordinate for $\alpha_2$}}
\put(75.8,48.0){\makebox(0,0)[c]{harmonic 2-form $\beta$, $\langle \alpha_1 \wedge \alpha_2, \beta \rangle = 0.54$}}
\put(15.2,27.0){\makebox(0,0)[c]{harmonic 1-form $\alpha_1$, $\|\alpha_1\| = 1$}}
\put(45.5,27.0){\makebox(0,0)[c]{harmonic 1-form $\alpha_2$, $\|\alpha_2\| = 1$}}
\put(75.8,27.0){\makebox(0,0)[c]{$\alpha_1 \wedge \alpha_2$, $\|\alpha_1 \wedge \alpha_2\| = 0.05$}}
\put(15.2,2.0){\makebox(0,0)[c]{circular coordinate for $\alpha_1$}}
\put(45.5,2.0){\makebox(0,0)[c]{circular coordinate for $\alpha_2$}}
\put(75.8,2.0){\makebox(0,0)[c]{harmonic 2-form $\beta$, $\langle \alpha_1 \wedge \alpha_2, \beta \rangle = 0.002$}}
  \end{overpic}
  \vspace{0.4em}
  \caption{\textbf{Cup product of cohomology classes.}
  A torus (top two rows) and a sphere with two circles attached (bottom two rows) both have Betti numbers $\beta_0 = 1$, $\beta_1 = 2$, and $\beta_2 = 1$, so they have two harmonic 1-forms $\alpha_1, \alpha_2$ and one harmonic 2-form $\beta$, but their topologies are different.
  We can tell them apart by computing the cup product $\alpha_1 \cup \alpha_2$, which is represented by the harmonic part of the wedge product $\alpha_1 \wedge \alpha_2$.
  We can measure this in the inner product $\langle \alpha_1 \wedge \alpha_2, \beta \rangle$, which is large for the torus (0.54) but small for the sphere with circles (0.002).
  All the harmonic forms are normalised to have unit norm, and so the inner products are a geometric measure of \textit{correlation}.
  For clarity, the colour intensity of the circular coordinates is proportional to the pointwise norm of the corresponding harmonic form.
  }
  \label{fig:tda2}
\end{figure}

\subsection{Comparison with persistent homology}
\label{sec: ph comparison}

The most common method for computing the topology of data is \boldblue{persistent homology} \cite{robins1999towards, edelsbrunner2002topological, zomorodian2004computing, carlsson2009topology}.
Persistent homology turns a point cloud into a combinatorial object called a \boldblue{filtered simplicial complex}, whose topology can reveal information about the geometry and topology of the points across different scales.

A standard construction is the \boldblue{Vietoris–Rips filtration}, which (approximately) considers balls of radius $\epsilon$ centred at each data point, which will progressively join up as $\epsilon$ increases.
By computing the cohomology of the complex at scale $\epsilon$, we can measure how many topological features exist that have a \textit{radius} of at least $\epsilon$.
As we increase $\epsilon$ from 0 to $\infty$, each of these features will appear at some \boldblue{birth time} $b$ and disappear at a \boldblue{death time} $d$.
The total lifetime $d - b$ is called the \boldblue{persistence} of the feature and is typically interpreted as a kind of topological \q{signal strength}.
The pairs $(b, d)$ can be plotted in a 2d scatter plot called a \boldblue{persistence diagram} which summarises the overall geometry and topology of the point cloud \cite{zomorodian2004computing}.
We illustrate this construction in Figure~\ref{fig:vietoris-rips}.

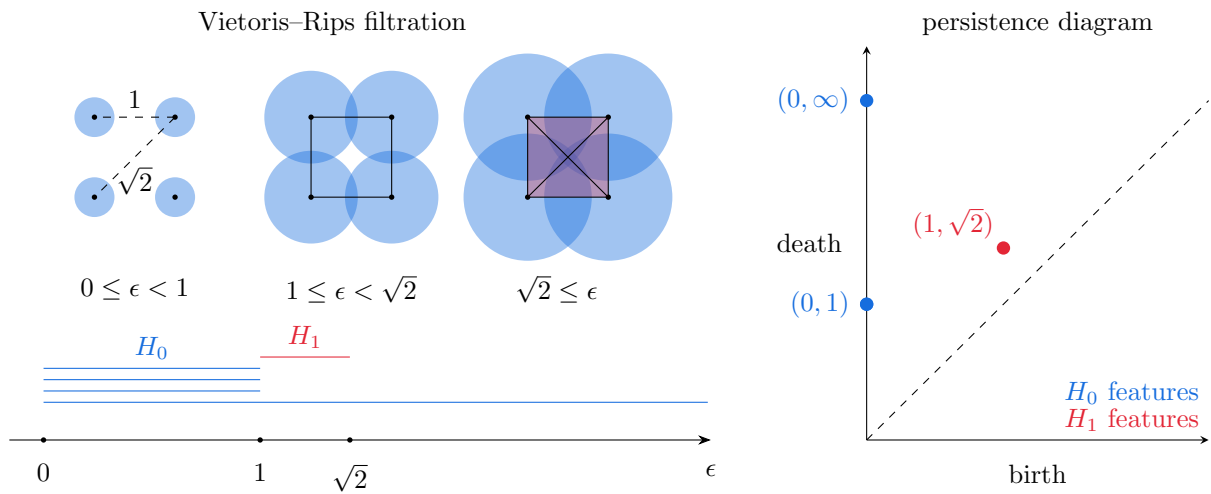
\begin{figure}[h!]
\centering

\begin{tikzpicture}[scale=0.5]

\definecolor{dotred}{HTML}{e32636}
\definecolor{dotblue}{HTML}{166dde}
\def\mylinewidth{1pt}

\def\numVertices{4}
\def\radius{1.5}
\def\figuredistance{3.8}
\def\annotationY{-3.5}


  \foreach \i in {1,...,\numVertices} {
    \pgfmathsetmacro\angle{360/\numVertices * (\i - 1) + 45}
    \pgfmathsetmacro\x{\radius * cos(\angle)}
    \pgfmathsetmacro\y{\radius * sin(\angle)}
    \coordinate (V\i) at (\x , \y);
  }

  \def\radius1{15pt}
    \foreach \i in {1,...,\numVertices} {
        \coordinate (V\i) at ($(V\i) + (\figuredistance+0.5, 0)$);
        \fill[dotblue, opacity=0.4] (V\i) circle (\radius1);
        \fill[black] (V\i) circle (2pt); 
    }
    \draw[dashed](V1) -- (V2) node[pos=0.5, above] {1};
    \draw[dashed](V1) -- (V3) node[pos=0.5, below] {$\sqrt{2}$};
    \node at (\figuredistance+0.5,\annotationY) {$0\leq \epsilon < 1$};

    \def\radius2{35pt}
    \foreach \i in {1,...,\numVertices} {
        \coordinate (V\i) at ($(V\i) + (1.5*\figuredistance, 0)$);
        \fill[dotblue, opacity=0.4] (V\i) circle (\radius2);
        \fill[black] (V\i) circle (2pt); 
    }
    \draw (V1)
    \foreach \i in {2,...,\numVertices} {
      -- (V\i)
    }
    -- cycle;

    \def\angle2{120}
    \pgfmathsetmacro\x{1 * cos(\angle2)}
    \pgfmathsetmacro\y{1 * sin(\angle2)}


    \node at (0.5+2.5*\figuredistance,\annotationY) {$1\leq \epsilon < \sqrt{2}$};

    \def\radius3{48pt}
    \foreach \i in {1,...,\numVertices} {
        \coordinate (V\i) at ($(V\i) + (1.5*\figuredistance, 0)$);
        \fill[dotblue, opacity=0.4] (V\i) circle (\radius3);
        \fill[black] (V\i) circle (2pt); 
    }

    \fill[dotred,opacity=0.3] (V1)
    \foreach \i in {2,...,\numVertices} {
      -- (V\i)
    }
    -- cycle;

    \draw (V1)
    \foreach \i in {2,...,\numVertices} {
      -- (V\i)
    }
    -- cycle;

    \draw (V1) -- (V3);
    \draw (V2) -- (V4);

    \node at (0.2+4*\figuredistance,\annotationY) {$\sqrt{2}\leq \epsilon $};

\def\linedistance{-4}
\def\annotationshift{-0.2cm}

    \draw[-Stealth](1,\annotationY+\linedistance) -- (5.1213*\figuredistance,\annotationY+\linedistance) node[pos=1.0, below, yshift=\annotationshift]{$\epsilon$};

    \coordinate (P1) at (0.5*\figuredistance,\annotationY+\linedistance);
    \coordinate (P2) at (2*\figuredistance,\annotationY+\linedistance);
    \coordinate (P3) at (2.6213*\figuredistance,\annotationY+\linedistance);

    \fill (P1) circle (2pt) node[below, yshift=0.2\annotationshift] {0};
    \fill (P2) circle (2pt) node[below, yshift=0.2\annotationshift] {1};
    \fill (P3) circle (2pt) node[below, yshift=0.2\annotationshift] {$\sqrt{2}$};

\def\baselineoffset{1}
\def\inlineoffset{0.3}

\draw[dotblue] ($(P1) + (0,\baselineoffset)$) -- ($(P2) + (3.1*\figuredistance,\baselineoffset)$);
\draw[dotblue] ($(P1) + (0,\baselineoffset+\inlineoffset)$) -- ($(P2) + (0,\baselineoffset+\inlineoffset)$);
\draw[dotblue] ($(P1) + (0,\baselineoffset+2*\inlineoffset)$) -- ($(P2) + (0,\baselineoffset+2*\inlineoffset)$);
\draw[dotblue] ($(P1) + (0,\baselineoffset+3*\inlineoffset)$) -- ($(P2) + (0,\baselineoffset+3*\inlineoffset)$);
\draw[dotred] ($(P2) + (0,\baselineoffset+4*\inlineoffset)$) -- ($(P3) + (0,\baselineoffset+4*\inlineoffset)$) node[pos=0.5, above] {$H_1$};


\node[dotblue, above, yshift=0.3cm] at ($0.5*($(P1) + (0,\baselineoffset+\inlineoffset)$) + 0.5*($(P2) + (0,\baselineoffset+\inlineoffset)$)$) {$H_0$};



\def\pdX{6.2*\figuredistance} 
\def\pdY{-7.5}                
\def\pdScale{1.8}             
\def\pointSize{5pt}           

\draw[-stealth] (\pdX, \pdY) -- ++(5*\pdScale,0) node[midway, below, yshift=\annotationshift] {birth};
\draw[-stealth] (\pdX, \pdY) -- ++(0, 5.8*\pdScale) node[midway, left, xshift=\annotationshift] {death};

\draw[dashed] (\pdX, \pdY) -- ++(5*\pdScale, 5*\pdScale);


\fill[dotblue] (\pdX + 0*\pdScale, \pdY + 2*\pdScale) circle (\pointSize);
\fill[dotblue] (\pdX + 0*\pdScale, \pdY + 2*\pdScale) circle (\pointSize); 
\fill[dotblue] (\pdX + 0*\pdScale, \pdY + 5*\pdScale) circle (\pointSize);

\fill[dotred] (\pdX + 2*\pdScale, \pdY + 2.8284*\pdScale) circle (\pointSize);

\node[dotblue, left, xshift=-0.1cm] at (\pdX + 0*\pdScale, \pdY + 2*\pdScale) {$(0,1)$};
\node[dotblue, left, xshift=-0.1cm] at (\pdX + 0*\pdScale, \pdY + 5*\pdScale) {$(0,\infty)$};
\node[dotred, above left, xshift=-0.cm] at (\pdX + 2*\pdScale, \pdY + 2.8284*\pdScale) {$(1,\sqrt{2})$};

\node[dotblue, left] at (\pdX + 5*\pdScale, \pdY + 0.7*\pdScale) {$H_0$ features};
\node[dotred, left] at (\pdX + 5*\pdScale, \pdY + 0.3*\pdScale) {$H_1$ features};


\node[anchor=south] at (2.5*\figuredistance, 3) {Vietoris–Rips filtration};
\node[anchor=south] at (\pdX + 2.5*\pdScale, 3) {persistence diagram};


\end{tikzpicture}

\captionof{figure}{\textbf{Vietoris-Rips filtration and persistent homology} of four points in a square. 
At $\epsilon=0$, four connected components appear, merging at $\epsilon=1$. 
At this scale, a 1-cycle emerges that persists until $\epsilon=\sqrt{2}$. 
The \textit{persistence diagram} summarises these topological features. 
Note that three points overlap in the $H_0$ diagram.}
\label{fig:vietoris-rips}
\end{figure}

We note several differences between our approach and persistent homology.

\begin{enumerate}
    \item Diffusion geometry is defined on the \textit{underlying probability space} that the data are sampled from.
    Our computed objects are estimators for their continuous counterparts, and so our results, such as the harmonic forms, are \boldblue{inferential statistics}.
    They are inferring things about the underlying space.
    Conversely, persistent homology is only defined for a given filtration on a point cloud, and so is a \boldblue{descriptive statistic}.
    It is describing the data and is not necessarily an estimator for something about the space.
    \item In diffusion geometry, the harmonic forms provide unique, canonical representatives for each cohomology class.
    This means they can be easily \boldblue{visualised}, and also \boldblue{vectorised} for use in statistics and machine learning.
    Conversely, the points in the persistence diagram correspond to \textit{multiscale classes} of topological features and cannot be canonically visualised or vectorised \cite{ali2023survey}.
    This can make diffusion geometry easier to interpret: compare the harmonic forms in Figure \ref{fig:stability_and_robustness} to the persistence diagrams.
    \item When the filtration in persistent homology arises from a \boldblue{length scale}, such as in Vietoris–Rips, the birth and death times are all lengths in the original data units.
    The explicit link between topology and length has led to significant applications in areas where the data's scale has an essential meaning \cite{saadatfar2017pore,arns2004effect,herring2019topological}.
    While the harmonic forms in diffusion geometry offer a rich description of the space and can be combined with other calculus tools, they do not assess topological scale in the ambient units.
    \item The wedge product in diffusion geometry leads directly to the \boldblue{cup product} structure, as described above.
    Conversely, the multiscale aspect of persistent homology comes at the cost of not easily representing the cup product, which does not factor through the persistence diagram decomposition.
    There have been analogues of the cup product developed in this setting, like the \boldblue{persistent cup-length} and \boldblue{persistent cup-modules} \cite{yarmola2010persistence, contessoto2021persistent,memoli2024persistent}, but these come at a high computational cost and do not preserve the clean algebraic structure of the continuous setting.
    \item A practical limitation of topological methods is their \boldblue{computational complexity}.
    Persistent homology is rarely applied to datasets larger than $\approx 10^3$ and in degrees bigger than 1.
    Diffusion geometry is more efficient in both computational time and memory, by several orders of magnitude, as we explore in Section \ref{sec: computational_complexity}.
\end{enumerate}

\subsubsection{Stability and robustness}

Two fundamental properties of geometric and topological statistics are their \boldblue{stability to perturbation} and their \boldblue{robustness to noise}.
We say that a statistic of a point cloud is \boldblue{stable} if small changes to the input point cloud correspond to small changes in the statistic (with respect to some metric on each).
Conversely, a statistic is \boldblue{robust} if its value only changes slightly when the input data are noisy.
This is not a precise notion because it entirely depends on the type of noise, but it is generally understood to include large perturbations and outliers.

The \textit{stability theorem} guarantees the stability of persistent homology with respect to the \textit{bottleneck distance} on the diagrams, which treats the diagram as a multiset \cite{cohen2005stability}.
The computations in this paper all involve matrices and tensors, whose entries depend smoothly on their input, and so are smooth.
The solutions to weak formulations will therefore vary smoothly.
The solutions to the generalised eigenproblems (which we use to find harmonic forms) are more subtle.
In general, the eigenvalues will vary continuously, which is the most direct analogue of the persistent homology stability theorem.
The eigenvectors, however, are not stable, because when two eigenvalues change places, the eigenvectors will jump discontinuously from one to the other (a problem mirrored in the computation of \textit{cocycle representatives} in persistent homology).
However, provided the eigenvalues are all distinct, then, for sufficiently small perturbations, the eigenvalues and eigenvectors are both smooth.

In diffusion geometry, the underlying computation uses a heat kernel which smoothly aggregates the different data points, so our harmonic forms are extremely robust to both large perturbations and outliers.
Conversely, the Vietoris-Rips filtration is much more easily distorted, and is not significantly robust: see Figure \ref{fig:stability_and_robustness}.

\begin{computationalnote}
When the Gram matrix has small eigenvalues, and we employ regularisation (see Section \ref{sec: frame_theory_weak_formulations}), the stability of the solutions will depend on the regularisation.
They will be stable for Tikhonov regularisation, but not generally for spectral cutoff.
\end{computationalnote}


\begin{figure}[htbp]
    \centering
    \makebox[\textwidth][c]{
        \newcommand{\imgwidth}{0.31\textwidth} 
        \setlength{\tabcolsep}{2pt}
        \begin{tabular}{ccc}
             clean data 
             & $+$ perturbation
             & $+$ outliers \\
             \\
            
            \includegraphics[width=\imgwidth, valign=m, clip=true, trim=0 20 0 20]{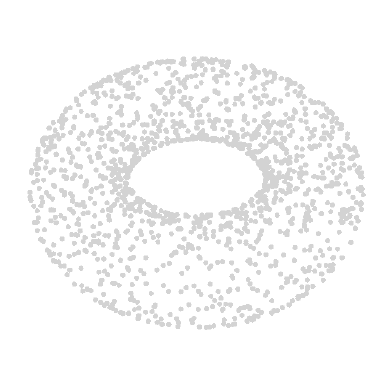} &
            \includegraphics[width=\imgwidth, valign=m, clip=true, trim=0 20 0 20]{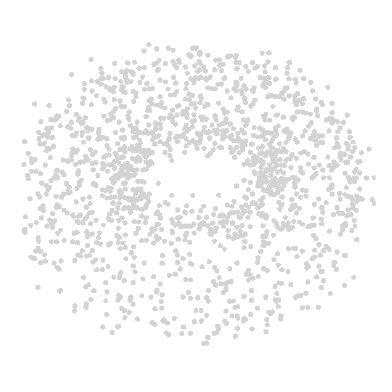} &
            \includegraphics[width=\imgwidth, valign=m, clip=true, trim=0 20 0 20]{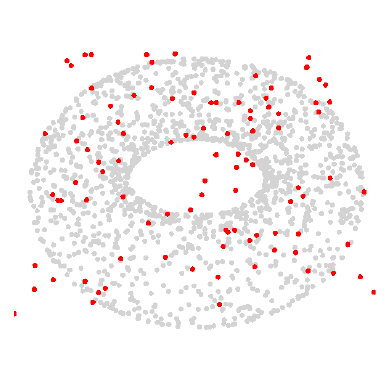} \\

            \includegraphics[width=\imgwidth, valign=m, clip=true, trim=0 20 0 20]{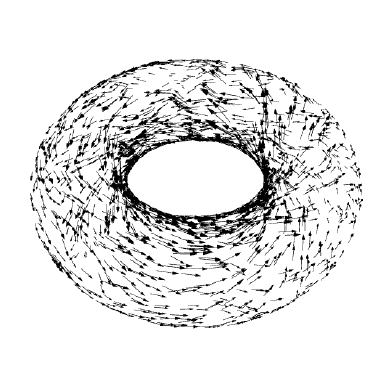} &
            \includegraphics[width=0.34\textwidth, valign=m, clip=true, trim=0 20 0 20]{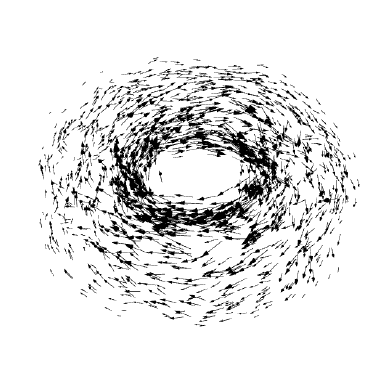} &
            \includegraphics[width=\imgwidth, valign=m, clip=true, trim=0 20 0 20]{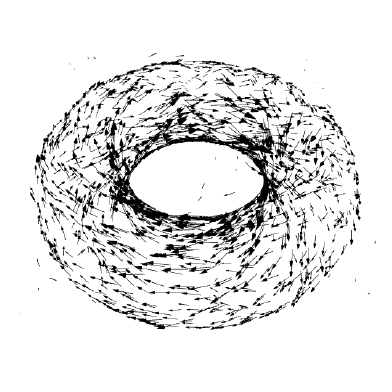} \\
            
            \includegraphics[width=\imgwidth, valign=m, clip=true, trim=0 20 0 20]{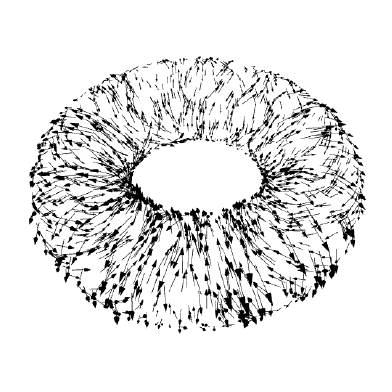} &
            \includegraphics[width=0.34\textwidth, valign=m, clip=true, trim=0 20 0 20]{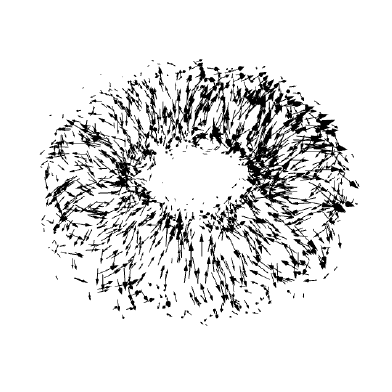} &
            \includegraphics[width=\imgwidth, valign=m, clip=true, trim=0 20 0 20]{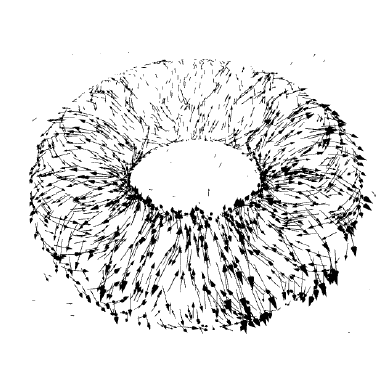} \\
            
            \\
            \includegraphics[width=\imgwidth, valign=m]{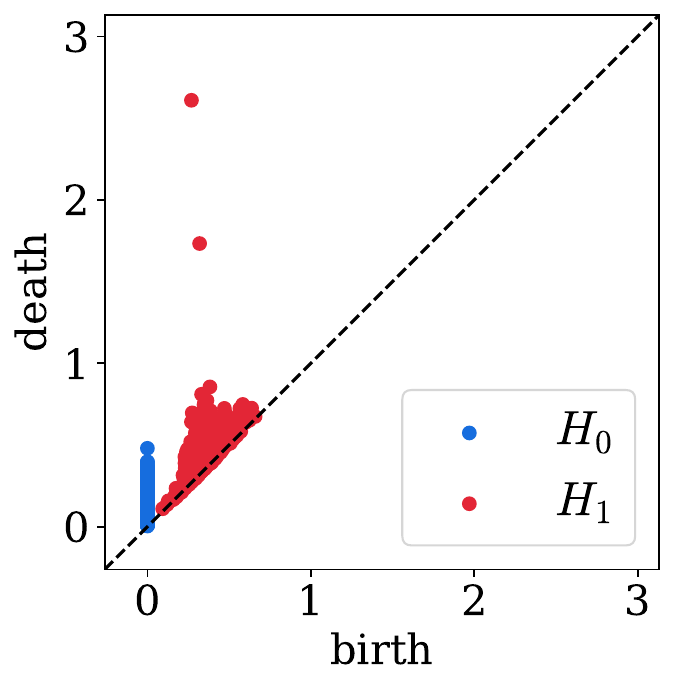} &
            \includegraphics[width=\imgwidth, valign=m]{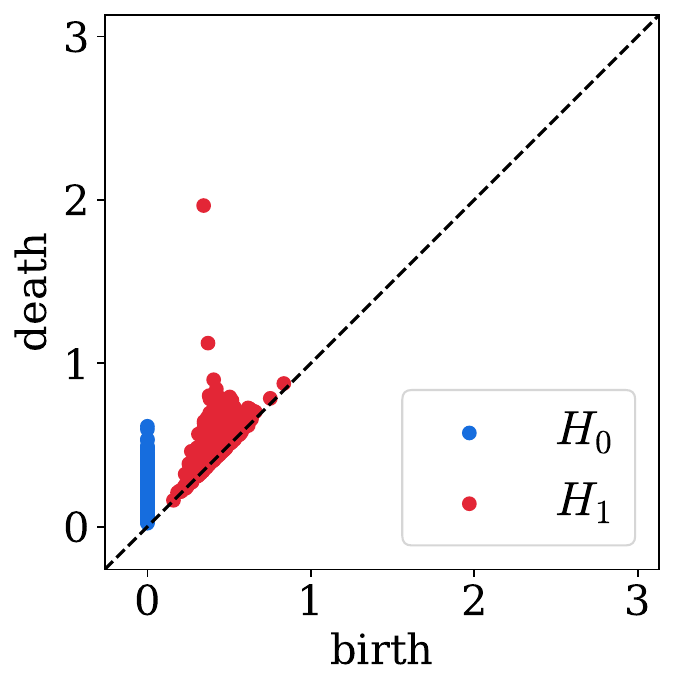} &
            \includegraphics[width=\imgwidth, valign=m]{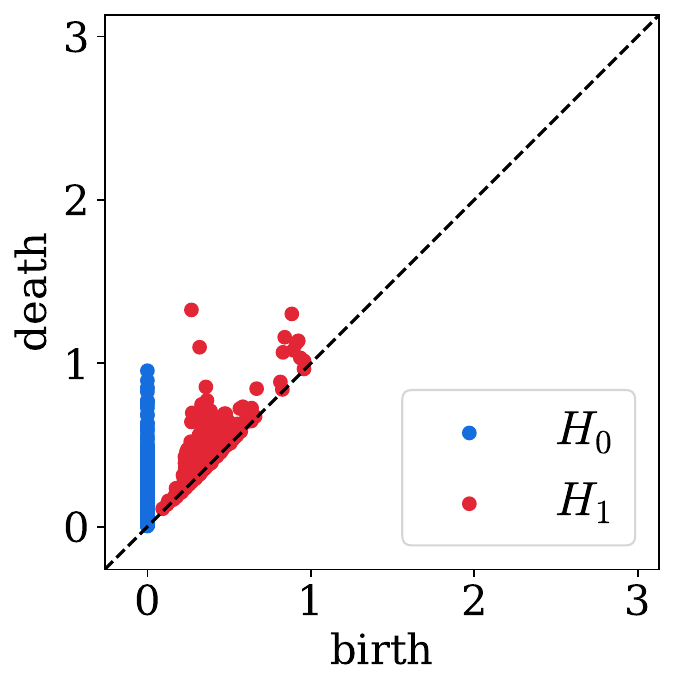} \\
        \end{tabular}
    }
    \caption{\textbf{Diffusion geometry vs persistent homology: stability and robustness.}
    Geometric and topological statistics should be \textit{stable} under small perturbations, and, ideally, \textit{robust} to large perturbations and more diverse noise like outliers.
    We sample points uniformly from a torus (left), with a small perturbation (middle), and with outliers (right).
    In the second and third rows, we use diffusion geometry to compute the first two eigenforms of the Hodge Laplacian, which stably and robustly reveal the topology of the torus.
    In the bottom row, we show the Vietoris-Rips persistence diagrams.
    These are less clear to interpret, but for the clean data, the $H_1$ persistence diagram has two highly persistent points, which we expect to correspond to the two holes in the torus.
    These are affected by perturbation but do not completely disappear (corresponding to stability), and are lost when we add outliers (indicating a lack of robustness).
    }
    \label{fig:stability_and_robustness}
\end{figure}

\subsection{Circular coordinates for 1-cohomology classes}

Given a harmonic 1-form $\alpha \in \Omega^1(M)$, we can ask how far a given point $p \in M$ is around the corresponding 1-dimensional hole.
When $M$ is a manifold, then, under certain conditions\footnote{the 1-form $\alpha$ represents a cohomology class $[\alpha] \in \mathcal{H}^1(M,\R)$, and has a circular coordinate precisely when $[\alpha]$ is an \textit{integral} cohomology class in $\mathcal{H}^1(M,\Z) \subset \mathcal{H}^1(M,\R)$}
on $\alpha$, this can be measured by a \boldblue{circular coordinate}, which assigns an angle $\theta(p) \in S^1$ to each point $p$.
Locally, $\theta$ satisfies $\alpha = 2\pi d\theta$, and so we can interpret the flow generated by the dual vector field $\alpha^\sharp$ as pushing points around the hole at a constant angular speed.
We will compute candidate circular coordinates for a 1-form $\alpha$ using the heuristic that the flow generated by $\alpha^\sharp$ is \textit{purely rotational}.


\begin{lemma}
If $\alpha \in \Omega^1(M)$ is a 1-form, then its dual vector field $\alpha^\sharp$ has adjoint $(\alpha^\sharp)^* = \partial\alpha - \alpha^\sharp$ with respect to the inner product on $L^2(\mu)$.
In particular, \boldblue{$\partial\alpha = 0$ if and only if $\alpha^\sharp$ is skew-adjoint}, so it has a purely imaginary spectrum.
\end{lemma}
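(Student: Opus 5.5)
The plan is to compute the adjoint of $\alpha^\sharp$, viewed as the operator $f \mapsto \alpha^\sharp(f) = g(\alpha, df)$ on $L^2(\mu)$, by testing against a second function $h$ and moving everything onto $h$ using the Leibniz rule and the defining adjoint relation of the codifferential. Concretely, for $f,h \in \A$ I will start from
\[
\inp{\alpha^\sharp(f)}{h} = \int h\, g(\alpha, df)\, d\mu = \int g(h\alpha, df)\, d\mu = \inp{h\alpha}{d f},
\]
using that the metric is $\A$-bilinear, so $h\,g(\alpha,df) = g(h\alpha, df)$. Since $\partial^{(1)}$ is defined as the adjoint of $d^{(0)}$, this equals $\inp{\partial(h\alpha)}{f}_{L^2(\mu)}$.

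The key step is a product rule for the codifferential:
\[
\partial(h\alpha) = h\,\partial\alpha - g(\alpha, dh) = h\,\partial\alpha - \alpha^\sharp(h).
\]
I would prove it weakly. For a test function $f$,
\[
\inp{\partial(h\alpha)}{f} = \inp{h\alpha}{df} = \inp{\alpha}{h\,df} = \inp{\alpha}{d(hf) - f\,dh},
\]
using the Leibniz rule $d(hf) = h\,df + f\,dh$, which is the derivation property of $\Gamma$. The right-hand side equals
\[
\inp{\partial\alpha}{hf} - \int f\, g(\alpha, dh)\, d\mu = \inp{h\,\partial\alpha - \alpha^\sharp(h)}{f}.
\]
Combining this with the first display gives $\inp{\alpha^\sharp f}{h} = \inp{f}{(\partial\alpha)h - \alpha^\sharp h}$. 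So $(\alpha^\sharp)^* = \partial\alpha - \alpha^\sharp$, where $\partial\alpha$ acts as a multiplication operator. The sign follows the paper's convention $\partial^{(1)} = -\nabla^*$, under which the divergence is $-\partial$.

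The \emph{in particular} claim then follows directly. If $\partial\alpha = 0$, then $(\alpha^\sharp)^* = -\alpha^\sharp$. Conversely, skew-adjointness gives $(\partial\alpha)\,h = 0$ for all $h$; taking $h = 1$ (or using density of $\A$ in $L^2(\mu)$) yields $\partial\alpha = 0$. A skew-adjoint operator $S$ has $iS$ self-adjoint, so its spectrum is purely imaginary.

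The main obstacle is technical rather than algebraic. $\alpha^\sharp$ is an unbounded operator, so the adjoint should be read formally on the dense domain $\A$, with the spectral statement understood for skew-adjoint closures. I would also need $h\alpha$ and $hf$ to remain in the relevant domains; for this I assume that $\A$ is an algebra of bounded functions with $\Gamma(h,h)$ bounded, as in the diffusion geometry setting. I would note these domain assumptions explicitly rather than prove essential skew-adjointness.
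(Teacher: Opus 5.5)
Your proposal is correct and follows the paper's proof. Both use the chain $\langle \alpha^\sharp(f), h\rangle = \langle h\alpha, df\rangle = \langle \partial(h\alpha), f\rangle$ and then the product rule $\partial(h\alpha) = h\,\partial\alpha - g(\alpha,dh)$; the differences are that you derive that product rule weakly from the Leibniz rule for $d$ (the paper just asserts it) and you spell out the converse of the \emph{in particular} claim by taking $h=1$, both of which are worthwhile additions.
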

\begin{proof}
We calculate
\[
\inp{\alpha^\sharp(f)}{h}_{L^2(\mu)}
=\inp{g(\alpha,df)}{h}_{L^2(\mu)} 
=\int h g(\alpha,df) d\mu 
=\int g(h\alpha,df) d\mu 
=\inp{h\alpha}{df}_{\Omega^1(\M)}.
\]
Using the fact that $\partial = d^*$, this equals
\[
\inp{\partial(h\alpha)}{f}_{L^2(\mu)}
=\inp{h\partial(\alpha) - g(\alpha,dh)}{f}_{L^2(\mu)}
=\inp{h\partial(\alpha) - \alpha^\sharp(h)}{f}_{L^2(\mu)}
\]
for any $f,h \in \A$.
\end{proof}

If $\partial\alpha = 0$ (or, equivalently, its Hodge decomposition has no exact part), we say that $\alpha \in \Omega^1(M)$ is \boldblue{coclosed}.
Harmonic forms are coclosed, so their dual vector fields are skew-adjoint, and their flows are rotational.
Given a harmonic 1-form $\boldsymbol{\alpha} \in \R^{n_1d}$, we can compute $\boldsymbol{\alpha}^{\textnormal{op}}$, the operator form of its dual vector field, as described in Section \ref{sub: directional derivatives}.
If we diagonalise this matrix, we expect it to have imaginary eigenvalues $\lambda_i$ and complex eigenfunctions $z_i \in \C^{n_0}$ that evolve under the flow of $\boldsymbol{\alpha}^{\textnormal{op}}$ as $z_i \mapsto e^{\lambda_i t}z_i$.
Using the heuristic that the flow of $\alpha^\sharp$ rotates uniformly around the hole, we will use the circular functions $\arg(z_i) \in S^1$ with the smallest eigenvalue magnitudes $|\lambda_i|$ as candidate circular coordinates for $\alpha$.


In practice, we regularise the vector field $\boldsymbol{\alpha}^{\textnormal{op}}$ by adding a Laplacian term to get
\[
L_\epsilon = \boldsymbol{\alpha}^{\textnormal{op}} - \epsilon \boldsymbol{\Delta},
\]
where higher values of $\epsilon$ give smoother eigenvectors.
When $\epsilon$ is too high, the eigenvalues of $L_\epsilon$ eventually become purely real valued, and when $\epsilon$ is too low, the eigenfunctions become too irregular to be useful.
In all our examples, we set $\epsilon = 1$ and choose the complex eigenfunction with the smallest eigenvalue magnitude.
We plot these coordinates next to their corresponding harmonic forms in Figures \ref{fig:tda1} and \ref{fig:tda2}.
We stress that the idea of using the flow of a coclosed 1-form to generate circular coordinates is purely heuristic, and we do not offer any formal guarantees.
The fact that it works robustly in all our examples raises several interesting theoretical questions for future work.

\begin{relatedwork}
The idea of using 1-forms to generate circular coordinates on data was introduced in \cite{de2009persistent}, which used persistent cohomology to find integral 1-forms and then solved a least-squares problem to find circular coordinates, and was developed in \cite{perea2020sparse}.
The idea of smoothing the vector field by adding a diffusion term is a standard trick in dynamical systems analysis, such as \cite{giannakis2019data}.
\end{relatedwork}

\subsection{Morse theory}
\label{sec: morse}

Another method from differential topology that we can access with diffusion geometry is \boldblue{Morse theory}, which uses the critical points of functions to measure the topology of a space.
If $f : M \to \R$ is a smooth function on a manifold $M$, then a point $p \in M$ is a \boldblue{critical point} of $f$ if $\nabla f$ vanishes there.
At a critical point $p$, the \boldblue{Hessian} $H(f)(p) : \mathfrak{X}(M) \times \mathfrak{X}(M) \to \R$ is a bilinear form that measures the second derivatives of $f$ at $p$, in the direction of a pair of vector fields.
If $M$ is $D$-dimensional, then $H(f)(p)$ has rank at most $D$.
We call a critical point $p$ \boldblue{non-degenerate} if $H(f)(p)$ has rank $D$, and a function $f$ is a \boldblue{Morse function} if all its critical points are non-degenerate.
If $p$ is a non-degenerate critical point of $f$, then the \boldblue{Morse index} of $f$ at $p$ is the number of negative eigenvalues of $H(f)(p)$, which is an integer between $0$ and $D$.
As such, the Morse index measures the number of independent directions around $p$ along which $f$ is locally concave, and so is 0 at local minima, $D$ at local maxima, and some integer in between at saddle points.
The Morse indices, and, more generally, the eigenvectors of the Hessian, give a complete topological description of the underlying manifold $M$.

We can compute these indices using our diffusion geometry framework.
First, we can find critical points of a function $f \in \A$ from the norm of its gradient $\|\nabla f\|$, which is (approximately) zero at critical points.
We can also compute the Hessian $H(f) \in \Omega^1(M)^{\otimes 2}$, and illustrate these all in Figure \ref{fig:morse_derivatives}.

\begin{figure}[h!]
  \centering
  \begin{overpic}[width=\linewidth,grid=false]{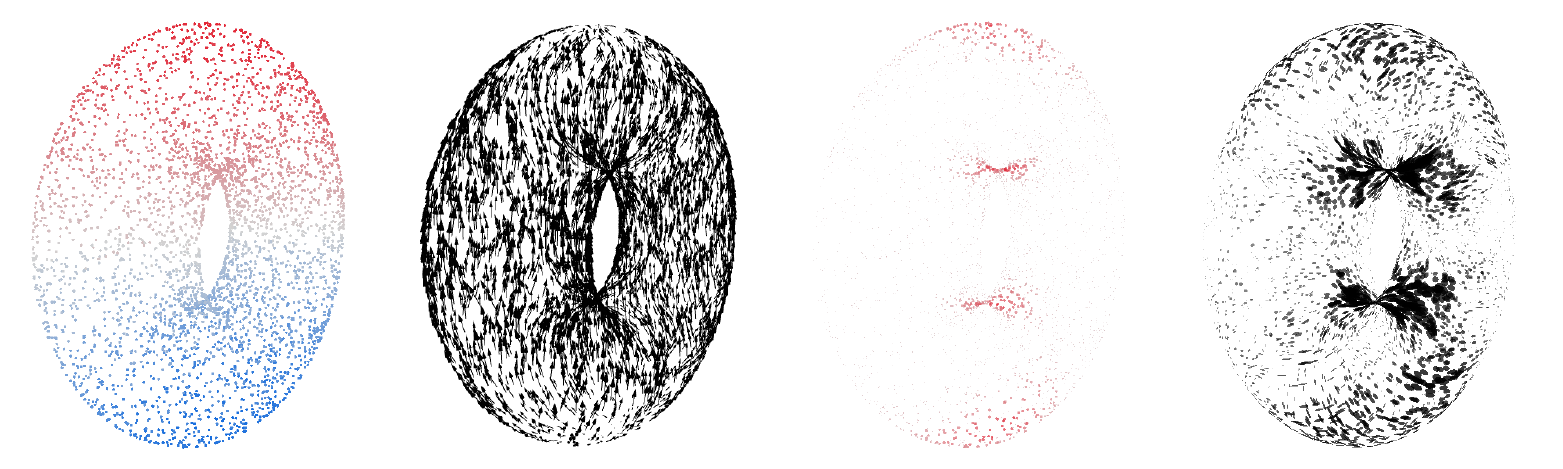}
  \end{overpic}
  \caption{\textbf{Function derivatives and critical points.}
  If $f \in \A$ is a function, then $\nabla f \in \mathfrak{X}(M)$ measures the velocity of $f$ and is zero at critical points.
  In the third panel, we plot an indicator function $\exp(-\|\nabla f\|)$ that highlights the critical points of $f$.
  The Hessian $H(f) \in \Omega^1(M)^{\otimes 2}$ measures the second derivatives of $f$ (its acceleration) in the direction of a pair of vector fields.
  }
  \label{fig:morse_derivatives}
\end{figure}

If $p$ is a critical point of $f$ and $X_1,...,X_D \in \mathfrak{X}(M)$ are any $D$ linearly independent vector fields at $p$, then we can represent $H(f)(p)$ as a $D \times D$ symmetric matrix with entries $H(f)(p)_{ij} = H(f)(X_i,X_j)(p)$.
Let $G(p) = (g(X_i,X_j)(p))_{i,j=1}^D$ be the Gram matrix of the vector fields at $p$, which is strictly positive definite because the $X_i$ are linearly independent.
Then the eigenvalues and eigenvectors of $H(f)(p)$ are solutions to the generalised eigenproblem
\[H(f)(p)v = \lambda G(p)v.
\]
Notice that these solutions are independent of the choice of vector fields $X_i$, since changing the vector fields corresponds to a change of basis in $\R^D$.
This eigenproblem is fully symmetric, and so has real eigenvalues and orthogonal eigenvectors with respect to $G(p)$.
The positive eigenvalues correspond to directions along which $f$ is locally convex, and the negative eigenvalues correspond to directions along which $f$ is locally concave.
We can therefore compute the Morse index of $f$ at $p$ by counting the number of negative eigenvalues of this generalised eigenproblem.
This process is illustrated in Figure \ref{fig:morse_indices}.

\begin{figure}[h!]
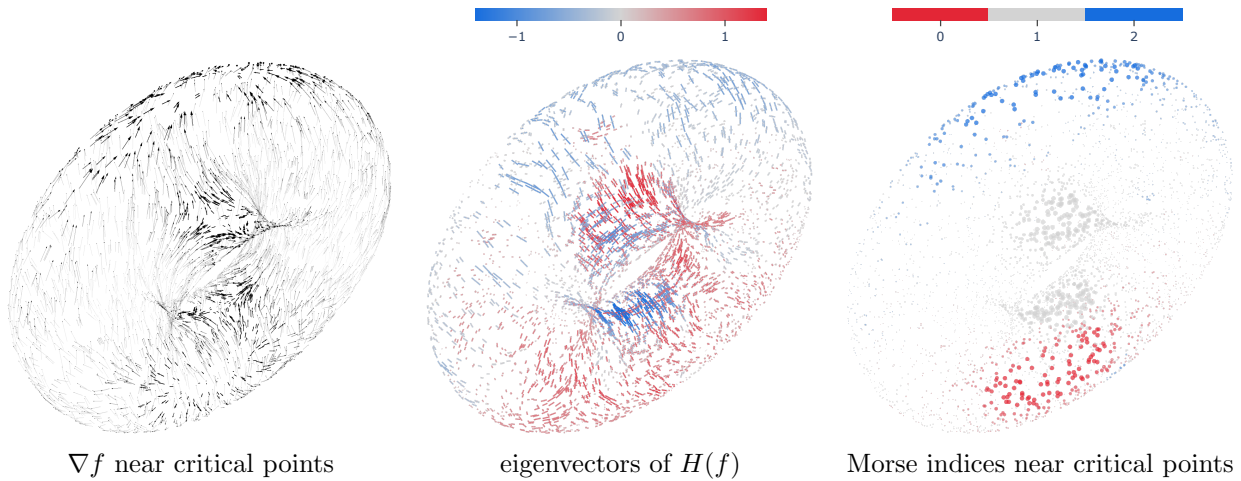

  \centering
  \begin{overpic}[width=\linewidth,grid=false]{figs/morse_2.pdf}
\put(16.7,-1.5){\makebox(0,0)[c]{$\nabla f$ near critical points}}
\put(50.0,-1.5){\makebox(0,0)[c]{eigenvectors of $H(f)$}}
\put(83.3,-1.5){\makebox(0,0)[c]{Morse indices near critical points}}
  \end{overpic}
  \vspace{0.4em}
  \caption{\textbf{Morse indices of a function.}
    We illustrate the flow of $\nabla f$ near critical points (left), using the indicator function $\exp(-\|\nabla f\|)$ to highlight them.
    We can diagonalise the Hessian $H(f)$ locally to find its eigenvectors (middle), which are coloured by their eigenvalues (blue for negative, red for positive).
    Counting the number of negative eigenvalues gives the Morse index at each critical point (right).
    Near local minima, $\nabla f$ is expanding in all directions, so both Hessian eigenvalues are positive and the Morse index is $0$ (red).
    Near local maxima, $\nabla f$ is contracting in all directions, so both Hessian eigenvalues are negative and the Morse index is $2$ (blue).
    Near saddle points, $\nabla f$ is expanding in one direction and contracting in the other, so the Hessian has one positive and one negative eigenvalue and the Morse index is $1$ (grey).
  }
  \label{fig:morse_indices}
\end{figure}

\begin{relatedwork}
A notable prior attempt to infer Morse indices from manifold point clouds is \cite{yim2021local}.
\end{relatedwork}

\section{Computational complexity and scaling}
\label{sec: computational_complexity}

We now summarise the computational complexity of the methods introduced in this paper, and measure empirical scaling laws.

\subsection{Complexity}

In our proposed framework, the complexity depends most significantly on the number of data points $n$ and the immersion dimension $d$.
As we discussed in \compnote{note: immersion dimension and coords}, the immersion coordinates we use are arbitrary, and their number need not be the ambient dimension.
We therefore assume that $d$ is of the order of the \textit{intrinsic dimension} and so, like $n$, is an essential attribute of the data.

Given these constraints, we have several tools to control the computational complexity, which can be tuned to suit the available hardware and desired numerical precision.
These parameters are the nearest-neighbour kernel sparsity $k$, the dimension of the compressed function space $n_0$, and the number of coefficient functions $n_1$ used to compute the vector fields, forms, and tensors.
We summarise the complexities of the main operations in the following table.

\begin{table}[h!]
\label{table:complexity}
\centering
\renewcommand{\arraystretch}{1.3}
\begin{tabularx}{\textwidth}{llX}
Object 
& Time complexity 
& Remarks \\
\hline

kernel matrix $K$
& $\mathcal{O}(n\log(n)+d)$ 
& Takes advantage of sparsity by only considering $k$-NN. \\

carré du champ $\Gamma(f,h)$ 
& $\mathcal{O}(kn)$ 
& Takes advantage of sparsity through $k$. \\

eigenfunctions of $P$ 
& $\mathcal{O}(n n_0^2)$ 
& Using an iterative solver for sparse symmetric matrices: there is a factor of the number of iterations.
\\

product of $m$ pairs of functions 
& $\mathcal{O}(n n_0 m)$ 
& \\

Gram matrix of $k$-forms
& $\mathcal{O}(n n_1^2 \binom{d}{k}^2)$ 
& \\

wedge product $\alpha \in \Omega^a(\mathcal{M})$, $\beta\in\Omega^b(\mathcal{M})$ 
& $\mathcal{O}\!\left(n n_1 \binom{d}{a}\binom{d-a}{b}\right)$ 
& \\

weak exterior derivative $d^{(l),\mathrm{weak}}$ 
& $\mathcal{O}\left( \binom{d}{l}\binom{d}{l+1}nl(l^2+n_1^2)\right)$
& Assuming $\Gamma$-tensors have been precomputed.\\

\end{tabularx}
\end{table}

\subsection{Scalability}

In Section \ref{sec:deRahm}, we described a method for topological data analysis based on de Rham cohomology, which involves computing and diagonalising the Hodge Laplacian (see Section \ref{sub: hodge laplacian}).
We will use this as a test of the scalability of diffusion geometry by measuring empirical scaling laws against the dataset size $n$ and dimension $d$.
As a reference, we will compare the scaling performance of diffusion geometry with \textit{Ripser} \cite{bauer2021ripser}, which is the state-of-the-art software for Vietoris-Rips persistent homology (see Section \ref{sec: ph comparison} for further comparisons).

We note two differences between our method and persistent homology.
\begin{enumerate}
    \item Computing persistent homology in degree $k$ requires also computing it in all the degrees $0,...,k-1$.
    Diffusion geometry can isolate degrees.
    \item The complexity of persistent homology in degree $k$ is (in the worst case) $\mathcal{O}(n^{3k+3})$.
    In diffusion geometry, the Hodge Laplacian on $k$-forms is an $n_1 \binom{d}{k} \times n_1 \binom{d}{k}$ matrix, and computing it costs $\mathcal{O}(n n_1^2 \binom{d}{k}^2)$.
    In particular, the complexity does not depend significantly on the degree $k$ \textit{as a function of $n$}, even though it will be greater when $\binom{d}{k}$ is large.
\end{enumerate}

To give a realistic assessment of these methods' usability in practical data science contexts, we perform all the tests on standard consumer hardware (a 2024 MacBook Pro with 24GB memory).

\subsubsection{Scaling $n$}

We observe the scaling laws in $n$ by fixing a data distribution (here a torus in $\R^3$), sampling $n$ points from it, and timing both methods.
Since persistent homology can only compute degrees cumulatively, a fair test is to measure the time taken to calculate cohomology in \boldblue{degrees up to $k$}, for $k = 0,1,2$.
We plot the scaling behaviour in Figure~\ref{fig:n_scaling}, fixing $n_1=50$ for the diffusion geometry Hodge Laplacian computation.
The speed could be increased by reducing $n_1$ further (and, for a torus, $n_1 = 10$ would still give perfectly accurate harmonic form estimates).

\begin{figure}[h!]
\centering
\includegraphics[width=0.75\textwidth, clip=true, trim=0 0 0 100]{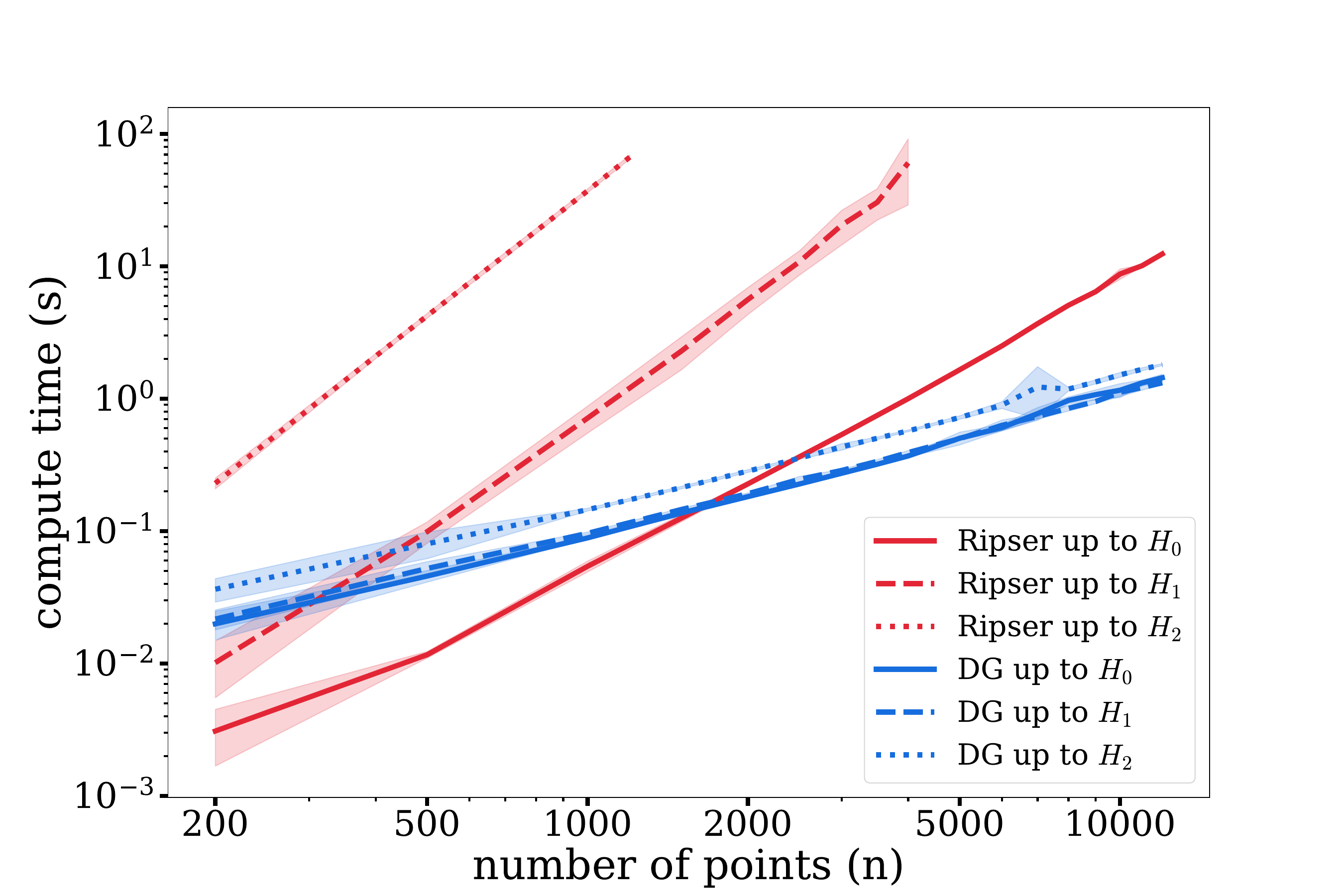}
\caption{
\textbf{Scalability of diffusion geometry vs persistent homology across dataset size.}
Benchmark results for cohomology computation in degrees up to $k$, for $k = 0,1,2$, using diffusion geometry (DG) and Ripser. 
The diffusion geometry Hodge Laplacian uses fixed parameters $k=32$ and $n_0=n_1=50$.
}
\label{fig:n_scaling}
\end{figure}

We notice that diffusion geometry exhibits substantially better scaling laws than persistent homology (the slope of the lines in Figure~\ref{fig:n_scaling}).
Importantly, the scaling laws for diffusion geometry do not depend on the degree $k$ (the lines for $H_0$, $H_1$, and $H_2$ have the same slope), whereas the complexity of Vietoris-Rips persistent homology is exponential in $k$ (the slope of the lines increases as $k$ increases).
These laws demonstrate that diffusion geometry is highly scalable, and takes only $\approx 1$ second to compute homology in all degrees for $n = 12,000$ on standard hardware.



\subsubsection{Scaling $d$}

We observe the scaling laws in $d$ by fixing a number of samples $n$, and sampling $n$ points from data distributions of increasing dimension (here, a $d$-sphere in $\R^{d+1}$), and timing both methods. 
Both diffusion geometry and persistent homology depend on $d$, but this dependence is more complicated than that on $n$.
The complexity of computing persistent homology in Ripser depends sensitively on both the intrinsic dimension and the degree.
The complexity of diffusion geometry depends very predictably on the immersion dimension, but, as discussed above, this depends somewhat arbitrarily on the choice of immersion coordinates.
Given this uncertainty, and the fact that persistent homology can only compute degrees cumulatively, we compute homology in every degree from $0$ to $d$, which we plot in the left panel of Figure~\ref{fig:d_scaling}.
The Vietoris-Rips complex becomes very large in high degrees, so we fix $n=100$ to allow computation up to degree 4.

\begin{figure}[h!]
\hspace*{-1cm}
\centering
\includegraphics[width=1.\textwidth]{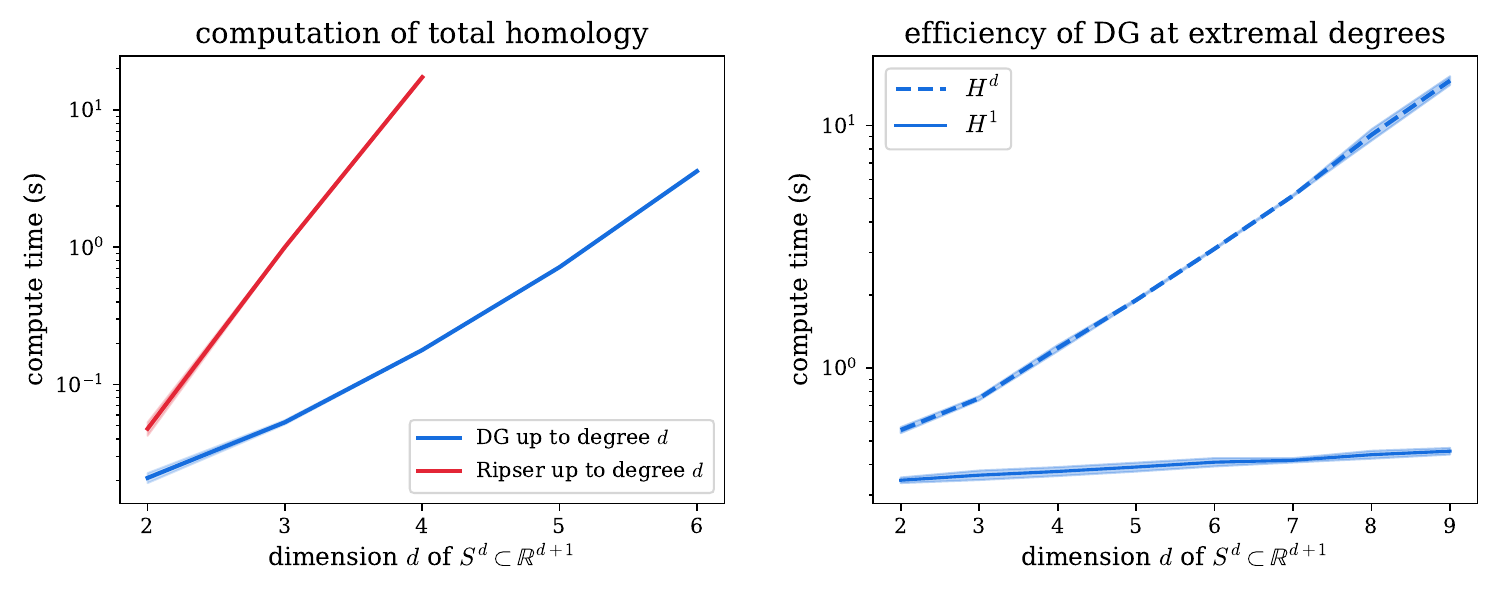}
\caption{
\textbf{Scalability of diffusion geometry vs persistent homology across dimensions.}
Left: computing cohomology in degrees up to $d$, using diffusion geometry (DG) and Ripser. 
A sample size of $n=100$ is chosen to account for the combinatorial explosion of simplices for Ripser.
Right: diffusion geometry benefits significantly from the combinatorial symmetry of the $k$-form space, so computation is exceptionally fast at extremal degrees (here $k=1$ and $k=d$).
This allows for high-dimensional feature recovery with minimal overhead, even here where we fix $n=5000$. 
For both plots, the diffusion geometry Hodge Laplacian uses fixed parameters $k=32$ and $n_0=n_1=50$.
}
\label{fig:d_scaling}
\end{figure}

Due to the $\binom{d}{k}$ scaling of the $k$-form space in diffusion geometry, computational complexity is minimal at the extremal degrees. This allows for very fast inference of topological features when $k$ is near 0 or the ambient dimension, as shown in the right panel of Figure~\ref{fig:d_scaling}.
The efficiency and scalability of diffusion geometry means we can set $n=5000$, and still obtain manageable computation times ($\approx 10$ seconds) up to degree 9.

\section{Background and context}
\label{sec: background}

Since this paper covers a wide range of topics, we have discussed some related work in each section as the need arises.
Here, we provide some general background for diffusion geometry in the context of computational geometry and topology.

\subsection{Theory}
Diffusion geometry is formulated using the language of \boldblue{Bakry-Émery $\Gamma$-calculus} \cite{bakry1985seminaire,bakry2014analysis}, which studies Markov diffusion processes through a bilinear operator called the \boldblue{carré du champ} $\Gamma$.
In diffusion geometry, the carré du champ is interpreted as a generalisation of the Riemannian metric, allowing Riemannian geometry to be generalised to any space with a diffusion process.
There have been many other approaches to generalising Riemannian geometry beyond manifolds, which are mostly purely theoretical, but a notable exception is \cite{bartholdi2012hodge}, which introduces a Hodge theory on metric measure spaces that can be computed using a simplicial complex on the data (see the following discussion of combinatorial models).
For a general theoretical background of diffusion geometry, see \cite{jones2024diffusion}.

\subsection{Topological models}
The dominant approach to computational geometry and topology is to construct a \boldblue{combinatorial structure} on the data, such as a \boldblue{graph}, \boldblue{simplicial complex}, \boldblue{hypergraph}, or related object.
In this model, functions are represented as values on vertices, vector fields and differential 1-forms as values on edges, and so on.
The combinatorial structure can be used to compute discrete analogues of differential operators, such as the graph Laplacian or the combinatorial exterior derivative.

These ideas originate from Whitney \cite{whitney2012geometric}, who used simplicial complexes to approximate differential forms on manifolds, and Dodziuk \cite{dodziuk1976finite, dodziuk1996riemannian}, who introduced combinatorial Hodge theory.
They later became widely applied in computational geometry and topology, with applications in computer graphics and simulation, and were formalised in the theory of \boldblue{discrete exterior calculus} \cite{hirani2003discrete, desbrun2005discrete}.
This framework can represent many of the standard objects from vector calculus and Riemannian geometry, such as vector fields, differential forms, the exterior derivative, Hodge star, and Laplacian.
There has been some progress towards representing more general tensors like the Hessian \cite{chen2022discrete}, connection \cite{berwick2021discrete}, and Lie bracket \cite{de2014discrete}, although these require special assumptions and fit less naturally into the framework.
Although these methods were originally developed for manifolds, they do not require any kind of tangent space basis\footnote{a useful property that is sometimes called being \q{coordinate-free} in the literature: we will avoid this term here to prevent confusion with the other sorts of coordinates used in this work}, so they can be used on any combinatorial complex.
A notable application is in \boldblue{topological data analysis}, which we will address in Section \ref{sec: TDA}.
As we discussed above, the fundamental objects of interest in geometry and calculus are differential operators, and a central question in discrete exterior calculus is how to represent these operators in a way that is stable under refinements of the complex and will eventually converge to the true operators on the underlying space.
This was resolved by \boldblue{finite element exterior calculus} (FEEC) \cite{arnold2006finite,arnold2010finite,christiansen2018generalized}, which used ideas from numerical analysis, specifically applying the \boldblue{Galerkin method} to solve PDEs using weak formulations.
Although our model is not combinatorial, we will take exactly this approach to define differential operators in Section \ref{sec: differential_operators}.

However, there are several limitations to applying combinatorial models to point cloud data.
First, it is hard to construct a suitable complex on the data in a statistically robust way.
These methods were generally developed for applications where the complex is given, and the results tend to depend sensitively on the choice of complex if the input data is just a point cloud, with small changes in the data leading to large changes in the complex (see the discussion in \cite{jones2024diffusion,jones2024manifold}).
Second, combinatorial complexes grow very quickly in size as the number of data points increases, making them computationally expensive.
Third, and most fundamentally, combinatorial models are essentially \boldblue{topological} and not geometric.
They have an exterior derivative that obeys the Leibniz rule, leading to meaningful cohomology groups with a cup product, and so are good models for topology. 
However, to model Riemannian geometry, we would like to define a Riemannian metric $g$, whose integral over the space defines a global inner product.
If $g$ is a pointwise inner product, then it must satisfy
\[
fg(X,Y)
= g(f \cdot X,Y)
= g(X \cdot f,Y)
= g(X,f \cdot Y)
= g(X,Y \cdot f)
\]
for all functions $f$ and vector fields $X,Y$, where $\cdot$ denotes the multiplication of functions and vector fields.
If $\langle X, Y \rangle = \int g(X,Y)$ is an inner product (for some notion of integral, such as a weighted sum over the vertices), then multiplication must be both associative and commutative\footnote{for example, 
$\langle fX - Xf, Y \rangle
= \int g(fX - Xf, Y) 
= \int fg(X,Y) - \int fg(X,Y)
= 0$ for all $Y$, so $fX = Xf$}, which in these combinatorial models it is not.
As such, combinatorial models cannot have a Riemannian metric.

\subsection{Geometric models}
The alternative class of models are those that admit a \boldblue{Riemannian metric}, which we will call \boldblue{geometric} models.
Most geometric models represent vector fields, forms, and tensors as pointwise vectors, and so are more aligned with the continuous notion of vector bundles.
For example, given $n$ data points in $\R^d$, a vector field could be represented as an $n \times d$ matrix, where each row is a $d$-dimensional vector representing the vector field at that data point.
This system easily admits a Riemannian metric by defining a pointwise inner product of these vectors.
Having a Riemannian metric means that geometric models must have an associative and commutative product, and so the vector fields, forms, and tensors are bimodules over the function space.
However, this means they cannot also obey the Leibniz rule, and so are weaker models for topology.

\subsubsection{Geometric models on a combinatorial complex}
Many geometric models still use a combinatorial complex.
When the data lie on a manifold $\M$ of dimension $d'$, the tangent space $T_p \M$ at each point $p$ is a $d'$-dimensional space, and given some choice of basis for each tangent space, we can represent vector fields as $nd'$-dimensional vectors.
This approach was applied to represent vector fields on \boldblue{triangulated manifolds} in computer graphics \cite{pedersen1995decorating, grimm1995modeling}, where the tangent space at each vertex is approximated by the plane of the adjacent faces.
Provided the tangent spaces at different points can be somehow aligned, this approach can be used to define a discrete parallel transport and vector field diffusion \cite{turk2001texture, wei2001texture, crane2010trivial, knoppel2013globally}, as well as the divergence and curl \cite{polthier2000variational, zhang2006vector, fisher2007design}.

An alternative approach to representing a vector field $X$ is to consider its \boldblue{operator form} as a directional derivative of functions, i.e. $f \mapsto X(f)$.
Given a basis for the function space, the vector fields can be identified with the space of square matrices.
This was introduced in \cite{azencot2013operator} for triangulated manifolds, with a function basis given by eigenfunctions of the graph Laplacian.
This choice of basis is a discrete analogue of the \boldblue{Fourier transform} of functions, and can be truncated to just the first $k$ eigenfunctions to obtain a low-rank \boldblue{compressed function space} of only the smoothest functions.
This approach avoids the need to estimate tangent spaces and leads straightforwardly to certain differential operators like the Lie bracket $[X,Y] = XY - YX$.
We classify this as a geometric model because the function multiplication is associative and commutative, even though there is not necessarily a Riemannian metric (since $X(f) = df(X)$ does not depend on a metric).
Given a metric, this approach can be used to define the Levi-Civita connection \cite{azencot2015discrete}.

This combinatorial setting has also seen the application of Bakry-Émery $\Gamma$-calculus to define a \boldblue{discrete carré du champ} operator using the graph Laplacian \cite{lin2010ricci, lin2011ricci}.
This has led to the development of a combinatorial Bakry-Émery theory that addresses the discrete analogues of functional inequalities \cite{jungel2017discrete, fathi2016entropic} and geometric bounds \cite{liu2018bakry, munch2018li, cushing2020bakry}.

\subsubsection{Diffusion methods on point clouds}

If the data are given only as a point cloud, then, even if they can safely be assumed to lie on a manifold, it is hard to construct a combinatorial complex on them in a way that the discrete geometry on that complex is statistically robust.
The dominant approach to robust geometry on point clouds is to use methods based on a \boldblue{kernel} evaluated on the data, such as a \boldblue{Gaussian kernel}.
These approximate the \boldblue{heat diffusion} on the underlying space, and approximate the \boldblue{Laplacian operator} when that space is a manifold \cite{belkin2003laplacian,COIFMAN20065,von2008consistency,hein2005graphs,belkin2006convergence,berry2016variable,garcia2020error}.
While the theoretical justification for these methods has currently only been developed for functions on manifolds, they have been extremely successful in practice when applied to non-manifold data such as images \cite{coifman2005geometric} and single-cell RNA data \cite{moon2019visualizing}.
Diffusion methods have mostly been applied to functions on the data, and less so for geometry (see the diffusion below for some exceptions), but their excellent statistical robustness and computational efficiency have made them popular tools for data analysis.
A notable application is in \boldblue{dimensionality reduction}\footnote{this is often called \qq{manifold learning}, but, like the \qq{manifold hypothesis}, really refers to low-dimensionality: the successful methods in this area are the ones that do not assume the data actually lie on a manifold (such as methods based on diffusion)}, where the eigenfunctions of the Laplacian are used to construct low-dimensional embeddings of the data \cite{coifman2005geometric,belkin2003laplacian,moon2019visualizing}.

\subsubsection{Geometric models on point clouds}

Given the limitations of constructing a combinatorial complex on point clouds, a major question is the development of \boldblue{complex-free geometric models} for vector fields, forms, and tensors on point clouds.

The most popular approach is to assume that the data lie on a manifold $\M$ of dimension $d'$, and explicitly estimate the tangent spaces $T_p \M$ at each point $p$ using \boldblue{local principal component analysis} (local PCA).
This gives a representation of vector fields as points in $\R^{nd'}$, where each $d'$-dimensional vector represents the vector field at that data point \cite{farmer1987predicting, kambhatla1997dimension, roweis2000nonlinear, donoho2003hessian, little2009estimation}.
This same idea can be used to represent the whole tensor algebra, including differential forms and general tensors, by taking the appropriate tensor products of the tangent spaces.
However, it is not clear how to define general differential operators in this framework\footnote{if we only want to compute these operators on the coordinate functions $x_1, ... , x_d$, then the picture is simpler: for example, at a point $p$, the $d' \times d$ matrix of local covariance eigenvectors that defines the tangent space is precisely the gradient operator $x_i \mapsto \nabla_p x_i$ applied to the coordinates}.
An important step in this direction was \boldblue{vector diffusion maps} (VDM) \cite{singer2012vector}, which uses a local alignment of the local PCA tangent spaces to define a parallel transport operator, and combines it with a kernel diffusion between points to define a vector field diffusion process.
The generator of this process approximates the connection Laplacian on vector fields when the data lie on a manifold \cite{singer2012vector,singer2017spectral}.
The local PCA tangent spaces can also be used to define other differential operators on functions, such as the Hessian \cite{donoho2003hessian}.

Just as vector fields on triangulated manifolds can be represented in operator form given a choice of function basis (typically eigenfunctions of the graph laplacian) \cite{azencot2013operator}, this idea can also be applied to point clouds (typically using eigenfunctions of the diffusion maps laplacian).
This is particularly popular in dynamical systems, where dynamical vector fields act as directional derivatives on functions \cite{giannakis2019data}.

A significant breakthrough in complex-free geometry was \boldblue{Spectral Exterior Calculus} (SEC), which builds a discrete representation of vector fields and forms on a manifold using only the eigenvalues and eigenfunctions of the Laplacian \cite{berry2020spectral}.
The method is inherently \textit{spectral}, and can be applied to the \textit{diffusion maps} Laplacian on point clouds \cite{COIFMAN20065} to compute the geometry of manifolds from data.
The authors observed that, if $\{ \p{i} \}_{i \le n_0}$ are the first $n_0$ eigenfunctions of the Laplacian, and we compute the $n_0 \times n_0 \times n_0$ multiplication tensor $c_{ijk} = \inp{\p{i} \p{j}}{\p{k}}$ described in \compnote{note: multiplication tensor}, then we can approximate the Riemannian metric $g(\nabla \p{i}, \nabla \p{j})$ using the carré du champ formula as
\begin{equation}
\label{eq: SEC cdc formula}
g(\nabla \p{i}, \nabla \p{j})
= \frac{1}{2} \left( \p{i} \Delta \p{j} + \p{j} \Delta \p{i} - \Delta (\p{i} \p{j}) \right)
\approx \frac{1}{2} \sum_{k=1}^{n_0} (\lambda_i + \lambda_j - \lambda_k) c_{ijk} \p{k},
\end{equation}
where $\lambda_i$ is the eigenvalue corresponding to $\p{i}$.
The approximation becomes exact when $n_0 = n$, and in this case the formula is equivalent to applying the discrete carré du champ operator of \cite{lin2010ricci} with the diffusion maps Laplacian \cite{COIFMAN20065} to the eigenfunctions $\p{i}, \p{j}$.
They showed that certain expressions from Riemannian geometry can be expressed via similar formulas involving only the eigenvalues, eigenfunctions, and the multiplication tensor.
This motivated the idea of representing vector fields and forms in an \boldblue{overcomplete spanning set} given by the eigenfunctions, such as $\{ \p{i} \nabla \p{j} \}_{i\le n_1, j \le n_2}$ for vector fields.
This allows the computation of differential operators via a weak formulation in global spaces, just like in FEEC, and is a significant departure from the pointwise construction of tangent spaces in local PCA.
In local PCA, the space of all ambient vector fields on the data $\R^{nd}$ is restricted to the $nd'$-dimensional subspace of vector fields that lie in the estimated tangent spaces (thereby excluding normal components), which forms a basis.
By contrast, the overcomplete spanning set in SEC does not exclude any normal vectors, but still gives meaningful results because those normal components are simply assigned a very small norm when computing inner products, and so do not contribute significantly in the weak formulations.
In other words, the locally low-dimensional structure of the data is \textit{implicit} in the Riemannian metric, rather than \textit{explicit} in the construction of a basis.
The authors applied this method to compute eigenforms of the Hodge Laplacian on 1-forms for data sampled from manifolds.
The SEC demonstrated, among many other things, two important ideas that we build on here.
First, it showed that the discrete carré du champ constructed from a diffusion kernel on the data gives a meaningful estimate of the Riemannian metric.
Second, it showed that, for data on manifolds, local tangent space estimates are not necessary and \textit{weak formulations are all you need} to compute differential operators, because they use the inner product and that already encodes the necessary information about the geometry.

These properties mean that SEC can be used directly as a tool to compute diffusion geometry, which removes the manifold assumption entirely and defines everything using only the carré du champ.
In \cite{jones2024diffusion}, we demonstrated this on various non-manifold datasets, and extended the SEC to compute higher order forms, and operators including the metric, derivative, codifferential, wedge product, and Lie bracket, and the SEC has been subsequently applied to represent dynamical systems \cite{das2025learning}.

However, the SEC has several limitations, which we have sought to address in this work.
First, it relies on computing the full multiplication tensor $c_{ijk}$ in $\ord(nn_0^3)$ time.
This cost can be constrained by taking a small $n_0$, but then the approximation error of multiplication increases.
Since the spectral approach involves expanding each expression in the eigenfunction basis, this error compounds and leads to the geometric expressions becoming inaccurate.
For example, the inner products are not positive definite, with Gram matrices often having negative eigenvalues of the same order of magnitude as the positive ones, leading to numerical instability.
This can be resolved by taking $n_0 = n$, but then the computational cost of $c_{ijk}$ becomes $\ord(n^4)$, which is prohibitively large.

Second, the SEC uses the eigenfunction gradients $\nabla \p{j}$ as generators for the spanning set $\{ \p{i} \nabla \p{j} \}_{i\le n_1, j \le n_2}$ for vector fields (and the tensor algebra generally).
All we require for $\nabla f_1, ..., \nabla f_m$ to generate a spanning set is that the functions $f_1, ..., f_m$ define an immersion into $\R^m$, and choosing the eigenfunction gradients $\nabla \p{j}$ has several drawbacks.
One is that the number of eigenfunctions required to create an immersion depends on the geometric complexity of the data, which we cannot know in advance, and could be arbitrarily large (for example, given multiple connected components).
A second is that the sequence of eigenfunctions will generally cover \q{larger} geometric features multiple times before addressing smaller ones.
Consider, for example, the unit circle in $\R^2$ with a small interval attached as $[1,1+\epsilon]\times\{0\}$.
The first three eigenfunctions will look like $\{1,\ \sin(\theta),\ \cos(\theta)\}$ and their gradients $\{0,\ \cos(\theta)\nabla\theta,\ \sin(\theta)\nabla\theta\}$ will generate the vector fields over the circle, but, if $\epsilon$ is small, it will take a very large number of eigenfunctions before one appears that varies along the small interval.
This will lead to high computational complexity, but also numerical instability, as the space of vector fields over the circle is highly redundant.
By comparison, using the ambient coordinates $\nabla x$ and $\nabla y$ is guaranteed to be always sufficient regardless of the geometric complexity.
In general, there are often better choices of immersion than the eigenfunctions of the Laplacian, such as the original ambient coordinates of the data in $\R^d$, a learned lower-dimensional immersion, or something domain-specific.

Third, the SEC is constrained to access the carré du champ by the spectral formula \eqref{eq: SEC cdc formula}, which depends on the accurate spectral estimation of the Laplacian.
In this work, we have followed the theory of diffusion geometry in treating the carré du champ as a general bilinear operator $\A\times\A\to\A$, which can be more easily composed to compute second derivatives, and also provides the flexibility to use different choices of operator.
For instance, we have favoured the mean-centred covariance formula for the carré du champ in \ref{sub: cdc}.
In general, we could obtain carré du champ operators from other kernel methods or using deep learning to scale these tools to high-dimensional data.
In the theoretical context, the convergence of the weak formulations in this work depends only on the convergence of the carré du champ, which exists in greater generality than the Laplacian, and so may be easier to prove in more general, non-manifold settings.

\subsection{Where does diffusion geometry fit?}

Our approach to computing diffusion geometry is a \boldblue{geometric model on a point cloud}, meaning it has a Riemannian metric, and the function-tensor multiplication is associative and commutative.
We follow the SEC in using a spanning set that is overcomplete, rather than a basis constructed pointwise, to represent vector fields, forms, and tensors.
We follow FEEC and SEC in using weak formulations to define differential operators.
Unlike the SEC, we compute the geometry using a diffusion-based \boldblue{carré du champ operator}, with which we can directly evaluate the Riemannian metric, instead of using spectral information from the Laplacian.
In particular, this operator is pointwise positive definite, and so we can define true inner products for vector fields, forms, and tensors.
It also allows us to use \boldblue{any choice of immersion} to generate the spanning set, which can encode inductive biases and reduce computational complexity.
By default, we use the original data coordinates in $\R^d$ as the immersion, which we found in \cite{jones2024manifold} to have a strong regularising effect on the geometry, improving robustness to noise and leading to state-of-the-art estimators for curvature.
The central role of the carré du champ operator mirrors its role in the theory of diffusion geometry \cite{jones2024diffusion}, and so we can directly apply methods like the Bakry-Émery Hessian formula \cite{bakry2014analysis} to compute geometric quantities like second derivatives, the Levi-Civita connection, and sectional curvature, as well as higher order tensors.

We compare the properties of our model to other geometric and topological models in the following table.
It is hard to fully compare the computational complexity, but as a simple comparison, we show the complexity of computing the product of a function and a vector field.
When models use a compressed function space, we denote its dimension by $n_0$.
We denote the ambient dimension by $d$, and then the immersion dimension (which may be $d$) by $\tilde{d}$.

\begin{table}[h!]
\renewcommand{\arraystretch}{1.3}
\newcolumntype{C}[1]{>{\centering\arraybackslash}p{#1}}

\begin{tabular}{l
                C{2em}
                C{2em}
                C{2em}
                C{2em}
                C{2em}
                C{2em}
                C{2em}
                C{2em}}
 & \rotatebox{60}{Riemannian metric}
 & \rotatebox{60}{Leibniz rule}
 & \rotatebox{60}{Manifold-specific}
 & \rotatebox{60}{Positive metric/inner products}
 & \rotatebox{60}{Directional derivatives}
 & \rotatebox{60}{Whole tensor algebra}
 & \rotatebox{60}{Uses immersion coords}
 & \rotatebox{60}{Product complexity} \\
\hline

Topological model
  & \boldblue{no} & \boldred{yes} & varies & \boldred{yes} & \boldblue{no} & \boldblue{no} & \boldblue{no} & varies \\

Geometric model on a complex
  & \boldred{yes} & \boldblue{no} & varies & \boldred{yes} & \boldblue{no} & \boldred{yes} & \boldblue{no} & varies \\

Operator representation on a complex
  & \boldblue{no} & \boldblue{no} & \boldred{yes} & \boldblue{no} & \boldred{yes} & \boldblue{no} & \boldblue{no} & $\ord(nn_0^3)$ \\

Local PCA
  & \boldred{yes} & \boldblue{no} & \boldred{yes} & \boldred{yes} & \boldblue{no} & \boldred{yes} & \boldred{yes} & $\ord(nd^3)$ \\

Operator representation on a point cloud
  & \boldblue{no} & \boldblue{no} & \boldblue{no} & \boldblue{no} & \boldred{yes} & \boldblue{no} & \boldblue{no} & $\ord(nn_0^3)$ \\

SEC (with truncated multiplication)
  & \boldred{yes} & \boldblue{no} & \boldblue{no} & \boldblue{no} & \boldred{yes} & \boldred{yes} & \boldblue{no} & $\ord(nn_0^3)$ \\

SEC (no truncation)
  & \boldred{yes} & \boldblue{no} & \boldblue{no} & \boldred{yes} & \boldred{yes} & \boldred{yes} & \boldblue{no} & $\ord(n^4)$ \\

Diffusion geometry
  & \boldred{yes} & \boldblue{no} & \boldblue{no} & \boldred{yes} & \boldred{yes} & \boldred{yes} & \boldred{yes} & $\ord(nn_0\tilde d)$ \\

\end{tabular}
\end{table}

\section{Conclusions}

Diffusion geometry reformulates classical Riemannian geometry in terms of a Markov process through its carré du champ operator, leading to a simpler and more general theory.
In this work, we have shown that the same principle applies to computation.
We used a heat kernel to compute a carré du champ operator on point clouds, and applied the method of weak formulations to estimate differential operators.
This allowed us to directly translate the theories of vector calculus, Riemannian geometry, geometric analysis, and differential topology into simple, scalable data analysis tools.

\section*{Acknowledgements}

This project has greatly benefited from conversations with Jacob Bamberger and Kelly Maggs.
We are also grateful to Jeff Giansiracusa and Yue Ren for their helpful comments.

\bibliographystyle{plain}
\bibliography{bibliography}

@book{bakry2014analysis,
  title={Analysis and geometry of Markov diffusion operators},
  author={Bakry, Dominique and Gentil, Ivan and Ledoux, Michel and others},
  volume={103},
  year={2014},
  publisher={Springer}
}

@article{carlsson2009topology,
  title={Topology and data},
  author={Carlsson, Gunnar},
  journal={Bulletin of the American Mathematical Society},
  volume={46},
  number={2},
  pages={255--308},
  year={2009}
}

@article{cayton2005algorithms,
  title={Algorithms for manifold learning},
  author={Cayton, Lawrence and others},
  journal={Univ. of California at San Diego Tech. Rep},
  volume={12},
  number={1-17},
  pages={1},
  year={2005}
}

@article{saadatfar2017pore,
  title={Pore configuration landscape of granular crystallization},
  author={Saadatfar, Mohammad and Takeuchi, Hiroshi and Robins, Vanessa and Francois, Nicolas and Hiraoka, Yasuaki},
  journal={Nature communications},
  volume={8},
  number={1},
  pages={15082},
  year={2017},
  publisher={Nature Publishing Group UK London}
}

@article{arns2004effect,
  title={Effect of network topology on relative permeability},
  author={Arns, Ji-Youn and Robins, Vanessa and Sheppard, Adrian P and Sok, Robert M and Pinczewski, Wolf Val and Knackstedt, Mark A},
  journal={Transport in Porous media},
  volume={55},
  number={1},
  pages={21--46},
  year={2004},
  publisher={Springer}
}

@inproceedings{cohen2005stability,
  title={Stability of persistence diagrams},
  author={Cohen-Steiner, David and Edelsbrunner, Herbert and Harer, John},
  booktitle={Proceedings of the twenty-first annual symposium on Computational geometry},
  pages={263--271},
  year={2005}
}

@article{herring2019topological,
  title={Topological persistence for relating microstructure and capillary fluid trapping in sandstones},
  author={Herring, AL and Robins, Vanessa and Sheppard, AP},
  journal={Water Resources Research},
  volume={55},
  number={1},
  pages={555--573},
  year={2019},
  publisher={Wiley Online Library}
}

@article{jones2024manifold,
  title={Manifold Diffusion Geometry: Curvature, Tangent Spaces, and Dimension},
  author={Jones, Iolo},
  journal={arXiv preprint arXiv:2411.04100},
  year={2024}
}

@inproceedings{little2009estimation,
  title={Estimation of intrinsic dimensionality of samples from noisy low-dimensional manifolds in high dimensions with multiscale SVD},
  author={Little, Anna V and Lee, Jason and Jung, Yoon-Mo and Maggioni, Mauro},
  booktitle={2009 IEEE/SP 15th Workshop on Statistical Signal Processing},
  pages={85--88},
  year={2009},
  organization={IEEE}
}

@article{singer2017spectral,
  title={Spectral convergence of the connection Laplacian from random samples},
  author={Singer, Amit and Wu, Hau-Tieng},
  journal={Information and Inference: A Journal of the IMA},
  volume={6},
  number={1},
  pages={58--123},
  year={2017},
  publisher={Oxford University Press}
}

@incollection{dodziuk1996riemannian,
  title={Riemannian structures and triangulations of manifolds},
  author={Dodziuk, Jozef and Patodi, Vijay Kumar},
  booktitle={Collected Papers of VK Patodi},
  pages={232--283},
  year={1996},
  publisher={World Scientific}
}

@book{whitney2012geometric,
  title={Geometric integration theory},
  author={Whitney, Hassler},
  year={2012},
  publisher={Courier Corporation}
}

@book{hirani2003discrete,
  title={Discrete exterior calculus},
  author={Hirani, Anil Nirmal},
  year={2003},
  publisher={California Institute of Technology}
}

@book{mallat1999wavelet,
  title={A wavelet tour of signal processing},
  author={Mallat, Stephane},
  year={1999},
  publisher={Academic Press}
}

@article{farmer1987predicting,
  title={Predicting chaotic time series},
  author={Farmer, J Doyne and Sidorowich, John J},
  journal={Physical review letters},
  volume={59},
  number={8},
  pages={845},
  year={1987},
  publisher={APS}
}

@article{giannakis2019data,
  title={Data-driven spectral decomposition and forecasting of ergodic dynamical systems},
  author={Giannakis, Dimitrios},
  journal={Applied and Computational Harmonic Analysis},
  volume={47},
  number={2},
  pages={338--396},
  year={2019},
  publisher={Elsevier}
}

@incollection{chen2022discrete,
  title={Discrete Hessian complexes in three dimensions},
  author={Chen, Long and Huang, Xuehai},
  booktitle={The virtual element method and its applications},
  pages={93--135},
  year={2022},
  publisher={Springer}
}

@article{berwick2021discrete,
  title={Discrete vector bundles with connection and the Bianchi identity},
  author={Berwick-Evans, Daniel and Hirani, Anil N and Schubel, Mark D},
  journal={arXiv preprint arXiv:2104.10277},
  year={2021}
}

@article{dodziuk1976finite,
  title={Finite-difference approach to the Hodge theory of harmonic forms},
  author={Dodziuk, Jozef},
  journal={American Journal of Mathematics},
  volume={98},
  number={1},
  pages={79--104},
  year={1976},
  publisher={JSTOR}
}

@article{roweis2000nonlinear,
  title={Nonlinear dimensionality reduction by locally linear embedding},
  author={Roweis, Sam T and Saul, Lawrence K},
  journal={science},
  volume={290},
  number={5500},
  pages={2323--2326},
  year={2000},
  publisher={American Association for the Advancement of Science}
}

@article{christiansen2018generalized,
  title={Generalized finite element systems for smooth differential forms and Stokes’ problem},
  author={Christiansen, Snorre H and Hu, Kaibo},
  journal={Numerische Mathematik},
  volume={140},
  number={2},
  pages={327--371},
  year={2018},
  publisher={Springer}
}

@inproceedings{pedersen1995decorating,
  title={Decorating implicit surfaces},
  author={Pedersen, Hans K{\o}hling},
  booktitle={Proceedings of the 22nd annual conference on Computer graphics and interactive techniques},
  pages={291--300},
  year={1995}
}

@inproceedings{turk2001texture,
  title={Texture synthesis on surfaces},
  author={Turk, Greg},
  booktitle={Proceedings of the 28th annual conference on Computer graphics and interactive techniques},
  pages={347--354},
  year={2001}
}

@inproceedings{wei2001texture,
  title={Texture synthesis over arbitrary manifold surfaces},
  author={Wei, Li-Yi and Levoy, Marc},
  booktitle={Proceedings of the 28th annual conference on Computer graphics and interactive techniques},
  pages={355--360},
  year={2001}
}

@article{zhang2006vector,
  title={Vector field design on surfaces},
  author={Zhang, Eugene and Mischaikow, Konstantin and Turk, Greg},
  journal={ACM Transactions on Graphics (ToG)},
  volume={25},
  number={4},
  pages={1294--1326},
  year={2006},
  publisher={ACM New York, NY, USA}
}

@inproceedings{grimm1995modeling,
  title={Modeling surfaces of arbitrary topology using manifolds},
  author={Grimm, Cindy M and Hughes, John F},
  booktitle={Proceedings of the 22nd annual conference on Computer graphics and interactive techniques},
  pages={359--368},
  year={1995}
}

@inproceedings{crane2010trivial,
  title={Trivial connections on discrete surfaces},
  author={Crane, Keenan and Desbrun, Mathieu and Schr{\"o}der, Peter},
  booktitle={Computer Graphics Forum},
  volume={29},
  number={5},
  pages={1525--1533},
  year={2010},
  organization={Wiley Online Library}
}

@inproceedings{polthier2000variational,
  title={Variational approach to vector field decomposition},
  author={Polthier, Konrad and Preu{\ss}, Eike},
  booktitle={Data Visualization 2000: Proceedings of the Joint EUROGRAPHICS and IEEE TCVG Symposium on Visualization in Amsterdam, The Netherlands, May 29--30, 2000},
  pages={147--155},
  year={2000},
  organization={Springer}
}

@article{fisher2007design,
  title={Design of tangent vector fields},
  author={Fisher, Matthew and Schr{\"o}der, Peter and Desbrun, Mathieu and Hoppe, Hugues},
  journal={ACM transactions on graphics (TOG)},
  volume={26},
  number={3},
  pages={56--es},
  year={2007},
  publisher={ACM New York, NY, USA}
}

@article{knoppel2013globally,
  title={Globally optimal direction fields},
  author={Kn{\"o}ppel, Felix and Crane, Keenan and Pinkall, Ulrich and Schr{\"o}der, Peter},
  journal={ACM Transactions on Graphics (ToG)},
  volume={32},
  number={4},
  pages={1--10},
  year={2013},
  publisher={ACM New York, NY, USA}
}

@inproceedings{azencot2013operator,
  title={An operator approach to tangent vector field processing},
  author={Azencot, Omri and Ben-Chen, Mirela and Chazal, Fr{\'e}d{\'e}ric and Ovsjanikov, Maks},
  booktitle={Computer Graphics Forum},
  volume={32},
  number={5},
  pages={73--82},
  year={2013},
  organization={Wiley Online Library}
}

@inproceedings{de2014discrete,
  title={Discrete 2-tensor fields on triangulations},
  author={de Goes, Fernando and Liu, Beibei and Budninskiy, Max and Tong, Yiying and Desbrun, Mathieu},
  booktitle={Computer Graphics Forum},
  volume={33},
  number={5},
  pages={13--24},
  year={2014},
  organization={Wiley Online Library}
}

@article{belkin2003laplacian,
  title={Laplacian eigenmaps for dimensionality reduction and data representation},
  author={Belkin, Mikhail and Niyogi, Partha},
  journal={Neural computation},
  volume={15},
  number={6},
  pages={1373--1396},
  year={2003},
  publisher={MIT Press}
}

@article{von2008consistency,
  title={Consistency of spectral clustering},
  author={Von Luxburg, Ulrike and Belkin, Mikhail and Bousquet, Olivier},
  journal={The Annals of Statistics},
  pages={555--586},
  year={2008},
  publisher={JSTOR}
}

@inproceedings{hein2005graphs,
  title={From graphs to manifolds--weak and strong pointwise consistency of graph Laplacians},
  author={Hein, Matthias and Audibert, Jean-Yves and Von Luxburg, Ulrike},
  booktitle={International Conference on Computational Learning Theory},
  pages={470--485},
  year={2005},
  organization={Springer}
}

@article{trillos2018error,
  title={Error estimates for spectral convergence of the graph Laplacian on random geometric graphs towards the Laplace--Beltrami operator},
  author={Trillos, Nicolas Garcia and Gerlach, Moritz and Hein, Matthias and Slepcev, Dejan},
  journal={arXiv preprint arXiv:1801.10108},
  year={2018}
}

@article{belkin2006convergence,
  title={Convergence of Laplacian eigenmaps},
  author={Belkin, Mikhail and Niyogi, Partha},
  journal={Advances in neural information processing systems},
  volume={19},
  year={2006}
}

@article{trillos2018variational,
  title={A variational approach to the consistency of spectral clustering},
  author={Trillos, Nicolas Garcia and Slep{\v{c}}ev, Dejan},
  journal={Applied and Computational Harmonic Analysis},
  volume={45},
  number={2},
  pages={239--281},
  year={2018},
  publisher={Elsevier}
}

@article{shi2015convergence,
  title={Convergence of Laplacian spectra from random samples},
  author={Shi, Zuoqiang},
  journal={arXiv preprint arXiv:1507.00151},
  year={2015}
}

@article{dunson2021spectral,
  title={Spectral convergence of graph Laplacian and heat kernel reconstruction in $L^\infty$ from random samples},
  author={Dunson, David B and Wu, Hau-Tieng and Wu, Nan},
  journal={Applied and Computational Harmonic Analysis},
  volume={55},
  pages={282--336},
  year={2021},
  publisher={Elsevier}
}

@article{calder2022improved,
  title={Improved spectral convergence rates for graph Laplacians on $\varepsilon$-graphs and k-NN graphs},
  author={Calder, Jeff and Trillos, Nicolas Garcia},
  journal={Applied and Computational Harmonic Analysis},
  volume={60},
  pages={123--175},
  year={2022},
  publisher={Elsevier}
}

@article{garcia2020error,
  title={Error estimates for spectral convergence of the graph Laplacian on random geometric graphs toward the Laplace--Beltrami operator},
  author={Garc{\'\i}a Trillos, Nicol{\'a}s and Gerlach, Moritz and Hein, Matthias and Slep{\v{c}}ev, Dejan},
  journal={Foundations of Computational Mathematics},
  volume={20},
  number={4},
  pages={827--887},
  year={2020},
  publisher={Springer}
}

@article{moon2019visualizing,
  title={Visualizing structure and transitions in high-dimensional biological data},
  author={Moon, Kevin R and Van Dijk, David and Wang, Zheng and Gigante, Scott and Burkhardt, Daniel B and Chen, William S and Yim, Kristina and Elzen, Antonia van den and Hirn, Matthew J and Coifman, Ronald R and others},
  journal={Nature biotechnology},
  volume={37},
  number={12},
  pages={1482--1492},
  year={2019},
  publisher={Nature Publishing Group US New York}
}

@article{coifman2005geometric,
  title={Geometric diffusions as a tool for harmonic analysis and structure definition of data: Diffusion maps},
  journal={Proceedings of the national academy of sciences},
  volume={102},
  number={21},
  pages={7426--7431},
  year={2005},
  publisher={National Academy of Sciences}
}

@article{lin2010ricci,
  title={Ricci curvature and eigenvalue estimate on locally finite graphs},
  author={Lin, Yong and Yau, Shing-Tung},
  journal={Mathematical research letters},
  volume={17},
  number={2},
  pages={343--356},
  year={2010},
  publisher={International Press of Boston}
}

@article{lin2011ricci,
  title={Ricci curvature of graphs},
  author={Lin, Yong and Lu, Linyuan and Yau, Shing-Tung},
  journal={Tohoku Mathematical Journal, Second Series},
  volume={63},
  number={4},
  pages={605--627},
  year={2011},
  publisher={Mathematical Institute, Tohoku University}
}

@article{jungel2017discrete,
  title={Discrete Beckner inequalities via the Bochner--Bakry--Emery approach for Markov chains},
  author={J{\"u}ngel, Ansgar and Yue, Wen},
  year={2017}
}

@article{fathi2016entropic,
  title={Entropic Ricci curvature bounds for discrete interacting systems},
  author={Fathi, Max and Maas, Jan},
  year={2016}
}

@article{liu2018bakry,
  title={Bakry--{\'E}mery curvature and diameter bounds on graphs},
  author={Liu, Shiping and M{\"u}nch, Florentin and Peyerimhoff, Norbert},
  journal={Calculus of Variations and Partial Differential Equations},
  volume={57},
  number={2},
  pages={67},
  year={2018},
  publisher={Springer}
}

@article{cushing2020bakry,
  title={Bakry--{\'E}mery curvature functions on graphs},
  author={Cushing, David and Liu, Shiping and Peyerimhoff, Norbert},
  journal={Canadian Journal of Mathematics},
  volume={72},
  number={1},
  pages={89--143},
  year={2020},
  publisher={Canadian Mathematical Society}
}

@article{munch2018li,
  title={Li--Yau inequality on finite graphs via non-linear curvature dimension conditions},
  author={M{\"u}nch, Florentin},
  journal={Journal de Math{\'e}matiques Pures et Appliqu{\'e}es},
  volume={120},
  pages={130--164},
  year={2018},
  publisher={Elsevier}
}

@article{das2025learning,
  title={Learning Dynamical Systems with the Spectral Exterior Calculus},
  author={Das, Suddhasattwa and Giannakis, Dimitrios and Gu, Yanbing and Slawinska, Joanna},
  journal={arXiv preprint arXiv:2505.06061},
  year={2025}
}

@article{berry2015nonparametric,
  title={Nonparametric forecasting of low-dimensional dynamical systems},
  author={Berry, Tyrus and Giannakis, Dimitrios and Harlim, John},
  journal={Physical Review E},
  volume={91},
  number={3},
  pages={032915},
  year={2015},
  publisher={APS}
}

@article{berry2016forecasting,
  title={Forecasting turbulent modes with nonparametric diffusion models: Learning from noisy data},
  author={Berry, Tyrus and Harlim, John},
  journal={Physica D: Nonlinear Phenomena},
  volume={320},
  pages={57--76},
  year={2016},
  publisher={Elsevier}
}

@article{yarmola2010persistence,
  title={Persistence and computation of the cup product},
  author={Yarmola, Andrew},
  journal={Undergraduate honors thesis. Stanford University},
  year={2010}
}

@article{contessoto2021persistent,
  title={Persistent cup-length},
  author={Contessoto, Marco and M{\'e}moli, Facundo and Stefanou, Anastasios and Zhou, Ling},
  journal={arXiv preprint arXiv:2107.01553},
  year={2021}
}

@article{memoli2024persistent,
  title={Persistent cup product structures and related invariants},
  author={M{\'e}moli, Facundo and Stefanou, Anastasios and Zhou, Ling},
  journal={Journal of Applied and Computational Topology},
  volume={8},
  number={1},
  pages={93--148},
  year={2024},
  publisher={Springer}
}

@inproceedings{de2009persistent,
  title={Persistent cohomology and circular coordinates},
  author={De Silva, Vin and Vejdemo-Johansson, Mikael},
  booktitle={Proceedings of the twenty-fifth annual symposium on Computational geometry},
  pages={227--236},
  year={2009}
}

@inproceedings{perea2020sparse,
  title={Sparse circular coordinates via principal $\mathbb{Z}$-bundles},
  author={Perea, Jose A},
  booktitle={Topological Data Analysis: The Abel Symposium 2018},
  pages={435--458},
  year={2020},
  organization={Springer}
}

@phdthesis{yim2021local,
  title={Local inference of Morse indices using finite point samples},
  author={Yim, Ka Man},
  year={2021},
  school={University of Oxford}
}

@article{smale2009geometry,
  title={Geometry on probability spaces},
  author={Smale, Steve and Zhou, Ding-Xuan},
  journal={Constructive Approximation},
  volume={30},
  number={3},
  pages={311--323},
  year={2009},
  publisher={Springer}
}

@article{COIFMAN20065,
title = {Diffusion maps},
journal = {Applied and Computational Harmonic Analysis},
volume = {21},
number = {1},
pages = {5-30},
year = {2006},
note = {Special Issue: Diffusion Maps and Wavelets},
issn = {1063-5203},
doi = {https://doi.org/10.1016/j.acha.2006.04.006},
url = {https://www.sciencedirect.com/science/article/pii/S1063520306000546},
author = {Ronald R. Coifman and Stéphane Lafon}
}

@article{gine2006empirical,
  title={Empirical graph Laplacian approximation of Laplace-Beltrami operators: large sample results},
  author={Gin{\'e}, Evarist and Koltchinskii, Vladimir},
  journal={Lecture Notes-Monograph Series},
  pages={238--259},
  year={2006},
  publisher={JSTOR}
}

@article{belkin2008towards,
  title={Towards a theoretical foundation for Laplacian-based manifold methods},
  author={Belkin, Mikhail and Niyogi, Partha},
  journal={Journal of Computer and System Sciences},
  volume={74},
  number={8},
  pages={1289--1308},
  year={2008},
  publisher={Elsevier}
}

@article{berry2020spectral,
  title={Spectral exterior calculus},
  author={Berry, Tyrus and Giannakis, Dimitrios},
  journal={Communications on Pure and Applied Mathematics},
  volume={73},
  number={4},
  pages={689--770},
  year={2020},
  publisher={Wiley Online Library}
}

@article{robins1999towards,
  title={Towards computing homology from finite approximations},
  author = {Vanessa Robins},
  journal = {Topology proceedings},
  volume = {24},
  number = {1},
  pages = {503-532},
  year = {1999},
}

@article{edelsbrunner2002topological,
  title={Topological persistence and simplification},
  author={Edelsbrunner and Letscher and Zomorodian},
  journal={Discrete \& computational geometry},
  volume={28},
  pages={511--533},
  year={2002},
  publisher={Springer}
}

@article{bauer2021ripser,
  title={Ripser: efficient computation of Vietoris--Rips persistence barcodes},
  author={Bauer, Ulrich},
  journal={Journal of Applied and Computational Topology},
  volume={5},
  number={3},
  pages={391--423},
  year={2021},
  publisher={Springer}
}

@misc{bakry1985seminaire,
  title={S{\'e}minaire de probabilit{\'e}s XIX 1983/84},
  author={Bakry, Dominique and {\'E}mery, M},
  journal={Diffusions hypercontractives},
  volume={1123},
  pages={177--206},
  year={1985},
  publisher={Springer Berlin}
}

@article{kambhatla1997dimension,
  title={Dimension reduction by local principal component analysis},
  author={Kambhatla, Nandakishore and Leen, Todd K},
  journal={Neural computation},
  volume={9},
  number={7},
  pages={1493--1516},
  year={1997},
  publisher={MIT Press One Rogers Street, Cambridge, MA 02142-1209, USA journals-info~…}
}

@article{donoho2003hessian,
  title={Hessian eigenmaps: Locally linear embedding techniques for high-dimensional data},
  author={Donoho, David L and Grimes, Carrie},
  journal={Proceedings of the National Academy of Sciences},
  volume={100},
  number={10},
  pages={5591--5596},
  year={2003},
  publisher={National Acad Sciences}
}

@article{singer2012vector,
  title={Vector diffusion maps and the connection Laplacian},
  author={Singer, Amit and Wu, H-T},
  journal={Communications on pure and applied mathematics},
  volume={65},
  number={8},
  pages={1067--1144},
  year={2012},
  publisher={Wiley Online Library}
}

@article{desbrun2005discrete,
  title={Discrete exterior calculus},
  author={Desbrun, Mathieu and Hirani, Anil N and Leok, Melvin and Marsden, Jerrold E},
  journal={arXiv preprint math/0508341},
  year={2005}
}

@article{arnold2006finite,
  title={Finite element exterior calculus, homological techniques, and applications},
  author={Arnold, Douglas N and Falk, Richard S and Winther, Ragnar},
  journal={Acta numerica},
  volume={15},
  pages={1--155},
  year={2006},
  publisher={Cambridge University Press}
}

@article{arnold2010finite,
  title={Finite element exterior calculus: from Hodge theory to numerical stability},
  author={Arnold, Douglas and Falk, Richard and Winther, Ragnar},
  journal={Bulletin of the American mathematical society},
  volume={47},
  number={2},
  pages={281--354},
  year={2010}
}

@article{bartholdi2012hodge,
  title={Hodge theory on metric spaces},
  author={Bartholdi, Laurent and Schick, Thomas and Smale, Nat and Smale, Steve},
  journal={Foundations of Computational Mathematics},
  volume={12},
  pages={1--48},
  year={2012},
  publisher={Springer}
}

@inproceedings{zomorodian2004computing,
  title={Computing persistent homology},
  author={Zomorodian, Afra and Carlsson, Gunnar},
  booktitle={Proceedings of the twentieth annual symposium on Computational geometry},
  pages={347--356},
  year={2004}
}

@article{berry2016variable,
  title={Variable bandwidth diffusion kernels},
  author={Berry, Tyrus and Harlim, John},
  journal={Applied and Computational Harmonic Analysis},
  volume={40},
  number={1},
  pages={68--96},
  year={2016},
  publisher={Elsevier}
}

@article{jones2024diffusion,
  title={Diffusion Geometry},
  author={Jones, Iolo},
  journal={arXiv preprint arXiv:2405.10858},
  year={2024}
}

@article{azencot2015discrete,
  title={Discrete derivatives of vector fields on surfaces--an operator approach},
  author={Azencot, Omri and Ovsjanikov, Maks and Chazal, Fr{\'e}d{\'e}ric and Ben-Chen, Mirela},
  journal={ACM Transactions on Graphics (TOG)},
  volume={34},
  number={3},
  pages={1--13},
  year={2015},
  publisher={ACM New York, NY, USA}
}

@article{sritharan2021computing,
  title={Computing the Riemannian curvature of image patch and single-cell RNA sequencing data manifolds using extrinsic differential geometry},
  author={Sritharan, Duluxan and Wang, Shu and Hormoz, Sahand},
  journal={Proceedings of the National Academy of Sciences},
  volume={118},
  number={29},
  pages={e2100473118},
  year={2021},
  publisher={National Acad Sciences}
}

@article{hickok2023intrinsic,
  title={An Intrinsic Approach to Scalar-Curvature Estimation for Point Clouds},
  author={Hickok, Abigail and Blumberg, Andrew J},
  journal={arXiv preprint arXiv:2308.02615},
  year={2023}
}

@article{ali2023survey,
  title={A survey of vectorization methods in topological data analysis},
  author={Ali, Dashti and Asaad, Aras and Jimenez, Maria-Jose and Nanda, Vidit and Paluzo-Hidalgo, Eduardo and Soriano-Trigueros, Manuel},
  journal={IEEE Transactions on Pattern Analysis and Machine Intelligence},
  volume={45},
  number={12},
  pages={14069--14080},
  year={2023},
  publisher={IEEE}
}

@article{bamberger2025carr,
  title={Carr$\backslash$'e du champ flow matching: better quality-generalisation tradeoff in generative models},
  author={Bamberger, Jacob and Jones, Iolo and Duncan, Dennis and Bronstein, Michael M and Vandergheynst, Pierre and Gosztolai, Adam},
  journal={arXiv preprint arXiv:2510.05930},
  year={2025}
}

\appendix

\refstepcounter{section}
\section*{\thesection\hspace{0.5em} Markov chain properties}
\label{sec:markov-appendix}

\subsection*{Carré du champ as infinitesimal covariance}

We now give a proof of Proposition \ref{prop: covarianceeuclideanformula} from Section \ref{sec: overview} about the carré du champ operator for the heat diffusion on $\R^d$.
The analogous result holds for arbitrary Markov processes, i.e. if $P_t : \A \to \A$ is a Markov semigroup with carré du champ operator $\Gamma$ and transition kernel $p_t(x,dy)$, then
\begin{align*}
\Gamma(f,h)(p)
&= \lim_{t \to 0} \frac{1}{2t} \left( P_t(fh)(p) - P_t(f)(p) P_t(h)(p) \right) \\
&= \lim_{t \to 0} \frac{1}{2t} \left( \E[f(X)h(X) : X \sim p_t(p,\cdot)] - \E[f(X) : X \sim p_t(p,\cdot)] \E[h(X) : X \sim p_t(p,\cdot)] \right) \\
&= \lim_{t \to 0} \frac{1}{2t} \textnormal{Cov}\!\left[f(X), h(X) : X \sim p_t(p,\cdot)\right].
\end{align*}
See \cite{bakry2014analysis} for a proof of this general case.

\covarianceeuclideanformula*

\begin{proof}
We can Taylor expand
\[
f(y) = f(x) + (y - x) \cdot \nabla f + \ord(\|y-x\|^2),
\qquad
h(y) = h(x) + (y - x) \cdot \nabla h + \ord(\|y-x\|^2),
\]
so that, up to higher order terms,
\[
(f(y) - f(x)) (h(y) - h(x))
= \big((y-x) \cdot \nabla f \big)\big((y-x) \cdot \nabla h \big).
\]
By standard properties of the trace,
\[
\big((y-x) \cdot \nabla f \big)\big((y-x) \cdot \nabla h \big)
= \textnormal{tr}\!\big(\nabla f (\nabla h)^\top (y-x)(y-x)^\top\big),
\]
so
\begin{align*}
\int p_t(p,y) (f(y) - f(p)) (h(y) - h(p)) \, dy
&= \E\!\left[(f(X) - f(p))(h(X) - h(p)) : X \sim \mathcal{N}(p,2t\textbf{I})\right] \\
&= \E\!\left[\textnormal{tr}\!\big(\nabla f (\nabla h)^\top (X-p)(X-p)^\top\big) : X \sim \mathcal{N}(p,2t\textbf{I})\right] \\
&= 2t \, \textnormal{tr}(\nabla f (\nabla h)^\top) \\
&= 2t \, \nabla f \cdot \nabla h.
\end{align*}
Dividing by $2t$ gives the result.  
Finally, since $\E[f(X)] - f(p) = \ord(t)$ and similarly for $h$, replacing the expectation by the covariance gives the same limit
\[
\frac{1}{2t}\textnormal{Cov}\!\left[f(X), h(X) : X \sim \mathcal{N}(p,2t\textbf{I})\right]
\;\to\;
\nabla f \cdot \nabla h.
\]
as $t \to 0$.
\end{proof}

\subsection*{Symmetric kernels are self-adjoint Markov chains}

We now show that the Markov chains we use are self-adjoint if and only if they are the row-normalisations of symmetric kernel matrices.

\begin{prop}[Markov chain properties]
\label{prop: markov chain properties}
The following are equivalent:
\begin{enumerate}
    \item $\textbf{P}$ is constructed from a symmetric kernel matrix $\textbf{K}$ by row-normalising,
    \item $\textbf{P}$ is \boldblue{reversible} with respect to $\boldsymbol{\mu}$, meaning it satisfies the \q{detailed balance} condition
    \[
    \boldsymbol{\mu}_i \textbf{P}_{ij} = \boldsymbol{\mu}_j \textbf{P}_{ji}
    \]
    for all $i,j$,
    \item the linear operator $\textbf{P}: L^2(\textbf{A}, \boldsymbol{\mu}) \to L^2(\textbf{A}, \boldsymbol{\mu})$ is self-adjoint.
\end{enumerate}
If these conditions hold, then $\boldsymbol{\mu}$ is a stationary distribution for $\textbf{P}$.
\end{prop}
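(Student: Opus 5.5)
The plan is to prove the cycle $(1)\Rightarrow(2)\Rightarrow(3)\Rightarrow(2)\Rightarrow(1)$ using the identification $\boldsymbol{\mu}_i = \mathbf{D}_i/\sum_k \mathbf{D}_k$. Since the scalar $Z=\sum_k\mathbf{D}_k$ plays no role, I will work with $\mathbf{D}$ directly. Throughout I assume $\boldsymbol{\mu}$ is the normalised row-sum vector of the matrix from which $\mathbf{P}$ is built, as in the paper's construction, and that all $\boldsymbol{\mu}_i>0$.

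For $(1)\Rightarrow(2)$: if $\mathbf{P}_{ij}=\mathbf{K}_{ij}/\mathbf{D}_i$ with $\mathbf{K}$ symmetric, then
\[
\boldsymbol{\mu}_i\mathbf{P}_{ij}=\mathbf{K}_{ij}/Z=\mathbf{K}_{ji}/Z=\boldsymbol{\mu}_j\mathbf{P}_{ji}.
\]

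For $(2)\Leftrightarrow(3)$, expand the weighted inner product:
\[
\inp{\mathbf{f}}{\mathbf{P}\mathbf{h}}_{\boldsymbol{\mu}}=\sum_{i,j}\mathbf{f}_i\boldsymbol{\mu}_i\mathbf{P}_{ij}\mathbf{h}_j
\quad\text{and}\quad
\inp{\mathbf{P}\mathbf{f}}{\mathbf{h}}_{\boldsymbol{\mu}}=\sum_{i,j}\mathbf{h}_j\boldsymbol{\mu}_j\mathbf{P}_{ji}\mathbf{f}_i .
\]
Detailed balance makes these equal. Conversely, testing with indicator vectors $\mathbf{f}=e_i$, $\mathbf{h}=e_j$ recovers detailed balance entrywise. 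Here I read $L^2(\mathbf{A},\boldsymbol{\mu})$ as $L^2(\R^n,\boldsymbol{\mu})$ so that indicators are available.

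For $(2)\Rightarrow(1)$, set $\mathbf{K}_{ij}:=\boldsymbol{\mu}_i\mathbf{P}_{ij}$. This is symmetric by $(2)$. Its row sums are $\boldsymbol{\mu}_i\sum_j\mathbf{P}_{ij}=\boldsymbol{\mu}_i$ because $\mathbf{P}$ is stochastic, so row-normalising $\mathbf{K}$ returns $\mathbf{P}$. The normalised row sums also return $\boldsymbol{\mu}$, so this construction is consistent with how $\boldsymbol{\mu}$ was defined. For the stationarity claim, sum detailed balance over $i$:
\[
\sum_i\boldsymbol{\mu}_i\mathbf{P}_{ij}=\boldsymbol{\mu}_j\sum_i\mathbf{P}_{ji}=\boldsymbol{\mu}_j .
\]

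The main obstacle is interpretive rather than technical. The statement is only true when $\boldsymbol{\mu}$ is tied to $\mathbf{P}$: either as the normalised degree vector, or, in $(2)\Rightarrow(1)$, as whatever measure satisfies detailed balance. The proof must state this convention explicitly. It also needs positivity of $\boldsymbol{\mu}$ so that $\inp{\cdot}{\cdot}_{\boldsymbol{\mu}}$ is a genuine inner product and the indicator-vector test in $(3)\Rightarrow(2)$ is valid.
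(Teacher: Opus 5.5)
Your proof is correct and follows the paper's argument essentially step for step. You use the same computation for (1)$\Rightarrow$(2), the same kernel $\mathbf{K}_{ij}=\boldsymbol{\mu}_i\mathbf{P}_{ij}$ for (2)$\Rightarrow$(1), the same bilinear-form expansion and basis-vector test for (2)$\Leftrightarrow$(3), and the same summation of detailed balance for stationarity. Your stated conventions are exactly what the paper relies on without saying so: $\boldsymbol{\mu}$ is the normalised degree vector of the kernel defining $\mathbf{P}$, $\boldsymbol{\mu}_i>0$, and $L^2(\mathbf{A},\boldsymbol{\mu})$ is read as all of $\R^n$ so that indicator vectors are admissible.
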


\begin{proof}
$(1) \implies (2)$: 
Assume that $\textbf{P}_{ij} = \textbf{K}_{ij}/D_i$ where $\textbf{K}$ is symmetric and $D_i = \sum_{j=1}^n \textbf{K}_{ij}$ are its row sums, so $\boldsymbol{\mu}_i = D_i / \sum_k D_k$.
Then
\[
\boldsymbol{\mu}_i \textbf{P}_{ij}
= \frac{\textbf{K}_{ij}}{\sum_k D_k}
= \frac{\textbf{K}_{ji}}{\sum_k D_k}
= \boldsymbol{\mu}_j \textbf{P}_{ji}
\]
so $\textbf{P}$ is reversible.

$(2) \implies (1)$: If $\textbf{P}$ is reversible, we can define a symmetric kernel $\textbf{K}_{ij} = \boldsymbol{\mu}_i \textbf{P}_{ij}$.
Let 
\[
D_i 
= \sum_j \textbf{K}_{ij} 
= \sum_j \boldsymbol{\mu}_i \textbf{P}_{ij} 
= \boldsymbol{\mu}_i.
\]
Then the Markov chain induced by $\textbf{K}$ is 
$
\textbf{K}_{ij}/D_i 
= (\boldsymbol{\mu}_i\textbf{P}_{ij})/\boldsymbol{\mu}_i 
= \textbf{P}_{ij}
$,
so $P$ is induced by a symmetric kernel.

$(2) \implies (3)$: Assume $\textbf{P}$ is reversible, so $\boldsymbol{\mu}_i \textbf{P}_{ij} = \boldsymbol{\mu}_j \textbf{P}_{ji}$.
Then
\[
\langle \textbf{P}\mathbf{f}, \mathbf{g} \rangle_{\boldsymbol{\mu}} 
= \sum_{i} (\textbf{P}\mathbf{f})_i \mathbf{g}_i \boldsymbol{\mu}_i
= \sum_{i,j} \textbf{P}_{ij} \mathbf{f}_j \mathbf{g}_i \boldsymbol{\mu}_i
= \sum_{i,j} \mathbf{f}_j \mathbf{g}_i ( \boldsymbol{\mu}_i \textbf{P}_{ij} )
\]
is symmetric in $\mathbf{f}$ and $\mathbf{g}$, so $\textbf{P}$ is self-adjoint as a linear operator $L^2(\textbf{A}, \boldsymbol{\mu}) \to L^2(\textbf{A}, \boldsymbol{\mu})$.

$(3) \implies (2)$: 
Notice that
\[
\langle \textbf{P}\mathbf{e}_k, \mathbf{e}_l \rangle_{\boldsymbol{\mu}} 
= \sum_{i,j} \textbf{P}_{ij} (\mathbf{e}_k)_j (\mathbf{e}_l)_i \boldsymbol{\mu}_i 
= \sum_{i,j} \textbf{P}_{ij} \delta_{jk} \delta_{il}\boldsymbol{\mu}_i 
= P_{kl} \boldsymbol{\mu}_l.
\]
If $\textbf{P}$ is self-adjoint, then
\(
\textbf{P}_{kl} \boldsymbol{\mu}_l
= \langle \textbf{P}\mathbf{e}_k, \mathbf{e}_l \rangle_{\boldsymbol{\mu}} 
= \langle \textbf{P}\mathbf{e}_l, \mathbf{e}_k \rangle_{\boldsymbol{\mu}} 
= \textbf{P}_{lk} \boldsymbol{\mu}_k
\)
for all $k,l$, so $\textbf{P}$ is reversible.

Finally, we see that reversibility implies that $\boldsymbol{\mu}$ is a stationary distribution, because
\[
(\boldsymbol{\mu}^T \textbf{P})_i
= \sum_j \boldsymbol{\mu}_j \textbf{P}_{ji}
= \sum_j \boldsymbol{\mu}_i \textbf{P}_{ij}
= \boldsymbol{\mu}_i
\]
for all $i$.
\end{proof}


This validates why the symmetric kernel approach is reasonably natural.
We could use a general Markov chain to compute diffusion geometry, and use its stationary distribution as the measure for an $L^2$ space of functions, but the heat diffusion operator $P$ would then not be self-adjoint.
The self-adjointness is theoretically expected, and also leads to a computational speedup when we try to find its eigenvectors.

\refstepcounter{section}
\section*{\thesection\hspace{0.5em} Operator forms of tensors}
\label{sec:operator-forms-appendix}

\subsection*{Operator form of the Hessian}

We can interpret the Hessian as a bilinear operator $\mathfrak{X}(M) \times \mathfrak{X}(M) \rightarrow \A$ via
$$
H(f)(X,Y) := H(f)(X\otimes Y) = g(H(f), X^\flat \otimes Y^\flat).
$$
In other words, the \q{operator form} of the Hessian is a map from $\A$ to the bilinear operators $\mathfrak{X}(M) \times \mathfrak{X}(M) \rightarrow \A$.
We can discretise this as an $n \times n_1d \times n_1d \times n_0$ 4-tensor $\textbf{H}^{\textnormal{op}}$ that maps functions in $\R^{n_0}$ to bilinear maps $\R^{n_1d} \times \R^{n_1d} \to \R^n$.
We first evaluate the action of $H(\p{i'})$ on the pair of vector fields $\p{i_1}\nabla x_{j_1}$ and $\p{i_2}\nabla x_{j_2}$ at a point $p$ the 6-tensor
\begin{equation*}
\label{eq: op Hessian}
\begin{split}
\textbf{H}^{\textnormal{op}}_{pi_1j_1i_2j_2i'}
&= H(\p{i'})(\p{i_1}\nabla x_{j_1}, \p{i_2}\nabla x_{j_2})(p) \\
&= \p{i_1}(p)\p{i_2}(p)H(\p{i'})(\nabla x_{j_1}, \nabla x_{j_2})(p) \\
&= \frac{1}{2} \p{i_1}(p)\p{i_2}(p)
\big[ \Gamma(x_{j_1}, \Gamma(x_{j_2}, \p{i'}))(p)
+ \Gamma(x_{j_2}, \Gamma(x_{j_1}, \p{i'}))(p)
- \Gamma(\p{i'}, \Gamma(x_{j_1}, x_{j_2}))(p)
\big] \\
&= \frac{1}{2}\textbf{U}_{pi_1} \textbf{U}_{pi_2}
\big[ 
\boldsymbol{\Gamma}^\textnormal{coord/mix}_{pj_1j_2i'}
+ \boldsymbol{\Gamma}^\textnormal{coord/mix}_{pj_2j_1i'}
- \boldsymbol{\Gamma}^\textnormal{mix/coord}_{pi'j_1j_2}
\big]
\end{split}
\end{equation*}
which has dimension $n \times n_1 \times d \times n_1 \times d \times n_0$.
We reshape $\textbf{H}^{\textnormal{op}}$ into an $n \times n_1d \times n_1d \times n_0$ 4-tensor, so, if $\textbf{f} \in \R^{n_0}$ is a function and $\textbf{X},\textbf{Y} \in \R^{n_1d}$ are vector fields, then
$$
H(f)(X,Y) = \Big( \sum_{i=1}^{n_0}\sum_{I,J = 1}^{n_1d} \textbf{H}^{\textnormal{op}}_{pIJi} \textbf{X}_I \textbf{Y}_J \textbf{f}_i \Big)_{p=1,...,n} \in \R^n
$$
computes the Hessian of $\textbf{f}$ in the directions $\textbf{X}$ and $\textbf{Y}$.

\paragraph{Hessian matrix.}

If we only want to compute the function $H(f)(\nabla x_i, \nabla x_j) \in \R^n$ for some $i,j$, then we can use the fact that
$$
\textbf{H}(\boldsymbol{\phi}_{i'})(\nabla \textbf{x}_i, \nabla \textbf{x}_j) 
= \big(
\boldsymbol{\Gamma}^\textnormal{coord/mix}_{pj_1j_2i'}
+ \boldsymbol{\Gamma}^\textnormal{coord/mix}_{pj_2j_1i'}
- \boldsymbol{\Gamma}^\textnormal{mix/coord}_{pi'j_1j_2}
\big)_{p=1,...,n} \in \R^n.
$$
At a fixed point $p$, $\boldsymbol{\Gamma}^\textnormal{coord/mix}_{pj_1j_2i'}
+ \boldsymbol{\Gamma}^\textnormal{coord/mix}_{pj_2j_1i'}
- \boldsymbol{\Gamma}^\textnormal{mix/coord}_{pi'j_1j_2}$ has dimension $d \times d \times n_0$, which, viewed as a linear map $\R^{n_0} \to \R^{d \times d}$, maps functions in $\R^{n_0}$ to their $d \times d$ Hessian matrix at $p$.

\subsection*{$(1,1)$-tensor form of the Levi-Civita connection.}

We often interpret the Levi-Civita connection as linear operator $\nabla_X : \mathfrak{X}(M) \to \mathfrak{X}(M)$, where $X \in \mathfrak{X}(M)$ is fixed, via
$$
\nabla_X Y := \nabla(X)(Y, \cdot)^\sharp \in \mathfrak{X}(M).
$$
This formulation of the Levi-Civita connection views it as a $(1,1)$-tensor that maps a vector field $X$ to a linear map on vector fields $\nabla_X : \mathfrak{X}(M) \to \mathfrak{X}(M)$.
We can discretise this as an $n_1d \times n_1d \times n_1d$ 3-tensor $\textbf{H}^{(1,1)}$ that maps vector fields $\textbf{X}$ in $\R^{n_1d}$ to linear maps $\R^{n_1d} \to \R^{n_1d}$ (i.e. $n_1d \times n_1d$ matrices).
We can evaluate $\boldsymbol{\nabla}^{\textnormal{weak,} (1,1)}$ with a minor adjustment from the above in the 6-tensor
\begin{align*}
\boldsymbol{\nabla}^{\textnormal{weak,} (1,1)}_{i_1j_1i_2j_2i'j'}
&= \inp{\p{i_1}\nabla{x_{j_1}}}{\nabla_{\p{i'}\nabla x_{j'}}(\p{i_2}\nabla{x_{j_2}})} \\
&= \int \nabla(\p{i'}\nabla x_{j'})(\p{i_2}\nabla x_{j_2}, \p{i_1}\nabla x_{j_1}) d\mu \\
&= \langle \p{i_1}\p{i_2} dx_{j_2}\otimes dx_{j_1}, \nabla(\p{i'}\nabla x_{j'})\rangle \\
&= \sum_{p=1}^n \textbf{U}_{pi_1} \textbf{U}_{pi_2} \boldsymbol\Gamma_p(\textbf{x}_{j_2}, \boldsymbol\phi_{i'}) \boldsymbol\Gamma_p(\textbf{x}_{j_1}, \textbf{x}_{j'}) \boldsymbol{\mu}_p \\
& \qquad
+ \frac{1}{2}\sum_{p=1}^n
 \textbf{U}_{pi_1} \textbf{U}_{pi_2} \textbf{U}_{pi'}
\big[ 
\boldsymbol{\Gamma}^\textnormal{coord/coord}_{pj_1j_2j'}
+ \boldsymbol{\Gamma}^\textnormal{coord/coord}_{pj_2j_1j'}
- \boldsymbol{\Gamma}^\textnormal{coord/coord}_{pj'j_1j_2}
\big] 
\boldsymbol{\mu}_p
\end{align*}
which has dimension $n_1 \times d \times n_1 \times d \times n_1 \times d$.
We reshape $\boldsymbol{\nabla}^{\textnormal{weak,} (1,1)}$ into an $n_1d \times n_1d \times n_1d$ 3-tensor, and obtain the strong form of the connection as $\boldsymbol{\nabla}^{(1,1)} = \big(\textbf{G}^1)\inv \boldsymbol{\nabla}^{\textnormal{weak,} (1,1)}$.
This is also an $n_1d \times n_1d \times n_1d$ 3-tensor, which we view as a map from vector fields in $\R^{n_1d}$ to the linear operators on vector fields $\R^{n_1d \times n_1d}$.
So, if $\textbf{X} \in \R^{n_1d}$ is a vector field, then $\boldsymbol{\nabla}^{(1,1)}\textbf{X} : \R^{n_1d} \to \R^{n_1d}$.
Explicitly, if $\textbf{X}, \textbf{Y} \in \R^{n_1d}$ then
$$
\nabla_X Y
= (\boldsymbol{\nabla}^{(1,1)}\textbf{X})(\textbf{Y})
= \Big( \sum_{I,J=1}^{n_1d} \boldsymbol{\nabla}^{(1,1)}_{KIJ}\textbf{Y}_I\textbf{X}_J \Big)_{K=1,...,n_1d} \in \R^{n_1d}.
$$

\end{document}